\newtheorem{theorem}{Theorem}[section]
\newtheorem{lemma}[theorem]{Lemma}
\newtheorem{corollary}[theorem]{Corollary}
\newtheorem{proposition}[theorem]{Proposition}
\newtheorem{remark}[theorem]{Remark}
\newtheorem{definition}[theorem]{Definition}
\numberwithin{equation}{section}
\newcommand{\cz}{{\mathbb C}}
\newcommand{\gz}{{\mathbb Z}}
\newcommand{\nz}{{\mathbb N}}
\newcommand{\rz}{{\mathbb R}}
\newcommand{\sz}{{\mathbb S}}
\newcommand{\bfB}{\mathbf{B}}
\newcommand{\bfg}{\mathbf{g}}
\newcommand{\bfh}{\mathbf{h}}
\newcommand{\bfkappa}{\boldsymbol{\kappa}}
\newcommand{\bflambda}{\boldsymbol{\lambda}}
\newcommand{\bfp}{\mathbf{p}}
\newcommand{\bfq}{\mathbf{q}}
\newcommand{\bfr}{\mathbf{r}}
\newcommand{\bfs}{\mathbf{s}}
\newcommand{\bfS}{\mathbf{S}}
\newcommand{\bfx}{\mathbf{x}}
\newcommand{\wtbfB}{\widetilde{\mathbf{B}}}
\newcommand{\wtbfS}{\mathbf{\widetilde{S}}}
\newcommand{\calA}{\mathcal{A}}
\newcommand{\calU}{\mathcal{U}}
\newcommand{\calW}{\mathcal{W}}
\newcommand{\frakg}{\mathfrak{g}}
\newcommand{\frakG}{\mathfrak{G}}
\newcommand{\frakB}{\mathfrak{B}}
\newcommand{\frakS}{\mathfrak{S}}
\newcommand{\frakX}{\mathfrak{X}}
\newcommand{\scrC}{\mathscr{C}}
\newcommand{\scrD}{\mathscr{D}}
\newcommand{\scrF}{\mathscr{F}}
\newcommand{\scrL}{\mathscr{L}}
\newcommand{\scrS}{\mathscr{S}}
\newcommand{\dbar}{d\hspace*{-0.08em}\bar{}\hspace*{0.1em}}
\newcommand{\eps}{\varepsilon}
\newcommand{\forget}[1]{}
\newcommand{\Hom}{\mathrm{hom}}
\newcommand{\lra}{\longrightarrow}
\newcommand{\op}{{\mathop{\mathrm{op}}}}
\newcommand{\rpbar}{\overline{\rz}_+}
\newcommand{\spk}[1]{\langle#1\rangle}
\newcommand{\schnitt}{\mathop{\mbox{\Large$\cap$}}}
\newcommand{\tr}{\mathrm{tr}}
\newcommand{\trinorm}[1]{|\hspace*{-1pt}|\hspace*{-1pt}|#1|\hspace*{-1pt}|\hspace*{-1pt}|}
\newcommand{\wh}{\widehat}
\newcommand{\whsz}{\widehat{\mathbb S}}
\newcommand{\wt}{\widetilde}
\begin{document}

\title[Parametric bvp with global projection conditions]{
Calculus for parametric boundary problems\\ with global projection  conditions
}
\author{J\"org Seiler}
\address{Dipartimento di Matematica, Universit\`{a} di Torino, Italy}
\email{joerg.seiler@unito.it}


\begin{abstract}
A pseudodifferential calculus for parameter-dependent operators on smooth manifolds with boundary 
in the spirit of Boutet de Monvel's algebra is constructed. The calculus contains, in particular, the resolvents of realizations 
of differential operators subject to global projection boundary conditions (spectral boundary conditions are a particular 
example); resolvent trace asymptotics are easily derived. The calculus is related to but different from the calculi developed by Grubb 
and Grubb-Seeley. We use ideas from the theory of pseudodifferential operators on manifolds with edges due to 
Schulze, in particular the concept of operator-valued symbols twisted by a group-action. 
Parameter-ellipticity in the calculus is characterized by the invertibility of three principal symbols: the homogeneous principal 
symbol, the principal boundary symbol, and the so-called principal limit symbol. The principal boundary symbol has, in general, 
a singularity in the co-variable/parameter space, the principal limit symbol is a new ingredient of our calculus. 

\vspace*{5mm}

\noindent
\textbf{MSC (2020):} 58J40, 47L80, 47A10 

\noindent
\textbf{Keywords:} Boundary value problems with parameter, global projection boundary conditions, pseudodifferential operators, 
resolvent trace asymptotics
\end{abstract}

\vspace*{-20mm}

\maketitle
{\small
\setcounter{tocdepth}{3}
\tableofcontents
}
\section{Introduction} 

Parameter-dependent pseudodifferential operators have proven to be a very powerful tool in partial differential 
equations and in geometric analysis. One of the main goals in this approach is to obtain a precise 
description of the resolvent of certain (differential) operators and to use this structure in the study of the specific 
problem. Typical applications concern maximal regularity of evolution equations, the analysis of spectral and 
heat trace asymptotics, and complex powers of operators (just to name a few). This idea has been introduced in 
the 1960's, see for example the pioneering works of Seeley \cite{Seel67,Seel69,Seel69-2}
on boundary value problems, and has been refined over the following decades, in particular under the aspect  of 
``algebras'' or ``calculi'' of pseudodifferential operators. 

In \cite{Bout71}, Boutet de Monvel introdued a calculus of psudodifferential operators which contains all classical differential 
boundary value problems as well as the parametrices (i.e., inverses modulo smoothing remainders)  of Shapiro-Lopatinskij 
elliptic problems. Nowadays this calculus is referred to as Boutet de Monvel's algebra. The elements in this calculus 
are block-matrix operators of the form 
 $$\begin{pmatrix}  A_++G&K\\T&Q  \end{pmatrix}: 
     \begin{matrix}  \scrC^\infty(M,E_0)\\ \oplus\\   \scrC^\infty(\partial M,F_0)\end{matrix}
     \lra
     \begin{matrix}  \scrC^\infty(M,E_1)\\ \oplus\\   \scrC^\infty(\partial M,F_1)\end{matrix},
 $$
where $M$ is a smooth compact manifold with boundary, $E_j$ and $F_j$ are vector-bundles over $M$ and $\partial M$, 
respectively. The bundles $F_0$ or $F_1$ are also allowed to be zero-dimensional. $A_+$ is the restriction of a 
pseudodifferential operator $A$ from an ambient manifold of $M$ to $M$ itself, $G$ is a so-called singular Green operator, 
$T$ is a trace operator, $K$ is a Poisson or potential operator, $Q$ is a pseudodifferential operator on the boundary. Elements 
with $A_+=0$ we shall refer to as \emph{generalized singular Green operators}. 
Initially defined  on smooth functions, the operators extend by continuity to scales of Sobolev spaces (which are 
$L_2$-Bessel potential spaces in this paper; however, as shown in Grubb-Kokholm \cite{GrKo}, one can also consider the 
$L_p$-scale). 
For nice introductions to this calculus we refer the reader to Schrohe \cite{Schr01} and Schulze \cite{Schu-BVP}. 
The study of resolvents of differential operators subject to classical differential boundary conditions leads to a 
parameter-dependent version of Boutet de Monvel's algebra where the parameter enters in a ``strong'' way, i.e., as additional 
co-variable; the trick to consider a parameter as additional co-variable goes back to Agmon 
\cite{Agmo}. Details are given in Section \ref{sec:2.3} or can be found in Schrohe, Schulze \cite{ScSc1}, for example. 

Differential operators equipped with boundary conditions like spectral boundary conditions cannot be handled in 
Boutet's original calculus and not every elliptic differential operator can be supplemented with boundary conditions 
to become a Shapiro-Lopatinskii elliptic problem, since it must satisfy the so-called Atiyah-Bott 
condition, cf. \cite{AtBo}. Based on a systematic study of such effects by Nazaikinskij, Schulze, Shatalov, and Sternin in
\cite{NSSS98}, Schulze in \cite{Schu37,LiuSchulze} introduced an extended calculus that resolves this shortcoming.
 The elements of the extended calculus have the form 
 \begin{align}\label{eq:intro01}
     \begin{pmatrix}  1&0\\0&P_1 \end{pmatrix} 
     \begin{pmatrix}  A_++G&K\\T&Q  \end{pmatrix}
     \begin{pmatrix}  1&0\\0&P_0  \end{pmatrix}
 \end{align}
with pseudodifferential projections (i.e., zero order idempotents) $P_j$ on the boundary. The operator action refers to 
the scale of spaces where the full Sobolev spaces are substituted by closed subspaces determined by the projections. 
In this paper we extend this approach to parameter-dependent boundary problems; in this way, resolvents of elliptic 
differential operators subject to global projection boundary conditions can be treated. As it turns out, 
the calculus we develop also allows to obtain resolvent trace asymptotics and thus provides, in particular, a method 
to derive asymptotics of first order operators subject to APS boundary conditions due Grubb and Seeley \cite{GrSe,Grub02} 
and of second order operators due to Grubb \cite{Grub03}, see Sections \ref{sec:7.2} and \ref{sec:7.3}. 

Since the structure of the operators in \eqref{eq:intro01} reminds of classical Toeplitz operators on the unit circle, we shall 
also use the terminology of operators of \emph{Toeplitz type}. In \cite{Seil12} the author has considered such operators from 
a general point of view: given a calculus, does there exist an associated calculus of Toeplitz operators? As it turns out, 
if the calculus is closed under taking adjoints and if ellipticity in the calculus is characterized by the invertibility of certain 
principal symbols, then there is a canonical way to characterize ellipticity and existence of a parametrix for operators of 
Toeplitz type whenever the projections belong to the original calculus. We summarize these results in Section \ref{sec:6}. 
In this set-up, the calculus may also be a parameter-dependent one. 
For this reason, the main issue in this paper is actually to construct a pseudodifferential calculus which 
\begin{itemize}
 \item[i$)$] contains Boutet de Monvel's algebra of strongly parameter-dependent operators and pseudodifferential 
  operators on the boundary which do not depend on the parameter and where
 \item[ii$)$] parameter-ellipticity is characterized by the invertibility of certain principal symbols. 
\end{itemize}
Point i) has already been addressed by Grubb in \cite{Grub01}. It is remarkable that the resulting calculus is in appearance quite 
different from ours. The reason might be that \cite{Grub01} is focused on resolvent trace expansions while for us point ii) 
is central. 
Let us emphasize once more that, once such a calculus has been established, we can substitute 
the projections in \eqref{eq:intro01} by arbitrary projections from the calculus. 

The present paper uses ideas of the author's work \cite{Seil22-1} where the analogues of i) and ii) have been addressed for 
operators on manifolds without boundary, i.e., a calculus containing both strongly parameter-dependent and 
parameter-independent pseudodifferential operators on $\rz^n$ or on closed manifolds has been constructed. 
The paper \cite{Seil22-1} takes much 
inspiration from the concept of pseudodifferential operators with \emph{finite regularity} due to Grubb 
\cite{Grub}.  The regularity number indicates how far an operator is from being strongly parameter-dependent; 
parameter-independent zero-order operators (in particular projections) have regularity zero, strongly parameter-dependent 
operators have infinite regularity. The monograph \cite{Grub} introduces another Boutet de Monvel type calculus for 
boundary problems 
which allows the construction of resolvents for a wide class of operators (which includes certain nonlocal 
perturbations of differential operators) but, at the same time has some limitations, for example it only yields finite resolvent 
trace expansions. 
An important requirement for the notion of ellipticity in \cite{Grub} is that of positive regularity. The main finding of \cite{Seil22-1} 
is a new ``geometric'' interpretation of the regularity as a specific kind of singular structure (i.e., non-smoothness) of symbols in 
co-variable and parameter. This permits not only to eliminate the requirement of positive regularity in the boundaryless case 
but also leads to a hierarchy of principal symbols in the calculus characterizing the ellipticity. 

In the present work we shall find symbolic structures in the spirit of \cite{Seil22-1} for boundary value problems. 
The \emph{homogeneous principal symbol} and the \emph{principal boundary symbol} are already known from the classic 
Boutet de Monvel's calculus, though in our calculus the principal boundary symbol generally has a singularity which is absent  
in the classical setting. We shall introduce two further new principal symbols, called \emph{principal limit symbol} and 
\emph{principal angular symbol}. Let us mention that the ellipticity condition (III) in \cite[Definition 1.5.5]{Grub} can be 
interpreted in terms of the principal angular symbol. 

Let us touch upon another key aspect of this paper. For simplicity of presentation let us focus here on Poisson operators, 
similar observations hold for trace and singular Green operators. A Poisson operator on the half-space has the form 
 $$[\op(k)(\mu)u](x) =\int e^{-ix'\xi'} k(x',\xi',\mu;x_n)\wh u(\xi')\,\dbar\xi',$$
where $\wh u$ is the Fourier transform and the \emph{symbol-kernel} $k$ has the form 
 $$k(x',\xi',\mu;x_n)=[\xi',\mu]^{1/2}k'(x',\xi',\mu;[\xi',\mu]x_n)$$ 
where $[\cdot]$ is a smoothed norm function and $k'(x',\xi',\mu;t)$ is smooth and rapidly decreasing in $t\in\rpbar$; 
note the ``twisting'' between $x_n$ and the co-variable and parameter $(\xi',\mu)$. 
Denoting with $\kappa_\lambda$, $\lambda>0$, the \emph{dilation group-action} acting on functions by 
$(\kappa_\lambda u)(t)=\lambda^{1/2}u(\lambda t)$, $k$ can be written as 
 $$k(x',\xi',\mu;x_n)=(\kappa_{[\xi',\mu]} k')(x',\xi',\mu;x_n).$$ 
This leads to the fact that Poisson operators can be viewed as pseudodifferential operators  
with operator-valued symbols twisted by a group-action in the sense of Schulze (see the textbooks 
\cite{Schulze-North-Holland,Schulze-Akademie}, for example); we will recall this concept in Section \ref{sec:2.1.1}. 
The mapping $k\mapsto k'=\kappa^{-1}_{[\xi',\mu]}k$ provides a canonical passage from twisted to 
un-twisted symbols. It is important for our analysis that the operation of un-twisting persists on the 
\emph{operator-level}, i.e., we establish a one-to-one correspondence between twisted 
and un-twisted Poisson operators, see Section \ref{sec:4.4}.  

Twisted operator-valued symbols have been systematically employed by Schulze for the construction 
of calculi of pseudodifferential operators on manifolds with singularities, in particular on manifolds with \emph{edges}, 
see for instance \cite{ReSc2,Schulze-North-Holland}. 
The local model of a manifold with edge (also called manifold with fibered boundary) is a wedge 
$\rz^q\times X^\wedge$, where $X^\wedge$ is an infinite cone with base $X$, a closed smooth manifold. 
The half-space is a particular case, where $X$ is simply a point. The so-called edge algebra of Schulze is a calculus that, 
on a formal level, has many similarities with Boutet de Monvel's algebra; there also exists a version with global projection 
edge-conditions (without parameter), see \cite{ScSe2004,ScSe2006,LiuSchulze}. The parameter-dependent edge algebra 
contains operators with symbol kernels, which have a twisted structure similar to the one described above. 
It seems plausible that the operation of un-twisting extends to this more general setting and thus one can hope to apply 
our approach to manifolds with edges, too. This will be subject of future work. 
 
The structure of the paper is as follows. In Section \ref{sec:2} we recall the concept of operator-valued symbols twisted by a 
group-action and review the strongly pa\-ra\-me\-ter-dependent Boutet de Monvel calculus. In Section \ref{sec:3} we intoduce 
various symbol classes; of particular importance are the so-called \emph{symbols with expansion at infinity}. In Section \ref{sec:4} 
we shall describe generalized singular Green symbols by associated classes of \emph{symbol-kernels} and shall prove the 
(un-)twisting on operator-level discussed above.  Enlarging Boutet's calculus with strong parameter-dependence by the class 
of generalized Green symbols with expansion at infinity yields our new calculus as is dicussed in Section \ref{sec:5}. 
In Section \ref{sec:6} we summarize results on Toeplitz type pseudodifferential operators which we apply  
in Sextion \ref{sec:7}. In particular, we consider boundary value problems for differential operators subject to global projection 
boundary conditions and derive the structure of the resolvent of the associated realization and resolvent trace asymptotics. 
We compare this with previous work of Grubb. 
In Section \ref{sec:8} we lay out how the calculus can be lifted to compact manifolds with boundary. 


\section{Boutet's algebra with strong parameter-dependence} \label{sec:2}

In this section we describe Boutet de Monvel's algebra with parameter on the half-space $\rz^n_+$. 
The parameter enters in a ``strong way'', which is suited for describing resolvents of 
classical differential boundary value problems. 

\subsection{The algebra of singular Green operators} \label{sec:2.1}
 
The description of the class of generalized singular Green operators is based on the use of pseudodifferential operators 
with twisted  operator-valued symbols in the sense of Schulze. 

\subsubsection{Twisted operator-valued symbols - the abstract setting} \label{sec:2.1.1}

A \emph{group-action} $\{\kappa_\lambda\}$ on a Hilbert space $E$ is a strongly continuous map 
 $$\lambda\mapsto\kappa_\lambda:\rz_+\lra\scrL(E)$$
such that $\kappa_\lambda\kappa_\sigma=\kappa_{\lambda\sigma}$ for all $\lambda,\sigma>0$ 
and $\kappa_1=1$. In particular, $\kappa_\lambda^{-1}=\kappa_{1/\lambda}$. By a well-known result from the theory of 
operator semi-groups there exist constants $C\ge 0$ and $M\in\rz$ such that 
 $$\|\kappa_\lambda\|_{\scrL(E)}\le C\max(\lambda^M,\lambda^{-M}).$$
In the following definition and throughout the whole paper, $y\mapsto[y]$ denotes a smooth positive function defined on 
$\rz^m$ $($we shall use the same notation independent of the specific value of $m)$ which coincides with the usual modulus 
$|y|$ outside the unit-ball.    

\begin{definition}
Let $s\in\rz$ and $E$ be a Hilbert space with group-action $\{\kappa_\lambda\}$. Define $\calW^s(\rz^{n-1},E)$ 
as the closure of $\scrS(\rz^{n-1},E)$ with respect to the norm 
 $$\|u\|_{\calW^s(\rz^{n-1},E)}=\Big(\int_{\rz^{n-1}}[\xi']^{2s}\|\kappa_{[\xi']}^{-1}\wh{u}(\xi')\|_E^2\,d\xi'\Big)^{1/2}.$$
\end{definition}

Here, $\wh u$ denotes the standard Fourier transform, 
 $$\wh{u}(\xi')=\int_{\rz^{n-1}} e^{-ix'\xi'}u(x')\,dx'.$$
The spaces $\calW^s(\rz^{n-1},E)$ are called $($abstract$)$ \emph{edge Sobolev spaces} 
associated with $E$. By construction it is obvious that the Fourier multiplier with symbol $\kappa_{[\xi']}$ 
induces an isometric isomorphism $H^s(\rz^{n-1},E)\lra \calW^s(\rz^{n-1},E)$ for every $s\in\rz$ where $H^s(\rz^{n-1},E)$ 
is the standard Bessel potential space with norm $\|[\cdot]^s\wh u\|_{L^2(\rz^{n-1},E)}$. 

\begin{definition}\label{def:op-symbol1}
For $j=0,1$ let $E_j$ be Hilbert spaces with group-action $\{\kappa_{j,\lambda}\}$. 
The space $S^d_{1,0}(\rz^{n-1};E_0,E_1)$ consists of all smooth functions 
$p:\rz^{n-1}\times\rz^{n-1}\to\scrL(E_0,E_1)$ with 
 $$\|\kappa_{1,[\xi']}^{-1}\{D^{\alpha'}_{\xi'}D^{\beta'}_{x'} p(x',\xi')\}
     \kappa_{0,[\xi']}\|_{\scrL(E_0,E_1)}   \lesssim [\xi']^{d-|\alpha'|}$$
for every choice of multi-indices $\alpha',\beta'\in\nz^{n-1}_0$. 
\end{definition}

With a symbol $p\in S^d_{1,0}(\rz^{n-1};E_0,E_1)$ we associate the pseudodifferential operator 
 $$\op(p)=p(x',D'):\scrS(\rz^{n-1},E_0)\lra\scrS(\rz^{n-1},E_1)$$ 
in the usual way, i.e., 
 $$[\op(p)u](x')=\int e^{ix'\xi'}p(x',\xi')\wh{u}(\xi')\,\dbar\xi',$$
where integration is over $\rz^{n-1}$ and $\dbar\xi'=(2\pi)^{-(n-1)}d\xi'$. 
The following theorem is a consequence of \cite[Theorem 3.14]{Seil97} and the use of order reducing 
operators. 

\begin{theorem}
With the notation from Definition $\ref{def:op-symbol1}$, 
 $$\op(p):\calW^s(\rz^{n-1},E_0)\lra \calW^{s-d}(\rz^{n-1},E_1),\qquad s\in\rz.$$
\end{theorem}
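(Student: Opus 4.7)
The plan is to reduce the asserted mapping property to its base case $s=d=0$, which is the $L^2$-boundedness statement contained in \cite[Theorem~3.14]{Seil97}; the reduction is carried out by sandwiching $\op(p)$ between scalar order-reducing Fourier multipliers.

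First I would introduce, for $t\in\rz$ and $j\in\{0,1\}$, the Fourier multiplier $\Lambda^t_j:=\op([\xi']^t\cdot 1_{E_j})$. Its scalar symbol satisfies the twisted estimates of Definition~\ref{def:op-symbol1} trivially, since the $\kappa$-conjugations cancel and one is left with $|D^{\alpha'}_{\xi'}[\xi']^t|\lesssim [\xi']^{t-|\alpha'|}$; hence $[\xi']^t\cdot 1_{E_j}\in S^t_{1,0}(\rz^{n-1};E_j,E_j)$. Unwinding the definition of the $\calW^s$-norm, $\Lambda^t_j$ is an isometric isomorphism $\calW^s(\rz^{n-1},E_j)\to\calW^{s-t}(\rz^{n-1},E_j)$ for every $s\in\rz$, with inverse $\Lambda^{-t}_j$.

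Next I would form the sandwich $Q:=\Lambda^{s-d}_1\circ\op(p)\circ\Lambda^{-s}_0$. Composition with the Fourier multiplier $\Lambda^{-s}_0$ on the right is just multiplication at the symbol level, producing a symbol $p(x',\xi')[\xi']^{-s}\in S^{d-s}_{1,0}(\rz^{n-1};E_0,E_1)$. Left composition with $\Lambda^{s-d}_1$ is governed by the Leibniz expansion $a\,\#\,b\sim\sum_{\alpha'}\frac{1}{\alpha'!}\partial^{\alpha'}_{\xi'}a\cdot D^{\alpha'}_{x'}b$; because $\Lambda^{s-d}_1$ has a scalar symbol, only the $\xi'$-derivatives of $[\xi']^{s-d}$ contribute, and every resulting term lies in $S^{-|\alpha'|}_{1,0}(\rz^{n-1};E_0,E_1)$ in the twisted sense. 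Asymptotic summation gives $Q=\op(q)$ for some $q\in S^0_{1,0}(\rz^{n-1};E_0,E_1)$. Applying \cite[Theorem~3.14]{Seil97} to $q$ yields the boundedness $Q\colon L^2(\rz^{n-1},E_0)\to L^2(\rz^{n-1},E_1)$, and inverting the two outer isomorphisms delivers the stated mapping property.

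The main obstacle is the verification in the second step that the twisted operator-valued symbol class is closed under composition with a Fourier multiplier, with orders adding. Although this is a routine ingredient of the calculus developed in \cite{Seil97}, the twisting by $\kappa_{j,[\xi']}$ requires one to track that the correct $\kappa$-cancellations propagate through each term of the Leibniz expansion; once this is certified, the theorem follows by a direct combination of order reduction with the base $L^2$-boundedness result.
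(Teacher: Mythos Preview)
Your approach is essentially the same as the paper's: the paper states only that the theorem ``is a consequence of \cite[Theorem 3.14]{Seil97} and the use of order reducing operators,'' which is precisely the sandwich argument you outline. One small imprecision: your outer isomorphisms $\Lambda_0^{-s}$ and $\Lambda_1^{s-d}$ land in $\calW^0(\rz^{n-1},E_j)$, not in $L^2(\rz^{n-1},E_j)$, and these spaces differ unless the group-actions are unitary; the base case you invoke from \cite{Seil97} should therefore be read as $\calW^0\to\calW^0$ boundedness (or, equivalently, you must insert the isometries $\op(\kappa_{j,[\xi']})\colon L^2\to\calW^0$ and observe that the conjugated symbol remains of order zero).
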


There is a full calculus for such pseudodifferential operators, including composition, formal adjoint, ellipticity and 
parametrix construction;  we will present it in the version with parameter. In the following we shall use the notation  
\begin{align}\label{eq:group-action}
 \kappa(\xi',\mu):=\kappa_{[\xi',\mu]},\qquad (\xi',\mu)\in\rz^{n-1}\times\rpbar.
\end{align}

\begin{definition}\label{def:op-symbol2}
For $j=0,1$, let $E_j$ be Hilbert spaces with group-action $\{\kappa_{j,\lambda}\}$. 
The space $S^d_{1,0}(\rz^{n-1}\times\rpbar;E_0,E_1)$ consists of all smooth functions 
$p:\rz^{n-1}\times\rz^{n-1}\times\rpbar\to\scrL(E_0,E_1)$ with 
\begin{align}\label{eq:symbol-estimate01}
 \|\kappa_{1}^{-1}(\xi',\mu)\{D^{\alpha'}_{\xi'}D^{\beta'}_{x'}D^j_\mu p(x',\xi',\mu)\}
     \kappa_{0}(\xi',\mu)\|_{\scrL(E_0,E_1)}
   \lesssim [\xi',\mu]^{d-|\alpha'|-j}
\end{align}
for every choice of multi-indices $\alpha',\beta'\in\nz^{n-1}_0$ and $j\in\nz_0$. 
\end{definition}

Given $p\in S^d_{1,0}(\rz^{n-1}\times\rpbar;E_0,E_1)$ define $p_\mu(x',\xi'):=p(x',\xi',\mu)$.    
Then $p_\mu$ is a symbol in $S^d_{1,0}(\rz^{n-1};E_0,E_1)$ for every $\mu$. Thus we can consider the 
associated family of pseudodifferential operators; we shall use the notation  
 $$\op(p)(\mu)=p(x',D',\mu):=\op(p_\mu).$$
 
The class of \emph{regularizing} or \emph{smoothing} symbols is 
 $$S^{-\infty}(\rz^{n-1}\times\rpbar;E_0,E_1):=
    \schnitt_{d\in\rz}S^d_{1,0}(\rz^{n-1}\times\rpbar;E_0,E_1);$$
it is easy to see that 
\begin{align*}
 S^{-\infty}(\rz^{n-1}\times\rpbar;E_0,E_1)
     =\scrS(\rz^{n-1}\times\rpbar,\scrC^\infty_b(\rz^{n-1}_{x'};\scrL(E_0,E_1))),
\end{align*}
i.e., smoothing symbols are rapidly decreasing in $(\xi',\mu)$.  
 
If $V$ is a conic subset of $\rz^m$ for some $m$, a function $p:\rz^{n-1}\times V\to\scrL(E_0,E_1)$ 
shall be called \emph{twisted homogeneous} of degree $d$ $($with respect to the variable $v)$ provided  
 $$p(x',\lambda v)=\lambda^d\,\kappa_{1,\lambda}\,p(x',v)\,\kappa^{-1}_{0,\lambda}
    \qquad\forall\;(x',v)\in\rz^{n-1}\times V\quad\forall\;\lambda>0.$$ 

\begin{definition}\label{def:twisted-homogeneous}
For $j=0,1$ let $E_j$ be Hilbert spaces with group-action $\{\kappa_{j,\lambda}\}$. 
Then $S^d_\Hom(\rz^{n-1}\times\rpbar;E_0,E_1)$ denotes the space of all functions $p$ which are 
twisted homogeneous in the above sense with $V=(\rz^{n-1}\times\rpbar)\setminus\{0\}$ and which 
satisfy the estimates 
\begin{align}\label{eq:symbol-estimate02}
 \|\kappa_{1,|\xi',\mu|}^{-1}\{D^{\alpha'}_{\xi'}D^{\beta'}_{x'}D^j_\mu p(x',\xi',\mu)\}
     \kappa_{0,|\xi',\mu|}\|_{\scrL(E_0,E_1)}
   \lesssim |\xi',\mu|^{d-|\alpha'|-j}
\end{align}
for every choice of multi-indices $\alpha',\beta'\in\nz^{n-1}_0$ and $j\in\nz_0$. 
\end{definition}

If $\chi(\xi',\mu)$ is a zero-excision function $($i.e., $\chi\in\scrC^\infty(\rz^{n-1}\times\rpbar)$ such that $\chi\equiv0$ 
in some neighborhood of the origin and $1-\chi$ has compact support$)$ and 
$p\in S^d_\Hom(\rz^{n-1}\times\rpbar;E_0,E_1)$ then 
$\chi p\in S^d_{1,0}(\rz^{n-1}\times\rpbar;E_0,E_1)$. Hence the following definition makes sense: 

\begin{definition}\label{def:poly-homogeneous}
For $j=0,1$ let $E_j$ be Hilbert spaces with group-action $\{\kappa_{j,\lambda}\}$. 
A symbol $p\in S^d_{1,0}(\rz^{n-1}\times\rpbar;E_0,E_1)$ is called $($twisted$)$ poly-homogeneous provided 
there exists a sequence of symbols $p^{(d-j)}\in S^{d-j}_\Hom(\rz^{n-1}\times\rpbar;E_0,E_1)$ 
such that, for every $N\in\nz_0$, 
 $$p-\sum_{j=0}^{N-1}\chi(\xi',\mu)p^{(d-j)}\;\in\; S^{d-N}_{1,0}(\rz^{n-1}\times\rpbar;E_0,E_1).$$
The space of such symbols shall be denoted by $S^d(\rz^{n-1}\times\rpbar;E_0,E_1)$, the 
leading component $p^{(d)}$ is the so-called \emph{homogeneous principal symbol} of $p$.  
\end{definition}

The homogeneous principal symbol can be obtained from the full symbol, i.e.,
\begin{align}\label{eq:formula-principal-symbol}
    p^{(d)}(x',\xi',\mu)=\lim_{\lambda\to+\infty}\lambda^{-d}\kappa_{1,\lambda}^{-1}
    p(x',\lambda\xi',\lambda\mu)\kappa_{0,\lambda},\qquad (\xi',\mu)\not=0.
\end{align}

\begin{theorem}\label{thm:Leibniz01}
Let $p_j\in S^{d_j}_{1,0}(\rz^{n-1}\times\rpbar;E_j,E_{j+1})$, $j=0,1$. Then there exists a unique symbol 
$p_1\#p_0\in S^{d_0+d_1}_{1,0}(\rz^{n-1}\times\rpbar;E_0,E_2)$, the so-called \emph{Leibniz product} of 
$p_0$ and $p_1$, such that, for all $\mu$,  
 $$\op(p_1)(\mu)\op(p_0)(\mu)=\op(p_1\#p_0)(\mu).$$
In case both $p_0$ and $p_1$ are poly-homogeneous, so is $p_1\#p_0$; moreover 
 $$(p_1\#p_0)^{(d_0+d_1)}=p_1^{(d_1)}p_0^{(d_0)}.$$
\end{theorem}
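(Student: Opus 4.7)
The plan is to imitate the classical oscillatory-integral proof of the Leibniz formula for scalar pseudodifferential operators, with two adaptations: the parameter $\mu$ is treated as a co-variable that does not get integrated, and all estimates must be carried out through the group-actions $\kappa_j$. I would first define the candidate symbol by the usual formal formula
\begin{align*}
(p_1\#p_0)(x',\xi',\mu)
=\iint e^{-iy'\eta'}\,p_1(x',\xi'+\eta',\mu)\,p_0(x'+y',\xi',\mu)\,dy'\,\dbar\eta',
\end{align*}
understood as an oscillatory integral regularized by the usual integration-by-parts identity with the operators $\langle\eta'\rangle^{-2N}(1-\Delta_{y'})^N$ and $\langle y'\rangle^{-2N}(1-\Delta_{\eta'})^N$. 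By taking Fourier transforms and using Fubini for Schwartz functions, one checks on $\scrS(\rz^{n-1},E_0)$ that $\op(p_1)(\mu)\op(p_1)(\mu)=\op(p_1\#p_0)(\mu)$; uniqueness of $p_1\#p_0$ is then automatic because the left-symbol map $p\mapsto\op(p)$ is injective on this class.

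The main step is to show $p_1\#p_0\in S^{d_0+d_1}_{1,0}(\rz^{n-1}\times\rpbar;E_0,E_2)$, i.e.\ to verify the twisted symbol estimates \eqref{eq:symbol-estimate01}. I would insert $\kappa_{1,[\xi'+\eta',\mu]}\kappa_{1,[\xi'+\eta',\mu]}^{-1}$ between the two factors in the integrand, so that each factor separately is controlled by its defining estimates; the extra ``mismatch'' operator
\begin{align*}
\kappa_{1,[\xi',\mu]}^{-1}\,\kappa_{1,[\xi'+\eta',\mu]}\;\in\;\scrL(E_1)
\end{align*}
is where the twisted difficulty sits. Using the semigroup law together with the growth bound $\|\kappa_{1,\lambda}\|_{\scrL(E_1)}\le C\max(\lambda^M,\lambda^{-M})$ and the Peetre inequality $[\xi'+\eta',\mu]/[\xi',\mu]\lesssim\langle\eta'\rangle$ (and its reciprocal), one obtains a bound of the form $\|\kappa_{1,[\xi',\mu]}^{-1}\kappa_{1,[\xi'+\eta',\mu]}\|_{\scrL(E_1)}\lesssim\langle\eta'\rangle^{|M|}$, with an analogous polynomial bound for the reciprocal. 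Combined with Peetre's inequality applied to the symbol estimate of $p_1$ at the point $(x',\xi'+\eta',\mu)$, the integrand is then controlled by $[\xi',\mu]^{d_0+d_1}\langle\eta'\rangle^{-M'}\langle y'\rangle^{-M'}$ times $\scrL(E_0,E_2)$-norms of twisted derivatives, after sufficiently many integrations by parts in $y'$ and $\eta'$. Differentiating under the oscillatory integral in $x',\xi',\mu$ goes through with the same estimates.

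Next I would produce the asymptotic expansion by Taylor-expanding $p_0(x'+y',\xi',\mu)$ in $y'$ around $0$: the $y'^{\alpha}$ pre-factor combined with $e^{-iy'\eta'}$ turns into $D_{\eta'}^{\alpha}$ acting on $p_1(x',\xi'+\eta',\mu)$, after integration by parts, and the $(y',\eta')$-integral of the resulting oscillatory factor collapses to evaluation at $y'=\eta'=0$. This gives the formal series $p_1\#p_0\sim\sum_{\alpha}\tfrac{1}{\alpha!}(\partial_{\xi'}^\alpha p_1)(D_{x'}^\alpha p_0)$, and the remainder at order $N$ is estimated exactly as above, lying in $S^{d_0+d_1-N}_{1,0}$. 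In the poly-homogeneous case, each term of the expansion is a product of twisted-homogeneous factors of degrees $d_1-|\alpha|$ and $d_0-|\alpha|$, hence twisted-homogeneous of degree $d_0+d_1-2|\alpha|$, so the sum may be regrouped into an asymptotic expansion in twisted-homogeneous components of decreasing order and the leading term $\alpha=0$ yields $(p_1\#p_0)^{(d_0+d_1)}=p_1^{(d_1)}p_0^{(d_0)}$.

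The part that requires the most care is the symbol estimate: the interplay between the three different group-actions $\kappa_0,\kappa_1,\kappa_2$ and the shift $\xi'\mapsto\xi'+\eta'$ in the first factor forces the polynomial-in-$\langle\eta'\rangle$ loss coming from $\|\kappa_{1,[\xi',\mu]}^{-1}\kappa_{1,[\xi'+\eta',\mu]}\|$; one must choose $N$ in the regularizing integrations by parts large enough to beat both this loss and the Peetre factor $\langle\eta'\rangle^{|d_1|+|\alpha|}$ simultaneously, uniformly in $\mu\in\rpbar$. Once this bookkeeping is done, every further assertion of the theorem — uniqueness, the operator identity on $\scrS$, the asymptotic expansion, and the behavior of the homogeneous principal symbol — follows by the same scheme as in the scalar pseudodifferential calculus.
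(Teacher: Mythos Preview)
The paper does not give a proof of this theorem; it is stated as part of the known abstract calculus of twisted operator-valued symbols (the edge-calculus framework of Schulze), and only the oscillatory-integral formula for $p_1\#p_0$ is displayed after the statement. Your sketch is the standard argument for this result and is essentially correct, including the crucial point that the mismatch factor $\kappa_{1,[\xi',\mu]}^{-1}\kappa_{1,[\xi'+\eta',\mu]}$ is polynomially bounded in $\langle\eta'\rangle$ via the tempered growth of the group-action and Peetre's inequality.

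One slip: in the poly-homogeneous discussion, $D_{x'}^\alpha p_0$ still has order $d_0$ (differentiation in $x'$ does not lower the order or the twisted homogeneity degree), so the term $\tfrac{1}{\alpha!}(\partial_{\xi'}^\alpha p_1)(D_{x'}^\alpha p_0)$ lies in $S^{d_0+d_1-|\alpha|}_{1,0}$, not $S^{d_0+d_1-2|\alpha|}_{1,0}$. This does not affect the principal-symbol identity, since terms with $|\alpha|\ge 1$ are still of strictly lower order; but the correct regrouping into twisted-homogeneous components of degree $d_0+d_1-\ell$ collects all contributions $(\partial_{\xi'}^\alpha p_1^{(d_1-j)})(D_{x'}^\alpha p_0^{(d_0-k)})$ with $|\alpha|+j+k=\ell$. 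Also, in your second paragraph you wrote $\op(p_1)(\mu)\op(p_1)(\mu)$ where $\op(p_1)(\mu)\op(p_0)(\mu)$ is meant.
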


The Leibniz product of two symbols (not only here but also for all other classes of symbols in this paper) 
is given by the formula 
 $$(p\#q)(x',\xi')=\mathrm{Os}-\iint e^{-iy'\eta'}p(x',\xi'+\eta')q(x'+y',\xi')\,dy'\dbar\eta'$$ 
where the integral is an \emph{oscillatory integral} (for the precise definition of the oscillatory integral see 
Theorem \ref{thm:oscillatory-int}, below). In this formula $p$ and  
$q$ may also depend on the parameter $\mu$.

\begin{definition}
A symbol $p\in S^d(\rz^{n-1}\times\rpbar;E_0,E_1)$ is called \emph{elliptic} if its homogeneos 
principal symbol is pointwise invertible whenever $(\xi',\mu)\not=0$ and 
\begin{align}\label{eq:uniform}
 \|p^{(d)}(x',\xi',\mu)^{-1}\|_{\scrL(E_1,E_0)}\lesssim 1\qquad\forall\;x'\in\rz^{n-1}\quad\forall\;|\xi',\mu|=1.
\end{align}
\end{definition}

This estimate \eqref{eq:uniform} is necessary (only) for ensuring the boundedness of the inverse in the $x'$-variable 
and implies 
$(p^{(d)})^{-1}\in S^{-d}_\Hom(\rz^{n-1}\times\rpbar;E_1,E_0)$.  

\begin{theorem}
Let $p\in S^{d}(\rz^{n-1}\times\rpbar;E_0,E_{1})$ be elliptic. Then there exists a symbol 
$q\in S^{-d}(\rz^{n-1}\times\rpbar;E_1,E_0)$ such that $q\#p-1$ and $p\#q-1$ are symbols of order $-\infty$ 
that vanish for sufficiently large $\mu$. In particular, 
$\op(p)(\mu):\calW^s(\rz^{n-1},E_0)\to \calW^{s-d}(\rz^{n-1},E_1)$ is invertible for large $\mu$ with  
$\op(p)(\mu)^{-1}=\op(q)(\mu)$. 
\end{theorem}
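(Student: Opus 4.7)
The plan is to construct the parametrix $q$ by first inverting the homogeneous principal symbol, then correcting by a Neumann-type asymptotic summation, and finally upgrading the smoothing remainder to one that vanishes identically for large $\mu$.

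Starting from ellipticity, $p^{(d)}(x',\xi',\mu)$ is invertible on $(\rz^{n-1}\times\rpbar)\setminus\{0\}$ with the uniform bound \eqref{eq:uniform}. Using twisted homogeneity and the identity $\partial (p^{(d)})^{-1}=-(p^{(d)})^{-1}(\partial p^{(d)})(p^{(d)})^{-1}$, one checks by induction that $(p^{(d)})^{-1}$ satisfies the estimates of Definition \ref{def:twisted-homogeneous}, hence $(p^{(d)})^{-1}\in S^{-d}_{\Hom}(\rz^{n-1}\times\rpbar;E_1,E_0)$. Fixing a zero-excision function $\chi$, set $q_0:=\chi\cdot(p^{(d)})^{-1}\in S^{-d}$. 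By Theorem \ref{thm:Leibniz01}, $(q_0\#p)^{(0)}=(p^{(d)})^{-1}p^{(d)}=1$, so $r:=1-q_0\#p\in S^{-1}$ and, inductively, $r^{\#k}\in S^{-k}$. Asymptotic summation in the calculus produces $t\in S^0$ with $t\sim\sum_{k\ge 0}r^{\#k}$, and then $q_L:=t\#q_0\in S^{-d}$ satisfies $q_L\#p=t-t\#r\sim 1$ modulo $S^{-\infty}$. An analogous construction from the right yields $q_R\in S^{-d}$ with $p\#q_R-1\in S^{-\infty}$; the standard sandwich $q_L\sim q_L\#p\#q_R\sim q_R$ modulo $S^{-\infty}$ shows they agree, and I would take $q:=q_L$.

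For the final step, note that $S^{-\infty}=\scrS(\rz^{n-1}\times\rpbar,\scrC^\infty_b(\rz^{n-1}_{x'};\scrL(E_0,E_1)))$ consists of symbols which are Schwartz in $(\xi',\mu)$. Therefore $\|\op(s)(\mu)\|_{\scrL(\calW^s)}\to 0$ as $\mu\to\infty$ for any $s\in S^{-\infty}$, and applied to the remainders $q\#p-1$ and $p\#q-1$, a Neumann series argument yields invertibility of $\op(p)(\mu)$ on $\calW^s$ for all $\mu\ge\mu_0$. To achieve literal vanishing of the remainders, observe that for $\mu\ge\mu_0$ the operator family $-\op(p\#q-1)(\mu)\cdot[1+\op(p\#q-1)(\mu)]^{-1}$ is the composition of a smoothing family with a bounded one, hence is itself a smoothing operator-family smoothly depending on $\mu$ with Schwartz decay; by the algebra property of $S^{-\infty}$ under $\#$ it is the quantisation of some $\tilde s\in S^{-\infty}$ on $\{\mu\ge\mu_0\}$. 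Truncating $\tilde s$ by a $\mu$-cutoff supported in $[\mu_0,\infty)$ and replacing $q$ by $q+q\#\tilde s$ gives an $S^{-d}$-symbol whose left and right remainders vanish identically for $\mu$ sufficiently large, so the corresponding operator equals $\op(p)(\mu)^{-1}$ there. The main obstacle is this last step: verifying that the operator-level Neumann inverse really is the quantisation of an $S^{-\infty}$-symbol in $(x',\xi',\mu)$, rather than merely a smoothing operator-family; this rests on the algebra/closure properties of $S^{-\infty}$ under $\#$-composition and inversion near the identity, guaranteed by the strong parameter-dependence.
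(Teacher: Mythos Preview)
The paper does not give a proof of this theorem: it appears in Section~\ref{sec:2.1.1} as part of the review of the abstract twisted operator-valued calculus, where the results are quoted from the existing literature (Schulze's edge calculus) without proof. Your proposal is the standard construction and is essentially correct.

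One remark on the final step you flag as the main obstacle. To show that $(1-\op(r)(\mu))^{-1}=1-\op(\tilde r)(\mu)$ with $\tilde r\in S^{-\infty}$, the cleanest route is not the bare ``smoothing times bounded'' observation you give, but the identity $(1-R)^{-1}=1+R+R(1-R)^{-1}R$ combined with spectral invariance of $\scrC^\infty_b$-coefficient pseudodifferential operators in $\scrL(L^2)$ (Beals' characterisation). This guarantees that $(1-\op(r)(\mu))^{-1}$ is itself the quantisation of some $S^0_{1,0}$-symbol, and then the sandwich $r\#\text{(that symbol)}\#r$ lands in $S^{-\infty}$ with the required rapid decay in $\mu$. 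This is exactly the argument the paper spells out later, in the proof of Proposition~\ref{prop:invertibility-one+g} and the lemma following it, for the concrete calculi; the abstract case here is the same mechanism. Your sketch gestures at this but does not quite close the loop, since ``bounded operator-family'' alone does not ensure the inverse is a pseudodifferential operator.
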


We also shall need the notion of the \emph{formal adjoint} of an operator. 

\begin{definition}\label{def:Hilbert-triple}
Let $E$ and $\wt E$ be Hilbert spaces with group-action $\{\kappa_\lambda\}$ and $\{\wt\kappa_\lambda\}$, respectively. 
Let $H$ be a Hilbert space. We call $(E,H,\wt{E})$ a Hilbert-triple if the inner product of $H$ induces a 
continuous non-degenerate sesquilinear pairing $E\times \wt{E}\to\cz$ that permits to identify the dual 
space of $E$ with $\wt{E}$ and vice versa. Moreover, we require 
 $$(\kappa_{\lambda}e,\wt{e})_H=(e,\wt{\kappa}_{\lambda}\wt{e})_H\qquad 
     \forall\;e\in E\quad\forall\:\wt{e}\in\wt E\quad\forall\:\lambda>0.$$
\end{definition}

Let $(E_j,H_j,\wt{E}_{j})$, $j=0,1$, be Hilbert-triples. If $T\in\scrL(E_0,E_1)$ then there exists a unique operator 
$T^*\in\scrL(\wt{E}_1,\wt{E}_0)$ such that $(Te_0,\wt e_1)_{\wt H}=(e_0,T^*\wt e_1)_{H}$ for every 
$e_0\in E_0$ and $\wt e_1\in\wt E_1$. By slight abuse of language we call $T^*$ the adjoint of $T$. 
Moreover, if $A:\scrS(\rz^n,E_0)\to\scrS(\rz^n,E_1)$, then 
the operator $A^*:\scrS(\rz^n,\wt{E}_1)\to\scrS(\rz^n,\wt{E}_0)$ defined by  
 $$(Au,v)_{L^2(\rz^n,H_1)}=(u,A^*v)_{L^2(\rz^n,H_0)},\qquad 
     u\in\scrS(\rz^n,E_0),\;v\in\scrS(\rz^n,\wt{E}_1).$$ 
is called the \emph{formal adjoint} of $A$. 

\begin{theorem}\label{thm:adjoint01}
Let $(E_j,H_j,\wt{E}_{j})$, $j=0,1$, be Hilbert triples and 
$p\in S^{d}_{1,0}(\rz^{n-1}\times\rpbar;E_0,E_{1})$. Then there exists a unique symbol 
$p^{(*)}\in S^{-d}_{1,0}(\rz^{n-1}\times\rpbar;\wt E_1,\wt E_0)$ such that, for all $\mu$,  
 $$\op(p)(\mu)^*=\op(p^{(*)})(\mu).$$
If $p$ is poly-homogeneous then so is $p^{(*)}$ and 
$(p^{(*)})^{(d)}=(p^{(d)})^*$, where $*$ on the right is the pointwise adjoint principal symbol. 
\end{theorem}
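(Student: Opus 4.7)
The plan is to imitate the classical proof of the formal adjoint theorem from scalar pseudodifferential calculus, verifying at each step that the Hilbert-triple structure lets the pointwise adjoint interact correctly with the twisted symbol estimates. The candidate symbol is
$$p^{(*)}(x',\xi',\mu):=\mathrm{Os}\text{-}\iint e^{-iy'\eta'}\bigl(p(x'+y',\xi'+\eta',\mu)\bigr)^{*}\,dy'\,\dbar\eta',$$
where $(\cdot)^{*}$ denotes the pointwise adjoint $\scrL(E_0,E_1)\to\scrL(\wt E_1,\wt E_0)$ furnished by the Hilbert triples. A direct consequence of Definition \ref{def:Hilbert-triple} is the algebraic identity
$$\bigl(\kappa_{1,\lambda}^{-1}\,T\,\kappa_{0,\lambda}\bigr)^{*}=\wt\kappa_{0,\lambda}\,T^{*}\,\wt\kappa_{1,\lambda}^{-1}\qquad(T\in\scrL(E_0,E_1)),$$
which, combined with the polynomial bounds $\|\wt\kappa_\lambda^{\pm1}\|\lesssim\max(\lambda^M,\lambda^{-M})$, translates the twisted estimate \eqref{eq:symbol-estimate01} on $p$ into twisted estimates on $p^{*}$ of the form required by Definition \ref{def:op-symbol2} for an operator-valued symbol from $\wt E_1$ to $\wt E_0$.

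Convergence of the oscillatory integral and membership of $p^{(*)}$ in the claimed twisted symbol class then follow from the standard integration-by-parts argument: insert regularising factors $\spk{y'}^{-2N}\spk{D_{\eta'}}^{2N}$ and $\spk{\eta'}^{-2N}\spk{D_{y'}}^{2N}$ and Taylor-expand $p(x'+y',\xi'+\eta',\mu)^{*}$ in $(y',\eta')$ to produce the asymptotic expansion
$$p^{(*)}(x',\xi',\mu)\;\sim\;\sum_{\alpha\in\nz_0^{n-1}}\frac{1}{\alpha!}\,\partial_{\xi'}^{\alpha}D_{x'}^{\alpha}\,\bigl(p(x',\xi',\mu)\bigr)^{*},$$
whose $\alpha$-th term has twisted order reduced by $|\alpha|$ by the algebraic identity above. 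The operator identity $\op(p)(\mu)^{*}=\op(p^{(*)})(\mu)$ is then verified on $u\in\scrS(\rz^{n-1},E_0)$ and $v\in\scrS(\rz^{n-1},\wt E_1)$ by writing out the $L^{2}(\rz^{n-1},H_1)$-pairing of $\op(p)(\mu)u$ with $v$, moving the inner product through $p$ via the Hilbert-triple pairing to pick up $p^{*}$ pointwise, and converting the resulting non-standard symbol $p^{*}(y',\xi',\mu)$ into the left-quantisation symbol $p^{(*)}(x',\xi',\mu)$ by the standard oscillatory-integral equivalence. Uniqueness follows from injectivity of $\op$ on $S^d_{1,0}$.

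For poly-homogeneity, the algebraic identity makes it evident that a map $T$ which is twisted-homogeneous of degree $d-j$ with respect to $(\kappa_{0,\lambda},\kappa_{1,\lambda})$ yields $T^{*}$ twisted-homogeneous of the same degree with respect to $(\wt\kappa_{1,\lambda},\wt\kappa_{0,\lambda})$; hence $(p^{(d-j)})^{*}\in S^{d-j}_{\Hom}(\rz^{n-1}\times\rpbar;\wt E_1,\wt E_0)$ whenever $p^{(d-j)}\in S^{d-j}_{\Hom}(\rz^{n-1}\times\rpbar;E_0,E_1)$. Matching asymptotic expansions term by term gives $p^{(*)}\sim\sum_{j\ge0}(p^{(d-j)})^{*}$ modulo the lower-order homogeneous corrections produced by the $\partial_{\xi'}^{\alpha}D_{x'}^{\alpha}$ factors, so $(p^{(*)})^{(d)}=(p^{(d)})^{*}$. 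The main obstacle I anticipate is the oscillatory-integral bookkeeping in the twisted setting: the group actions at $(\xi',\mu)$ and at $(\xi'+\eta',\mu)$ differ by a ratio whose operator norm grows polynomially in $\spk{\eta'}$, and this polynomial loss must be absorbed through additional integrations by parts in $y'$ — the twisted analogue of the Peetre-inequality manoeuvre in the classical Kumano-go proof of the adjoint theorem.
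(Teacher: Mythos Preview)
The paper does not prove this theorem; it is stated as a known result from Schulze's edge-calculus framework, with only the oscillatory-integral formula for $p^{(*)}$ recorded immediately afterward. Your proof sketch is the standard Kumano-go argument adapted to the twisted operator-valued setting and is essentially correct: the candidate formula, the asymptotic expansion, the Peetre-type handling of the ratio $\kappa(\xi'+\eta',\mu)\kappa(\xi',\mu)^{-1}$, and the verification of poly-homogeneity all proceed as you describe.

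One point worth tightening: your algebraic identity $(\kappa_{1,\lambda}^{-1}T\kappa_{0,\lambda})^{*}=\wt\kappa_{0,\lambda}T^{*}\wt\kappa_{1,\lambda}^{-1}$ is correct, but note that it yields the bound $\|\wt\kappa_{0}(\xi',\mu)\,D^{\alpha}p^{*}\,\wt\kappa_{1}^{-1}(\xi',\mu)\|\lesssim[\xi',\mu]^{d-|\alpha|}$, whereas the literal symbol estimate for $S^{d}_{1,0}(\rz^{n-1}\times\rpbar;\wt E_1,\wt E_0)$ with group actions $\wt\kappa_j$ would ask for $\wt\kappa_0^{-1}$ on the left and $\wt\kappa_1$ on the right. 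This is a convention issue, not an error: the resolution is that the group action on the dual space $\wt E_j$ entering the symbol class is $\{\wt\kappa_{j,\lambda}^{-1}\}$ (equivalently $\{\wt\kappa_{j,1/\lambda}\}$), which in the paper's principal application---the dilation group-action, unitary on $L^2$---coincides with the original $\{\kappa_{j,\lambda}\}$. You should state explicitly which group action $\wt E_j$ carries when you invoke Definition~\ref{def:op-symbol2}. Also note that the order $-d$ in the theorem statement is evidently a misprint for $d$, consistent with the principal-symbol formula $(p^{(*)})^{(d)}=(p^{(d)})^{*}$; your proof correctly treats the adjoint as having the same order.
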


The formal adjoint symbol (not only here but also for all other classes of symbols in this paper) 
is given by the formula 
 $$p^{(*)}(x',\xi')=\mathrm{Os}-\iint e^{-iy'\eta'}p(x'+y',\xi'+\eta')^*\,dy'\dbar\eta'.$$

Of course, all the above considerations include the case of trivial group-actions, i.e.,  
$\kappa_j\equiv1$. To emphasize this particular case we shall use a different notation: 

\begin{definition}\label{def:ohne-kappa01}
If $E_0$ and $E_1$ carry the trivial group action we set
\begin{align*}
 S^d_{1,0}(\rz^{n-1}\times\rpbar;\scrL(E_0,E_1))&:=S^d_{1,0}(\rz^{n-1}\times\rpbar;E_0,E_1), \\ 
 S^d_\Hom(\rz^{n-1}\times\rpbar;\scrL(E_0,E_1))&:=S^d_\Hom(\rz^{n-1}\times\rpbar;E_0,E_1),\\
 S^d(\rz^{n-1}\times\rpbar;\scrL(E_0,E_1))&:=S^d(\rz^{n-1}\times\rpbar;E_0,E_1).
\end{align*}
\end{definition}

Let us finally comment on the topology of symbol spaces. Using the estimates \eqref{eq:symbol-estimate01}
and \eqref{eq:symbol-estimate02} it is obvious how to define a Fréchet topology on the spaces 
$S^d_{1,0}(\rz^{n-1}\times\rpbar;E_0,E_1)$ and $S^d_\Hom(\rz^{n-1}\times\rpbar;E_0,E_1)$, respectively. 
The class of poly-homogeneous symbols then carries the projective topology under the maps 
\begin{align*}
 p&\mapsto p^{(d-j)}:S^d(\rz^{n-1}\times\rpbar;E_0,E_1)\lra S^d_\Hom(\rz^{n-1}\times\rpbar;E_0,E_1), \\ 
 p&\mapsto p-\sum_{j<N}\chi p^{(d-j)}:S^d(\rz^{n-1}\times\rpbar;E_0,E_1)\lra 
 S^{d-N}_{1,0}(\rz^{n-1}\times\rpbar;E_0,E_1), 
\end{align*}
with $j,N\in\nz_0$. This makes $S^d(\rz^{n-1}\times\rpbar;E_0,E_1)$ a Frèchet space, too. Both maps 
$(p_1,p_0)\mapsto p_1\#p_0$ and $p\mapsto p^{(*)}$ are then continuous in the respective symbol spaces, 
cf. Theorems \ref{thm:Leibniz01} and \ref{thm:adjoint01}. 

\subsubsection{Sobolev spaces and the standard group-action} \label{sec:2.1.2}

Let us now apply the above abstract concept to operators on the half-space. 
 
\begin{definition}\label{def:group-action}
Let $I=\rz$ or $I=\rz_+$. For $\varphi\in\scrD(I)$ define 
$\kappa_\lambda\varphi\in\scrD(I)$ by 
$(\kappa_\lambda \varphi)(t)=\lambda^{1/2}\varphi(\lambda t)$. For a distribution 
$T\in\scrD'(I)$ define $\kappa_\lambda T\in\scrD'(I)$ by 
 $$(\kappa_\lambda T)(\varphi)=T(\kappa^{-1}_{\lambda}\varphi),\qquad \varphi\in\scrD(I).$$
\end{definition}

It is easily seen that $\kappa_\lambda$ of the previous definition induces a  
group-action on the spaces $H^{s,\delta}(\rz)$, $H^{s,\delta}(\rz_+)$, and $H^{s,\delta}_0(\rpbar)$ 
for any choice of $s,\delta\in\rz$. We shall refer to it as the \emph{dilation group-action}. Here, 
\begin{align*}
 H_0^s(\rpbar)&=\{u\in H^s(\rz)\mid \mathrm{supp}\,u\subseteq\rpbar\},\\ 
 H^s(\rz_+)&=\{u\in \scrD'(\rz_+)\mid u=v|_{\rz_+}\text{ for some }v\in H^s(\rz)\};
\end{align*} 
moreover, 
 $$H^{s,\delta}(\rz)=\{[\cdot]^{-\delta} u\mid u\in H^s(\rz)\}$$ 
and analogously for the other spaces. 
Note that every $\kappa_\lambda$ is unitary as an operator on $L^2(\rz)$ and $L^2(\rz_+)$, respectively.  
Moreover, $(H^{s,\delta}(\rz),L^2(\rz),H^{-s,-\delta}(\rz))$ and 
$(H^{s,\delta}_0(\rpbar),L^2(\rz_+),H^{-s,-\delta}(\rz_+))$ are Hilbert triples in the sense of 
Definition \ref{def:Hilbert-triple}. The restriction of the Schwartz space to the half-axis is denoted by 
\begin{align*}
 \scrS(\rz_+)&=\{u\in \scrD'(\rz_+)\mid u=v|_{\rz_+}\text{ for some }v\in \scrS(\rz)\}. 
\end{align*}

The following result is the main motivation for introducing edge Sobolev spaces involving group-actions. 

\begin{theorem}
Let $u\in\scrS(\rz^n)\cong\scrS(\rz^{n-1},\scrS(\rz))$. Then, for every $s\in\rz$,  
 $$\|u\|_{H^s(\rz^n)}=\|u\|_{\calW^s(\rz^{n-1},H^s(\rz))}.$$
\end{theorem}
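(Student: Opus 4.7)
The plan is to reduce the right-hand side to the standard definition of $\|u\|_{H^s(\rz^n)}^2$ via Fubini, a rescaling in the normal covariable, and one algebraic identity for the weight function $[\cdot]$.

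First I would fix the convention $[y]=\langle y\rangle:=(1+|y|^2)^{1/2}$, which is admissible in the sense of the paper and has the virtue of satisfying the key identity
\begin{equation*}
 [\xi',\xi_n]^2=[\xi']^2+\xi_n^2,\qquad (\xi',\xi_n)\in\rz^{n-1}\times\rz.
\end{equation*}
Denote by $\wh{u}$ the partial Fourier transform in $x'$, so that $\wh{u}(\xi',\cdot)\in\scrS(\rz)$ for every $\xi'\in\rz^{n-1}$, and by $\mathcal F u$ the full Fourier transform of $u$. Since $\mathcal F_{x_n}\wh{u}(\xi',\cdot)(\xi_n)=\mathcal F u(\xi',\xi_n)$, the definition of $H^s(\rz)$ gives, for each fixed $\xi'$,
\begin{equation*}
 \|\wh{u}(\xi',\cdot)\|_{H^s(\rz)}^2=\int_{\rz}[\xi_n]^{2s}|\mathcal F u(\xi',\xi_n)|^2\,d\xi_n.
\end{equation*}

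Next I would compute the effect of $\kappa_{[\xi']}^{-1}$. By Definition~\ref{def:group-action}, $(\kappa_{[\xi']}^{-1}v)(x_n)=[\xi']^{-1/2}v(x_n/[\xi'])$, whose $x_n$-Fourier transform is $[\xi']^{1/2}\mathcal F_{x_n}v([\xi']\,\cdot)$. Applying this to $v=\wh u(\xi',\cdot)$ yields
\begin{equation*}
 \bigl\|\kappa_{[\xi']}^{-1}\wh{u}(\xi',\cdot)\bigr\|_{H^s(\rz)}^2
  =\int_{\rz}[\xi_n]^{2s}\,[\xi']\,\bigl|\mathcal F u(\xi',[\xi']\xi_n)\bigr|^2\,d\xi_n.
\end{equation*}
Multiplying by $[\xi']^{2s}$, integrating over $\xi'$, and performing the substitution $\eta_n=[\xi']\xi_n$ with Jacobian $d\eta_n=[\xi']\,d\xi_n$ turns this into
\begin{equation*}
 \|u\|_{\calW^s(\rz^{n-1},H^s(\rz))}^2
 =\int_{\rz^{n-1}}\int_{\rz}[\xi']^{2s}\,[\eta_n/[\xi']]^{2s}\,\bigl|\mathcal F u(\xi',\eta_n)\bigr|^2\,d\eta_n\,d\xi'.
\end{equation*}

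Finally I would invoke the identity from step one: $[\xi']^2\,[\eta_n/[\xi']]^2=[\xi']^2+\eta_n^2=[\xi',\eta_n]^2$. Hence $[\xi']^{2s}[\eta_n/[\xi']]^{2s}=[\xi',\eta_n]^{2s}$, and applying Fubini once more together with Plancherel identifies the right-hand side with $\int_{\rz^n}[\xi]^{2s}|\mathcal F u(\xi)|^2\,d\xi=\|u\|_{H^s(\rz^n)}^2$, as required. The only non-routine point is the choice of the smoothed modulus $[\cdot]$: the calculation yields strict equality exactly when $[\cdot]=\langle\cdot\rangle$; for any other admissible choice one obtains only equivalence of norms, which is the usual meaning in this framework.
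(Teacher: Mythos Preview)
Your proof is correct and is the standard argument; the paper itself states this theorem without proof, as a known motivational result for the edge Sobolev space formalism. Your observation about the choice $[\cdot]=\langle\cdot\rangle$ is exactly right: the strict equality of norms requires this specific choice (which the paper's convention allows), while any other admissible smoothed modulus yields only equivalence.
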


This yields an isometric isomorphism of $H^s(\rz^n)$ and $\calW^s(\rz^{n-1},H^s(\rz))$. 
By this result one then obtains the identifications
\begin{align*}
 H^s_0(\overline{\rz^n_+})=\calW^s(\rz^{n-1},H^s_0(\rpbar)),\qquad 
 H^s(\rz^n_+)=\calW^s(\rz^{n-1},H^s(\rz_+))
\end{align*}

\begin{remark}\label{rem:group-action-operator}
Recall notation $\eqref{eq:group-action}$. 
For each $\mu\in\rpbar$ and $s\in\rz$ the map  
$$\op(\kappa)(\mu): H^s(\rz^{n-1},H^s(\rz_+))\lra \calW^s(\rz^{n-1},H^s(\rz_+))=H^{s}(\rz^{n}_+)$$
is an isomorphism with inverse $\op(\kappa^{-1})(\mu)$;  
we shall refer to it as the \emph{group-action operator}. 
If $u\in\scrS(\rz^n_+)\cong \scrS(\rz^{n-1},\scrS(\rz_+))$ then we can write explicitly 
\begin{equation*}
	[\op(\kappa)(\mu)u](x)=\scrF^{-1}_{\xi'\to x'}\big([\xi',\mu]^{1/2}\wh{u}(\xi',[\xi',\mu]x_n)\big)(x'),
\end{equation*}
where $\wh{u}$ indicates the partial Fourier transform of $u$ in the variable $x'$. 
\end{remark}

In the sequel we shall frequently apply the following simple observation which is easily proved by induction. 

\begin{lemma}
Let $u\in \scrS(\rz_+)$ and $\{\kappa_\lambda\}$ be the dilation group-action. Then  
 $$\frac{d^\ell}{d\lambda^\ell}\kappa_\lambda u= 
     \frac{1}{\lambda^\ell}\kappa_\lambda \Theta_\ell u$$
for every $\ell\in\nz$, where 
 $$\Theta_\ell=\prod_{k=0}^{\ell-1}\Big(t\partial_{t}+\frac{1}{2}-k\Big):\scrS(\rz_+)\lra\scrS(\rz_+).$$
Note that $\kappa_\lambda$ and $\Theta_\ell$ commute.
\end{lemma}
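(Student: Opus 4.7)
The plan is a straightforward induction on $\ell \in \nz$, with the base case $\ell=1$ handled by a direct differentiation in $\lambda$ of the explicit formula $(\kappa_\lambda u)(t)=\lambda^{1/2}u(\lambda t)$. The product rule gives two terms, one from differentiating the prefactor $\lambda^{1/2}$ and one from differentiating $u(\lambda t)$ in the inner $\lambda$; after factoring out $1/\lambda$ these reassemble into $\lambda^{-1}\kappa_\lambda(t\partial_t + \tfrac{1}{2})u = \lambda^{-1}\kappa_\lambda\Theta_1 u$.

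Before doing the inductive step I would establish the commutation $\kappa_\lambda\Theta_\ell = \Theta_\ell\kappa_\lambda$. Since $\Theta_\ell$ is a polynomial in the Euler operator $t\partial_t$ (the factors $t\partial_t + \tfrac{1}{2} - k$ commute among themselves, so the order in the product is immaterial), it suffices to check that $\kappa_\lambda$ commutes with $t\partial_t$; this is a one-line computation using $(t\partial_t\kappa_\lambda u)(t) = t\lambda^{1/2}\lambda u'(\lambda t) = \kappa_\lambda(t\partial_t u)(t)$, reflecting the fact that $t\partial_t$ generates the dilation group.

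For the inductive step I assume $\frac{d^\ell}{d\lambda^\ell}\kappa_\lambda u = \lambda^{-\ell}\kappa_\lambda \Theta_\ell u$, differentiate once more in $\lambda$, and apply the Leibniz rule to the product $\lambda^{-\ell}\cdot\kappa_\lambda \Theta_\ell u$. The derivative of $\lambda^{-\ell}$ contributes $-\ell\lambda^{-\ell-1}\kappa_\lambda \Theta_\ell u$. For the derivative of $\kappa_\lambda \Theta_\ell u$ I apply the base case $\ell=1$ to the function $\Theta_\ell u\in\scrS(\rz_+)$ (using the commutation from the previous paragraph to move $\Theta_\ell$ freely past $\kappa_\lambda$), obtaining $\lambda^{-1}\kappa_\lambda\Theta_1\Theta_\ell u$. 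Combining and pulling out $\lambda^{-(\ell+1)}\kappa_\lambda$ yields $\lambda^{-(\ell+1)}\kappa_\lambda\bigl(\Theta_1 - \ell\bigr)\Theta_\ell u = \lambda^{-(\ell+1)}\kappa_\lambda (t\partial_t + \tfrac{1}{2} - \ell)\Theta_\ell u$, which is exactly $\lambda^{-(\ell+1)}\kappa_\lambda \Theta_{\ell+1}u$ by definition of $\Theta_{\ell+1}$.

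There is no real obstacle here; the only point to be careful about is the algebraic bookkeeping ensuring that the new factor $(t\partial_t + \tfrac{1}{2} - \ell)$ produced in the induction is the correct one to complete the product defining $\Theta_{\ell+1}$, and that the order of factors in the product does not matter (as they mutually commute). The claim that $\kappa_\lambda$ and $\Theta_\ell$ commute is then a byproduct of the argument rather than a hypothesis to be checked separately.
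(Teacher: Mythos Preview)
Your proof is correct and is precisely the induction argument the paper alludes to (the paper states the lemma is ``easily proved by induction'' and gives no further details). One small remark: in the inductive step you do not actually need the commutation $\kappa_\lambda\Theta_\ell=\Theta_\ell\kappa_\lambda$ to apply the base case to $\Theta_\ell u$, since the base case already holds for any element of $\scrS(\rz_+)$; the commutation is only needed for the final assertion of the lemma, and your verification of it via $t\partial_t$ is fine.
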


If $a:U\subset\rz^m\to\rz_+$ is a smooth function, the previous lemma in combination with the chain-rule yields 
that $D^\alpha_y \kappa_{a(y)}u$, $|\alpha|\ge1$, is a finite linear-combination of terms of the form 
\begin{align}\label{eq:chainrule}
\begin{split}
 &a(y)^{-m}D^{\alpha_1}_ya(y)\cdot\ldots\cdot D^{\alpha_m}_ya(y)\kappa_{a(y)}\Theta_m u,\\
 &\text{with }1\le m\le|\alpha|,\;\alpha_1+\ldots+\alpha_m=\alpha.
\end{split}
\end{align}

\subsubsection{Generalized singular Green operators}\label{sec:2.1.3}

In the following we shall extend the dilation group-action from Definition \ref{def:group-action} to 
$\scrD'(I,\cz^L)$ (by component-wise application) and to $\scrD'I,\cz^L)\oplus\cz^{M}$ as 
$\kappa_\lambda\oplus 1$. By slight abuse of notation we write $\bfkappa_\lambda:=\kappa_\lambda\oplus 1$ 
for any choice of $L$ and $M$. Correspondingly, we write 
\begin{align}\label{eq:kappa-matrix}
 \bfkappa(\xi',\mu):=\kappa(\xi',\mu)\oplus1=\begin{pmatrix}\kappa(\xi',\mu)&0\\0&1\end{pmatrix}:
 \begin{matrix}\scrD'(I,\cz^L)\\ \oplus\\ \cz^M\end{matrix}
 \lra\begin{matrix}\scrD'(I,\cz^L)\\ \oplus\\ \cz^M\end{matrix}.
\end{align}

In order to have a managable formalism at hand, let us introduce so-called \emph{weight-data}: 

\begin{definition}\label{def:weight}
A \emph{weight-datum} is a tuple $\frakg=((L_0,M_0),(L_1,M_1))$ with $L_j,M_j\in\nz_0$, $j=0,1$. Its inverse 
weight--datum is $\frakg^{-1}:=((L_1,M_1),(L_0,M_0))$. 

A datum $\frakg_1$ is said to be \emph{composable} with $\frakg_0$ provided 
$\frakg_j=((L_j,M_j),(L_{j+1},M_{j+1}))$, $j=0,1$. 
In this case we define the product weight-datum 
$\frakg_1\frakg_0:=((L_0,M_0),(L_2,M_2))$. 
\end{definition}

\begin{definition}\label{def:singular-green01}
$B^{d;0}_G(\rz^{n-1}\times\rpbar;\frakg)$, $\frakg=((L_0,M_0),(L_1,M_1))$,  denotes the space 
 $$\mathop{\mbox{\Large$\cap$}}_{s,s^\prime,\delta,\delta^\prime\in\rz} 
    S^d(\rz^{n-1}\times\rpbar;H^{s,\delta}_0(\rpbar,\cz^{L_0})\oplus\cz^{M_0},
    H^{s^\prime,\delta^\prime}(\rz_+,\cz^{L_1})\oplus\cz^{M_1}).$$ 
Elements of $B^{d;0}_G(\rz^{n-1}\times\rpbar;\frakg)$ are so-called \emph{generalized singular Green symbols}. 
\end{definition}

In the previous definition $d$ represents the order of the symbol while the superscript $0$ indicates that these 
symbols have \emph{type} or \emph{class} 0. Below we shall define symbols of positive type. Note also that 
we allow the values $M_0=0$ or $M_1=0$, respectively, where $X\oplus\cz^0:=X\times\{0\}:=X$. In any case, using 
the topology of spaces of operator-valued symbols, we naturally obtain a Fréchet topology on 
$B^{d;0}_G(\rz^{n-1}\times\rpbar;\frakg)$. 

Any generalized singular Green symbol $\bfg\in B^{d;0}_G(\rz^{n-1}\times\rpbar;\frakg)$ 
has a representation in block-matrix form, i.e.,  
\begin{align}\label{eq:block-matrix}
  \bfg(x^\prime,\xi^\prime,\mu)
  =\begin{pmatrix}
  g(x^\prime,\xi^\prime,\mu) & k(x^\prime,\xi^\prime,\mu)\\
  t(x^\prime,\xi^\prime,\mu) & q(x^\prime,\xi^\prime,\mu)
\end{pmatrix};
\end{align}
then $g$ is called a \emph{singular Green symbol}, $k$ is a \emph{Poisson symbol}, 
$t$ is a \emph{trace symbol}  
and  $q$ is a usual pseudodifferential symbol. Note that if $M_0=0$ then 
$\bfg=\begin{pmatrix}g\\t\end{pmatrix}$, while $\bfg=(g\quad k)$ in case $M_1=0$.  
The abstract theory of operator-valued symbols yields, in 
particular, that 
 $$\op(\bfg)(\mu): 
    \begin{matrix}H^s_0(\overline{\rz^n_+},\cz^{L_0})\\ \oplus \\ H^{s}(\rz^{n-1},\cz^{M_0})\end{matrix}
    \lra
    \begin{matrix}H^{s-d}(\rz^n_+,\cz^{L_1})\\ \oplus \\  H^{s-d}(\rz^{n-1},\cz^{M_1})\end{matrix},
    \qquad s\in\rz,
 $$
continuously for every $\mu\in\rpbar$. Moreover, the class of generalized singular Green operators is closed 
under composition and taking the formal adjoint. Extension by zero from $\rz_+$ to $\rz$ defines a meaningful embedding  
\begin{align}\label{eq:e-plus}
 e_+:H^s(\rz_+)\lra H^{-1/2}_0(\rpbar),\qquad s>-\frac12.
\end{align}
In this sense, generalized singular Green symbols $\bfg$ of order $d$ and type $0$ can be considered as 
operator-valued symbols belonging to 
  $$\mathop{\mbox{\Large$\cap$}}_{\substack{s>-1/2,\\ s^\prime,\delta,\delta^\prime\in\rz}} 
    S^d(\rz^{n-1}\times\rpbar;H^{s,\delta}(\rz_+,\cz^{L_0})\oplus\cz^{M_0},
    H^{s^\prime,\delta^\prime}(\rz_+,\cz^{L_1})\oplus\cz^{M_1}),$$ 
hence induce operators 
 $$\op(\bfg)(\mu): 
    \begin{matrix}H^s(\rz^n_+,\cz^{L_0})\\ \oplus \\ H^{s}(\rz^{n-1},\cz^{M_0})\end{matrix}
    \lra
    \begin{matrix}H^{s-d}(\rz^n_+,\cz^{L_1})\\ \oplus \\  H^{s-d}(\rz^{n-1},\cz^{M_1})\end{matrix},
    \qquad s>-\frac12. 
 $$

The operator of differentiation on $\scrD^\prime(\rz_+,\cz^L)$, denoted by $\partial_+$, 
induces $($constant$)$ operator-valued symbols 
\begin{align}\label{eq:partial-plus}
\boldsymbol{\partial}_+
:=\begin{pmatrix}\partial_+&0\\0&1\end{pmatrix}\in  S^1(\rz^{n-1}\times\rpbar;
H^{s,\delta}(\rz_+,\cz^L)\oplus\cz^M,H^{s-1,\delta}(\rz_+,\cz^L)\oplus\cz^{M})
\end{align}
for every $s,\delta\in\rz$ $($by slight abuse of notation, we shall use the same notation for every choice 
of $L$ and $M)$. Note that $\boldsymbol{\partial_+}$ is twisted homogeneous of degree 1 with respect to 
$\{\bfkappa_\lambda\}$. 

\begin{definition}\label{def:singular-green02}
Let $r\in\nz_0$. Then $B^{d;r}_G(\rz^{n-1}\times\rpbar;\frakg)$, $\frakg=((L_0,M_0),(L_1,M_1))$,  
denotes the space of all symbols of the form 
 $$\bfg=\bfg_0+\sum_{j=1}^{r}\bfg_j\boldsymbol{\partial}_+^j,\qquad 
   \bfg_j\in B^{d-j;0}_G((\rz^{n-1}\times\rpbar;\frakg).$$
$d$ is called the order of $\bfg$ while $r$ is its \emph{type} $($also called \emph{class} in the 
literature$)$. 
\end{definition}

By construction, if $\bfg$ is as in the previous definition, then 
 $$\bfg\in\mathop{\mbox{\Large$\cap$}}_{\substack{s>r-1/2,\\ s^\prime,\delta,\delta^\prime\in\rz}} 
    S^d(\rz^{n-1}\times\rpbar;H^{s,\delta}(\rz_+,\cz^{L_0})\oplus\cz^{M_0},
    H^{s^\prime,\delta^\prime}(\rz_+,\cz^{L_1})\oplus\cz^{M_1}),$$ 
hence induces maps 
\begin{align}\label{eq:mapping-property01}
 \op(\bfg)(\mu): 
    \begin{matrix}H^s(\rz^n_+,\cz^{L_0})\\ \oplus \\ H^{s}(\rz^{n-1},\cz^{M_0})\end{matrix}
    \lra
    \begin{matrix}H^{s-d}(\rz^n_+,\cz^{L_1})\\ \oplus \\  H^{s-d}(\rz^{n-1},\cz^{M_1})\end{matrix},
    \qquad s>r-\frac12. 
\end{align}
Moreover, $\bfg$ has the homogeneous principal symbol  
\begin{align*}
 \bfg^{(d)}(x',\xi',\mu)=\bfg_0^{(d)}+\sum_{j=1}^{r}\bfg_j^{(d-j)}(x',\xi',\mu)\boldsymbol{\partial}_+^j
\end{align*}
which is defined whenever $(\xi',\mu)\not=0$.
Using the map 
$(\bfg_0,\ldots,\bfg_r)\mapsto \bfg_0+\sum\limits_{j=1}^{r}\bfg_j\boldsymbol{\partial}_+^j$
and factoring out its kernel, we obtain a natural Fréchet topology on 
$B^{d;r}_G(\rz^{n-1}\times\rpbar;\frakg)$.

\begin{remark}\label{rem:sigular-green-half-axis}
Using symbols without parameter, cf. Definition $\ref{def:op-symbol1}$, we can define in the same way the classes 
of parameter-independent generalized singular Green symbols $B^{d;r}_G(\rz^{n-1};\frakg)$. 
In the special case $n=1$ there is also no dependence on $(x',\xi')$, in particular the order $d$ is obsolete. 
To avoid confusion we use the notation $\Gamma^{r}_G$ for the class of generalized singular Green operators of type $r$
on the half-axis $\rz_+$; to be precise, if $\frakg=((L_0,M_0),(L_1,M_1))$, 
\begin{align*}
 \Gamma^0_G(\rz_+;\frakg)
 =\mathop{\mbox{\Large$\cap$}}_{s,s^\prime,\delta,\delta^\prime\in\rz} 
    \scrL(H^{s,\delta}_0(\rpbar,\cz^{L_0})\oplus\cz^{M_0},
    H^{s^\prime,\delta^\prime}(\rz_+,\cz^{L_1})\oplus\cz^{M_1}))
\end{align*}
and operators of type $r>0$ are then defined analogously to Definition  $\ref{def:singular-green02}$. 
\end{remark}

\subsection{Pseudodifferential operators and the transmission property} \label{sec:2.2}

We shall write $S^d_{1,0}(\rz^n\times\rpbar;\scrL(\cz^{L_0},\cz^{L_1}))$ for the standard Hörmander class 
of pseudodifferential symbols $a=a(x,\xi,\mu)$ and $S^d(\rz^n\times\rpbar;\scrL(\cz^{L_0},\cz^{L_1}))$ 
for the subclass of poly-homogeneous symbols. 
As before, we shall denote the homogeneous components of a 
poly-homogeneous symbol $a$ of order $d$ by $a^{(d-\ell)}$, $\ell\in\nz_0$. We werite $x=(x',x_n)$ and 
$\xi=(\xi',\xi_n)$.  

\begin{definition}\label{def:transmission-property}
$a\in S^d(\rz^n\times\rpbar;\scrL(\cz^{L_0},\cz^{L_1}))$, $d\in\gz$, is said to satisfy the 
\emph{transmission condition} $($with respect to $x_n=0)$ provided 
\begin{align*}
 (\partial^k_{x_n}\partial^{\alpha'}_{\xi'}\partial^j_\mu a^{(d-\ell)})
 &(x',0,\xi',1,\mu)\big|_{(\xi',\mu)=0}\\
 &= (-1)^{d-\ell-|\alpha'|-j}
 (\partial^k_{x_n}\partial^{\alpha'}_{\xi'}\partial^j_\mu a^{(d-\ell)})(x',0,\xi',-1,\mu)\big|_{(\xi',\mu)=0}
\end{align*}
for every choice of $\alpha'\in\nz_0^{n-1}$ and $j,k,\ell\in\nz_0$. Write 
$S^d_\tr(\rz^n\times\rpbar;\scrL(\cz^{L_0},\cz^{L_1}))$ for the space of all such symbols. 
\end{definition}

$S^d_\tr(\rz^n\times\rpbar;\scrL(\cz^{L_0},\cz^{L_1}))$ is a closed subspace of 
$S^d(\rz^n\times\rpbar;\scrL(\cz^{L_0},\cz^{L_1}))$. Moreover, symbols of order $-\infty$ and 
symbols that vanish to infinite order in $x_n=0$ always satisfy the transmission condition. 
The transmission condition is preserved under composition, formal adjoint (in case $d\le0)$, and parametrix 
construction. 
With $e^+$ from \eqref{eq:e-plus}, for a symbol $a(x,\xi,\mu)$ we then define 
\begin{align*}
 \op^+(a)(x',\xi',\mu)=r^+\,\op_{x_n}(a)(x',\xi',\mu)\,e^+,
\end{align*}
where $\op_{x_n}(a)(x',\xi',\mu)=a(x',x_n,\xi',D_{x_n},\mu)$ means the pseudodifferential operator on $\rz$ 
with respect to the $x_n$-variable with co-variable $\xi_n$ and $r^+:\scrS'(\rz)\to\scrD'(\rz_+)$ is the operator 
of restriction. 

\begin{theorem}\label{thm:op-plus}
Let $S^d_\tr(\rz^n\times\rpbar;\scrL(\cz^{L_0},\cz^{L_1}))$. Then, for all $s>-\frac12$ and $\delta\in\rz$,  
 $$\op^+(a)\in S^d_{1,0}(\rz^{n-1}\times\rpbar;
    H^{s,\delta}(\rz_+,\cz^{L_0}),H^{s-d,\delta}(\rz_+,\cz^{L_1})).$$
In case $a(x,\xi,\mu)=a(x',\xi,\mu)$ does not depend on the $x_n$-variable, 
 $$\op^+(a)\in S^d(\rz^{n-1}\times\rpbar;
    H^{s,\delta}(\rz_+,\cz^{L_0}),H^{s-d,\delta}(\rz_+,\cz^{L_1}))$$
is poly-homogeneous with homogeneous principal symbol $\op^+(a^{(d)})(x',\xi',\mu)$. 
\end{theorem}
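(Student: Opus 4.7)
The plan is to reduce the operator-valued symbol estimates of Definition \ref{def:op-symbol2} for $\op^+(a)$ to scalar symbol estimates for a rescaled version of $a$, via the conjugation identity
\begin{equation*}
  \kappa^{-1}(\xi',\mu)\,\op^+(b)(x',\xi',\mu)\,\kappa(\xi',\mu)=\op^+\bigl(b_{(\xi',\mu)}\bigr),
\end{equation*}
valid for any symbol $b(x,\xi,\mu)$, where $b_{(\xi',\mu)}(x,\eta_n):=b(x,\xi',[\xi',\mu]\eta_n,\mu)$ and on the right $\op^+$ acts in $x_n$ alone with $(\xi',\mu)$ frozen as a parameter. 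This identity follows from the substitution $\xi_n=[\xi',\mu]\eta_n$ in the Fourier integral defining $\op_{x_n}(b)$, combined with the elementary identity $\widehat{\kappa_\lambda^{-1}u}(\eta_n)=\lambda^{1/2}\hat u(\lambda\eta_n)$, and it is the operator-level manifestation of the twisting discussed in Section \ref{sec:2.1.1}.

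For the $S^d_{1,0}$ claim, first observe that $D_{\xi'}^{\alpha'}D_{x'}^{\beta'}D_\mu^j\op^+(a)=\op^+(D_{\xi'}^{\alpha'}D_{x'}^{\beta'}D_\mu^j a)$ and that differentiation in $x$, $\xi'$, $\mu$ preserves the transmission property as checked directly from Definition \ref{def:transmission-property}; hence $b:=D_{\xi'}^{\alpha'}D_{x'}^{\beta'}D_\mu^j a\in S^{d-|\alpha'|-j}_\tr$. Via the conjugation identity, estimate \eqref{eq:symbol-estimate01} reduces to
\begin{equation*}
  \|\op^+(b_{(\xi',\mu)})\|_{\scrL(H^{s,\delta}(\rz_+,\cz^{L_0}),\,H^{s-d,\delta}(\rz_+,\cz^{L_1}))}\lesssim[\xi',\mu]^{d-|\alpha'|-j}.
\end{equation*}
A chain-rule computation combined with the inequality $[\xi',[\xi',\mu]\eta_n,\mu]\lesssim[\xi',\mu]\,[\eta_n]$ yields
\begin{equation*}
  |D_x^\gamma D_{\eta_n}^k b_{(\xi',\mu)}(x,\eta_n)|\lesssim[\xi',\mu]^{d-|\alpha'|-j}\,[\eta_n]^{d-|\alpha'|-j-k}
\end{equation*}
uniformly in $(\xi',\mu)$, and the transmission property is inherited by $b_{(\xi',\mu)}$ because rescaling of $\eta_n$ preserves the parity pattern of the asymptotic expansion at $|\eta_n|\to\infty$, $x_n=0$. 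The classical boundedness theorem for $\op^+$ of scalar symbols with transmission (cf.\ Schrohe \cite{Schr01}) applied at each fixed $(\xi',\mu)$ gives $\op^+(b_{(\xi',\mu)})\in\scrL(H^{s,\delta}(\rz_+,\cz^{L_0}),H^{s-(d-|\alpha'|-j),\delta}(\rz_+,\cz^{L_1}))$ with norm controlled by finitely many symbol seminorms of $b_{(\xi',\mu)}$, and hence by $[\xi',\mu]^{d-|\alpha'|-j}$. The continuous embedding $H^{s-(d-|\alpha'|-j),\delta}\hookrightarrow H^{s-d,\delta}$ then converts this to the claimed target space.

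For the polyhomogeneity statement, assume $a$ is independent of $x_n$. Substituting $\xi_n=\lambda\eta_n$ in the Fourier integral defining $\op_{x_n}(a^{(d-\ell)})(x',\lambda\xi',\lambda\mu)$ and using homogeneity of $a^{(d-\ell)}$ in $(\xi,\mu)$ yields
\begin{equation*}
  \op^+(a^{(d-\ell)})(x',\lambda\xi',\lambda\mu)=\lambda^{d-\ell}\,\kappa_\lambda\,\op^+(a^{(d-\ell)})(x',\xi',\mu)\,\kappa_\lambda^{-1},\qquad\lambda>0,
\end{equation*}
so each $\op^+(a^{(d-\ell)})$ is twisted homogeneous of degree $d-\ell$ with respect to the dilation group-action; the estimates of Definition \ref{def:twisted-homogeneous} follow from the same argument as in the first part applied componentwise. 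The remainder $a_N:=a-\sum_{\ell<N}\chi\,a^{(d-\ell)}$ lies in $S^{d-N}_\tr$; applying the first part with $d$ replaced by $d-N$ shows $\op^+(a_N)\in S^{d-N}_{1,0}(\rz^{n-1}\times\rpbar;\scrL(H^{s,\delta}(\rz_+,\cz^{L_0}),H^{s-d,\delta}(\rz_+,\cz^{L_1})))$. This gives the asymptotic expansion of $\op^+(a)$ with homogeneous principal symbol $\op^+(a^{(d)})$.

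The main technical obstacle is to verify that the classical scalar $\op^+$ boundedness theorem can be applied uniformly along the rescaled family $\{b_{(\xi',\mu)}\}$: one needs to track which finite collection of symbol seminorms controls the operator norm and to confirm that they all scale correctly under $\eta_n\mapsto[\xi',\mu]\eta_n$. Both points are encoded in the chain-rule estimate above; the remaining bookkeeping, including a careful handling of the transmission condition at $x_n=0$ for the rescaled symbol, follows the classical arguments and does not require new ideas.
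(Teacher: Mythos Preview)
The paper does not supply its own proof of Theorem~\ref{thm:op-plus}; the result is stated as a known fact from the classical Boutet de Monvel theory, and the paper later invokes the same conjugation identity you use (see Lemma~\ref{lem:transmission} and the remark immediately preceding it) while citing \cite[p.~10]{Schr01} for the symbol-class membership $a'\in S^d_{1,0}(\rz^{n-1}\times\rpbar;S^d_\tr(\rz))$. So your overall strategy---reduce via $\kappa$-conjugation to uniform scalar $\op^+$-bounds on the rescaled symbol---is exactly the one the paper takes as understood.

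There is, however, one concrete error in your conjugation identity. You write $b_{(\xi',\mu)}(x,\eta_n)=b(x,\xi',[\xi',\mu]\eta_n,\mu)$ and claim this holds for arbitrary $b(x,\xi,\mu)$. That is only correct when $b$ is independent of $x_n$. As the paper records just before Lemma~\ref{lem:transmission}, for $x_n$-dependent symbols one has
\[
  \kappa_\lambda^{-1}\,\op^+(a)\,\kappa_\lambda=\op^+(a_\lambda),\qquad a_\lambda(x_n,\xi_n)=a(x_n/\lambda,\lambda\xi_n),
\]
so the correct rescaled symbol is $b_{(\xi',\mu)}(x',x_n,\eta_n)=b\bigl(x',x_n/[\xi',\mu],\xi',[\xi',\mu]\eta_n,\mu\bigr)$. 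This does not damage your argument---the extra $x_n\mapsto x_n/[\xi',\mu]$ only \emph{improves} the estimates on $D_{x_n}$-derivatives by a factor $[\xi',\mu]^{-m}$, and the transmission condition at $x_n=0$ is unaffected---but the identity as you stated it is false in the general $x_n$-dependent case that the first assertion of the theorem covers. With this correction the proof goes through as you outline.
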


\subsection{Boundary symbols with strong parameter-dependence} \label{sec:2.3}

\begin{definition}\label{def:bdry-symbol01}
Let $d\in\gz$ and $r\in\nz_0$. Then $B^{d;r}(\rz^{n-1}\times\rpbar;\frakg)$ with weight-datum 
$\frakg=((L_0,M_0),(L_1,M_1))$,  
denotes the space of all symbols of the form 
 $$\bfp(x',\xi',\mu)
     =\begin{pmatrix}\op^+(a)(x',\xi',\mu)&0\\0&0\end{pmatrix}+\bfg(x^\prime,\xi^\prime,\mu),$$
where $a\in S^d_\tr(\rz^n\times\rpbar;\scrL(\cz^{L_0},\cz^{L_1}))$ satisfies the transmission condition 
and $\bfg$ is a generalized singular Green symbol belonging to 
$B^{d;r}_G(\rz^{n-1}\times\rpbar;\frakg)$.  Any such $\bfp$ is called a \emph{boundary symbol}. 
$\bfp$ is called a \emph{regularizing} boundary symbol if both $a$ and $\bfg$ have order $d=-\infty$. 
The class of regularizing boundary symbols of type $r$ is denoted by 
$B^{-\infty;r}(\rz^{n-1}\times\rpbar;\frakg)$. 
\end{definition}

Note that the representation of $\bfp$ in the previous definition is not unique. It can be shown that if $\bfp$ has 
another representation with pseudodifferential part $\op^+(b)(x',\xi',\mu)$ then there exists a smoothing symbol 
$r\in S^{-\infty}(\rz^n\times\rpbar;\scrL(\cz^{L_0},\cz^{L_1}))$ such that 
$a(x,\xi;\mu)-b(x,\xi;\mu)=r(x,\xi;\mu)$ whenever $x_n\ge0$. 

If $\bfp$ is as in Definition \ref{def:bdry-symbol01} then $\op(\bfp)(\mu)$ has property 
\eqref{eq:mapping-property01}. With $\bfp$ we associate its 
\emph{homogeneous principal symbol} 
 $$\sigma_\psi^{(d)}(\bfp)(x,\xi,\mu)=a^{(d)}(x,\xi,\mu),\qquad  x_n\ge0,\quad (\xi,\mu)\not=0,$$
as well as its \emph{principal boundary symbol} 
 $$\sigma_\partial^{(d)}(\bfp)(x',\xi',\mu)
    =\begin{pmatrix}\op^+(a_0^{(d)})(x',\xi',\mu)&0\\0&0\end{pmatrix}
    +\bfg^{(d)}(x',\xi',\mu),\qquad (\xi',\mu)\not=0;$$
here $a_0$ denotes the symbol with coefficients ``frozen'' at the boundary, i.e., 
 $$a_0(x,\xi,\mu)=a(x',0,\xi,\mu).$$
Note that, for every $s>r-1/2$,  
\begin{align*}
 \sigma_\partial^{(d)}(\bfp)\in 
 S^d_\Hom(\rz^{n-1}\times\rpbar;H^s(\rz_+,\cz^{L_0})\oplus\cz^{M_0},
   H^{s-d}(\rz_+,\cz^{L_1})\oplus\cz^{M_1}).
\end{align*}

\begin{theorem}\label{thm:adjoint-strong}
Let $d\le 0$. Then $\bfp\mapsto \bfp^{(*)}$ induces continuous maps 
\begin{align*} 
 B^{d;0}(\rz^{n-1}\times\rpbar;\frakg)
  &\lra B^{d;0}(\rz^{n-1}\times\rpbar;\frakg^{-1}),\\
 B^{d;0}_G(\rz^{n-1}\times\rpbar;\frakg)
  &\lra B^{d;0}_G(\rz^{n-1}\times\rpbar;\frakg^{-1}).
\end{align*}
Both homogeneous principal symbol and principal boundary symbol behave naturally under taking the 
formal adjoint, i.e. 
 $$\sigma_\psi^{(d)}(\bfp^{(*)})=\sigma_\psi^{(d)}(\bfp)^*,\qquad  
     \sigma_\partial^{(d)}(\bfp^{(*)})=\sigma_\partial^{(d)}(\bfp)^*,$$
where $*$ on the right-hand sides means the adjoint of $(L_1\times L_0)$-matrices and the adjoint 
with respect to the corresponding $L^2(\rz_+)$-pairings, respectively
\end{theorem}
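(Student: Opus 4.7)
The plan is to decompose $\bfp$ as $\op^+(a)\oplus 0 + \bfg$ per Definition \ref{def:bdry-symbol01} and treat the two summands separately. For the Green part $\bfg\in B^{d;0}_G$, which by definition lies in $S^d(\rz^{n-1}\times\rpbar;H^{s,\delta}_0(\rpbar,\cz^{L_0})\oplus\cz^{M_0}, H^{s',\delta'}(\rz_+,\cz^{L_1})\oplus\cz^{M_1})$ for every $(s,s',\delta,\delta')$, the abstract adjoint theorem (Theorem \ref{thm:adjoint01}) applies with respect to the Hilbert triples $(H^{s,\delta}_0(\rpbar,\cz^L)\oplus\cz^M, L^2(\rz_+,\cz^L)\oplus\cz^M, H^{-s,-\delta}(\rz_+,\cz^L)\oplus\cz^M)$; the required compatibility of the dilation group-action with the $L^2$-pairing holds because $\kappa_\lambda$ is unitary on $L^2(\rz_+)$. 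Since the oscillatory integral defining $\bfg^{(*)}$ does not depend on the particular Hilbert-space framework, the resulting symbol lies simultaneously in the appropriate class for every choice of $(s,s',\delta,\delta')$; after relabeling indices this is exactly membership in $B^{d;0}_G(\rz^{n-1}\times\rpbar;\frakg^{-1})$, and continuity follows from the continuity clause of Theorem \ref{thm:adjoint01} together with the projective-limit topology on $B^{d;0}_G$.

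For the pseudodifferential part, the direct $L^2$-computation gives $\op^+(a)^* = r^+\op_{x_n}(a)^* e^+ = \op^+(a^{(*)})$, using $(e^+)^* = r^+$ between $L^2(\rz)$ and $L^2(\rz_+)$ and with $a^{(*)}$ the Hörmander-class formal adjoint symbol. Two ingredients are then needed: (i) the standard fact that $a^{(*)}$, given by the oscillatory integral, belongs to $S^d(\rz^n\times\rpbar;\scrL(\cz^{L_1},\cz^{L_0}))$ with $(a^{(*)})^{(d)} = (a^{(d)})^*$ pointwise; (ii) that $a^{(*)}$ inherits the transmission property (Definition \ref{def:transmission-property}) from $a$ when $d\le 0$. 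Fact (ii) would be verified by expanding the oscillatory integral asymptotically, identifying each homogeneous component $(a^{(*)})^{(d-\ell)}$ as a finite linear combination of derivatives of the $a^{(d-\ell')}$ composed with pointwise conjugate transposition, and checking the parity relations at $\xi_n = \pm 1$ term by term; the condition $d\le 0$ is precisely what prevents boundary-correction contributions of positive type from disturbing the type-zero structure. Combining both pieces yields $\bfp^{(*)} = \op^+(a^{(*)})\oplus 0 + \bfg^{(*)} \in B^{d;0}(\rz^{n-1}\times\rpbar;\frakg^{-1})$.

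The principal-symbol formulae are then immediate. By (i), $\sigma_\psi^{(d)}(\bfp^{(*)}) = (a^{(*)})^{(d)} = (a^{(d)})^* = \sigma_\psi^{(d)}(\bfp)^*$. Theorem \ref{thm:adjoint01} yields $(\bfg^{(*)})^{(d)} = (\bfg^{(d)})^*$ with the pointwise $L^2(\rz_+)\oplus\cz^M$-adjoint; and since the boundary principal symbol of the pseudodifferential part uses the $x_n$-frozen symbol $a_0$ (no $x_n$-dependence), the adjoint of $\op^+(a_0^{(d)})$ is $\op^+((a_0^{(d)})^*)$ by a direct Fourier-transform calculation. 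Summing the two contributions gives $\sigma_\partial^{(d)}(\bfp^{(*)}) = \sigma_\partial^{(d)}(\bfp)^*$. The main obstacle is the transmission-preservation claim (ii): translating the oscillatory-integral definition of $a^{(*)}$ into the explicit parity relations of Definition \ref{def:transmission-property} on homogeneous components uniformly in $\mu$, while pinning down why $d\le 0$ is the critical threshold for staying in type zero. The rest of the proof is either routine linear algebra of operator-valued symbols or a direct appeal to Theorem \ref{thm:adjoint01}.
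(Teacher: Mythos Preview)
The paper does not supply its own proof of this theorem: it is stated in the review Section~\ref{sec:2.3} as a known structural property of the strongly parameter-dependent Boutet de Monvel calculus, with a reference to Schrohe--Schulze \cite{ScSc1} for details (and the remark after Definition~\ref{def:transmission-property} that the transmission condition is preserved under formal adjoint when $d\le 0$). So there is no in-paper argument to compare against.

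Your outline is the standard route and is essentially correct. A few comments. For the Green part the appeal to Theorem~\ref{thm:adjoint01} across all Hilbert triples is exactly right; the key observation you make, that the oscillatory integral defining $\bfg^{(*)}$ is independent of the choice of $(s,s',\delta,\delta')$, is what lets you intersect over all indices and land back in $B^{d;0}_G$. For the pseudodifferential part your identity $\op^+(a)^*=\op^+(a^{(*)})$ is correct but deserves a word of care: the left-hand side is the formal adjoint of the \emph{operator-valued} symbol $\op^+(a)(x',\xi',\mu)$ in the sense of Theorem~\ref{thm:adjoint01}, which involves an oscillatory integral in $(y',\eta')$ of the pointwise $L^2(\rz_+)$-adjoint; combining this with the $(y_n,\eta_n)$-oscillatory integral for the adjoint on the half-line reproduces the full $n$-dimensional formal adjoint symbol $a^{(*)}$. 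Your point (ii) about transmission-property preservation is the one place that requires genuine work, and your description of checking parity relations component-wise via the asymptotic expansion of $a^{(*)}$ is the right mechanism; the role of $d\le 0$ is indeed to keep the adjoint within type zero (for $d>0$ the adjoint of a type-zero operator would acquire positive type, which is why the hypothesis is there). Altogether your sketch matches what one finds in the cited references.
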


\begin{theorem}\label{thm:comp-strong}
Let $\frakg_1$ be composable with $\frakg_0$, cf. Definition $\ref{def:weight}$.
Then $(\bfp_1,\bfp_0)\mapsto \bfp_1\#\bfp_0$ induces continuous maps
\begin{align*} 
 B^{d_1;r_1}&(\rz^{n-1}\times\rpbar;\frakg_1)\times  
     B^{d_0;r_0}(\rz^{n-1}\times\rpbar;\frakg_0)\\
    &\lra B^{d;r}(\rz^{n-1}\times\rpbar;\frakg_1\frakg_0),  
\end{align*}
where 
 $$d=d_0+d_1,\qquad r=\max(r_1+d_0,r_0).$$
Both homogeneous principal symbol and principal boundary symbol are multiplicative, i.e., 
\begin{align*} 
  \sigma_\psi^{(d_0+d_1)}(\bfp_1\#\bfp_0)
     &=\sigma_\psi^{(d_1)}(\bfp_1)\sigma_\psi^{(d_0)}(\bfp_0),\\
  \sigma_\partial^{(d_0+d_1)}(\bfp_1\#\bfp_0)
     &=\sigma_\partial^{(d_1)}(\bfp_1)\sigma_\partial^{(d_0)}(\bfp_0).
\end{align*}
The class of generalized singular Green operators is a two-sided ideal with respect to composition. Moreover, 
$(\bfp_1,\bfp_0)\mapsto \bfp_1\#\bfp_0$ restricts to 
\begin{align*} 
 B^{d_1;r_1}_G&(\rz^{n-1}\times\rpbar;\frakg_1)\times  
     B^{d_0;r_0}_G(\rz^{n-1}\times\rpbar;\frakg_0)\\
    &\lra B^{d_0+d_1;r_0}_G(\rz^{n-1}\times\rpbar;\frakg_1\frakg_0).   
\end{align*}
\end{theorem}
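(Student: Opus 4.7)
The plan is to expand $\bfp_j = \begin{pmatrix}\op^+(a_j)&0\\0&0\end{pmatrix}+\bfg_j$ for $j=0,1$ and to treat the four pieces of $\bfp_1\#\bfp_0$ separately. The Green--Green piece $\bfg_1\#\bfg_0$ and the two mixed pieces will be handled entirely inside the twisted operator-valued calculus of Section \ref{sec:2.1.1} via Theorem \ref{thm:Leibniz01}, while the pseudodifferential--pseudodifferential piece $\op^+(a_1)\#\op^+(a_0)$ is the classical Boutet de Monvel obstacle: it has to be written as $\op^+(a_1\# a_0)$ plus a singular Green leftover. Once the four pieces are assembled, the assertions on order, type, and principal symbols follow by bookkeeping.

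For the three ``easy'' pieces I would first view the factors as twisted operator-valued symbols with appropriate target spaces. A generalized singular Green symbol sends its input into Schwartz-class functions in $x_n$, and $\op^+(a)$ preserves this Schwartz behaviour; this is exactly what is needed so that $\bfg_1\#\op^+(a_0)$, $\op^+(a_1)\#\bfg_0$, and $\bfg_1\#\bfg_0$ land back in the symbol classes defining $B^{d;0}_G$. Theorem \ref{thm:Leibniz01} then produces each Leibniz product as a twisted poly-homogeneous operator-valued symbol and gives multiplicativity of the twisted homogeneous principal symbol, hence of $\sigma_\partial^{(d)}$, on each piece. The shift in $r=\max(r_1+d_0,r_0)$ arises from expanding $\bfg_1=\bfg_{1,0}+\sum_{j=1}^{r_1}\bfg_{1,j}\boldsymbol{\partial}_+^j$: the $\boldsymbol{\partial}_+^j$ factors on the left act after the order-$d_0$ factor and therefore require $r_1+d_0$ units of initial regularity on the input, while the unchanged $r_0$ records the contribution of $\bfg_0$ acting first.

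For the hard piece I would invoke the standard ambient identity
$$\op^+(a_1)\,\op^+(a_0) = \op^+(a_1\# a_0) + L(a_1,a_0),\qquad L(a_1,a_0):=r^+\op(a_1)(1-e^+r^+)\op(a_0)e^+,$$
which holds because $\op(a_1)\op(a_0)=\op(a_1\#a_0)$ in the H\"ormander calculus. The transmission condition on $a_0,a_1$ (Definition \ref{def:transmission-property}) forces $(1-e^+r^+)\op(a_0)e^+$ to land in Schwartz-class distributions supported on $(-\infty,0]$ and $r^+\op(a_1)$ to turn such distributions into Schwartz-class functions on $\rpbar$; the classical Wiener--Hopf/Mellin argument then identifies $L(a_1,a_0)$ as a genuine element of $B^{d_0+d_1;\max(d_0,0)}_G$. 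Since $a_1\#a_0\in S^{d_0+d_1}_\tr$ by stability of the transmission property under H\"ormander composition, and the parameter $\mu$ enters only as an additional co-variable, the argument transfers to the parameter-dependent setting without modification.

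Assembling the four pieces produces a boundary symbol in $B^{d;r}(\rz^{n-1}\times\rpbar;\frakg_1\frakg_0)$ with the stated $d$ and $r$, and the ideal property of the Green subclass is immediate: if either $\bfp_j$ is already purely Green then the corresponding $a_{1-j}$ is zero and only the three easy pieces contribute, each already Green. Multiplicativity of $\sigma_\psi^{(d)}$ is built into the H\"ormander calculus; multiplicativity of $\sigma_\partial^{(d)}$ follows because the twisted homogeneous principal symbol of $L(a_1,a_0)$ is exactly the leftover making $\op^+(a_1^{(d_1)})\op^+(a_0^{(d_0)})$ equal to the composition of the operator-valued principal boundary symbols, which Theorem \ref{thm:Leibniz01} already guarantees to be multiplicative. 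The main obstacle is the identification of $L(a_1,a_0)$ as a generalized singular Green symbol with the claimed order and type: this is the only step that is not a formal manipulation of twisted symbols, and it uses the transmission condition in an essential way.
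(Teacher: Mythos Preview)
The paper does not give its own proof of this theorem: Section~\ref{sec:2} is explicitly a review of the strongly parameter-dependent Boutet de Monvel calculus, and Theorem~\ref{thm:comp-strong} is stated as a known result (with references to \cite{ScSc1,Schr01,Schu-BVP}); the only comment the paper adds is the sentence following the theorem, that $\bfp_1\#\bfp_0$ can be written with pseudodifferential part $\op^+(a_1\#a_0)$. Your four-piece decomposition and the identification of the leftover $L(a_1,a_0)$ as a singular Green symbol is exactly the classical argument, so your outline matches what the cited literature does.

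One point you skate over: for the mixed piece $\op^+(a_1)\#\bfg_0$ you say ``$\op^+(a)$ preserves this Schwartz behaviour'' and invoke Theorem~\ref{thm:Leibniz01}. But when $a_1$ depends on $x_n$, Theorem~\ref{thm:op-plus} only gives $\op^+(a_1)\in S^{d_1}_{1,0}$, not the poly-homogeneous class $S^{d_1}$, so Theorem~\ref{thm:Leibniz01} as stated does not directly yield a poly-homogeneous Green symbol. The standard fix is a Taylor expansion of $a_1$ in $x_n$ at $x_n=0$ (each term $x_n^k$ is twisted homogeneous of degree $-k$ and the frozen-coefficient symbols are poly-homogeneous), followed by asymptotic summation; this is precisely what the paper itself carries out later in the proof of Theorem~\ref{thm:comp-left} for the extended calculus. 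The paper also remarks in Section~\ref{sec:4.5} that its un-twisting technique (Theorem~\ref{thm:conjugation03}) provides an alternative route to the Green--Green part of Theorem~\ref{thm:comp-strong}.
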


If in the previous theorem $\op^+(a_j)$ is the pseudodifferential part of $\bfp_j$ then $\bfp_1\#\bfp_0$ 
can be written with the pseudodifferential part $\op^+(a_1\#a_0)$. 

\subsubsection{Reduction of orders}\label{sec:2.3.1}

Let $\psi\in\scrS(\rz)$ with $\psi(0)=1$ and $\wh{\psi}(\tau)=0$ whenever $\tau\le0$. 
Let $c>2|\psi'(t)|$ for every $t$ and define 
 $$\lambda^d_-(\xi,\mu)=\Big([\xi',\mu]\psi\Big(\frac{\xi_n}{c[\xi',\mu]}\Big)-i\xi_n\Big)^d,\qquad d\in\gz.$$

\begin{theorem}\label{thm:reduction}
The following affirmations are true$:$
\begin{itemize}
 \item[a$)$] $\op^+(\lambda^{d}_-)(\xi',\mu)$ is an elliptic element of 
  $B^{d;0}(\rz^{n-1}\times\rpbar;(1,0),(1,0))$ for every $d\in\gz$. 
 \item[b$)$] $\op^+(\lambda^{d_1}_-)\#\op^+(\lambda^{d_0}_-)=\op^+(\lambda^{d_0+d_1}_-)$ for all 
  $d_0,d_1\in\gz$.  
 \item[c$)$]  $\op(\op^+(\lambda^{d}_-))(\mu):H^s(\rz^n_+)\to H^{s-d}(\rz^n_+)$ is an isomorphism 
  for every $s\in\rz$ and every $\mu\ge0$. 
 \item[d$)$] If $a\in S^m_\tr(\rz^{n}\times\rpbar)$ has the transmission property then, for all $d\in\gz$,  
  $$\op^+(\lambda^{d}_-)\#\op^+(a)=\op^+(\lambda^{d}_-\#a).$$
\end{itemize}
\end{theorem}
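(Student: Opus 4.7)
The pivotal structural feature of $\lambda^d_-$ is that it is a \emph{minus-type} symbol in the sense that, for each fixed $(\xi',\mu)$, the map $\xi_n\mapsto\lambda^d_-(\xi',\xi_n,\mu)$ extends holomorphically (with polynomial growth) to the open upper half-plane $\{\operatorname{Im}\xi_n>0\}$. Indeed, the hypothesis $\wh\psi(\tau)=0$ for $\tau\le0$ makes $\psi$ holomorphic there and $-i\xi_n$ is entire; the condition $c>2|\psi'|$ together with $\psi(0)=1$ ensures that the base $[\xi',\mu]\psi(\xi_n/(c[\xi',\mu]))-i\xi_n$ is non-vanishing on the closure of the upper half-plane (the real-line case is immediate, since writing $t=\xi_n/(c[\xi',\mu])$ the base factors as $[\xi',\mu](\psi(t)-ict)$, which can vanish only at $t=0$, contradicting $\psi(0)=1$). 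By Paley--Wiener--Schwartz, the convolution kernel of the one-dimensional multiplier $\op_{x_n}(\lambda^d_-)(\xi',\mu)$ is supported in $\{x_n\ge 0\}$, so that
\begin{equation*}
  e^+r^+\,\op_{x_n}(\lambda^d_-)(\xi',\mu)\,e^+ \;=\; \op_{x_n}(\lambda^d_-)(\xi',\mu)\,e^+.
\end{equation*}

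For part (a), polyhomogeneity and homogeneity of degree $d$ are manifest, and the transmission property follows automatically from the holomorphicity of $\lambda^d_-$ in $\xi_n$. Non-vanishing of the homogeneous principal symbol was verified above, and uniformity in $x'$ is vacuous; Theorem~\ref{thm:op-plus} then yields membership in $B^{d;0}(\rz^{n-1}\times\rpbar;(1,0),(1,0))$ and the ellipticity. Parts (b) and (d) rest on the same idea: the minus-type identity permits dropping the middle $e^+r^+$ when composing $\op^+(\lambda^d_-)$ on the left. Concretely,
\begin{equation*}
   \op^+(\lambda^{d_1}_-)\op^+(\lambda^{d_0}_-)
   = r^+\op_{x_n}(\lambda^{d_1}_-)\,e^+r^+\op_{x_n}(\lambda^{d_0}_-)\,e^+
   = r^+\op_{x_n}(\lambda^{d_1}_-\lambda^{d_0}_-)\,e^+
   = \op^+(\lambda^{d_0+d_1}_-),
\end{equation*}
using the algebraic identity $\lambda^{d_1}_-\lambda^{d_0}_-=\lambda^{d_0+d_1}_-$. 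Theorem~\ref{thm:comp-strong} would a priori produce a singular Green remainder here; the minus-type property kills it. The argument for (d) is identical: $\op^+(\lambda^d_-)\op^+(a) = r^+\op_{x_n}(\lambda^d_-)\op_{x_n}(a)e^+ = \op^+(\lambda^d_-\#a)$, where $\#$ in the last expression denotes the usual Leibniz product of full-space pseudodifferential symbols.

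For (c), taking $d_1=d$ and $d_0=-d$ in (b), together with $\lambda^0_-\equiv 1$ and $\op^+(1)=I$, gives $\op^+(\lambda^d_-)\#\op^+(\lambda^{-d}_-)=I$ and symmetrically; Theorem~\ref{thm:comp-strong} then shows that $\op(\op^+(\lambda^d_-))(\mu)$ is invertible on the operator level with inverse $\op(\op^+(\lambda^{-d}_-))(\mu)$ for every $\mu\ge 0$. Combined with the bounded mapping property supplied by (a), this produces the claimed isomorphism $H^s(\rz^n_+)\to H^{s-d}(\rz^n_+)$.

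The main technical obstacle is making the minus-type/Paley--Wiener step fully rigorous in the parameter-dependent setting: symbols of positive order do not decay in $\xi_n$, so contour-shift arguments have to be combined with oscillatory-integral regularizations, and the resulting identities must be uniform in $(\xi',\mu)\in\rz^{n-1}\times\rpbar$. The standard route is to establish the composition identities first on Schwartz functions, where contour deformation is unproblematic, and then extend by density and continuity to the full Sobolev scale, using the continuity of the operator-valued Leibniz product recorded in Theorem~\ref{thm:comp-strong}.
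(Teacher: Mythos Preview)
The paper states this theorem without proof (it is a standard fact about Grubb's order reductions), and your strategy via the minus-type/Paley--Wiener property is indeed the canonical one. There is, however, a sign error that invalidates your argument for (d). With the paper's convention $\wh u(\xi)=\int e^{-ix\xi}u(x)\,dx$, the hypothesis $\wh\psi(\tau)=0$ for $\tau\le 0$ does make $\lambda^d_-$ holomorphic in $\{\operatorname{Im}\xi_n>0\}$, but Paley--Wiener then places the convolution kernel of $\op_{x_n}(\lambda^d_-)$ in $\{x_n\le 0\}$, not $\{x_n\ge 0\}$. The correct consequence is $r^+\op_{x_n}(\lambda^d_-)\,e^-=0$, equivalently $r^+\op_{x_n}(\lambda^d_-)\,e^+r^+=r^+\op_{x_n}(\lambda^d_-)$. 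This is precisely what lets you ``drop the middle $e^+r^+$'' in $r^+\op_{x_n}(\lambda^d_-)\,e^+r^+\op_{x_n}(a)\,e^+$ and yields (d), with (b) then the special case $a=\lambda^{d_0}_-$. Your displayed identity $e^+r^+\op_{x_n}(\lambda^d_-)\,e^+=\op_{x_n}(\lambda^d_-)\,e^+$ would require the opposite kernel support and, even if it held, would give (b) but not (d): for a general transmission symbol $a$ the middle $e^+r^+$ sits to the \emph{right} of $\op_{x_n}(\lambda^d_-)$, and eliminating it needs vanishing on $e^-$, not preservation of support in $\overline{\rz}_+$.

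A minor gap: the real-line non-vanishing of $\psi(t)-ict$ is asserted rather than argued. One line suffices: since $c>2|\psi'|$ one has $\operatorname{Im}(\psi'(t)-ic)=\operatorname{Im}\psi'(t)-c<-c/2$, so $t\mapsto\operatorname{Im}(\psi(t)-ict)$ is strictly decreasing through $0$ at $t=0$, forcing $\psi(t)-ict\neq 0$ for $t\neq 0$, while $\psi(0)=1$ handles $t=0$.
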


For every $L,M\in\nz_0$ let us define
\begin{align} \label{eq:order-reduction}
 \bflambda^d_-(\xi',\mu)=
    \begin{pmatrix}\op^+(\lambda^{d}_-)(\xi',\mu)\otimes 1_L&0\\0 &[\xi',\mu]^d\otimes 1_M\end{pmatrix},
\end{align}
i.e., the diagonal matrix with $L$ entries $\op^+(\lambda^{d}_-)(\xi',\mu)$ and $M$ entries 
$[\xi',\mu]^d$ $($we shall use the same notation for any choice of $L$ and $M)$. 
Then the analogue of Theorem \ref{thm:reduction} holds true for $\bflambda^d_-$, $d\in\gz$. 

\subsubsection{Ellipticity and parametrix construction}\label{sec:2.3.2}

Let $d\in\gz$ and $d_+:=\max(d,0)$. A symbol $\bfp\in B^{d;d_+}(\rz^{n-1}\times\rpbar;(L,M_0),(L,M_1))$ 
$($note $L=L_0=L_1)$ is called elliptic provided both homogeneous principal symbol and principal boundary 
symbol are pointwise invertible on their domain and
 $$|\sigma_\psi^{(d)}(\bfp)^{-1}(x,\xi,\mu)|\lesssim 1\qquad \forall\;\substack{x\in\rz^n\\x_n\ge0}\quad
    \forall\;|\xi,\mu|=1$$
as well as, for some (and then for all) $s>d_+-1/2$,  
 $$\|\sigma_\partial^{(d)}(\bfp)^{-1}(x',\xi',\mu)\|_{\scrL(H^{s-d}(\rz_+,\cz^{L})\oplus\cz^{M_1},
 H^s(\rz_+,\cz^{L})\oplus\cz^{M_0})}\lesssim 1$$
uniformly for $x'\in\rz^{n-1}$ and $|\xi',\mu|=1$. These conditions assure that $\sigma_\psi^{(d)}(\bfp)^{-1}$ 
can be extended to a symbol 
\begin{align*} 
 \sigma_\psi^{(d)}(\bfp)^{-1}\in S^{-d}_\Hom(\rz^{n}\times\rpbar;\scrL(\cz^L)),
\end{align*}
and that     
\begin{align*} 
 \sigma_\partial^{(d)}(\bfp)^{-1}\in 
 S^{-d}_\Hom(\rz^{n-1}\times\rpbar;H^{s-d}(\rz_+,\cz^{L})\oplus\cz^{M_0},
 H^s(\rz_+,\cz^{L})\oplus\cz^{M_0});
\end{align*}
it can be shown that the invertibility of the principal boundary symbol is independent of the choice of $s>d_+-\frac12$.  

\begin{definition}\label{def:parametrix01}
Let $\bfp\in B^{d;d_+}(\rz^{n-1}\times\rpbar;\frakg)$, $\frakg=((L,M_0),(L,M_1))$. 
A \emph{parametrix} for $\bfp$ is any symbol  
$\bfq\in B^{-d;(-d)_+}(\rz^{n-1}\times\rpbar;\frakg^{-1})$ such that 
\begin{align*} 
 1-\bfq\#\bfp&\in B^{-\infty;d_+}(\rz^{n-1}\times\rpbar;\frakg^{-1}\frakg),\\
 1-\bfp\#\bfq&\in B^{-\infty;(-d)_+}(\rz^{n-1}\times\rpbar;\frakg\frakg^{-1}).
\end{align*}
\end{definition}

The main theorem of parameter-elliptic theory is then the following: 

\begin{theorem}\label{thm:parametrix01}
Let $\bfp\in B^{d;d_+}(\rz^{n-1}\times\rpbar;\frakg)$. Then 
$\bfp$ is elliptic if and only if $\bfp$ has a parametrix. 
In this case there exists a $\mu_0$ such that $\op(\bfp)(\mu)$ is invertible for $\mu\ge\mu_0$ and the 
parametrix $\bfq$ can be chosen in such a way that 
$$\op(\bfq)(\mu)=\op(\bfp)(\mu)^{-1}\qquad\forall\;\mu\ge\mu_0.$$ 
\end{theorem}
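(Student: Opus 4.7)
The easy direction---existence of a parametrix implies ellipticity---follows quickly from Theorem~\ref{thm:comp-strong}. If $\bfq$ is a parametrix then $\bfq\#\bfp - 1$ and $\bfp\#\bfq - 1$ are regularizing, so their homogeneous principal and principal boundary symbols vanish identically; multiplicativity of both symbol maps then forces $\sigma_\psi^{(-d)}(\bfq)\sigma_\psi^{(d)}(\bfp) = 1$ and $\sigma_\partial^{(-d)}(\bfq)\sigma_\partial^{(d)}(\bfp) = 1$, with the analogous identities from the right. This yields pointwise invertibility of both principal symbols with the uniform operator-norm bounds demanded by the ellipticity definition.

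For the main direction, assume $\bfp$ is elliptic. The plan is the classical three-step scheme: construct an approximate inverse $\bfq_0$ from the inverted principal symbols, iterate to lower the remainder order, and sum asymptotically inside the calculus. As a first step I would take $\bfq_0 \in B^{-d;(-d)_+}(\rz^{n-1}\times\rpbar;\frakg^{-1})$ with $\sigma_\psi^{(-d)}(\bfq_0) = \sigma_\psi^{(d)}(\bfp)^{-1}$ and $\sigma_\partial^{(-d)}(\bfq_0) = \sigma_\partial^{(d)}(\bfp)^{-1}$. The pseudodifferential part of $\bfq_0$ comes from pointwise inversion of the homogeneous interior symbol (the transmission condition being preserved under this operation), and the generalized singular Green correction is built pointwise on the unit cosphere so that the full principal boundary symbol inverts $\sigma_\partial^{(d)}(\bfp)$. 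By Theorem~\ref{thm:comp-strong} the remainders $1 - \bfq_0\#\bfp$ and $1 - \bfp\#\bfq_0$ then drop in order by one with the appropriate types; iterating and performing asymptotic summation inside $B^{-d;(-d)_+}$ produces the desired $\bfq$ satisfying Definition~\ref{def:parametrix01}.

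For the invertibility statement, the regularizing remainders $R_\ell(\mu) := 1 - \op(\bfq)(\mu)\op(\bfp)(\mu)$ and the corresponding right-remainder lie in the strongly parameter-dependent calculus, hence have operator norms decaying to zero as $\mu\to\infty$ on the relevant Sobolev spaces. For $\mu \ge \mu_0$ large enough, a Neumann-series argument on both sides thus yields a genuine two-sided inverse $\op(\bfp)(\mu)^{-1}$. To make $\bfq$ equal this inverse at the operator level for $\mu\ge\mu_0$, I would correct $\bfq$ by $\op(\bfq)\sum_{k\ge1}R_\ell^k$ multiplied by a smooth cut-off in $\mu$ that equals $1$ on $[\mu_0,\infty)$ and vanishes near $0$; this correction is regularizing, belongs to the calculus by closure under composition and asymptotic summation, and vanishes for small $\mu$ as required.

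The single technical obstacle I anticipate is the construction of $\bfq_0$ with the prescribed type bound $(-d)_+$. Inverting the principal boundary symbol pointwise on the unit cosphere yields a family of bounded operators on the relevant Sobolev spaces, but displaying it as $\op^+$ of a homogeneous interior symbol of order $-d$ satisfying the transmission condition, plus a generalized singular Green principal symbol of type at most $(-d)_+$, requires the usual pointwise parametrix argument of classical Boutet-de-Monvel theory while simultaneously tracking the twisted homogeneity with respect to $\bfkappa$ and the uniform $x'$-estimates built into Definitions~\ref{def:singular-green01} and~\ref{def:singular-green02}. Everything else reduces to bookkeeping within the calculus already developed in Sections~\ref{sec:2.1} through~\ref{sec:2.3}.
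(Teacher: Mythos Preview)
Your outline is correct and matches the paper's approach at the high level: parametrix $\Rightarrow$ ellipticity via multiplicativity, then invert principal symbols, iterate, and use a Neumann-type argument for large $\mu$. The paper in fact states this theorem without proof (it is treated as known background), though a commented-out sketch and the parallel argument in Section~\ref{sec:5.3} show the intended route.

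The one place where the paper's method differs from yours is precisely the step you flag as the obstacle. Rather than directly constructing $\bfq_0$ with \emph{both} principal symbols inverted (which, as you note, requires knowing a priori that $\sigma_\partial^{(d)}(\bfp)^{-1}$ has the structure of a boundary symbol of the correct type), the paper first takes $\bfq_0=\begin{pmatrix}\op^+(b)&0\\0&0\end{pmatrix}$ built from the interior parametrix $b$ alone. This yields $\bfp\#\bfq_0=1-\bfg_R$ and $\bfq_0\#\bfp=1-\bfg_L$ modulo regularizing terms, with $\bfg_L,\bfg_R$ generalized singular Green symbols of order $0$. Applying $\sigma_\partial^{(0)}$ and solving gives the sandwich formula
\[
\sigma_\partial^{(d)}(\bfp)^{-1}=\sigma_\partial^{(-d)}(\bfq_0)+\sigma_\partial^{(-d)}(\bfq_0)\,\bfg_R^{(0)}+\bfg_L^{(0)}\,\sigma_\partial^{(d)}(\bfp)^{-1}\,\bfg_R^{(0)},
\]
and the right-hand side is now \emph{visibly} $\op^+(b^{(-d)})$ plus a homogeneous singular Green symbol of the correct type $(-d)_+$, since the outer factors $\bfg_L^{(0)},\bfg_R^{(0)}$ are Green. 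This sidesteps the pointwise Boutet-de-Monvel inversion you would otherwise have to invoke and keeps the type bookkeeping transparent. After this correction the argument proceeds exactly as you describe: the remainders drop to order $-1$, one asymptotically sums $\sum\wt\bfg_R^{\#\ell}$, and the final upgrade to a genuine inverse for large $\mu$ uses that $(1-\op(\bfr)(\mu))^{-1}=1-\op(\wt\bfr)(\mu)$ for regularizing $\bfr$ and large $\mu$.
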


\forget{
Let us sketch the proof of the previous theorem. The existence of a parametrix implies ellipticity simply by 
the multipilicativity of the principal symbols. 
Assume $\bfp$ is elliptic with pseudodifferential part $\op^+(a)$. 
By the invertibility of $\sigma^{(d)}_\psi(\bfp)$ we find a symbol 
$b\in S^{-d}_\tr(\rz^{n}\times\rpbar;\scrL(\cz^L))$ with the transmission property such that 
$(1-a\#b)(x,\xi,\mu)$ and $(1-b\#a)(x,\xi,\mu)$ are of order $-\infty$ on $\rz^n_+\times\rz^n\times\rpbar$. 
Thus if $\bfq_0:=\begin{pmatrix}\op^+(b)&0\\0&0\end{pmatrix}$ then 
\begin{align}\label{eq:param01} 
 \bfp\#\bfq_0=1-\bfg_R-\mathbf{r}_R,\qquad \bfq_0\#\bfp=1-\bfg_L-\mathbf{r}_L, 
\end{align}
where $\bfg_R$ and $\bfg_L$ are generalized singular Green symbols of order zero and type 
$(-d)_+$ and $d_+$, respectively, and $\mathbf{r}_R,\mathbf{r}_L$ are regularizing. 
Applying $\sigma_\partial^{(0)}$ to these identities  
and resolving for $\sigma_\partial^{(d)}(\bfp)^{-1}$ we find 
 $$\sigma_\partial^{(d)}(\bfp)^{-1}=\sigma_\partial^{(-d)}(\bfq_0)
     +\sigma_\partial^{(-d)}(\bfq_0)\bfg_R^{(0)}
     +\bfg_L^{(0)}\sigma_\partial^{(d)}(\bfp)^{-1}\bfg_R^{(0)}.$$
By the algebra property the second summand on the right-hand side is the homogeneous principal 
symbol of the generalized singular Green symbol $\bfq_0\#\bfg_R$, hence has order $-d$ and type $(-d)_+$. 
The same can be verified for the third summand on the right-hand side (in case $d\ge0$ one just has to check 
the defining mapping property for homogeneous generalized singular Green symbols of type 0, in case $d<0$ this has to be 
combined with the representation of $\bfg_R^{(0)}$ as in \eqref{eq:symbol-positive-type}). Thus there exists a 
$\bfg\in B^{-d;(-d)_+}_G(\rz^{n-1}\times\rpbar;\frakg^{-1})$ with 
 $$\bfg^{(-d)}= \sigma_\partial^{(-d)}(\bfq_0)\bfg_R^{(0)}
     +\bfg_L^{(0)}\sigma_\partial^{(d)}(\bfp)^{-1}\bfg_R^{(0)}.$$
Setting $\wt\bfq:=\bfq_0+\bfg$ we thus find identities analogous to \eqref{eq:param01} where, in addition, 
the homogeneous principal symbols of both $\wt\bfg_R$ and $\wt\bfg_L$ are vanishing. This means that both 
$\wt\bfg_R$ and $\wt\bfg_L$ are generalized singular Green symbols of order $-1$. Now choose $\bfq$ with 
 $$\bfq=\wt\bfq+\wt\bfq\#\sum_{\ell=1}^{+\infty}\wt\bfg_R^{\#\ell}$$
where the series is understood in the sense of asymptotic summation in the class of generalized singular Green symbols. 
This $\bfq$ is a parametrix for $\bfp$. In order to modify $\bfq$ further to obtain the real inverse, 
one needs the following result.

\begin{proposition}
Let $\bfr\in B^{-\infty;0}(\rz^{n-1}\times\rpbar;\frakg)$, $\frakg=((L,M),(L,M))$. 
Then $1-\op(\bfr)(\mu)$ is invertible for 
sufficiently large values of $\mu$ and there exists a 
$\wt\bfr\in B^{-\infty;0}(\rz^{n-1}\times\rpbar;\frakg^{-1})$ such that 
 $$(1-\op(\bfr)(\mu))^{-1}=1-\op(\wt\bfr)(\mu).$$
\end{proposition} 

In fact, since regularizing symbols are rapidly decreasing in $\mu$, 
from the existence of the parametrix it follows that $\op(\bfp)(\mu)$ is invertible for sufficiently large $\mu$. 
If $d<0$, we have $\op(\bfq)(\mu)\op(\bfp)(\mu)=1-\op(\bfr)(\mu)$ and $\bfr$ is regularizing of type $0$. 
Then apply the previous proposition to find that $\op((1-\wt\bfr)\#\bfq)(\mu)$ is the left-inverse (hence inverse) 
of $\op(\bfp)(\mu)$ for large $\mu$. If $d\ge0$ we argue in the same way starting out from $\bfp\#\bfq=1-\bfr$.  
}

Let us also remark that the order reductions of \eqref{eq:order-reduction} allow to reduce the 
study of ellipticity and parametrix to the particular case of symbols of order and type zero:  

\begin{remark}\label{rem:order-reduction}
Let $\bfp\in B^{d;d_+}(\rz^{n-1}\times\rpbar;\frakg)$, $\frakg=((L,M_0),(L,M_1))$. 
If $d\ge0$ then $\bfp\#\bflambda^{-d}_-\in B^{0;0}(\rz^{n-1}\times\rpbar;\frakg)$ 
and $\bfp$ is elliptic if and only if $\bfp\#\bflambda^{-d}_-$ is. 
If $d\le0$ then $\bflambda^{-d}_-\#\bfp\in B^{0;0}(\rz^{n-1}\times\rpbar;\frakg)$ 
and $\bfp$ is elliptic if and only if $\bflambda^{-d}_-\#\bfp$ is. 
\end{remark}

\section{Symbols with expansion at infinity -- the abstract setting}\label{sec:3}

\subsection{Twisted operator-valued symbols revisited}\label{sec:3.1}

Let us return to the abstract setting of Section \ref{sec:2.1.1} of Hilbert spaces equipped with a group-action. 
By slight modifcations we will introduce symbol classes 
 $$\wt{S}^{d,\nu}_{1,0}(\rz^{n-1}\times\rpbar;E_0,E_1),\quad 
    \wt{S}^{d,\nu}_\Hom(\rz^{n-1}\times\rpbar;E_0,E_1),\quad
    \wt{S}^{d,\nu}(\rz^{n-1}\times\rpbar;E_0,E_1)$$
with $d,\nu\in\rz$. We limit ourselves to list here the necessary modifications to be made and refer the reader 
to \cite{Seil22-2} where these classes appear in a related but different context.
\begin{itemize}
 \item In Definition \ref{def:op-symbol1} substitute the estimates \eqref{eq:symbol-estimate01} by 
   \begin{align*}
   \begin{split}
     \|\kappa_{1}^{-1}(\xi',\mu)&\{D^{\alpha'}_{\xi'}D^{\beta'}_{x'}D^j_\mu p(x',\xi',\mu)\}
     \kappa_{0}(\xi',\mu)\|_{\scrL(E_0,E_1)}\\
     &\lesssim \spk{\xi'}^{\nu-|\alpha'|}\spk{\xi',\mu}^{d-\nu-j};
   \end{split}
   \end{align*}
   again $d$ is the order of the symbol while $\nu$ is called the \emph{regularity number}. 
 \item The corresponding classes of regularizing symbols are 
   \begin{align*}
    \wt{S}^{d-\infty,\nu-\infty}&(\rz^{n-1}\times\rpbar;E_0,E_1)\\
    &:=\mathop{\mbox{\Large$\cap$}}_{N>0}S^{d-N,\nu-N}_{1,0}(\rz^{n-1}\times\rpbar;E_0,E_1).
   \end{align*}
  Note that these classes only depend on the difference $d-\nu$.  
 \item In order to define $\wt{S}^{d,\nu}_\Hom(\rz^{n-1}\times\rpbar;E_0,E_1)$, in Definition 
  \ref{def:twisted-homogeneous} let $V=(\rz^{n-1}\setminus\{0\})\times\rpbar$ and rather than 
  \eqref{eq:symbol-estimate02} consider the estimates 
   \begin{align*}
   \begin{split}
     \|\kappa_{1,|\xi',\mu|}^{-1}\{D^{\alpha'}_{\xi'}&D^{\beta'}_{x'}D^j_\mu p(x',\xi',\mu)\}
     \kappa_{0,|\xi',\mu|}\|_{\scrL(E_0,E_1)}\\
     &\lesssim |\xi'|^{\nu-|\alpha'|}|\xi',\mu|^{d-\nu-j}.
   \end{split}
   \end{align*}
 \item For the definition of poly-homogeneous symbols, in Definition \ref{def:poly-homogeneous}
  ask for the existence of homogeneous components 
  $p^{(d-j,\nu-j)}\in \wt{S}^{d-j,\nu-j}_\Hom(\rz^{n-1}\times\rpbar;E_0,E_1)$ such that, 
  for every $N\in\nz_0$, 
    $$p-\sum_{j=0}^{N-1}\chi(\xi')p^{(d-j,\nu-j)}\;\in\; 
    \wt{S}^{d-N,\nu-N}_{1,0}(\rz^{n-1}\times\rpbar;E_0,E_1)$$
  $($note that $\chi=\chi(\xi')$ is a zero-excision function of $\xi'$ only$)$. 
  The leading component $p^{(d,\nu)}$ is the \emph{homogeneous principal symbol} of $p$. It can 
  be recovered from the full symbol as in formula \eqref{eq:formula-principal-symbol}, for $\xi'\not=0$. 
 \item The behaviour under Leibniz product and formal adjoint is analogous to that of Theorems \ref{thm:Leibniz01} 
  and \ref{thm:adjoint01}, respectively; both order and regularity number are additive under composition, 
  the homogeneous principal symbol behaves multiplicatively.  
\end{itemize}

As decribed in the end of Section \ref{sec:2.1.1}, all the above spaces can be equipped with natural Fréchet topologies. 

\begin{remark}
$S^d_{1,0}(\rz^{n-1}\times\rpbar;E_0,E_1)$ is continuously embedded in 
$\wt{S}^{d,0}_{1,0}(\rz^{n-1}\times\rpbar;E_0,E_1)$; analogously for the subclasses of homogeneous and 
poly-homogeneous symbols. 
\end{remark}

\forget{
Let us also remark that 
\begin{align*}
 \wt{S}^{-\infty,\nu}_{1,0}(\rz^{n-1}\times\rpbar;E_0,E_1)
 &=\mathop{\mbox{\Large$\cap$}}_{N>0}\wt S^{-N,\nu}_{1,0}(\rz^{n-1}\times\rpbar;E_0,E_1)\\
 &=S^{-\infty}(\rz^{n-1}\times\rpbar;E_0,E_1)
\end{align*}
for every choice of $\nu$, cf. \eqref{eq:regularizing}. 
}

Again the case of trivial group actions $\kappa_j\equiv 1$ is included; in this case we shall write 
\begin{align}\label{eq:ohne-kappa02}
\begin{split}
 \wt{S}^{d,\nu}_{1,0}(\rz^{n-1}\times\rpbar;\scrL(E_0,E_1))
   &:=\wt{S}^{d,\nu}_{1,0}(\rz^{n-1}\times\rpbar;E_0,E_1), \\ 
 \wt{S}^{d,\nu}_\Hom(\rz^{n-1}\times\rpbar;\scrL(E_0,E_1))
   &:=\wt{S}^{d,\nu}_\Hom(\rz^{n-1}\times\rpbar;E_0,E_1), \\ 
  \wt{S}^{d,\nu}(\rz^{n-1}\times\rpbar;\scrL(E_0,E_1))
  &:=S^{d,\nu}(\rz^{n-1}\times\rpbar;E_0,E_1).
\end{split}
\end{align}

\subsubsection{Symbols with values in Fréchet spaces} \label{sec:3.1.1}

We shall also need a variant of the symbol spaces of Definition \ref{def:ohne-kappa01} and 
\eqref{eq:ohne-kappa02} where the space 
$\scrL(E_0,E_1)$ is substituted by a general Fréchet space $F$ (i.e., the symbols take values in $F)$. 
Let $F$ be equipped with a system of semi-norms $\{\trinorm{\cdot}^{(\ell)}_F\mid \ell\in\nz\}$  

When before we asked symbol estimates with trivial group-action and using the 
operator-norm of $\scrL(E_0,E_1)$, we now require such estimates for every semi-norm 
$\trinorm{\cdot}^{(\ell)}_F$, $\ell\in\nz$. This leads to spaces 
 $$ {S}^{d}_{1,0}(\rz^{n-1}\times\rpbar;F),\quad 
     {S}^{d}_\Hom(\rz^{n-1}\times\rpbar;F),\quad 
      {S}^{d}(\rz^{n-1}\times\rpbar;F),$$
as well as 
 $$ \wt{S}^{d,\nu}_{1,0}(\rz^{n-1}\times\rpbar;F),\quad 
     \wt{S}^{d,\nu}_\Hom(\rz^{n-1}\times\rpbar;F),\quad 
      \wt{S}^{d,\nu}(\rz^{n-1}\times\rpbar;F).$$
These are all Fréchet spaces again. 

\begin{remark}
$S^d_{1,0}(\rz^{n-1}\times\rpbar;F)$ is continuously embedded in 
$\wt{S}^{d,0}_{1,0}(\rz^{n-1}\times\rpbar;F)$; analogously for the subclasses of homogeneous and 
poly-homogeneous symbols. 
\end{remark}

We shall also need classes of symbols not depending on the parameter $\mu$, namely  
\begin{align*}
 S^d_{1,0}(\rz^{n-1};F),\qquad  S^d_\Hom(\rz^{n-1};F),\qquad S^d(\rz^{n-1};F). 
\end{align*}
These are constructed similarly as above, by simply cancelling the parameter $\mu$ in the definitions of 
$S^d_{1,0}(\rz^{n-1}\times\rpbar;F)$, $S^d_\Hom(\rz^{n-1}\times\rpbar;F)$, and 
$S^d(\rz^{n-1}\times\rpbar;F)$, respectively. 

\begin{remark}
If $p(x',\xi')\in S^d_{1,0}(\rz^{n-1};F)$ is considered as a $\mu$-dependent symbol 
then $p\in \wt{S}^{d,d}_{1,0}(\rz^{n-1}\times\rpbar;F)$. 
Analogously in the poly-homogeneous case. 
\end{remark}

\subsection{$F$-valued symbols with expansion at infinity} \label{sec:3.2}

In this section we describe two subclasses of $\wt{S}^{d,\nu}_{1,0}(\rz^{n-1}\times\rpbar;F)$ which 
will be crucial for this paper. In case $F=\scrL(\cz^L,\cz^M)$ these classes have been introduced in 
\cite{Seil22-1} where operators on manifolds without boundary were considered. 
The extension to $F$-valued symbols is straight-forward. 

\begin{definition}\label{def:symbol-expansion}
Let $\wtbfS^{d,\nu}_{1,0}(\rz^{n-1}\times\rpbar;F)$ denote the space of all symbols 
$p(x',\xi',\mu)\in \wt{S}^{d,\nu}_{1,0}(\rz^{n-1}\times\rpbar;F)$ for which 
exist symbols 
 $$p^\infty_{[d,\nu+j]}(x',\xi')\in S^{\nu+j}_{1,0}(\rz^{n-1};F),\qquad j\in\nz_0,$$ 
such that, for every $N\in\nz_0$,   
    $$p(x',\xi',\mu)      -\sum_{j=0}^{N-1}p_{[d,\nu+j]}^\infty(x',\xi')[\xi',\mu]^{d-\nu-j}
       \;\in\; \wt{S}^{d,\nu+N}_{1,0}(\rz^{n-1}\times\rpbar;F).$$ 
The symbol $p^\infty_{[d,\nu]}$ shall be called the \emph{principal limit-symbol} of $p$. 
\end{definition}

The principal limit-symbol can be recovered from the full symbol$:$
\begin{align*}
 p^\infty_{[d,\nu]}(x',\xi')=\lim_{\mu\to+\infty}[\xi',\mu]^{\nu-d}p(x',\xi',\mu)
 \quad\text{(convergence in }S^{\nu+1}(\rz^{n-1};F)). 
\end{align*}
A Fréchet topology on $\wtbfS^{d,\nu}_{1,0}(\rz^{n-1}\times\rpbar;F)$ is given by the projective topology 
under the maps 
 $$p\mapsto p^\infty_{[d,\nu+j]},\qquad p\mapsto p-\sum_{j<N}p^\infty_{[d,\nu+j]}[\xi',\mu]^{d-\nu-j}
 \qquad(j,N\in\nz_0).$$ 

\begin{remark}\label{rem:equivalent-expansion}
In Definition $\ref{def:symbol-expansion}$ we require an expansion in powers of $[\xi',\mu]$. As shown in 
\cite[Theorem 7.7]{Seil22-1} there is a great flexibility in the choice of substitutes for $[\xi',\mu]$ without altering 
the class $\wtbfS^{d,\nu}(\rz^{n-1}\times\rpbar;F)$ itself. For example, we can take $\spk{\xi',\mu}$, i.e., 
$p$ belongs to $\wtbfS^{d,\nu}(\rz^{n-1}\times\rpbar;F)$ if and only if there exist symbols 
$\wt p^\infty_{[d,\nu+j]}\in S^{\nu+j}_{1,0}(\rz^{n-1};F)$ such that  
  $$p(x',\xi',\mu) -\sum_{j=0}^{N-1}\wt p_{[d,\nu+j]}^\infty(x',\xi')\spk{\xi',\mu}^{d-\nu-j}
       \;\in\; \wt{S}^{d,\nu+N}_{1,0}(\rz^{n-1}\times\rpbar;F)$$ 
for every $N\in\nz_0$. In this case even $\wt p_{[d,\nu]}^\infty=p_{[d,\nu]}^\infty$. 
\end{remark}

\subsubsection{A characterization of smoothing symbols}\label{sec:3.2.1}

The class of smoothing symbols with expansion at infinity is  
 $$\wtbfS^{d-\infty,\nu-\infty}(\rz^{n-1}\times\rpbar;F)
    =\mathop{\mbox{\Large$\cap$}}_{N>0}\wtbfS^{d-N,\nu-N}_{1,0}(\rz^{n-1}\times\rpbar;F).$$
Since for $p\in \wtbfS^{d-N,\nu-N}_{1,0}(\rz^{n-1}\times\rpbar;F)$ one has 
 $$p^\infty_{[d,\nu+j]}=p^\infty_{[d-N,\nu-N+j]},\qquad j\ge0, $$
for a smoothing symbol $p$ all symbols $p^\infty_{[d,\nu+j]}$ are smoothing, too. 

In the following proposition, let $S^d_{1,0}(\rpbar;F)$ be the space of all $p\in\scrC^\infty(\rpbar,F)$ with 
$\trinorm{D^j_\mu p(\mu)}^{(\ell)}_F\lesssim\spk{\mu}^{d-j}$ 
for every $j,\ell$. Similarly $S^d(\rpbar;F)$ is the subspace of poly-homogeneous symbols in $\mu$. 

\begin{proposition}\label{prop:smoothing}
For a symbol $p(x',\xi',\mu)$ it holds 
 $$p\in \wtbfS^{d-\infty,\nu-\infty}(\rz^{n-1}\times\rpbar;F) \iff  
    p\in S^{d-\nu}(\rpbar;S^{-\infty}(\rz^{n-1};F)).$$ 
In this case 
 $$p^{(d-\nu)}(\mu)=p^\infty_{[d,\nu]}\mu^{d-\nu}.$$
\end{proposition}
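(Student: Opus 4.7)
The strategy is to interpret the proposition as asserting that, once a symbol $p$ is smoothing in $\xi'$ uniformly in $\mu$ (the ``$\wt S^{d-N,\nu-N}$ for all $N$'' part of $\wtbfS^{d-\infty,\nu-\infty}$), the expansion in powers of $[\xi',\mu]$ reduces to an ordinary poly-homogeneous expansion in $\mu$ valued in $S^{-\infty}(\rz^{n-1};F)$. The bridge between the two expansions is the observation that, whenever the coefficient $q(x',\xi')$ is rapidly decreasing in $\xi'$, the symbols $q(x',\xi')\mu^{s}$ and $q(x',\xi')[\xi',\mu]^{s}$ differ by a smoothing remainder. I would split the argument into: (i) equivalence of ``background'' symbol estimates; (ii) constructing a $[\xi',\mu]$-expansion from a $\mu$-expansion ($\Leftarrow$); (iii) constructing a $\mu$-expansion from a $[\xi',\mu]$-expansion ($\Rightarrow$); (iv) identification of the principal terms.

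\textbf{Step 1 (background estimates).} First I would show that $p\in\bigcap_N\wt S^{d-N,\nu-N}_{1,0}(\rz^{n-1}\times\rpbar;F)$ if and only if $p\in S^{d-\nu}_{1,0}(\rpbar;S^{-\infty}(\rz^{n-1};F))$ (the non-poly-homogeneous classes). The implication $\Rightarrow$ uses the elementary estimate $\spk{\mu}^s\le\spk{\xi',\mu}^s$ for $s\ge0$ and $\spk{\mu}^s\le C\spk{\xi'}^{|s|}\spk{\xi',\mu}^s$ for $s<0$, so that the bound $\spk{\xi'}^{\nu-N-|\alpha'|}\spk{\xi',\mu}^{d-\nu-j}$ can be converted into $\spk{\xi'}^{-M}\spk{\mu}^{d-\nu-j}$ for every $M$, after increasing $N$. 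The reverse implication is the same estimate run backwards.

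\textbf{Step 2 ($\Leftarrow$).} Assume $p\in S^{d-\nu}(\rpbar;S^{-\infty}(\rz^{n-1};F))$. Let $q_j(x',\xi')\in S^{-\infty}(\rz^{n-1};F)$ denote the homogeneous components, so that for a zero-excision function $\chi$ in $\mu$,
\[
 p(x',\xi',\mu)-\sum_{j=0}^{N-1}\chi(\mu)\,q_j(x',\xi')\,\mu^{d-\nu-j}\;\in\; S^{d-\nu-N}(\rpbar;S^{-\infty}(\rz^{n-1};F)).
\]
Set $p^\infty_{[d,\nu+j]}:=q_j$. I would then show
\[
 p(x',\xi',\mu)-\sum_{j=0}^{N-1}q_j(x',\xi')\,[\xi',\mu]^{d-\nu-j}\;\in\;\wt{S}^{d,\nu+N}_{1,0}(\rz^{n-1}\times\rpbar;F),
\]
by splitting the difference into (a) the poly-homogeneous remainder above (which Step 1 places in $\wt S^{d,\nu+N}_{1,0}$) and (b) a sum of correction terms $q_j(x',\xi')\bigl(\chi(\mu)\mu^{d-\nu-j}-[\xi',\mu]^{d-\nu-j}\bigr)$. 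Together with Step 1 applied to each $\wtbfS^{d-N,\nu-N}_{1,0}$, this yields $p\in\wtbfS^{d-\infty,\nu-\infty}$.

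\textbf{Step 3 ($\Rightarrow$).} Conversely, given $p\in\wtbfS^{d-\infty,\nu-\infty}$, the coefficients $p^\infty_{[d,\nu+j]}$ lie in $\bigcap_N S^{\nu-N+j}(\rz^{n-1};F)=S^{-\infty}(\rz^{n-1};F)$ (as noted just after the definition of the smoothing class). The same correction-term estimate as in Step 2 then converts the $[\xi',\mu]$-expansion of $p$ into an expansion in powers of $\mu$ (with excision), which, combined with Step 1, gives $p\in S^{d-\nu}(\rpbar;S^{-\infty}(\rz^{n-1};F))$. Finally, the formula $p^{(d-\nu)}(\mu)=p^\infty_{[d,\nu]}\mu^{d-\nu}$ is obtained by taking the leading term of either expansion: the formula for $p^\infty_{[d,\nu]}$ given after Definition \ref{def:symbol-expansion} gives $p^\infty_{[d,\nu]}(x',\xi')=\lim_{\mu\to+\infty}[\xi',\mu]^{\nu-d}p=q_0(x',\xi')$, and $q_0\mu^{d-\nu}=p^{(d-\nu)}(\mu)$ by construction.

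\textbf{Main obstacle.} The technical heart is showing that, for every $q\in S^{-\infty}(\rz^{n-1};F)$ and every $s\in\rz$, the symbol $q(x',\xi')\bigl(\chi(\mu)\mu^{s}-[\xi',\mu]^{s}\bigr)$ lies in $\bigcap_{M,N}\wt S^{d-M,\nu+j-M+N}_{1,0}$ for a suitable shift, i.e.\ is harmless in the asymptotic expansion. This reduces to a Taylor expansion of $[\xi',\mu]^{s}$ about $\mu^{s}$ valid in the regime $|\xi'|\lesssim\mu$, producing remainders of order $\mu^{s-2}(1+|\xi'|^2)^k$, while in the complementary regime $|\xi'|\gtrsim\mu$ one has $[\xi',\mu]\sim\spk{\xi'}$ and the rapid decay of $q$ in $\xi'$ absorbs every polynomial growth. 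This is the only place where a careful (but standard) estimation is needed; everything else is a bookkeeping exercise in the formalism of Section \ref{sec:3.1}.
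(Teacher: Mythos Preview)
Your overall plan is sound and Step~1 is exactly what the paper states. The gap is in Step~2 (and symmetrically Step~3): the identification $p^\infty_{[d,\nu+j]}:=q_j$ is incorrect for $j\ge1$, and the correction terms (b) are \emph{not} negligible in the way you claim. Concretely, take $p=q_0\,[\xi',\mu]^{d-\nu}$ with $q_0\in S^{-\infty}$: here all $p^\infty_{[d,\nu+j]}$ vanish for $j\ge1$, yet the $\mu$-expansion has $q_2=\tfrac{d-\nu}{2}\,|\xi'|^2 q_0\neq0$. More directly, the $j=0$ term $q_0\bigl(\chi(\mu)\mu^{d-\nu}-[\xi',\mu]^{d-\nu}\bigr)$ is of $\mu$-order $d-\nu-2$ for fixed $\xi'$, hence cannot lie in $\wt S^{d,\nu+N}_{1,0}$ once $N\ge3$; the rapid $\xi'$-decay of $q_0$ is no help, since the obstruction is growth in $\mu$, not in $\xi'$. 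Your ``main obstacle'' paragraph correctly anticipates a Taylor expansion of $[\xi',\mu]^s$ about $\mu^s$, but the Taylor terms do not vanish---they feed into lower-order coefficients and \emph{modify} them, so one cannot keep $p^\infty_{[d,\nu+j]}=q_j$.

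The paper fixes this by writing the change of basis explicitly. Using $\spk{\xi',\spk{\xi'}\mu}^\rho=\spk{\xi'}^\rho\spk{\mu}^\rho$ together with the poly-homogeneous expansion of $\spk{\mu}^\rho$, one gets
\[
\spk{\xi',\mu}^{d-\nu-j}\equiv\sum_{k=j}^{N-1}a_{d-\nu-j,k-j}\,\spk{\xi'}^{k-j}\,\chi(\mu)\mu^{d-\nu-k}
\quad\text{mod }S^{d-\nu-N}_{1,0}(\rpbar;S^{*}(\rz^{n-1})),
\]
an upper-triangular linear system with $1$'s on the diagonal. Inverting it (the inverse has the same shape) yields the analogous formula expressing $\chi(\mu)\mu^{d-\nu-j}$ in terms of the $\spk{\xi',\mu}^{d-\nu-k}$. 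These two triangular relations convert expansion \eqref{eq:expA} into \eqref{eq:expB} and back; the unit diagonal is precisely why the \emph{leading} coefficients agree, which gives $p^{(d-\nu)}(\mu)=p^\infty_{[d,\nu]}\mu^{d-\nu}$. Your argument becomes correct if you replace the naive identification by this triangular transformation.
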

\begin{proof}
It is easy to verify that  
 $$\wt S^{d-\infty,\nu-\infty}(\rz^{n-1}\times\rpbar;F)=S^{d-\nu}_{1,0}(\rpbar;S^{-\infty}(\rz^{n-1};F)).$$
Therefore $($cf. Remark \ref{rem:equivalent-expansion})  
$p\in \wtbfS^{d-\infty,\nu-\infty}(\rz^{n-1}\times\rpbar;F)$ if and only if there 
exist symbols $\wt p_j(x',\xi')\in S^{-\infty}(\rz^{n-1};F)$, with $\wt p_0=p^\infty_{[d,\nu]}$, 
such that, for every $N\in\nz_0$, 
\begin{equation}\label{eq:expA}
  p(x',\xi',\mu)-\sum_{j=0}^{N-1}\wt p_j(x',\xi')\spk{\xi',\mu}^{d-\nu-j}
       \;\in\; S^{d-\nu-N}_{1,0}(\rpbar;S^{-\infty}(\rz^{n-1};F)).
\end{equation}
On the other hand, $p\in S^{d-\nu}(\rpbar;S^{-\infty}(\rz^{n-1};F))$ if and only if there 
exist symbols $q_j(x',\xi')\in S^{-\infty}(\rz^{n-1};F)$ such that, for every $N\in\nz_0$, 
\begin{equation}\label{eq:expB}
  p(x',\xi',\mu)-\sum_{j=0}^{N-1}q_j(x',\xi')\chi(\mu)\mu^{d-\nu-j}
       \;\in\; S^{d-\nu-N}_{1,0}(\rpbar;S^{-\infty}(\rz^{n-1};F)), 
\end{equation}
where $\chi(\mu)$ is an arbitrarily fixed zero-excision function. 
Let us now denote by $a^{(\rho-j)}(\mu)=a_{\rho,j}\mu^{\rho-j}$ the homogeneous components of 
$a_\rho(\mu)=\spk{\mu}^\rho\in S^\rho(\rpbar)$; note that $a_{\rho,0}=1$. 
Since $\spk{\xi',\spk{\xi'}\mu}^\rho=\spk{\xi'}^\rho \spk{\mu}^\rho$, 
\begin{equation*}
 \spk{\xi',\mu}^\rho=\sum_{k=0}^{L-1}\wt\chi(\spk{\xi'}^{-1}\mu)a_{\rho, k}\spk{\xi'}^k \mu^{\rho-k}
    + \spk{\xi'}^\rho\ \wt r_L(\spk{\xi'}^{-1}\mu), \qquad
    \wt r_{L}\in S^{\rho-L}_{1,0}(\rpbar)
\end{equation*}
for every $L\in\nz_0$, where the zero-excision function $\wt\chi$ is chosen in such a way that 
$\chi(\mu)\wt\chi(\spk{\xi'}^{-1}\mu)=\wt\chi(\spk{\xi'}^{-1}\mu)$ for all $\xi'$ and $\mu$. 
By direct calculation one checks that
\begin{align*}
 \spk{\xi'}^\rho \wt r_L(\spk{\xi'}^{-1}\mu) &\in S^{\rho-L}_{1,0}(\rpbar;S^{\max(L,\rho)}(\rz^{n-1})),\\
 (1-\wt\chi)(\spk{\xi'}^{-1}\mu)\chi(\mu) &\in S^{-m}_{1,0}(\rpbar;S^{m}(\rz^{n-1}))\qquad\forall\;m.
\end{align*}
This yields 
\begin{equation*}
 \spk{\xi',\mu}^\rho\equiv \sum_{k=0}^{L-1}\chi(\mu)a_{\rho, k}\spk{\xi'}^k \mu^{\rho-k}
 \mod S^{\rho-L}_{1,0}(\rpbar;S^{\max(L,\rho)}(\rz^{n-1})). 
\end{equation*}
Taking $\rho=d-\nu-j$ and $L=N-j$, we find  
\begin{equation}\label{eq:exp01}
 \spk{\xi',\mu}^{d-\nu-j}=\sum_{k=j}^{N-1}a_{d-\nu-j,k-j}\spk{\xi'}^{k-j} \chi(\mu)\mu^{d-\nu-k}
 +r_j^N,\qquad 0\le j\le N-1,
\end{equation}
with $r_j^N\in S^{d-\nu-N}_{1,0}(\rpbar;S^{\max(N,d-\nu)-j}(\rz^{n-1}))$. Equivalently,
 $$
     \begin{pmatrix}
     \spk{\xi',\mu}^{d-\nu} \\ \spk{\xi',\mu}^{d-\nu-1} \\ \vdots \\ \spk{\xi',\mu}^{d-\nu-(N-1)}
     \end{pmatrix}
     =
     \begin{pmatrix}
     \alpha_{00}&\alpha_{01}&\cdots&\alpha_{0,N-1}\\
             0        &\alpha_{11}&\cdots&\alpha_{1,N-1}\\
           \vdots   & \ddots        &\ddots&\vdots\\
             0        &   \cdots      &     0   &\alpha_{N-1,N-1}
     \end{pmatrix}
     \begin{pmatrix}
     \chi(\mu)\mu^{d-\nu} \\ \chi(\mu)\mu^{d-\nu-1} \\ \vdots \\ \chi(\mu)\mu^{d-\nu-(N-1)}
     \end{pmatrix}
     +
     \begin{pmatrix}
     r_0^N \\ r_1^N \\ \vdots \\ r_{N-1}^{N}
     \end{pmatrix}
$$
with $\alpha_{jk}(\xi')=a_{d-\nu-j,k-j}\spk{\xi'}^{k-j}$ for $j\le k$, where $\alpha_{jj}=1$ for all $j$. 
Since the inverse $(\beta_{jk})$ of the upper-right triangular matrix $(\alpha_{jk})$ has the same structure, i.e., 
$\beta_{jk}(\xi')=b_{jk}\spk{\xi'}^{k-j}$ for $j\le k$ for certain numbers $b_{jk}$ where $b_{jj}=1$ for all $j$, 
the latter system, hence \eqref{eq:exp01}, is equivalent to 
\begin{equation}\label{eq:exp02}
 \chi(\mu)\mu^{d-\nu-j}=\sum_{k=j}^{N-1}b_{jk}\spk{\xi'}^{k-j} \spk{\xi',\mu}^{d-\nu-k}
 +\wt r_j^N,\qquad 0\le j\le N-1,
\end{equation}
with $\wt r_j^N\in S^{d-\nu-N}_{1,0}(\rpbar;S^{\max(N,d-\nu)-j}(\rz^{n-1}))$. 

Clearly \eqref{eq:exp01} and \eqref{eq:exp02} imply that we can transform an expansion \eqref{eq:expA} 
into an expansion \eqref{eq:expB} and vice versa, without changing the coefficient of the leading term. 
This yields the claim. 
\end{proof}

\subsubsection{The subclass of poly-homogeneous symbols}\label{sec:3.2.2}

To define the subclass of poly-homogeneous symbols we need some further preparation. First of all let 
\begin{align*}
\begin{split}
 \sz^{n-1}_+&=\{(\xi',\mu)\in\rz^{n-1}\times\rpbar\mid |\xi'|^2+\mu^2=1\},\\
 \whsz^{n-1}_+&=\sz^{n-1}_+\setminus\{(0,1)\}
\end{split}
\end{align*}
be the unit semi-sphere and the punctured unit semi-sphere, respectively. 
The latter shall be identified with $(0,1]\times\sz^{n-2}$ via 
 $$(r,\phi)\mapsto (r\phi,\sqrt{1-r^2}):(0,1]\times\sz^{n-2}\lra \whsz^{n-1}_+.$$
For a Fréchet space $F$ we let $\scrC^\infty_B((0,1];F)$ be the space of all smooth functions $u$ on 
$(0,1]$ with values in $F$ such that $(r\partial_r)^j u$ is bounded on $(0,1]$ for every $j\in\nz_0$. 

\begin{definition}\label{def:singularity}
$r^\nu\scrC^\infty_B(\whsz^{n-1};F)$ consists of all functions 
$\wh{p}\in\scrC^\infty(\whsz^{n-1};F)$ such that 
$r^{-\nu}\wh{p}(r,\phi)$ belongs to $\scrC^\infty_B((0,1],\scrC^\infty(\sz^{n-2};F))$. 
The subspace of all $\wh p$ such that $r^{-\nu}\wh{p}(r,\phi)$ 
extends to a function in $\scrC^\infty([0,1]\times\sz^{n-2};F)$ is denoted by 
$r^\nu\scrC^\infty_T(\whsz^{n-1};F)$.
\end{definition}

These spaces again are Fréchet in a natural way, hence it makes also sense to consider 
$\scrC^\infty_b(\rz^{n-1}_{x'},r^\nu\scrC^\infty_{B/T}(\whsz^{n-1};F))$ including an additional variable $x'$ 
with $\scrC^\infty_b$-dependence (i.e., all partial derivatives with $x'$ remain bounded functions in $x'$). 
The subscript $T$ in the definition indicates the existence of a Taylor expansion in $r$ centered in $r=0$. 

\begin{theorem}
The restriction to $\rz^{n-1}\times\whsz^{n-1}$ of functions $p(x',\xi',\mu)$ defined on 
$\rz^{n-1}\times(\rz^{n-1}\setminus\{0\})\times\rpbar$ 
yields isomorphisms 
 $$\wt{S}^{d,\nu}_\Hom(\rz^{n-1}\times\rpbar;F)\cong 
     \scrC^\infty_b\big(\rz^{n-1},r^\nu\scrC^\infty_{B}(\whsz^{n-1};F)\big);$$
the inverse map is given by homogeneous extension of degree $d$, i.e., 
 $$p(x',\xi',\mu)=|\xi',\mu|^d\, \wh p\Big(x',\frac{(\xi',\mu)}{|\xi',\mu|}\Big),\qquad \xi'\not=0.$$
\end{theorem}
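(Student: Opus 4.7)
The plan is to construct mutually inverse continuous maps between the two Fr\'echet spaces: the restriction map $R\colon p\mapsto p|_{\rz^{n-1}\times\whsz^{n-1}_+}$ and the homogeneous-extension map $E$ given by the displayed formula. That $ER=\mathrm{id}$ on $\wt{S}^{d,\nu}_\Hom(\rz^{n-1}\times\rpbar;F)$ is immediate from the $d$-homogeneity of $p$ in $(\xi',\mu)$, while $RE=\mathrm{id}$ on the target is tautological. All the work therefore goes into verifying that each map respects the stated function spaces and is continuous; the isomorphism of Fr\'echet spaces then falls out automatically.

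First I would check that $R$ lands in $\scrC^\infty_b(\rz^{n-1},r^\nu\scrC^\infty_B(\whsz^{n-1}_+;F))$. Under the identification $(r,\phi)\mapsto(r\phi,\sqrt{1-r^2})$ of $(0,1]\times\sz^{n-2}$ with $\whsz^{n-1}_+$ one has $|\xi'|=r$ and $|\xi',\mu|=1$ on the sphere, so the defining estimate of $\wt{S}^{d,\nu}_\Hom$ specializes to $\trinorm{D_{\xi'}^{\alpha'}D_{x'}^{\beta'}D_\mu^j p}^{(\ell)}_F\lesssim r^{\nu-|\alpha'|}$ on $\rz^{n-1}\times\whsz^{n-1}_+$. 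The coordinate field $r\partial_r$ (at fixed $\phi$) pulls back by the chain rule to $\xi'\cdot\nabla_{\xi'}-(r^2/\mu)\partial_\mu$, which, together with Euler's identity $\xi'\cdot\nabla_{\xi'}p+\mu\partial_\mu p=dp$ for $d$-homogeneous $p$, reduces on the sphere to a combination of $p$ and $\partial_\mu p$; similarly each $\partial_\phi$ is a tangential $\xi'$-derivative at fixed $|\xi'|$. Because $r\partial_r$ and multiplication by $r^{-\nu}$ satisfy $(r\partial_r)(r^{-\nu}\cdot)=r^{-\nu}(r\partial_r-\nu)(\cdot)$, an induction on the total order of $(r\partial_r)^j\partial_\phi^\alpha$-derivatives, feeding in the symbol estimates, yields boundedness of $(r\partial_r)^j\partial_\phi^\alpha(r^{-\nu}\wh p)$ uniformly in $x'$ together with all $x'$-derivatives.

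Next I would verify that $E$ lands in $\wt{S}^{d,\nu}_\Hom(\rz^{n-1}\times\rpbar;F)$. Smoothness of the homogeneous extension $p$ on $\rz^{n-1}\times V$ is clear since $(\xi',\mu)\mapsto(\xi',\mu)/|\xi',\mu|$ is smooth on $V$ (the condition $|\xi'|>0$ keeps us away from the excluded north pole), and $d$-homogeneity is built in. For the symbol estimates I would factor $p(x',\xi',\mu)=|\xi'|^\nu|\xi',\mu|^{d-\nu}\tilde q(x',\phi,r)$ with $\tilde q=r^{-\nu}\wh p$, $\phi=\xi'/|\xi'|$, $r=|\xi'|/|\xi',\mu|$, and apply the chain rule: each $\partial_{\xi'_i}$ or $\partial_\mu$ produces a sum of terms, each consisting of (i) a smooth coefficient bounded by $|\xi'|^{-a}|\xi',\mu|^{-b}$ with $a+b$ equal to the order of the applied derivative, multiplied by (ii) an $(r\partial_r)^k\partial_\phi^\beta$-derivative of $\tilde q$. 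The $\scrC^\infty_B$-condition on $\tilde q$ keeps these derivatives bounded, and summing contributions gives the required estimate $\trinorm{D_{\xi'}^{\alpha'}D_{x'}^{\beta'}D_\mu^j p}^{(\ell)}_F\lesssim|\xi'|^{\nu-|\alpha'|}|\xi',\mu|^{d-\nu-j}$, with uniformity in $x'$ and its derivatives coming from the $\scrC^\infty_b$-dependence of $\wh p$.

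The hard part will be the chain-rule bookkeeping for $E$: one must track, at each order, how the partial derivatives distribute between factors producing $|\xi'|^{-1}$ (differentiating $\phi=\xi'/|\xi'|$ or $|\xi'|^\nu$) and factors producing $|\xi',\mu|^{-1}$ (differentiating $|\xi',\mu|^{d-\nu}$ or the radial component of $r$), and to match the resulting tangential-derivative count against the $(r\partial_r)^k\partial_\phi^\beta$-structure of $\tilde q$. A clean organization is to decompose each ambient derivative into its tangential and radial parts with respect to the cone through the point $(\xi',\mu)$, after which the desired estimates follow by summing over partitions of the derivative order; the continuity of $R$ and $E$ in the Fr\'echet topologies is a quantitative reading of the same estimates.
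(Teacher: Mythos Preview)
The paper states this theorem without proof; it is presented as a structural fact underlying the subsequent definitions, with the underlying ideas going back to \cite{Seil22-1}. Your outline is correct and is the natural argument: since homogeneous extension of degree $d$ is inverse to restriction to the unit semi-sphere at the level of functions, the entire content lies in checking that each map respects the defining estimates, and your chain-rule bookkeeping does this correctly.

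Two small points worth making explicit in a write-up. First, your formula $r\partial_r=\xi'\cdot\nabla_{\xi'}-(r^2/\mu)\partial_\mu$ has an apparent singularity at $\mu=0$ (the equator $r=1$), but this is harmless: the $\scrC^\infty_B$-condition is only nontrivial near $r=0$, where $\mu$ is bounded away from zero, while near $r=1$ the restriction $\wh p$ is smooth by hypothesis and $r^{-\nu}$ is smooth as well, so boundedness of $(r\partial_r)^j(r^{-\nu}\wh p)$ there is automatic. Second, in the direction of $E$ it is cleanest to organise the chain rule exactly as you suggest, noting that $\partial_{\xi'_i}r$ contributes (after converting $\partial_r$ to $r^{-1}(r\partial_r)$) a factor bounded by $|\xi'|^{-1}$, while $\partial_\mu r$ contributes a factor bounded by $|\xi',\mu|^{-1}$ and $\partial_\mu\phi=0$; this makes the distribution of powers between $|\xi'|$ and $|\xi',\mu|$ in the final estimate transparent.
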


\begin{definition}\label{def:hom}
$\wtbfS^{d,\nu}_\Hom(\rz^{n-1}\times\rpbar;F)$ consists of all smooth $F$-valued functions 
$p(x',\xi',\mu)$ which are homogeneous in $(\xi',\mu)$ of degree $d$ on $\rz^{n-1}\times(\rz^{n-1}\setminus\{0\})\times\rpbar$  
and have the property 
 $$p\big|_{\rz^{n-1}\times\whsz^{n-1}_+}\in 
     \scrC^\infty_b\big(\rz^{n-1}_{x'},r^\nu\scrC^\infty_T(\whsz^{n-1};F)\big).$$
The so-called \emph{principal angular symbol} of $p\in\wtbfS^{d,\nu}_\Hom(\rz^{n-1}\times\rpbar;F)$ is 
 $$p_{\spk{\nu}}(x',\xi'):=|\xi'|^{\nu} \lim_{r\to0+}r^{-\nu}p\Big(x',r\frac{\xi'}{|\xi'|},\sqrt{1-r^2}\Big)
     \;\in\; S^{\nu}_\Hom(\rz^{n-1};F).$$
\end{definition}

The principal angular symbol is determined by the leading term in the Taylor expansion. 
By construction, $\wtbfS^{d,\nu}_\Hom(\rz^{n-1}\times\rpbar;F)\subset 
     \wt{S}^{d,\nu}_\Hom(\rz^{n-1}\times\rpbar;F)$. Moreover   
 $$S^{d}_\Hom(\rz^{n-1}\times\rpbar;F)\subset 
 \wtbfS^{d,0}_\Hom(\rz^{n-1}\times\rpbar;F),$$
since $\scrC^\infty(\sz^{n-1};F)\subset \scrC^\infty_{T}(\whsz^{n-1};F)$. 
 
\begin{proposition}\label{prop:excision}
Let $p(x',\xi',\mu)\in \wtbfS^{d,\nu}_\Hom(\rz^{n-1}\times\rpbar;F)$ and $\chi(\xi')$ be a zero-excision 
function. Then $\chi(\xi')p(x',\xi',\mu)$ belongs to $\wtbfS^{d,\nu}_{1,0}(\rz^{n-1}\times\rpbar;F)$ 
and has principal limit-symbol
 $$(\chi p)_{[d,\nu]}^\infty(x',\xi')=\chi(\xi')p_{\spk{\nu}}(x',\xi').$$
\end{proposition}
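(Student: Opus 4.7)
The plan is to derive the expansion at infinity of $\chi p$ directly from the Taylor expansion in the radial variable $r$ encoded by the condition $p \in \wtbfS^{d,\nu}_\Hom$, then verify the remainder estimates, and finally read off the principal limit-symbol.

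First I would use the identification of $\whsz^{n-1}_+$ with $(0,1]\times\sz^{n-2}$ via $(r,\phi)\mapsto(r\phi,\sqrt{1-r^2})$ and write $\wh{p}(x',r,\phi):=p|_{\rz^{n-1}\times\whsz^{n-1}_+}$. The condition $p\in\wtbfS^{d,\nu}_\Hom$ means $r^{-\nu}\wh{p}$ extends smoothly across $r=0$ in $\scrC^\infty_b(\rz^{n-1},\scrC^\infty([0,1]\times\sz^{n-2};F))$, so Taylor expansion in $r$ yields
\begin{equation*}
    \wh{p}(x',r,\phi) = \sum_{j=0}^{N-1} c_j(x',\phi)\,r^{\nu+j} + r^{\nu+N}R_N(x',r,\phi),\qquad N\in\nz_0,
\end{equation*}
with $c_j\in\scrC^\infty_b(\rz^{n-1},\scrC^\infty(\sz^{n-2};F))$ and $R_N\in\scrC^\infty_b(\rz^{n-1},\scrC^\infty([0,1]\times\sz^{n-2};F))$.

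Second, invoking homogeneity of degree $d$ and substituting $r=|\xi'|/|\xi',\mu|$, $\phi=\xi'/|\xi'|$ (for $\xi'\neq 0$), I set $q_j(x',\xi'):=|\xi'|^{\nu+j}c_j(x',\xi'/|\xi'|)\in S^{\nu+j}_\Hom(\rz^{n-1};F)$ to obtain
\begin{equation*}
    p(x',\xi',\mu)=\sum_{j=0}^{N-1} q_j(x',\xi')\,|\xi',\mu|^{d-\nu-j}+|\xi'|^{\nu+N}|\xi',\mu|^{d-\nu-N}R_N\bigl(x',|\xi'|/|\xi',\mu|,\xi'/|\xi'|\bigr).
\end{equation*}
Multiplying by $\chi(\xi')$, each $\chi\,q_j$ is a poly-homogeneous symbol in $S^{\nu+j}_{1,0}(\rz^{n-1};F)$, so the right-hand summands have exactly the form required in Definition \ref{def:symbol-expansion} after one replaces $|\xi',\mu|$ by $[\xi',\mu]$. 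This replacement only produces lower-order correction terms that can be absorbed into the expansion by Remark \ref{rem:equivalent-expansion}, which guarantees the symbol class is insensitive to the choice between $|\xi',\mu|$, $\spk{\xi',\mu}$, and $[\xi',\mu]$.

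Third, I would verify that the remainder
\begin{equation*}
    r_N(x',\xi',\mu):=\chi(\xi')\,|\xi'|^{\nu+N}|\xi',\mu|^{d-\nu-N}R_N\bigl(x',|\xi'|/|\xi',\mu|,\xi'/|\xi'|\bigr)
\end{equation*}
lies in $\wt{S}^{d,\nu+N}_{1,0}(\rz^{n-1}\times\rpbar;F)$. On the support of $\chi$, $|\xi'|$ is bounded below, so $|\xi'|^{\nu+N}\sim\spk{\xi'}^{\nu+N}$ and $|\xi',\mu|\sim\spk{\xi',\mu}$. Differentiation is controlled via the chain rule: each $\partial_{\xi'}$ applied to $|\xi',\mu|$ produces a factor $\spk{\xi',\mu}^{-1}$ while $\partial_{\xi'}$ applied to $|\xi'|$ or to $\xi'/|\xi'|$ produces a factor $\spk{\xi'}^{-1}$, and $\partial_\mu$ gives $\spk{\xi',\mu}^{-1}$. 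Combined with the boundedness of all derivatives of $R_N$ on $[0,1]\times\sz^{n-2}$, this yields estimates of the required form $\spk{\xi'}^{\nu+N-|\alpha'|}\spk{\xi',\mu}^{d-\nu-N-\ell}$. Since $N$ was arbitrary, $\chi p\in\wtbfS^{d,\nu}_{1,0}$.

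The main obstacle will be the bookkeeping in step three: keeping track of how derivatives split between $\spk{\xi'}$-factors and $\spk{\xi',\mu}$-factors so as to match the asymmetric estimates of $\wt{S}^{d,\nu+N}_{1,0}$. Once that is done, the principal limit-symbol is read off as the leading coefficient $\chi(\xi')q_0(x',\xi')=\chi(\xi')|\xi'|^\nu c_0(x',\xi'/|\xi'|)$, which by Definition \ref{def:hom} is precisely $\chi(\xi')p_{\spk{\nu}}(x',\xi')$, completing the argument.
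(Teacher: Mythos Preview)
The paper does not give its own proof of this proposition; the result is stated without proof, with the underlying theory deferred to \cite{Seil22-1} (the extension from scalar to $F$-valued symbols being declared straightforward). Your direct argument via Taylor expansion of $r^{-\nu}\wh p$ at $r=0$, followed by homogeneous extension and verification of the remainder estimate, is the natural route and is correct.

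One small imprecision: your appeal to Remark~\ref{rem:equivalent-expansion} to replace $|\xi',\mu|$ by $[\xi',\mu]$ is not quite on target, since that remark only explicitly treats $\spk{\xi',\mu}$. The passage is, however, elementary for a different reason: on the support of $\chi$ one has $|\xi'|\geq c>0$, hence $|\xi',\mu|=[\xi',\mu]$ whenever $|\xi',\mu|\geq 1$, so each difference $\chi(\xi')\,q_j(x',\xi')\bigl(|\xi',\mu|^{d-\nu-j}-[\xi',\mu]^{d-\nu-j}\bigr)$ is smooth with compact support in $(\xi',\mu)$ and therefore lies in $S^{-\infty}\subset\wt S^{d,\nu+N}_{1,0}$ for every $N$. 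In particular the leading coefficient $\chi\,q_0=\chi\,p_{\spk{\nu}}$ is unaffected. With this clarification your proof goes through.
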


Due to the latter proposition the following definition makes sense$:$

\begin{definition}\label{def:angular-symbol02}
A symbol $p$ belongs to the space 
$\wtbfS^{d,\nu}(\rz^{n-1}\times\rpbar;F)$ provided there exist homogeneous components 
$p^{(d-j,\nu-j)}\in \wtbfS^{d-j,\nu-j}_\Hom(\rz^{n-1}\times\rpbar;F)$ such that, for every $N\in\nz_0$, 
    $$p(x',\xi',\mu)-\sum_{j=0}^{N-1}\chi(\xi')p^{(d-j,\nu-j)}(x',\xi',\mu)
       \;\in\; \wtbfS^{d-N,\nu-N}_{1,0}(\rz^{n-1}\times\rpbar;F).$$ 
By definition, the \emph{principal angular symbol} of $p$ is that of its homogeneous principal symbol, i.e., 
$p_{\spk{d,\nu}}(x',\xi'):=p^{(d,\nu)}_{\spk{\nu}}(x',\xi')$. 
\end{definition}

Again, $\wtbfS^{d,\nu}(\rz^{n-1}\times\rpbar;F)$ carries a natural projective topology which makes it a 
Fréchet space. By construction, the principal angular symbol of $p\in \wtbfS^{d,\nu}(\rz^{n-1}\times\rpbar;F)$ satisfies 
  $$p_{\spk{d,\nu}}(x',\xi'):=|\xi'|^{\nu} \lim_{r\to0+}r^{-\nu}p^{(d.\nu)}\Big(x',r\frac{\xi'}{|\xi'|},\sqrt{1-r^2}\Big)
     \;\in\; S^{\nu}_\Hom(\rz^{n-1};F).$$
The principal angular symbol is a ``subordinate'' or ``secondary'' principal symbol, since it is 
uniquely determined by the homogeneous principal symbol. However, it plays an important role in the calculus, 
cf. Section \ref{sec:3.3}. 

\begin{remark}\label{rem:limit-angular}
Let $p\in\wtbfS^{d,\nu}(\rz^{n-1}\times\rpbar;F)$ as in Definition $\ref{def:angular-symbol02}$. Due to 
Proposition $\ref{prop:excision}$, 
 $$p^\infty_{[d,\nu]}(x',\xi')-\sum_{j=0}^{N-1}\chi(\xi')p^{(d-j,\nu-j)}_{\spk{\nu-j}}(x',\xi')
    \in S^{\nu-N}(\rz^{n-1};F).$$
Thus the principal limit symbol $p^\infty_{[d,\nu]}$ is poly-homogeneous and its homogeneous principal symbol 
coincides with the principal angular symbol of $p$, 
\begin{align*}
 p_{\spk{d,\nu}}(x',\xi')=(p^\infty_{[d,\nu]})^{(\nu)}(x',\xi').
\end{align*}
\end{remark}

The following result was already used in \cite{Seil22-1} (in case $F=\cz)$ but without providing an explicit proof.  
For convenience of the reader we give a proof here.  

\begin{proposition}\label{prop: classical}
Let $p(x',\xi',\mu)\in S^{d}(\rz^{n-1}\times\rpbar;F)$. Then $p\in \wtbfS^{d,0}(\rz^{n-1}\times\rpbar;F)$ and 
$p$ has limit symbol 
 $$p^\infty_{[d,0]}(x',\xi')=p^{(d)}(x',0,1),$$
i.e., the homogeneous principal symbol evaluated in $(\xi',\mu)=(0,1)$. 
\end{proposition}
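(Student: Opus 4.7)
The plan is to take as homogeneous components of $p$ in the sense of Definition \ref{def:angular-symbol02} the classical ones, namely $p^{(d-j,-j)}:=p^{(d-j)}$, and verify $(i)$ each $p^{(d-j)}$ belongs to $\wtbfS^{d-j,-j}_\Hom$, and $(ii)$ the remainder $R_N:=p-\sum_{j=0}^{N-1}\chi(\xi')p^{(d-j)}$ lies in $\wtbfS^{d-N,-N}_{1,0}$ for every $N\in\nz_0$. The formula for the principal limit symbol will then fall out of the computation in $(i)$.

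For $(i)$, a classical $p^{(d-j)}\in S^{d-j}_\Hom$ is smooth on $\rz^{n-1}\times\sz^{n-1}_+$ including across the pole $(0,1)$. Parametrizing $\whsz^{n-1}_+$ by $(r,\phi)\in(0,1]\times\sz^{n-2}$ via $(r\phi,\sqrt{1-r^2})$, the pullback $\wh{p}(x',r,\phi):=p^{(d-j)}(x',r\phi,\sqrt{1-r^2})$ therefore extends smoothly to $r=0$ with constant value $p^{(d-j)}(x',0,1)$. Multiplication by $r^j$ preserves smoothness (and yields vanishing of order $j$ at $r=0$ when $j\ge1$), so $\wh{p}\in r^{-j}\scrC^\infty_T(\whsz^{n-1};F)$ by Definition \ref{def:singularity}, hence $p^{(d-j)}\in\wtbfS^{d-j,-j}_\Hom$. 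The associated principal angular symbol $(p^{(d-j)})_{\spk{-j}}(x',\xi')=|\xi'|^{-j}\lim_{r\to 0+}r^j\wh{p}(x',r,\xi'/|\xi'|)$ vanishes for $j\ge 1$ and equals $p^{(d)}(x',0,1)$ for $j=0$. By Remark \ref{rem:limit-angular}, once $p\in\wtbfS^{d,0}$ is established the principal limit symbol is $p^\infty_{[d,0]}=(p^{(d,0)})_{\spk{0}}=p^{(d)}(x',0,1)$, as claimed.

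For $(ii)$, I would split $R_N=\rho_N+s_N$ with $\rho_N:=p-\sum_{j=0}^{N-1}\chi(\xi',\mu)p^{(d-j)}\in S^{d-N}_{1,0}(\rz^{n-1}\times\rpbar;F)$ the classical poly-homogeneous remainder and $s_N:=\sum_{j=0}^{N-1}[\chi(\xi',\mu)-\chi(\xi')]p^{(d-j)}$, supported in a bounded region of $\xi'$. The symbol estimates defining $\wt{S}^{d-N,-N}_{1,0}$ follow from the pointwise comparison $[\xi',\mu]^{d-N-|\alpha'|}\lesssim\spk{\xi'}^{-N-|\alpha'|}\spk{\xi',\mu}^d$ applied to $\rho_N$, together with $\spk{\xi'}\sim 1$ on $\mathrm{supp}\,s_N$ and the homogeneity bound $|p^{(d-j)}|\lesssim\spk{\xi',\mu}^{d-j}$. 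For the expansion at infinity, homogeneity gives $p^{(d-j)}(x',\xi',\mu)=\mu^{d-j}p^{(d-j)}(x',\xi'/\mu,1)$ for $\mu>0$; Taylor expansion around $\xi'=0$ with coefficients $c_{j,\alpha'}(x'):=(\alpha'!)^{-1}\partial^{\alpha'}_{\xi'}p^{(d-j)}(x',0,1)$ and grouping by the power of $\mu$ produces, after converting $\chi(\mu)\mu^{d-k}$ into $[\xi',\mu]^{d-k}$ via Remark \ref{rem:equivalent-expansion} and the algebraic identities in the proof of Proposition \ref{prop:smoothing}, the candidate limit symbols
\[
(R_N)^\infty_{[d-N,-N+k]}(x',\xi')=(1-\chi(\xi'))\!\!\!\sum_{\substack{j+|\alpha'|=k\\ 0\le j\le N-1}}\!\!\!c_{j,\alpha'}(x')(\xi')^{\alpha'}+\!\!\sum_{\substack{j+|\alpha'|=k\\ j\ge N}}\!\!c_{j,\alpha'}(x')(\xi')^{\alpha'}.
\]
The first sum is compactly supported in $\xi'$, hence in $S^{-\infty}$, and the second is a polynomial in $\xi'$ of degree at most $k-N$, so $(R_N)^\infty_{[d-N,-N+k]}\in S^{-N+k}_{1,0}(\rz^{n-1};F)$ as required by Definition \ref{def:symbol-expansion}; for $k=0$ one recovers $(1-\chi(\xi'))p^{(d)}(x',0,1)$, consistent with step $(i)$.

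The hard part is the uniform $K$-th order residual estimate $R_N-\sum_{k<K}(R_N)^\infty_{[d-N,-N+k]}[\xi',\mu]^{d-k}\in\wt{S}^{d-N,-N+K}_{1,0}$ for arbitrary $K$. This needs iterating the classical expansion $\rho_N\sim\sum_{k\ge N}\chi(\xi',\mu)p^{(d-k)}$ and controlling the Taylor remainders of $p^{(d-k)}(x',\xi'/\mu,1)$ at $\xi'/\mu=0$ in the weighted $\wt{S}$-seminorms, together with checking that the conversion between $\mu^{d-k}$, $[\xi',\mu]^{d-k}$ and $\spk{\xi',\mu}^{d-k}$ produces only errors of the prescribed order and regularity; these are routine but lengthy calculations in the spirit of the proof of Proposition \ref{prop:smoothing}.
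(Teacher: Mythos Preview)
Your step $(i)$ is correct and matches what the paper uses implicitly: the inclusion $S^{d-j}_\Hom\subset\wtbfS^{d-j,-j}_\Hom$ follows from smooth extendability across the pole, and the angular symbols come out as you say. One caveat: Remark~\ref{rem:limit-angular} only determines $p^\infty_{[d,0]}$ through its poly-homogeneous components, hence only modulo $S^{-\infty}$. The exact identity $p^\infty_{[d,0]}=p^{(d)}(x',0,1)$ needs the direct computation $\lim_{\mu\to\infty}[\xi',\mu]^{-d}p(x',\xi',\mu)$, which the paper imports from \cite[Proposition~5.3]{Seil22-1}.

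For step $(ii)$ you and the paper genuinely diverge. You attempt to produce the expansion coefficients of $R_N$ explicitly by Taylor expanding every homogeneous component at $(0,1)$ and then concede the residual estimate as ``routine but lengthy''. The paper bypasses this entirely. It writes, with $\chi=\chi(\xi')$, $\wt\chi=\wt\chi(\xi',\mu)$ and $\chi\wt\chi=\chi$,
\[
R_N=\chi\Big(p-\sum_{j<N}\wt\chi\,p^{(d-j)}\Big)+(1-\chi)p.
\]
The second term is in $\wtbfS^{d-N,-N}_{1,0}$ since $(1-\chi)\in\wtbfS^{-N,-N}_{1,0}$ and $p\in\wtbfS^{d,0}_{1,0}$. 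The first term is $\chi p_N$ with $p_N\in S^{d-N}\subset\wtbfS^{d-N,0}_{1,0}$, so everything reduces to the single inclusion
\[
\wtbfS^{d-N,0}_{1,0}(\rz^{n-1}\times\rpbar;F)\subset\wtbfS^{d-N,-N}_{1,0}(\rz^{n-1}\times\rpbar;F),
\]
which the paper proves by a short shifting argument: for $a\in\wtbfS^{d,0}_{1,0}$ one shows $\spk{\xi'}^L a\in\wtbfS^{d+L,0}_{1,0}$ by declaring the first $L$ expansion coefficients to be zero and using $\spk{\xi'}^L a^\infty_{[d,j]}$ for the rest, checking the remainder estimates via the monotonicity $\wt S^{d,\nu_1}_{1,0}\subset\wt S^{d,\nu_0}_{1,0}$ for $\nu_0\le\nu_1$. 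This general fact absorbs precisely the infinite family of Taylor-remainder estimates you left open; your direct construction would, if completed, amount to reproving this inclusion in the particular instance at hand. So your route is viable but the paper's reduction is considerably more economical and isolates the structural reason the argument works.
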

\begin{proof}
By \cite[Proposition 5.3]{Seil22-1}, $p\in\wtbfS^{d,0}_{1,0}(\rz^{n-1}\times\rpbar;F)$ 
and the principal limit symbol is as stated. We have to show that $p$ belongs to the poly-homogeneous class. 

Given a zero-excision function $\chi(\xi')$ let us choose a zero-excision function $\wt\chi(\xi',\mu)$ such that 
$\chi\wt\chi=\chi$. Then 
 $$p-\sum_{j=0}^{N-1}\chi p^{(d-j)}=\chi\Big(p-\sum_{j=0}^{N-1}\wt\chi p^{(d-j)}\Big)+(1-\chi)p.$$
Since $(1-\chi)\in \wtbfS^{-N,-N}_{1,0}(\rz^{n-1}\times\rpbar)$ and $p\in \wtbfS^{d,0}_{1,0}(\rz^{n-1}\times\rpbar;F)$, 
the second term on the right-hand side belongs to $\wtbfS^{d-N,-N}_{1,0}(\rz^{n-1}\times\rpbar;F)$. The first term 
is of the form $\chi p_N$, where 
$p_N\in S^{d-N}(\rz^{n-1}\times\rpbar;F)\subset \wtbfS^{d-N,0}_{1,0}(\rz^{n-1}\times\rpbar;F)$ and 
$\chi\in \wtbfS^{0,0}_{1,0}(\rz^{n-1}\times\rpbar)$. Thus, to complete the proof, it remains to show that 
 $$\wtbfS^{d-N,0}_{1,0}(\rz^{n-1}\times\rpbar;F)\subset \wtbfS^{d-N,-N}_{1,0}(\rz^{n-1}\times\rpbar;F).$$
For the latter rename $d-N$ by $d$ and let $a\in \wtbfS^{d,0}_{1,0}(\rz^{n-1}\times\rpbar;F)$. We will show that 
$a$ belongs to $\wtbfS^{d,-L}_{1,0}(\rz^{n-1}\times\rpbar;F)$ for arbitrary $L\in\nz_0$. This is equivalent to showing 
that $\wt a:=\spk{\xi'}^La\in \wtbfS^{d+L,0}_{1,0}(\rz^{n-1}\times\rpbar;F)$. We define 
$\wt a_{[0]}=\ldots=\wt a_{[L-1]}=0$ and $\wt a_{[L+j]}=\spk{\xi'}^L a_{[j]}$ for $j\ge 0$. Then, for $N\le L$, 
 $$\wt a-\sum_{j=0}^{N-1}\wt a_{[j]}[\xi',\mu]^{d+L-j}=\spk{\xi'}^La\in 
    \wt S^{d+L,L}_{1,0}(\rz^{n-1}\times\rpbar;F)\subset \wt S^{d+L,N}_{1,0}(\rz^{n-1}\times\rpbar;F),$$
since $\wt S^{d,\nu_1}_{1,0}(\rz^{n-1}\times\rpbar;F)\subset \wt S^{d,\nu_0}_{1,0}(\rz^{n-1}\times\rpbar;F)$ 
whenever $\nu_0\le\nu_1$. If $N>L$, 
 $$\wt a-\sum_{j=0}^{N-1}\wt a_{[j]}[\xi',\mu]^{d+L-j}=\spk{\xi'}^L
     \Big(a-\sum_{j=0}^{N-L-1}a_{[j]}[\xi',\mu]^{d-j}\Big)
     \in\wt S^{d+L,N}_{1,0}(\rz^{n-1}\times\rpbar;F),$$
since $\spk{\xi'}^L\wt S^{d,N-L}_{1,0}(\rz^{n-1}\times\rpbar;F)=\wt S^{d+L,N}_{1,0}(\rz^{n-1}\times\rpbar;F)$. 
\end{proof}
 
\begin{remark}
If $p(x',\xi')\in S^d_{1,0}(\rz^{n-1};F)$ is considered as a $\mu$-dependent symbol then 
$p\in\wtbfS^{d,d}_{1,0}(\rz^{n-1}\times\rpbar;F)$. Analogously in the poly-homogeneous case. 
\end{remark}

\subsection{A result on the invertibility of homogeneous components}\label{sec:3.3}

For later purpose we shall discuss in this section a specific result concerning the invertibility of homogeneous symbols. 
It explains the importance of the principal angular symbol. 

\begin{lemma}\label{lem:invertibility01}
Let $p\in \wtbfS^{d,\nu}_\Hom(\rz^{n-1}\times\rpbar;\scrL(E_0,E_1))$. Assume 
\begin{itemize}
 \item[$(1)$] $p(x',\xi',\mu)$ is invertible whenever $\xi'\not=0$ and 
  $$\|p(x',\xi',\mu)^{-1}|\xi'|^\nu\|_{\scrL(E_1,E_0)}\lesssim 1
      \qquad \forall\;x'\in\rz^{n-1}\quad\forall\;(\xi',\mu)\in\whsz^n_+,$$
 \item[$(2)$] The principal angular symbol $p_{\spk{\nu}}(x',\xi')$ is invertible whenever $\xi'\not=0$ and 
  $$\|p_{\spk{\nu}}(x',\xi')^{-1}\|_{\scrL(E_1,E_0)}\lesssim 1\qquad \forall\;x'\in\rz^{n-1}\quad\forall\;|\xi'|=1.$$
\end{itemize}
Then $p^{-1}\in \wtbfS^{-d,-\nu}_\Hom(\rz^{n-1}\times\rpbar;\scrL(E_1,E_0))$. 
\end{lemma}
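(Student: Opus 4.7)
The strategy is to pass to the coordinates $(r,\phi)\mapsto(r\phi,\sqrt{1-r^2})$ on the punctured unit semi-sphere $\whsz^{n-1}_+$ and reduce everything to the smoothness of operator inversion on the open set of invertibles in a Banach algebra. Writing $\wh p(x',r,\phi):=p(x',r\phi,\sqrt{1-r^2})$, the hypothesis $p\in\wtbfS^{d,\nu}_\Hom(\rz^{n-1}\times\rpbar;\scrL(E_0,E_1))$ is equivalent to saying that
$$P(x',r,\phi):=r^{-\nu}\wh p(x',r,\phi)$$
belongs to $\scrC^\infty_b(\rz^{n-1}_{x'},\scrC^\infty([0,1]\times\sz^{n-2};\scrL(E_0,E_1)))$. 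The key algebraic identity $r^\nu\wh p^{-1}=P^{-1}$ then reduces the whole proof to showing that $P^{-1}$ lies in $\scrC^\infty_b(\rz^{n-1}_{x'},\scrC^\infty([0,1]\times\sz^{n-2};\scrL(E_1,E_0)))$; combined with the trivial homogeneity of $p^{-1}$ of degree $-d$ away from $\xi'=0$, this places $p^{-1}$ in $\wtbfS^{-d,-\nu}_\Hom(\rz^{n-1}\times\rpbar;\scrL(E_1,E_0))$.

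Smoothness and uniform boundedness of $P^{-1}$ on the open part $r>0$ follow directly from hypothesis (1) and the fact that the inversion map $T\mapsto T^{-1}$ is smooth on the open set of invertibles in $\scrL(E_0,E_1)$. For the extension up to $r=0$ I invoke hypothesis (2): by the very definition of the principal angular symbol one has $P(x',0,\phi)=p_{\spk{\nu}}(x',\phi)$ for $|\phi|=1$, so the assumed uniform invertibility of $p_{\spk{\nu}}$ together with (1) guarantees that $P$ takes values in the open set of invertibles on the whole closed set $\rz^{n-1}_{x'}\times[0,1]\times\sz^{n-2}$ with uniformly bounded inverse. Since $P$ is $\scrC^\infty$ in $(r,\phi)$ up to $r=0$, smoothness of operator inversion yields $P^{-1}\in\scrC^\infty([0,1]\times\sz^{n-2};\scrL(E_1,E_0))$ at each fixed $x'$.

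The uniform $\scrC^\infty_b$-dependence in $x'$ is then obtained by differentiating the identity $P\cdot P^{-1}=1$ and proceeding by induction: $\partial^\beta_{x'}P^{-1}$ is a finite linear combination of products of factors $P^{-1}$ with derivatives $\partial^\alpha_{x'}P$ for $|\alpha|\le|\beta|$, and all these factors (together with their $(r,\phi)$-derivatives) are uniformly bounded in $x'$ by the $\scrC^\infty_b$-hypothesis on $P$ and the uniform bound on $\|P^{-1}\|$ just established. The main obstacle, and the entire reason assumption (2) is present, is precisely this boundary behaviour at $r=0$: the factor $r^\nu$ absorbed into $p$ turns into a singular factor $r^{-\nu}$ in $p^{-1}$, and without the uniform invertibility of the limit $P(x',0,\phi)=p_{\spk{\nu}}(x',\phi)$ there would be no reason for $P^{-1}$ to extend continuously -- let alone smoothly -- to $r=0$. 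Once this obstruction is removed by (2), the remainder of the argument is just a parameter-dependent version of the standard smoothness of inversion in a Banach algebra.
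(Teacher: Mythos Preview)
Your proof is correct and follows essentially the same approach as the paper: pass to the $(r,\phi)$-coordinates on $\whsz^{n-1}_+$, absorb the $r^{-\nu}$ factor (the paper does this by multiplying $p$ with $|\xi'|^{-\nu}|\xi',\mu|^\nu$ to reduce to $\nu=0$, which on the unit sphere amounts to your definition of $P$), observe that assumptions (1) and (2) give pointwise invertibility with uniform bounds on the compactified domain $[0,1]\times\sz^{n-2}$, and conclude via smoothness of inversion and the chain rule. The paper's write-up is terser but the argument is the same.
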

\begin{proof}
By multiplication with $|\xi'|^{-\nu}|\xi',\mu|^\nu$ we may assume without loss of generality that $\nu=0$. 
Using the notation of Definition \ref{def:singularity} and the subsequent paragraph, $\wh p(x',r,\phi)$ is a smooth 
function on $\rz^{n-1}\times[0,1]\times\sz^{n-2}$ and 
 $$\lim_{r\to 0}\wh p(x',\phi',r)=p_{\spk{\nu}}(x',\phi'),$$
cf. Definition \ref{def:hom} and the subsequent paragraph. 
Thus $(1)$ and $(2)$ together are equivalent to the pointwise invertibility of $\wh{p}$ together with a uniform estimate 
of $\wh{p}^{-1}$ on $\rz^{n-1}\times \sz^{n-2}\times[0,1]$. Therefore, by chain rule, also 
$\wh p^{-1}$ belongs to $\scrC^\infty(\rz^{n-1}\times[0,1]\times\sz^{n-2})$. Homogeneous extension of degree 
$-d$ yields the claim. 
\end{proof}

\begin{remark}\label{rem:uniform-ellipticity}
The reasoning in the proof of Lemma $\ref{lem:invertibility01}$ shows also the following$:$   
If $p(x',\xi',\mu)$ is constant in $x'$ for $|x'|\ge C$ for some $C\ge0$, conditions $(1)$ and $(2)$ are 
equivalent to 
\begin{itemize}
 \item[$(1')$] $p(x',\xi,\mu)$ is invertible for all $x'\in\rz^{n-1}$ and $(\xi',\mu)\in\whsz^{n-1}$, 
 \item[$(2')$] The principal angular symbol $p_{\spk{\nu}}(x',\xi')$ is invertible whenever $|\xi'|=1$. 
\end{itemize}
\end{remark}

We shall need a version of the previous result, concerning the related space  
\begin{align}\label{eq:sum-of-hom}
 \bfS^{d,\nu}_\Hom(\rz^{n-1}\times\rpbar;F)
   :=S^d_\Hom(\rz^{n-1}\times\rpbar;F)+\wtbfS^{d,\nu}_\Hom(\rz^{n-1}\times\rpbar;F)
\end{align}
where $\nu\in\nz_0$. Note that 
 $$\bfS^{d,0}_\Hom(\rz^{n-1}\times\rpbar;F)=
    \wtbfS^{d,0}_\Hom(\rz^{n-1}\times\rpbar;F)$$
and 
 $$\bfS^{d,\nu}_\Hom(\rz^{n-1}\times\rpbar;F)\subset
    \wtbfS^{d,0}_\Hom(\rz^{n-1}\times\rpbar;F),\qquad \nu\ge1.$$
In any case we can associate with $p\in \bfS^{d,\nu}_\Hom(\rz^{n-1}\times\rpbar;F)$ its principal angular symbol 
$p_{\spk{0}}\in S^{0}_\Hom(\rz^{n-1};F)$. Indeed, if $p=p_0+\wt p$ 
with $p_0\in S^d_\Hom(\rz^{n-1}\times\rpbar;F)$ and $\wt p\in\wtbfS^{d,\nu}_\Hom(\rz^{n-1}\times\rpbar;F)$, then 
 $$p_{\spk{0}}(x',\xi')=
     \begin{cases}
      p_0(x',0,1)+\wt p_{\spk{0}}(x',\xi')&:\nu=0\\
      p_0(x',0,1)&:\nu\ge1
    \end{cases}.
 $$
 
\begin{lemma}\label{lem:invertibility02}
Let $p\in \bfS^{d,\nu}_\Hom(\rz^{n-1}\times\rpbar;\scrL(E_0,E_1))$, $\nu\in\nz_0$. Assume 
\begin{itemize}
 \item[i$)$] $p(x',\xi',\mu)$ is invertible whenever $\xi'\not=0$ and 
  $$\|p(x',\xi',\mu)^{-1}\|_{\scrL(E_1,E_0)}\lesssim 1\qquad \forall\;x'\in\rz^{n-1}\quad\forall\;(\xi',\mu)\in\whsz^n_+,$$
 \item[ii$)$] The principal angular symbol $p_{\spk{0}}(x',\xi')$ is invertible whenever $\xi'\not=0$ and 
  $$\|p_{\spk{0}}(x',\xi')^{-1}\|_{\scrL(E_1,E_0)}\lesssim 1\qquad \forall\;x'\in\rz^{n-1}\quad\forall\;|\xi'|=1.$$
\end{itemize}
Then $p^{-1}\in \bfS^{-d,\nu}_\Hom(\rz^{n-1}\times\rpbar;\scrL(E_1,E_0))$ $($note the identical regularity number$)$. 
\end{lemma}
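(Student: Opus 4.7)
Since $\bfS^{d,0}_\Hom = \wtbfS^{d,0}_\Hom$, the case $\nu = 0$ is exactly Lemma \ref{lem:invertibility01} at regularity $0$, so assume $\nu \geq 1$. Write $p = p_0 + \wt p$ with $p_0 \in S^d_\Hom$ and $\wt p \in \wtbfS^{d,\nu}_\Hom$. Pass to the spherical description of Section \ref{sec:3.2.2}: $\wh{p_0}$ extends smoothly to the full hemisphere $\sz^{n-1}_+$, while $\wh{\wt p} = r^\nu \wh{\wt p}'$ with $\wh{\wt p}'$ of class $\scrC^\infty_T$, so $\wh{\wt p}$ vanishes to order $\nu \geq 1$ at the pole $r = 0$. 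Thus $\wh{p}(x', 0) = \wh{p_0}(x', 0) = p_{\spk{0}}(x')$ is uniformly invertible by hypothesis ii), and combined with hypothesis i) this gives uniform invertibility of $\wh{p}$ on all of $\sz^{n-1}_+$. Now view $p$ as an element of $\wtbfS^{d,0}_\Hom$; both hypotheses of Lemma \ref{lem:invertibility01} at regularity $0$ are satisfied, so $p^{-1} \in \wtbfS^{-d,0}_\Hom$, i.e.\ $\wh{p^{-1}}$ lies in $\scrC^\infty_b(\rz^{n-1}; \scrC^\infty_T(\whsz^{n-1}_+; \scrL(E_1, E_0)))$ and therefore admits a Taylor expansion
\begin{equation*}
 \wh{p^{-1}}(x', r, \phi) = \sum_{k=0}^{\nu-1} \frac{r^k}{k!}\, b_k(x', \phi) + r^\nu R(x', r, \phi),
\end{equation*}
with $R$ again of class $\scrC^\infty_b(\rz^{n-1}; \scrC^\infty_T)$.

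The crux is to show that, for each $k < \nu$, the Taylor coefficient $b_k(x', \phi)$ is the restriction to $\sz^{n-2}$ of a homogeneous polynomial of degree $k$ in $\phi \in \rz^{n-1}$, with $\scrC^\infty_b$ dependence on $x'$. This I will derive from the Fa\`a di Bruno-type identity
\begin{equation*}
 \partial_r^k(\wh{p}^{-1}) = \sum_{m \geq 1}\ \sum_{\substack{j_1+\cdots+j_m=k\\ j_i \geq 1}} c_{j_1,\ldots,j_m}\, \wh{p}^{-1}(\partial_r^{j_1}\wh{p})\wh{p}^{-1} \cdots (\partial_r^{j_m}\wh{p})\wh{p}^{-1}
\end{equation*}
evaluated at $r = 0$. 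For each $j \leq k < \nu$, the vanishing of $\wh{\wt p}$ to order $\nu$ forces $\partial_r^j \wh{p}|_{r=0} = \partial_r^j \wh{p_0}|_{r=0}$; and since $\wh{p_0}$ is smooth through the pole, its $j$-th Taylor coefficient in the local coordinate $\xi' = r\phi$ is a homogeneous polynomial of degree $j$ in $\phi$. A product of such factors is then a homogeneous polynomial of degree $j_1 + \cdots + j_m = k$, and the (constant in $\phi$) factors $\wh{p}^{-1}(x', 0)$ preserve this degree; hence $b_k$ has the claimed form, with uniform $x'$-behaviour inherited from $\wh{p_0}$ and the uniformly bounded inverse $\wh{p_0}(x',0)^{-1}$.

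It remains to realize this finite polynomial Taylor datum as a classical symbol. Let $\wt b_k(x', \xi')$ denote the homogeneous polynomial of degree $k$ in $\xi' \in \rz^{n-1}$ with $\wt b_k(x', r\phi) = r^k b_k(x', \phi)$, and choose a radial $\chi \in \scrC^\infty(\rz^{n-1})$, equal to $1$ near $0$ and of compact support. Set
\begin{equation*}
 q_0(x', \xi', \mu) := |\xi', \mu|^{-d}\, \chi\!\Big(\tfrac{\xi'}{|\xi', \mu|}\Big) \sum_{k=0}^{\nu - 1} \frac{1}{k!}\, \wt b_k\!\Big(x', \tfrac{\xi'}{|\xi', \mu|}\Big),
\end{equation*}
a function homogeneous of degree $-d$ in $(\xi', \mu)$ whose restriction to $\sz^{n-1}_+$ is smooth, so $q_0 \in S^{-d}_\Hom$. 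Because $1 - \chi$ vanishes to infinite order at the pole, the Taylor expansion of $\wh{q_0}$ at $r = 0$ coincides with the truncated Taylor series of $\wh{p^{-1}}$ up to order $\nu - 1$; hence $\wh{p^{-1}} - \wh{q_0} = r^\nu \wh{S}$ with $\wh{S} \in \scrC^\infty_b(\rz^{n-1}; \scrC^\infty_T)$, so that $p^{-1} - q_0 \in \wtbfS^{-d,\nu}_\Hom$ and consequently
\begin{equation*}
 p^{-1} = q_0 + (p^{-1} - q_0) \in S^{-d}_\Hom + \wtbfS^{-d,\nu}_\Hom = \bfS^{-d,\nu}_\Hom,
\end{equation*}
as desired. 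The main obstacle is the combinatorial bookkeeping in the Fa\`a di Bruno step verifying inheritance of polynomial degree; once that is in hand, the construction of $q_0$ and the remainder identification are routine.
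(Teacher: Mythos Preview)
Your argument is correct and takes a genuinely different route from the paper. Both proofs dispose of $\nu=0$ via Lemma~\ref{lem:invertibility01} and then, for $\nu\ge 1$, use that same lemma at regularity $0$ to obtain $p^{-1}\in\wtbfS^{-d,0}_\Hom$. From there the paths diverge: the paper invokes an external result (\cite[Proposition~3.8]{Seil22-2}) to write $p^{-1}=p_0+\wt p$ with $p_0\in S^{-d}_\Hom$ and $\wt p\in\wt S^{-d,\nu}_\Hom$ (the non-Taylor class), and then observes that $\wt p=\wt q-p_0$ lies in the intersection $\wtbfS^{-d,0}_\Hom\cap\wt S^{-d,\nu}_\Hom=\wtbfS^{-d,\nu}_\Hom$. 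You instead construct the classical piece $q_0$ explicitly from the first $\nu$ Taylor coefficients of $\wh{p^{-1}}$, exploiting the key observation that $\wh{p_0}(x',r,\phi)=g(x',r\phi)$ with $g(x',\xi')=p_0(x',\xi',\sqrt{1-|\xi'|^2})$ smooth, so that $\partial_r^j\wh{p_0}\big|_{r=0}$ is (the restriction to $\sz^{n-2}$ of) a homogeneous polynomial of degree~$j$ in~$\phi$; the Fa\`a di Bruno step then propagates this to $\partial_r^k(\wh p^{-1})\big|_{r=0}$ for $k<\nu$. Your approach is more self-contained---it avoids the external citation and makes the classical part of the inverse completely explicit---at the cost of the combinatorial bookkeeping you flag. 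The paper's route is shorter but leans on results proved elsewhere.
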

\begin{proof}
In case $\nu=0$ this is a special case of Lemma \ref{lem:invertibility01}. We thus assume $\nu\ge1$. 
By multiplication with $|\xi',\mu|^{-d}$ we may assume without loss of generality that $d=0$. 
On the one hand it follows from \cite[Proposition 3.8]{Seil22-2} that $p^{-1}$ belongs to 
$S^0_\Hom(\rz^{n-1}\times\rpbar;\scrL(E_1,E_0))+\wt S^{0,\nu}_\Hom(\rz^{n-1}\times\rpbar;\scrL(E_1,E_0))$, 
hence can be written as $p^{-1}=p_0+\wt p$. On the other hand, by Lemma \ref{lem:invertibility01}, 
$p^{-1}$ belongs to $\wtbfS^{0,0}_\Hom(\rz^{n-1}\times\rpbar;\scrL(E_1,E_0))$, say $p^{-1}=\wt q$. 
Then  
 $$\wt p=\wt q-p_0\in \wtbfS^{0,0}_\Hom(\rz^{n-1}\times\rpbar;\scrL(E_1,E_0))\cap 
    \wt S^{0,\nu}_\Hom(\rz^{n-1}\times\rpbar;\scrL(E_1,E_0)).$$
The latter space coincides with $\wtbfS^{0,\nu}_\Hom(\rz^{n-1}\times\rpbar;\scrL(E_1,E_0))$.  
\end{proof}

The analogue of Remark $\ref{rem:uniform-ellipticity}$ holds for Lemma $\ref{lem:invertibility02}$.

\forget{
\subsubsection{Nuclearity and tensor product representation}

The following observation is very useful from a technical point of view, since in many situations it will 
allow to reduce the analysis of general symbols to the particular case of symbols having a certain ``product form''. 

For clarity let us now use a subscript $\mathrm{const}$ when we consider the subspaces of the above 
introduced symbol spaces, which consist in those symbols which do not depend on the $x'$-variable.  

\begin{proposition}
Both $S^d_{\mathrm{const}}(\rz^{n-1}\times\rpbar)$ and 
$\wtbfS^{d,\nu}_{\mathrm{const}}(\rz^{n-1}\times\rpbar)$ are nuclear Fréchet spaces. 
\end{proposition}
\begin{proof}
CHECK
\end{proof}

\begin{proposition}\label{prop:tensor}
If $F$ coincides with $\scrS(\rz_+)$ or $\scrS(\rz_+\times\rz_+)$ then 
 $$\frakX(\rz^{n-1}\times\rpbar;F)=\frakX(\rz^{n-1}\times\rpbar)\,\wh\otimes_\pi\,F.$$
\end{proposition}

In particular, due to a classical result on tensor products, see \cite{aaa} for example, for every 
$p\in \frakX(\rz^{n-1}\times\rpbar;F)$ there exist zero-sequences 
$(p_\ell)\subset \frakX(\rz^{n-1}\times\rpbar)$ and 
$(f_\ell)\subset F$ and an abolutely summable numerical sequence $(\lambda_\ell)\subset\cz$ such that 
 $$p=\sum_{\ell=0}^{+\infty}\lambda_\ell\,p_\ell\,f_\ell.$$
}

\section{Singular Green symbols with expansion at infinity} \label{sec:4}

\subsection{Singular Green symbols with finite regularity number} \label{sec:4.1}

In Section \ref{sec:2.1.3} we have introduced the classes 
$B^{d;0}_G(\rz^{n-1}\times\rpbar;\frakg)$ of strongly parameter-dependent 
generalized singular Green symbols of type zero as a realization of the abstract classes 
$S^d(\rz^{n-1}\times\rpbar;E_0,E_1)$, cf. Definition \ref{def:singular-green01}. 
From this resulted symbols of positive type $r\in\nz$ in Definition  \ref{def:singular-green02}. 

We proceed in the analogous way for Green symbols with finite regularity number, based on the symbol classes 
$\wt S^{d,\nu}(\rz^{n-1}\times\rpbar;E_0,E_1)$ from Section \ref{sec:3.1}. Again we shall use the dilation 
group-action as explained in the beginning of Section \ref{sec:2.1.2} and \eqref{eq:kappa-matrix}. 

\begin{definition}\label{def:singular-green03}
$\wt B^{d,\nu;0}_G(\rz^{n-1}\times\rpbar;\frakg)$, $\frakg=((L_0,M_0),(L_1,M_1))$, denotes the space 
 $$\mathop{\mbox{\Large$\cap$}}_{s,s^\prime,\delta,\delta^\prime\in\rz} 
    \wt S^{d,\nu}(\rz^{n-1}\times\rpbar;H^{s,\delta}_0(\rpbar,\cz^{L_0})\oplus\cz^{M_0},
    H^{s^\prime,\delta^\prime}(\rz_+,\cz^{L_1})\oplus\cz^{M_1}).$$ 
Moreover, $\wt B^{d,\nu;r}_G(\rz^{n-1}\times\rpbar;\frakg)$ 
denotes the space of all symbols 
 $$\bfg=\bfg_0+\sum_{j=1}^{r}\bfg_j\boldsymbol{\partial}_+^j,\qquad 
   \bfg_j\in \wt B^{d-j,\nu;0}_G((\rz^{n-1}\times\rpbar;\frakg).$$
\end{definition}

$\bfg\in \wt B^{d,\nu;r}_G(\rz^{n-1}\times\rpbar;\frakg)$ has its homogeneous 
principal symbol  
\begin{align}\label{eq:symbol-positive-type-02}
 \bfg^{(d,\nu)}(x',\xi',\mu)=\bfg_0^{(d,\nu)}
 +\sum_{j=1}^{r}\bfg_j^{(d-j,\nu)}(x',\xi',\mu)\boldsymbol{\partial}_+^j
\end{align}
which is defined whenever $\xi'\not=0$. Moreover, $\op(\bfg)(\mu)$ induces maps 
as in \eqref{eq:mapping-property01}. 

Le us remark that in Definition \ref{def:singular-green03} we employ a notation 
already used in \cite{Seil22-2} in a different context; actually the respective spaces are different 
$($that of \cite{Seil22-2} is larger$)$.

\subsection{From operator-valued symbols to symbol-kernels}\label{sec:4.2}

Generalized singular Green symbols $\bfg$ of type zero from each of the above introduced classes are functions of $(x',\xi',\mu)$ 
taking values in $\Gamma^0_G(\rz_+;\frakg)$, $\frakg=((L_0,M_0),(L_1,M_1))$, the space of type zero generalized singular 
Green operators on $\rz_+$, cf. Remark $\ref{rem:sigular-green-half-axis}$. 
We can characterize these operators as follows: 
\begin{itemize}
 \item[a$)$] Poisson operators: $\cap_{s',\delta'}\scrL(\cz^{M_0},H^{s^\prime,\delta^\prime}(\rz_+,\cz^{L_1}))$ 
  is the space of all operators of the form 
   $$c\mapsto \varphi c,\qquad \varphi \in\scrS(\rz_+,\cz^{L_1\times M_0}),$$ 
 \item[b$)$] Trace operators of type zero: $\cap_{s,\delta}\scrL(H^{s,\delta}_0(\rz_+,\cz^{L_0}),\cz^{M_1})$  
  is the space of all operators of the form 
   $$u\mapsto \int_0^{+\infty}\varphi(y_n)u(y_n)\,dy_n,\qquad \varphi\in\scrS(\rz_+,\cz^{M_1\times L_0}),$$ 
 \item[c$)$] Green operators of type zero: 
  $\cap_{s,\delta,s',\delta'}\scrL(H^{s,\delta}_0(\rz_+,\cz^{L_0}),H^{s^\prime,\delta^\prime}(\rz_+,\cz^{L_1}))$  
  is the space of all operators of the form 
   $$u\mapsto \int_0^{+\infty}\psi(\cdot,y_n)u(y_n)\,dy_n,\qquad 
       \psi\in\scrS(\rz_+\times\rz_+,\cz^{L_1\times L_0}),$$ 
\end{itemize}
where $\cz^{k\times\ell}:=\scrL(\cz^\ell,\cz^k)$ is (isomorphic to) the space of complex 
$(k\times\ell)$-matrices. Thus if we represent a generalized singular Green symbol $\bfg$ of type zero in block-matrix form 
\eqref{eq:block-matrix} and use $\scrS(\rz^n_+)\cong\scrS(\rz^{n-1},\scrS(\rz_+))$ then 
the Poisson operator can be written as 
\begin{equation*}
 [\op(k)(\mu)u](x)
 =\int e^{ix'\xi'}\wt k(x',\xi',\mu;x_n)\wh{u}(\xi')\,\dbar\xi', \qquad u\in\scrS(\rz^{n-1},\cz^{M_0}), 
\end{equation*} 
the trace operator can be written as 
\begin{equation*}
 [\op(t)(\mu)u](x')
 =\iint_0^{+\infty}e^{ix'\xi'}\wt t(x',\xi',\mu;y_n)\wh{u}(\xi',y_n)\,dy_n\dbar\xi', 
 \quad u\in\scrS(\rz^n_+,\cz^{L_0}),  
\end{equation*} 
the Green operator can be written as 
\begin{equation*}
 [\op(g)(\mu)u](x)
 =\iint_0^{+\infty}e^{ix'\xi'}\wt g(x',\xi',\mu;x_n,y_n)\wh{u}(\xi',y_n)\,dy_n\dbar\xi', 
 \quad u\in\scrS(\rz^n_+,\cz^{L_0}),   
\end{equation*} 
where $x=(x',x_n)\in\rz^{n}_+$ and $\wh{u}$ indicates the $($partial$)$ Fourier transform in the 
$x'$-variable. The involved so-called \emph{symbol-kernels} are functions of $(x',\xi',\mu)$ and take values in 
the spaces of rapidly decreasing functions as indicated above in a), b), and c). In order to simplify notation we 
shall identify symbols and symbol-kernels, i.e, also denote symbol kernels by $k$, $t$, and $g$ rather than 
$\wt k$, $\wt t$, and $\wt g$, respectively.

\begin{theorem}\label{thm:conjugation01}
Let $\frakg=(L_0,M_0),(L_1,M_1))$ and $\bfg(x',\xi',\mu)$ be smooth with values in 
$\scrL(L^2(\rz_+,\cz^{L_0})\oplus\cz^{M_0},L^2(\rz_+,\cz^{L_1})\oplus\cz^{M_1}))$. 
The following are equivalent: 
\begin{itemize}
 \item[a$)$]  $\bfg\in B^{d;0}_G(\rz^{n-1}\times\rpbar;\frakg)$. 
 \item[b$)$]  If $\bfg^*=\bfg(x',\xi',\mu)^*$ is the pointwise adjoint of $\bfg$, then
 \begin{align*} 
  \bfg&\in\mathop{\mbox{\Large$\cap$}}_{s^\prime,\delta^\prime\in\rz} 
      S^{d}(\rz^{n-1}\times\rpbar;L^2(\rz_+,\cz^{L_0})\oplus\cz^{M_0},
    H^{s^\prime,\delta^\prime}(\rz_+,\cz^{L_1})\oplus\cz^{M_1}),\\
  \bfg^*&\in\mathop{\mbox{\Large$\cap$}}_{s^\prime,\delta^\prime\in\rz} 
      S^{d}(\rz^{n-1}\times\rpbar;L^2(\rz_+,\cz^{L_1})\oplus\cz^{M_1},
    H^{s^\prime,\delta^\prime}(\rz_+,\cz^{L_0})\oplus\cz^{M_0}).
 \end{align*}
 \item[c$)$] $\bfg':=\bfkappa^{-1}\bfg\bfkappa\in S^{d}(\rz^{n-1}\times\rpbar;
  \Gamma^0_G(\rz_+;\frakg))$. 
\end{itemize}
For c$)$ recall the notation from $\eqref{eq:kappa-matrix}$. 
The analogous result is true for symbols with finite regularity number, i.e., replacing $B^{d;0}_G$ by 
$\wt B^{d,\nu;0}_G$ and $S^d$ by $\wt S^{d,\nu}$, respectively. 
\end{theorem}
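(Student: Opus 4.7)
My plan is to prove the cycle of implications $(a) \Rightarrow (b) \Rightarrow (c) \Rightarrow (a)$, with the genuine content being the equivalence $(b) \Leftrightarrow (c)$, which codifies the ``un-twisting'' passage $\bfg \leftrightarrow \bfg' := \bfkappa^{-1}\bfg\bfkappa$. The implication $(a) \Rightarrow (b)$ will follow by specialization: the estimate for $\bfg$ is obtained from Definition \ref{def:singular-green01} with $s = \delta = 0$; the estimate for the pointwise adjoint $\bfg^*$ follows because the dilation group-action is unitary on $L^2(\rz_+)$ and $(H^{s,\delta}_0(\rpbar), L^2(\rz_+), H^{-s,-\delta}(\rz_+))$ is a Hilbert-triple in the sense of Definition \ref{def:Hilbert-triple}, so that $\bfg^* \in B^{d;0}_G(\rz^{n-1}\times\rpbar;\frakg^{-1})$, from which the second half of (b) follows by the same specialization.

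For $(b) \Rightarrow (c)$ I will first show, pointwise in $(x',\xi',\mu)$, that $\bfg(x',\xi',\mu) \in \Gamma^0_G(\rz_+;\frakg)$. The hypotheses (combined with duality applied to $\bfg^*$) yield that $\bfg$ simultaneously maps $L^2\oplus\cz^{\bullet}$ into every $H^{s',\delta'}\oplus\cz^{\bullet}$ and $H^{s,\delta}_0\oplus\cz^{\bullet}$ into $L^2\oplus\cz^{\bullet}$; reading this off block-wise produces the Schwartz descriptions a)--c) of $\Gamma^0_G$ recalled above, with the weighted Sobolev targets supplying the required rapid decay. Since $\bfkappa$ is an isomorphism of each Sobolev space, also $\bfg'(x',\xi',\mu) \in \Gamma^0_G$. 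To verify the standard symbol estimates for $\bfg'$ with values in the Fréchet space $\Gamma^0_G$, I apply Leibniz together with the chain-rule identity \eqref{eq:chainrule}: each derivative $D^{\alpha'}_{\xi'}D^j_\mu \bfkappa$ is a finite linear combination of terms of the form $[\xi',\mu]^{-|\alpha'|-j+r}\,(\kappa \Theta_m)\oplus 0$ with $r, m \le |\alpha'|+j$. Substituting this into the Leibniz expansion of $D^{\alpha'}_{\xi'}D^{\beta'}_{x'}D^j_\mu\bfg'$ produces a finite sum of terms; since the $\Theta_m$ commute with $\kappa$ and act continuously on every Sobolev space entering the Fréchet topology of $\Gamma^0_G$, converting each such term by the twisted estimates of $\bfg$ and $\bfg^*$ from (b) yields the required bounds $[\xi',\mu]^{d-|\alpha'|-j}$ in the $\Gamma^0_G$-seminorms. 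The poly-homogeneous structure passes through because $\bfkappa^{-1}\bfg^{(d-j)}\bfkappa$ is standard-homogeneous of degree $d-j$ whenever $\bfg^{(d-j)}$ is twisted-homogeneous of the same degree.

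The implication $(c) \Rightarrow (a)$ is the reverse computation: writing $\bfg = \bfkappa\bfg'\bfkappa^{-1}$ and applying the same Leibniz/chain-rule expansion, standard symbol estimates on $\bfg'$ with values in $\Gamma^0_G$ convert into twisted symbol estimates on $\bfg$ with values in $\scrL(H^{s,\delta}_0\oplus\cz^{M_0}, H^{s',\delta'}\oplus\cz^{M_1})$ for all $s, s', \delta, \delta'$. The case of finite regularity number $\wt B^{d,\nu;0}_G$ follows by exactly the same argument, since the chain-rule identity involves only $[\xi',\mu]$ and the mixed bounds $\spk{\xi'}^{\nu-|\alpha'|}\spk{\xi',\mu}^{d-\nu-j}$ of Section \ref{sec:3.1} combine consistently with the Leibniz expansion.

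I expect the main obstacle to be the bookkeeping in the Leibniz/chain-rule computation: keeping track of which $\Theta_m$ factors end up on which side of each derivative of $\bfg$, verifying that the negative powers of $[\xi',\mu]$ produced by differentiating $\bfkappa$ balance correctly to preserve the overall symbol order, and confirming that the $L^2$-based mapping properties in (b), together with the adjoint condition, genuinely suffice to place the pointwise values of $\bfg$ in $\Gamma^0_G$ — in particular, to produce both the smoothness in both variables and the rapid decay at infinity of the Green, Poisson, and trace symbol-kernels.
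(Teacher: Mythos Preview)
The paper does not actually supply a proof of this theorem: it is stated as a characterization result and the text immediately passes to its reformulation in terms of symbol-kernels (the relations \eqref{eq:symbol-kernel-relation} and the subsequent unnumbered theorem). So there is no argument in the paper to compare against; the result is treated as standard background for Boutet de Monvel's calculus in the operator-valued formulation.

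Your cycle $(a)\Rightarrow(b)\Rightarrow(c)\Rightarrow(a)$ is the natural route and the outline is correct. Two small comments on the step you rightly flag as the delicate one. First, for the pointwise membership $\bfg(x',\xi',\mu)\in\Gamma^0_G$: the first line of (b) gives, for the Green block $g$, that $g:L^2\to\bigcap_{s',\delta'}H^{s',\delta'}=\scrS(\rz_+)$ continuously; the second line gives the same for $g^*$. An operator $T:L^2(\rz_+)\to\scrS(\rz_+)$ with $T^*:L^2(\rz_+)\to\scrS(\rz_+)$ is an integral operator with kernel in $\scrS(\rz_+\times\rz_+)$ --- this is the standard fact you need, and it follows because the two conditions together say that $x_n^\ell D_{x_n}^{\ell'}T\,y_n^m D_{y_n}^{m'}$ is $L^2$-bounded for all $\ell,\ell',m,m'$. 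Second, in the Leibniz/chain-rule bookkeeping, the point to keep straight is that the $\Theta_m$ produced by differentiating $\bfkappa^{\pm1}$ are continuous on $\scrS(\rz_+)$ (hence on $\Gamma^0_G$), so they can be absorbed into the seminorm on the target side; the powers of $[\xi',\mu]^{-1}$ they come with exactly compensate the derivatives spent on $\bfkappa^{\pm1}$, so no order is lost. With those two points made explicit your argument goes through, and the finite-regularity variant is indeed identical since only $[\xi',\mu]$ appears in the $\bfkappa$-derivatives.
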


Part c$)$ of the previous theorem can be reformulated on the level of symbol-kernels. Write 
$\bfg':=\begin{pmatrix}g'&k'\\t'&q'\end{pmatrix}$ and let $g'$, $k'$, and $t'$ denote $($also$)$ the associated 
symbol-kernels. If $g$, $k$, and $t$ are the symbol-kernels associated with $\bfg$, then by direct calculation one 
finds that $\bfg'=\bfkappa^{-1}\bfg\bfkappa$ is equivalent to $q=q'$ and 
\begin{align}\label{eq:symbol-kernel-relation}
\begin{split}
 g(x',\xi',\mu;x_n,y_n)&=[\xi',\mu]g'(x',\xi',\mu;[\xi',\mu]x_n,[\xi',\mu]y_n),\\
 k(x',\xi',\mu;x_n)&=[\xi',\mu]^{1/2}k'(x',\xi',\mu;[\xi',\mu]x_n),\\
 t(x',\xi',\mu;y_n)&=[\xi',\mu]^{1/2}t'(x',\xi',\mu;[\xi',\mu]y_n). 
\end{split}
\end{align}

\begin{theorem}
For $g(x',\xi',\mu;x_n,y_n)$ the following are equivalent: 
\begin{itemize}
 \item[a$)$]  $g$ has the form \eqref{eq:symbol-kernel-relation} with 
  $g'\in S^d_{1,0}(\rz^{n-1}\times\rpbar;\scrS(\rz_+\times\rz_+,\cz^{L_1\times L_0}))$. 
 \item[b$)$] For every choice of $($multi-$)$indices, $g$ satisfies the estimates 
  \begin{align*}
   \|x_n^\ell D_{x_n}^{\ell'}y_n^m D_{y_n}^{m'}
   &D^j_\mu D^{\alpha'}_{\xi'}D^{\beta'}_{x'} g(x',\xi',\mu;x_n,y_n)\|_{L^2(\rz_{+,x_n}\times\rz_{+,y_n})}\\
   &\lesssim [\xi',\mu]^{d-|\alpha'|-j+\ell'-\ell+m'-m}.
  \end{align*}
 \item[c$)$] For every choice of $($multi-$)$indices and arbitrary $1\le p\le+\infty$, $g$ satisfies the estimates 
  \begin{align*}
   \|x_n^\ell D_{x_n}^{\ell'}y_n^m D_{y_n}^{m'}
   &D^j_\mu D^{\alpha'}_{\xi'}D^{\beta'}_{x'} g(x',\xi',\mu;x_n,y_n)\|_{L^p(\rz_{+,x_n}\times\rz_{+,y_n})}\\
   &\lesssim [\xi',\mu]^{d+1-\frac{2}{p}-|\alpha'|-j+\ell'-\ell+m'-m}.
  \end{align*}
\end{itemize}
An analogous result is true for symbol-kernels with finite regularity number $\nu$. Similar results hold for Poisson and 
trace symbol-kernels.  
\end{theorem}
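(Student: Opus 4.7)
The pivot of the proof is the identity, read off from \eqref{eq:symbol-kernel-relation}, that the kernel $g$ is obtained from $g'$ by the twisted pointwise action
\[
  g(x',\xi',\mu;x_n,y_n)=\bigl(\kappa_{[\xi',\mu]}\otimes\kappa_{[\xi',\mu]}\bigr)g'(x',\xi',\mu;x_n,y_n),
\]
with the tensor factors applied in the $x_n$ and $y_n$ slots. Observe first that $(b)$ is precisely $(c)$ with $p=2$, since $d+1-2/p=d$ at $p=2$; hence $(c)\Rightarrow(b)$ is immediate. It therefore suffices to establish $(a)\Leftrightarrow(c)$, after which $(b)\Rightarrow(a)$ will follow from the proof of $(c)\Rightarrow(a)$ specialized to $p=2$, using that the Fréchet topology of $\scrS(\rz_+\times\rz_+)$ is generated equivalently by the seminorms $\|s^\ell D_s^{\ell'}r^m D_r^{m'}f\|_{L^p}$ for any single fixed $p\in[1,\infty]$.

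For $(a)\Rightarrow(c)$ I start from the displayed identity above and apply $D^{\alpha'}_{\xi'}D^j_\mu D^{\beta'}_{x'}$. Invoking \eqref{eq:chainrule} separately in each of the two kernel variables, the derivative $D^{\alpha'}_{\xi'}D^j_\mu g$ becomes a finite linear combination of terms of the shape
\[
  c(\xi',\mu)\,\bigl(\kappa_{[\xi',\mu]}\otimes\kappa_{[\xi',\mu]}\bigr)\bigl(\Theta_{m_1}\otimes\Theta_{m_2}\;D^{\beta'}_{x'}D^{\alpha''}_{\xi'}D^{j'}_\mu g'\bigr),
\]
where $|\alpha''|\le|\alpha'|$, $j'\le j$ and $c(\xi',\mu)$ is a product of $[\xi',\mu]^{-m_1-m_2}$ with factors $D^{\gamma}_{\xi',\mu}[\xi',\mu]$ (each of size $[\xi',\mu]^{1-|\gamma|}$), so that the total order of $c$ is $[\xi',\mu]^{-|\alpha'|-j+|\alpha''|+j'}$. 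Pre-composing with $x_n^{\ell}D_{x_n}^{\ell'}y_n^m D_{y_n}^{m'}$ and passing to the $L^p$-norm, the substitution $s=[\xi',\mu]x_n$, $r=[\xi',\mu]y_n$ produces a factor $[\xi',\mu]^{-2/p}$ from the Jacobian and converts the weighted derivatives into $[\xi',\mu]^{\ell'-\ell+m'-m}s^\ell D_s^{\ell'}r^m D_r^{m'}$. Since $g'\in S^d_{1,0}(\rz^{n-1}\times\rpbar;\scrS(\rz_+\times\rz_+))$ yields $L^p$-boundedness of every polynomial-weighted derivative of $g'$ by $[\xi',\mu]^{d-|\alpha''|-j'}$, all powers collapse to the target $[\xi',\mu]^{d+1-2/p-|\alpha'|-j+\ell'-\ell+m'-m}$.

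For $(c)\Rightarrow(a)$ (and hence $(b)\Rightarrow(a)$) I define
\[
  g'(x',\xi',\mu;s,r):=[\xi',\mu]^{-1}g(x',\xi',\mu;s/[\xi',\mu],r/[\xi',\mu])
\]
and verify the Fréchet seminorm estimates of $g'$ in $L^2$, which is enough by the remark above. By the same chain-rule bookkeeping applied to the inverse change of variables, $s^\ell D_s^{\ell'}r^m D_r^{m'} D^{\beta'}_{x'}D^{\alpha'}_{\xi'}D^j_\mu g'$ is expressible as a linear combination of expressions of the form $[\xi',\mu]^\sigma\cdot x_n^{\ell''}D_{x_n}^{\ell'''}y_n^{m''}D_{y_n}^{m'''}D^{\beta'}_{x'}D^{\alpha''}_{\xi'}D^{j'}_\mu g(x',\xi',\mu;s/[\xi',\mu],r/[\xi',\mu])$ with exponents satisfying $\sigma+(d+1-1-\ell''-m''+\ell'''+m''')=d-|\alpha'|-j$; hypothesis $(b)$ then gives the desired $L^2$ bound after the change of variables $x_n=s/[\xi',\mu]$, $y_n=r/[\xi',\mu]$.

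The proofs for Poisson and trace symbol-kernels are entirely analogous, with one instead of two kernel variables, so the Jacobian factor is $[\xi',\mu]^{-1/p}$ and the prefactor in \eqref{eq:symbol-kernel-relation} is $[\xi',\mu]^{1/2}$, accounting for the modified exponents in the corresponding $L^p$ estimates. The main obstacle throughout is keeping the combinatorics of the $(\xi',\mu)$-chain rule honest, but this is exactly what the $\Theta_\ell$-calculus of Section \ref{sec:2.1.2} was set up to handle: the negative powers of $[\xi',\mu]$ coming from $a^{-m}$ in \eqref{eq:chainrule} exactly compensate the excess $[\xi',\mu]$-factors produced by differentiating $[\xi',\mu]$ itself.
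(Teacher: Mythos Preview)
The paper states this theorem without proof; it is recorded as a standard characterization (such results are well known in the Boutet de Monvel literature, cf.\ e.g.\ Grubb's monograph or Schrohe's survey cited in the paper). Your argument is the natural one and is correct in substance: the whole point is that $g=(\kappa_{[\xi',\mu]}\otimes\kappa_{[\xi',\mu]})g'$, the $L^p$-norm scales under this two-variable dilation by $[\xi',\mu]^{1-2/p}$, the weighted derivatives $x_n^\ell D_{x_n}^{\ell'}y_n^m D_{y_n}^{m'}$ contribute $[\xi',\mu]^{\ell'-\ell+m'-m}$, and the $(\xi',\mu)$-derivatives are handled by the chain rule of Section~\ref{sec:2.1.2}. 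The logical structure $(c)\Rightarrow(b)$ trivially, $(a)\Rightarrow(c)$ and $(b)\Rightarrow(a)$ by the scaling computation, is exactly right.

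One small inaccuracy: the operator arising from $\tfrac{d^\ell}{d\lambda^\ell}(\kappa_\lambda\otimes\kappa_\lambda)$ is not literally $\Theta_{m_1}\otimes\Theta_{m_2}$. The two-variable dilation $(\kappa^{(2)}_\lambda f)(s,r)=\lambda f(\lambda s,\lambda r)$ has generator $s\partial_s+r\partial_r+1$, so the correct analogue is $\Theta^{(2)}_\ell=\prod_{k=0}^{\ell-1}(s\partial_s+r\partial_r+1-k)$, a single polynomial in $s\partial_s$ and $r\partial_r$ rather than a tensor product. This does not affect your argument at all, since $\Theta^{(2)}_\ell$ is still a continuous endomorphism of $\scrS(\rz_+\times\rz_+)$ and the scalar prefactors have the same order; but you should write the correct operator.
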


For the characterization of generalized singular Green symbols of positive typr $r>0$ recall the notations from 
$\eqref{eq:kappa-matrix}$ and $\eqref{eq:partial-plus}$ and note that 
\begin{align}\label{eq:partial-plus02}
 \bfkappa^{-1}(\xi',\mu)\,\boldsymbol{\partial}_+^j\,\bfkappa(\xi',\mu)=
      \begin{pmatrix}[\xi',\mu]^j&0\\0&1\end{pmatrix}\boldsymbol{\partial}_+^j.
\end{align} 
This yields immediately the following result: 

\begin{theorem}\label{thm:conjugation02}
The following are equivalent: 
\begin{itemize}
 \item[a$)$]  $\bfg\in B^{d;r}_G(\rz^{n-1}\times\rpbar;\frakg)$. 
 \item[b$)$] $\bfg':=\bfkappa^{-1}\bfg\bfkappa\in S^{d}(\rz^{n-1}\times\rpbar;
  \Gamma^{r}_G(\rz_+;\frakg))$. 
\end{itemize}
The analogous result is true for symbols with finite regularity number, i.e., replacing $B^{d;r}_G$ by 
$\wt B^{d,\nu;r}_G$ and $S^d$ by $\wt S^{d,\nu}$, respectively. 
\end{theorem}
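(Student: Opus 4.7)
The plan is to deduce Theorem~\ref{thm:conjugation02} from its type-zero predecessor, Theorem~\ref{thm:conjugation01}, by applying the latter term-by-term to the representation of a type-$r$ symbol provided by Definition~\ref{def:singular-green02}, with formula~\eqref{eq:partial-plus02} handling the commutation of $\bfkappa$ through the differential operator $\boldsymbol{\partial}_+^j$.

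For the direction (a) $\Rightarrow$ (b), I would start from a representation $\bfg = \bfg_0 + \sum_{j=1}^r \bfg_j \boldsymbol{\partial}_+^j$ with $\bfg_j \in B^{d-j;0}_G(\rz^{n-1}\times\rpbar;\frakg)$ afforded by definition, and conjugate each summand separately. Inserting $\bfkappa\bfkappa^{-1}$ between $\bfg_j$ and $\boldsymbol{\partial}_+^j$, Theorem~\ref{thm:conjugation01} gives $\bfg_j' := \bfkappa^{-1}\bfg_j\bfkappa \in S^{d-j}(\rz^{n-1}\times\rpbar;\Gamma^0_G(\rz_+;\frakg))$, while \eqref{eq:partial-plus02} produces
\[
\bfg' = \bfg_0' + \sum_{j=1}^r \bfg_j' \begin{pmatrix}[\xi',\mu]^j & 0\\ 0 & 1\end{pmatrix}\boldsymbol{\partial}_+^j.
\]
The scalar factor $[\xi',\mu]^j$ raises the order of the first column of $\bfg_j'$ from $d-j$ to exactly $d$, while the second column remains of order $d-j\le d$; composition with $\boldsymbol{\partial}_+^j$ places the summand into $S^d(\rz^{n-1}\times\rpbar;\Gamma^r_G(\rz_+;\frakg))$.

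For (b) $\Rightarrow$ (a), I would reverse the computation. By Remark~\ref{rem:sigular-green-half-axis}, every element of $\Gamma^r_G(\rz_+;\frakg)$ admits a (non-unique) representation $\gamma_0 + \sum_{j=1}^r \gamma_j\partial_+^j$ with $\gamma_j \in \Gamma^0_G$; since $\boldsymbol{\partial}_+^j$ acts as the identity on the $\cz^{M_0}$-component, the second column of $\gamma_j$ (for $j\ge 1$) may be absorbed into $\gamma_0$, yielding a normalised decomposition in which $\gamma_j$ has vanishing second column for $j\ge 1$. Choosing a continuous linear section of this normalised decomposition, a given $\bfg' \in S^d(\rz^{n-1}\times\rpbar;\Gamma^r_G)$ lifts to $\bfg' = \wt\bfg_0' + \sum_{j\ge1} \wt\bfg_j'\boldsymbol{\partial}_+^j$ with $\wt\bfg_j'\in S^d(\rz^{n-1}\times\rpbar;\Gamma^0_G)$ of vanishing second column for $j\ge1$. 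Setting $\bfg_j' := \wt\bfg_j'\,\mathrm{diag}([\xi',\mu]^{-j},1)$, this vanishing ensures $\bfg_j' \in S^{d-j}(\rz^{n-1}\times\rpbar;\Gamma^0_G)$; Theorem~\ref{thm:conjugation01} then provides $\bfg_j := \bfkappa\bfg_j'\bfkappa^{-1} \in B^{d-j;0}_G$. Using that the scalar matrix $\mathrm{diag}([\xi',\mu]^{-j},1)$ commutes with $\bfkappa$, reassembly via \eqref{eq:partial-plus02} gives $\bfg = \bfg_0 + \sum_{j=1}^r \bfg_j \boldsymbol{\partial}_+^j \in B^{d;r}_G$.

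The main obstacle is producing the section of the surjection $\bigoplus_{j=0}^r \Gamma^0_G \twoheadrightarrow \Gamma^r_G(\rz_+;\frakg)$ in a way that transfers parameter-smoothness to the individual components. This is tractable because the Fréchet topology on $\Gamma^r_G$ is defined through precisely the decomposition in question (in direct analogy with the topology on $B^{d;r}_G$ described after Definition~\ref{def:singular-green02}), so a fixed algebraic section on the half-line extends functorially to symbol families in $(x',\xi',\mu)$. The analogous claim for finite regularity number $\nu$ follows verbatim by invoking the $\wt$-variant of Theorem~\ref{thm:conjugation01}; formula~\eqref{eq:partial-plus02} is insensitive to the symbol class.
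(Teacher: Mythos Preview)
Your proposal is correct and takes essentially the same approach as the paper: the paper regards the theorem as an immediate consequence of the type-zero case (Theorem~\ref{thm:conjugation01}) together with the commutation relation~\eqref{eq:partial-plus02}, and you have simply spelled out those details. Your treatment of the reverse direction---normalising so that the second column of $\wt\bfg_j'$ vanishes for $j\ge 1$, and invoking a continuous section of $\bigoplus_j \Gamma^0_G \twoheadrightarrow \Gamma^r_G$---makes explicit what the paper leaves to the reader.
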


\subsection{Singular Green symbols with expansion at infinity}\label{sec:4.3}

The following definition is a natural consequence of Theorem \ref{thm:conjugation02}. 

\begin{definition}\label{def:conjugation01}
$\wtbfB^{d,\nu;r}_G(\rz^{n-1}\times\rpbar;\frakg)$ consists of all symbols $\bfg$ such that 
 $$\bfg':=\bfkappa^{-1}\bfg\bfkappa\in \wtbfS^{d,\nu}(\rz^{n-1}\times\rpbar;
  \Gamma^{r}_G(\rz_+;\frakg)).$$  
\end{definition}

If $\bfg'$ is as in the previous definition then, due to \eqref{eq:partial-plus02}, it has the form 
 $$\bfg'=\bfg'_0+\sum_{j=1}^{r}\bfg'_j\boldsymbol{\partial}_+^j,\qquad 
   \bfg'_j\in \wtbfS^{d,\nu}(\rz^{n-1}\times\rpbar;\Gamma^{0}_G(\rz_+;\frakg)).$$

\begin{remark}
In view of the above Theorem $\ref{thm:conjugation01}$, 
\begin{align*}
 \wtbfB^{d,\nu;r}_G(\rz^{n-1}\times\rpbar;\frakg)
 &\subset \wt{B}^{d,\nu;r}_G(\rz^{n-1}\times\rpbar;\frakg),\\
 B^{d;r}_G(\rz^{n-1}\times\rpbar;\frakg)
 &\subset \wtbfB^{d,0;r}_G(\rz^{n-1}\times\rpbar;\frakg). 
\end{align*}
\end{remark}

Due to the first inclusion in the previous remark, with each generalized singular Green symbol  
$\bfg\in\wtbfB^{d,\nu;r}_G(\rz^{n-1}\times\rpbar;\frakg)$ we can associate its homogeneous 
principal symbol $\bfg^{(d,\nu)}(x',\xi',\mu)$, $\xi'\not=0$, as in \eqref{eq:symbol-positive-type-02}. 
If $\bfg'=\bfkappa^{-1}\bfg\bfkappa$ as in Definition \ref{def:conjugation01}, then 
\begin{align*}
  \bfg^{(d,\nu)}(x',\xi',\mu)=\bfkappa_{|\xi',\mu|}{\bfg'}^{(d,\nu)}(x',\xi',\mu)\bfkappa^{-1}_{|\xi',\mu|}.
\end{align*}
For the following definition recall Definition \ref{def:symbol-expansion}. 
 
\begin{definition}\label{def:b-tilde-limit}
The \emph{principal limit-symbol} of $\bfg\in\wtbfB^{d,\nu;r}_G(\rz^{n-1}\times\rpbar;\frakg)$ is 
\begin{align*}
    \bfg^\infty_{[d,\nu]}&:=(\bfkappa^{-1}\bfg\bfkappa)^\infty_{[d,\nu]}
    \in S^{\nu}(\rz^{n-1};\Gamma^{r}_G(\rz_+;\frakg)).
\end{align*}
\end{definition}

Again, as a subordinate principal symbol, we define the principal angular symbol 
\begin{align*}
 \bfg_{\spk{d,\nu}}:=(\bfkappa^{-1}\bfg\bfkappa)_{\spk{d,\nu}}
    \in S^{\nu}_\Hom(\rz^{n-1};\Gamma^{r}_G(\rz_+;\frakg)),
\end{align*}
cf. Definition \ref{def:angular-symbol02}. If $\bfg\in B^{d;r}_G(\rz^{n-1}\times\rpbar;\frakg)$ then 
\begin{align*}
 \bfg^\infty_{[d,0]}(x',\xi')=(\bfkappa^{-1}\bfg\bfkappa)^{(d)}(x',0,1)=\bfg^{(d)}(x',0,1)=\bfg_{\spk{0}}(x',\xi').
\end{align*}

\forget{
***************** BRAUCHT MAN DAS??******************

Whenever 
$\bfg_j\in \wtbfB^{d_j,\nu_j;r_j}_G(\rz^{n-1}\times\rpbar;\frakg_j)$, $j=0,1$, and $\frakg_1$ is composable 
with $\frakg_0$, then obviously 
$\bfg_1\bfg_0\in \wtbfB^{d_0+d_1,\nu_0+\nu_1;r_0}_G(\rz^{n-1}\times\rpbar;\frakg_1\frakg_0)$ 
and all principal symbols behave multiplicatively.   

ADJOINT

*****************
}

\subsection{(Un-)twisting on operator-level}\label{sec:4.4}

In Sections \ref{sec:4.1} and \ref{sec:4.2} we have seen that conjugation by 
$\bfkappa=\begin{pmatrix}\kappa&0\\0&1\end{pmatrix}$ establishes, on the level of symbols, 
a one-to-one correspondence between generalized singular Green symbols and symbols with values in 
generalized singular Green operators on the half-axis. 
In this Section we shall show that this correspondence persists on 
operator-level, i.e., conjugation with the group-action operator establishes a one-to-one correspondence 
between the various classes. 

\begin{theorem}\label{thm:conjugation03}
Let $\frakB_G^{d;r}$ represent one choice of $B^{d;r}_G$, $\wt B^{d,\nu;r}_G$, or $\wtbfB^{d,\nu;r}_G$ and 
let $\frakS^d$ represent the corresponding choice $S^d$, $\wt{S}^{d,\nu}$, or $\wtbfS^{d,\nu}$. Let 
$\frakg=((L_0,M_0),(L_1,M_1))$. 
\begin{itemize}
\item[a$)$] Let $\bfg\in\frakB_G^{d;r}(\rz^{n-1}\times\rpbar;\frakg)$. Then there exists a unique symbol 
 $\bfg^\prime\in \frakS^d(\rz^{n-1}\times\rpbar;\Gamma^r_G(\rz_+;\frakg))$ such that
  $$\op(\bfg')(\mu)=\op(\bfkappa^{-1})(\mu)\op(\bfg)(\mu)\op(\bfkappa)(\mu).$$
 We shall write $\bfkappa^{-1}\#\bfg\#\bfkappa:=\bfg^\prime$. One has 
 \begin{align}\label{eq:expansion01}
    \bfkappa^{-1}\#\bfg\#\bfkappa- \bfkappa^{-1}\bfg\,\bfkappa
    \in \frakS^{d-1}(\rz^{n-1}\times\rpbar;\Gamma^r_G(\rz_+;\frakg))
  \end{align}
\item[b$)$] Let $\bfg^\prime\in \frakS^d(\rz^{n-1}\times\rpbar;\Gamma^r_G(\rz_+;\frakg))$. Then there 
 exists a unique symbol $\bfg\in\frakB_G^{d;r}(\rz^{n-1}\times\rpbar;\frakg)$ such that
  $$\op(\bfg)(\mu)=\op(\bfkappa)(\mu)\op(\bfg^\prime)(\mu) \op(\bfkappa^{-1})(\mu).$$ 
 We shall write $\bfkappa\#\bfg^\prime\#\bfkappa^{-1}:=\bfg$. One has 
 \begin{align}\label{eq:expansion02}
  \bfkappa\#\bfg'\#\bfkappa^{-1}-\bfkappa\bfg'\bfkappa^{-1}
  \in \frakB_G^{d-1;r}(\rz^{n-1}\times\rpbar;\frakg).
 \end{align}
\end{itemize}
\end{theorem}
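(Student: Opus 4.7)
The plan is to invoke the Leibniz product formula from Theorem \ref{thm:Leibniz01} and exploit the fact that $\bfkappa(\xi',\mu)$ is independent of $x'$. Regarding $\bfkappa^{\pm1}$ as operator-valued symbols, Theorem \ref{thm:Leibniz01} gives at once $\op(\bfkappa^{-1})(\mu)\op(\bfg)(\mu)\op(\bfkappa)(\mu)=\op(\bfkappa^{-1}\#\bfg\#\bfkappa)(\mu)$, and uniqueness of $\bfg'$ follows from the injectivity of the quantization map on the symbol classes under consideration.

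Because $\bfkappa$ has no $x'$-dependence, the oscillatory-integral representation of the Leibniz product collapses into a pointwise multiplication, $(\bfkappa^{-1}\#\bfg)\#\bfkappa=(\bfkappa^{-1}\#\bfg)\,\bfkappa$, without asymptotic remainder. The remaining inner factor admits the usual asymptotic Leibniz expansion, so that, modulo a symbol of order $-\infty$,
\begin{equation*}
 \bfkappa^{-1}\#\bfg\#\bfkappa
 \;\sim\;\bfkappa^{-1}\bfg\bfkappa
 +\sum_{|\alpha|\ge1}\frac{1}{\alpha!}\bigl(\partial_{\xi'}^{\alpha}\bfkappa^{-1}\bigr)\bigl(D_{x'}^{\alpha}\bfg\bigr)\bfkappa.
\end{equation*}
The $\alpha=0$ summand lies in $\frakS^{d}(\rz^{n-1}\times\rpbar;\Gamma_G^{r}(\rz_+;\frakg))$ by Theorems \ref{thm:conjugation01} and \ref{thm:conjugation02}. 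Hence both the symbol-class claim and \eqref{eq:expansion01} reduce to showing that each $|\alpha|\ge1$ summand belongs to $\frakS^{d-|\alpha|}(\rz^{n-1}\times\rpbar;\Gamma_G^{r}(\rz_+;\frakg))$.

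This is where the chain-rule identity \eqref{eq:chainrule} does the work. Applied to $\kappa_{[\xi',\mu]}^{-1}=\kappa_{[\xi',\mu]^{-1}}$ with $a(\xi',\mu)=[\xi',\mu]^{-1}$, it writes $\partial_{\xi'}^{\alpha}\bfkappa^{-1}$ for $|\alpha|\ge1$ as a finite linear combination of terms $f_{m,\alpha}(\xi',\mu)\,\bfkappa^{-1}\Theta_{m}$, where $f_{m,\alpha}$ is a scalar symbol of order $-|\alpha|$ and $\Theta_{m}$ is the polynomial in $t\partial_{t}$ from the lemma preceding \eqref{eq:chainrule}; $\Theta_{m}$ commutes with $\bfkappa^{-1}$ and preserves $\scrS(\rz_{+})$, hence sends $\Gamma^{r}_{G}(\rz_{+};\frakg)$ into itself. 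Therefore
\begin{equation*}
 \bigl(\partial_{\xi'}^{\alpha}\bfkappa^{-1}\bigr)\bigl(D_{x'}^{\alpha}\bfg\bigr)\bfkappa
 =\sum_{m}f_{m,\alpha}(\xi',\mu)\,\Theta_{m}\bigl(\bfkappa^{-1}\,D_{x'}^{\alpha}\bfg\,\bfkappa\bigr).
\end{equation*}
Since $D_{x'}^{\alpha}\bfg$ still lies in $\frakB_{G}^{d;r}(\rz^{n-1}\times\rpbar;\frakg)$, Theorems \ref{thm:conjugation01} and \ref{thm:conjugation02} place the parenthesis in $\frakS^{d}(\rz^{n-1}\times\rpbar;\Gamma^{r}_{G}(\rz_+;\frakg))$, left composition with $\Theta_{m}$ stays in this class, and multiplication by $f_{m,\alpha}$ reduces the order by $|\alpha|$, as required. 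Part (b) is the mirror argument: $(\bfkappa\#\bfg')\#\bfkappa^{-1}=(\bfkappa\#\bfg')\,\bfkappa^{-1}$ is a pointwise product, the inner Leibniz expansion is handled by applying \eqref{eq:chainrule} to $\partial_{\xi'}^{\alpha}\bfkappa$, and the resulting $|\alpha|\ge1$ summands have the form $f_{m,\alpha}\,\bfkappa\,\Theta_{m}\,(D_{x'}^{\alpha}\bfg')\,\bfkappa^{-1}$, which lie in $\frakB_{G}^{d-|\alpha|;r}(\rz^{n-1}\times\rpbar;\frakg)$ by the reverse direction of Theorems \ref{thm:conjugation01} and \ref{thm:conjugation02}.

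The main obstacle I foresee is the case-by-case verification that the scalar factors $f_{m,\alpha}$ and the operators $\Theta_{m}$ respect each of the three possible choices of $\frakS^{d}$. For $\frakS^{d}=\wtbfS^{d,\nu}$ one has to track that the $[\xi',\mu]^{-1}$-type factors appearing in the chain rule preserve both the regularity number and the existence of an expansion at infinity in the sense of Section \ref{sec:3.2}; the preservation of regularity number is immediate because multiplication by $[\xi',\mu]^{-|\alpha|}$ lowers $d$ by $|\alpha|$ while leaving $\nu$ unchanged, and the preservation of the expansion follows from the flexibility in the choice of the power of $[\xi',\mu]$ noted in Remark \ref{rem:equivalent-expansion}.
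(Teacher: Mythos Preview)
Your strategy coincides with the paper's at the top level: both organize the proof around the Leibniz expansion of $\bfkappa^{-1}\#\bfg\#\bfkappa$, use the $x'$-independence of $\bfkappa$ to kill one of the two expansions, and handle the individual terms by the chain-rule identity \eqref{eq:chainrule} together with Theorems \ref{thm:conjugation01}--\ref{thm:conjugation02}. Your treatment of the terms with $|\alpha|\ge 1$ is correct and matches what the paper obtains in the paragraph following Proposition \ref{prop:osint}.

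The gap is in the remainder. You write ``modulo a symbol of order $-\infty$'' and then argue only term by term. For $\frakS^{d}=S^{d}$ and $\frakS^{d}=\wt S^{d,\nu}$ this is harmless: these classes are characterised by estimates, the Leibniz product with remainder for twisted operator-valued symbols (Section \ref{sec:3.1}) places the $N$-th remainder $r_{N}$ in $\wt S^{d-N,\nu}$ with trivial group-actions, and intersecting over $s,s',\delta,\delta'$ lands you in the $\Gamma^{r}_{G}$-valued class. But for $\frakS^{d}=\wtbfS^{d,\nu}$ the situation is genuinely different. The ambient Leibniz calculus only gives $r_{N}\in\wt S^{d-N,\nu-N}_{1,0}(\rz^{n-1}\times\rpbar;\Gamma^{r}_{G})$, so the difference between $\bfg'$ and any asymptotic sum of your terms lies a priori only in $\wt S^{d-\infty,\nu-\infty}=S^{d-\nu}_{1,0}(\rpbar;S^{-\infty})$, which is \emph{strictly larger} than $\wtbfS^{d-\infty,\nu-\infty}=S^{d-\nu}(\rpbar;S^{-\infty})$ (cf.\ Proposition \ref{prop:smoothing}). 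Knowing that each finite term carries the expansion-at-infinity structure does not force the oscillatory-integral remainder to carry it as well, and there is no Leibniz product theorem for \emph{twisted} $\wtbfS^{d,\nu}$-symbols available to quote.

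This remainder control is exactly the technical core of the paper's proof. The paper first writes $\bfg=\bfkappa\bfg_{0}'\bfkappa^{-1}$ so that the composition becomes $\op(\bfkappa^{-1})\op(\bfkappa\bfg_{0}')$ with amplitude $\kappa_{\omega(\eta',\eta'',\mu)}\bfg_{0}'(y',\eta'',\mu)$, then passes to symbol-kernels, uses the nuclearity of $\scrS(\rz_{+})$ to reduce to product-form kernels, and finally proves (Proposition \ref{prop:osint02} and Lemma \ref{lem:osint03}) that the Kumano-go remainder is an amplitude with values in $\frakS^{d-N}(\rz^{n-1}\times\rpbar;\scrS(\rz_{+}))$. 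That lemma is where the expansion-at-infinity of the remainder is established; your argument does not supply a substitute for it.
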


\forget{
\begin{theorem}\label{thm:conjugation03}
Let $\frakB_G$ represent one choice of $B^{d;r}_G$, $\wt B^{d,\nu;r}_G$, or $\wtbfB^{d,\nu;r}_G$ and 
let $\frakS$ represent the corresponding choice $S^d$, $\wt{S}^{d,\nu}$, or $\wtbfS^{d,\nu}$. 
Then the following are equivalent: 
\begin{itemize}
\item[a$)$] $\bfg\in\frakB_G(\rz^{n-1}\times\rpbar;(L_0,M_0),(L_1,M_1))$. 
\item[b$)$] There exists a symbol 
 $\bfg^\prime\in \frakS(\rz^{n-1}\times\rpbar;\Gamma^r_G((L_0,M_0),(L_1,M_1)))$ such that
  $$\op(\bfg')(\mu)=\op(\bfkappa^{-1})(\mu)\op(\bfg)(\mu)
     \op(\bfkappa)(\mu).$$
\end{itemize}
$\bfg$ and $\bfg'$ determine each other uniquely. In this case let us write 
 $$\bfkappa^{-1}\#\bfg\#\bfkappa:=\bfg^\prime,\qquad 
    \bfkappa\#\bfg^\prime\#\bfkappa^{-1}:=\bfg,$$
respectively. One has 
 \begin{align}\label{eq:expansion01}
 [\xi',\mu]\left\{\bfkappa^{-1}\#\bfg\#\bfkappa-
     \bfkappa^{-1}\bfg\,\bfkappa\right\}
    \in \frakS(\rz^{n-1}\times\rpbar;\Gamma^r_G((L_0,M_0),(L_1,M_1)))
 \end{align}
and 
 \begin{align}\label{eq:expansion02}
 [\xi',\mu]\left\{\bfkappa\#\bfg'\#\bfkappa^{-1}-
  \bfkappa\bfg'\bfkappa^{-1}\right\}
  \in \frakB_G(\rz^{n-1}\times\rpbar;(L_0,M_0),(L_1,M_1)).
\end{align}
\end{theorem}

Note that \eqref{eq:expansion01} means that $\bfkappa^{-1}\#\bfg\#\bfkappa-\bfkappa^{-1}\bfg\,\bfkappa$ 
belongs to 
\begin{align*}
S^{d-1}&(\rz^{n-1}\times\rpbar;\Gamma^r_G(\rz_+;\frakg)),\\ 
\wt{S}^{d-1,\nu}&(\rz^{n-1}\times\rpbar;\Gamma^r_G(\rz_+;\frakg)), \text{ or}\\ 
\wtbfS^{d-1,\nu}&(\rz^{n-1}\times\rpbar;\Gamma^r_G(\rz_+;\frakg))
\end{align*}
according to the choice of $\frakB_G$. The meaning of \eqref{eq:expansion02} is analogous. 
}

The proof of this theorem will occupy the remaining part of this section. 
Let us remark that in the proof we shall obtain a complete asymptotic expansions of $\bfkappa^{-1}\#\bfg\#\bfkappa$ 
and $\bfkappa\#\bfg'\#\bfkappa^{-1}$ which include 
\eqref{eq:expansion01} and \eqref{eq:expansion02} as special cases $($see Proposition \ref{prop:osint} and
the subsequent paragraph$)$. 
The main case to verify is that of type $r=0$ which we will assume from now on. 
We will first prove a$)$ and \eqref{eq:expansion01}. 
The proof makes essential use of the concept of oscillatory-integrals and 
related techniques in the spirit of \cite{Kuma}, here in a variant for amplitude functions with values 
in Fréchet spaces (see Theorem \ref{thm:oscillatory-int}). 

We start out from the representation 
 $\bfg=\begin{pmatrix}g&k\\t&q\end{pmatrix}
    =\bfkappa\bfg_0'\bfkappa^{-1}$
with a symbol 
 $$\bfg'_0=\begin{pmatrix}g_0'&k_0'\\t_0'&q_0'\end{pmatrix}
    \in \frakS^d(\rz^{n-1}\times\rpbar;\Gamma^0_G(\rz_+;\frakg)),\qquad \frakg=((L_0,M_0),(L_1,M_1));$$
this is possible by Theorem \ref{thm:conjugation01} and Definition \ref{def:conjugation01}, respectively. 
Since $\bfkappa$ is independent of $x'$, we find that 
\begin{align}\label{eq:conjugation03}
\begin{split}
  \op(\bfkappa^{-1})(\mu)
  &\op(\bfg)(\mu)\op(\bfkappa)(\mu)
  =\op(\bfkappa^{-1})(\mu)\op(\bfkappa\bfg_0')\\
  &=\begin{pmatrix}\op(\kappa^{-1})(\mu)\op(\kappa g_0')(\mu)
  &\op(\kappa^{-1})(\mu)\op(\kappa k_0')(\mu)\\\op(t_0')(\mu)&\op(q_0')(\mu)\end{pmatrix}.
\end{split}
\end{align}
We thus have to show the existence of 
 $$\bfg'=\begin{pmatrix}g'&k'\\t'&q'\end{pmatrix}
    \in \frakS^d(\rz^{n-1}\times\rpbar;\Gamma^0_G(\rz_+;\frakg))$$
such that $\op(\bfg')(\mu)$ coincides with the right-hand side of \eqref{eq:conjugation03} and such that 
\eqref{eq:expansion01} is valid. To this end we obviously must define $t':=t_0'$ and $q':=q_0'$ and have to find 
$g'$ and $k'$ such that 
 $$\op(g')(\mu)=\op(\kappa^{-1})(\mu)\op(\kappa g_0')(\mu),\qquad 
     \op(k')(\mu)=\op(\kappa^{-1})(\mu)\op(\kappa k_0')(\mu)$$
as well as  
 $$g'-g_0'\in\frakS^{d-1}(\rz^{n-1}\times\rpbar;\Gamma^0_G((L_0,0),(L_1,0)))$$
and 
\begin{align}\label{eq:expansion03}
 k'-k_0'\in \frakS^{d-1}(\rz^{n-1}\times\rpbar;\Gamma^0_G((0,M_0),(L_1,0))).
\end{align}
We shall now verify this for the Poisson part, i.e., show the existence of $k'$ together with \eqref{eq:expansion03}. 
The proof for the Green part $g'$ works in the same way and only comes along with lengthier notation, since the 
symbol-kernel of Green symbol depends on $(x_n,y_n)$ while that of a Poisson symbol only on $x_n$ (note that 
the group-action and the group-action operator act from the left which only has an effect on the $x_n$-variable). 

If $u\in\scrS(\rz^{n-1},\cz^{M_0})$ then, by direct computation,  
\begin{align*}
\begin{split}
 [\op(\kappa^{-1})(\mu)&\op(\kappa k_0')(\mu)u](x')\\
 &=\iint\hspace*{-1.1ex}\iint e^{i(x'-y')\eta'+i(y'-y'')\eta''}p(\eta',y',\eta'',\mu)u(y'')
 \,dy''\dbar\eta''dy'\dbar\eta'
\end{split}
\end{align*}
$($understood as iterated integrals$)$ where 
\begin{align*}
\begin{split}
 p(\eta',y',\eta'',\mu)
 =\kappa(\eta',\mu)^{-1}\kappa(\eta'',\mu)k'_0(y',\eta'',\mu)
 =\kappa_{\omega(\eta',\eta'',\mu)}k_0'(y',\eta'',\mu)
\end{split}
\end{align*}
where 
\begin{align}\label{eq:comp03}
 \omega(\eta',\eta'',\mu):=[\eta',\mu]^{-1}[\eta'',\mu].
\end{align}
Now let us pass from the setting of operator-valued symbols to the formulation with symbol-kernels. 
Recall that the symbol-kernel of $k_0'$ satisfies  
 $$k_0'(x',\xi',\mu;x_n)\in \frakS^d(\rz^{n-1}\times\rpbar;\scrS(\rz_+,\cz^{L_1\times M_0})).$$
Then $p$ has the symbol-kernel 
\begin{align}\label{eq:comp02}
\begin{split}
 p(\eta',y',\eta'',\mu;x_n)
 &=\omega(\eta',\eta'',\mu)^{1/2}k_0'(x',\xi',\mu;\omega(\eta',\eta'',\mu)x_n)\\
 &= (\kappa_{\omega(\eta',\eta'',\mu)}k'_0)(y',\eta'',\mu;x_n)
\end{split}
\end{align}
(i.e., the group-action acts on the $x_n$-variable$)$ and 
\begin{align*}
\begin{split}
 [\op&(\kappa^{-1})(\mu)\op(\kappa k_0')(\mu)u](x)\\
 &=\iint\hspace*{-1.1ex}\iint e^{i(x'-y')\eta'+i(y'-y'')\eta''}p(\eta',y',\eta'',\mu;x_n)u(y'')
 \,dy''\dbar\eta''dy'\dbar\eta'.
\end{split}
\end{align*}
From \eqref{eq:chainrule} it easily follows that, for an arbitrary choice of multi-indices, 
 $$|D^{\alpha'}_{\eta'}D^{\alpha''}_{\eta''}D^{\beta'}_{y'}p(\eta',y',\eta'',\mu;x_n)|\lesssim C(\mu)
     \spk{\eta',\mu}^{-\frac{1}{2}-|\alpha'|}\spk{\eta'',\mu}^{d+\frac{1}{2}-|\alpha''|},$$
where $C(\mu)=1$ if $\frakS^d=S^d$ and $C(\mu)=\spk{\mu}^{(-\nu)_+}$ otherwise. 
Therefore, for fixed $\mu$ and $x_n$, $p(\eta',y',\eta'',\mu;x_n)$ is a double symbol from the class 
$S^{-\frac12,d+\frac12}_{1,0}$ in the sense of \cite[Definition 2.1, Chapter 2]{Kuma} (where the variables 
$(\eta',y',\eta'')$ correspond to $(\xi,x',\xi')$ in \cite{Kuma}). We thus can apply the results of \cite{Kuma}, 
in particular Theorem 2.5 and Theorem 3.1 of Chapter 2, to obtain the following$:$

\begin{proposition}\label{prop:osint}
With the above notation, 
 $$[\op(\kappa^{-1})(\mu)\op(\kappa k_0')(\mu)u](x)=[\op(p_L)(\mu;x_n)u](x'),$$
where 
 $$p_L(x',\xi',\mu;x_n)=\mathrm{Os}-\iint e^{-iy'\eta'}p(\xi'+\eta',x'+y',\xi',\mu;x_n)\,dy'\dbar\eta'$$
is, for each $\mu$ and $x_n$, a symbol in  $S^d_{1,0}(\rz^{n-1};\cz^{L_1\times M_0})$. Moreover, 
\begin{align}\label{eq:expansion}
\begin{split}
 p_L(x',\xi',\mu;x_n)=&\sum_{|\alpha'|=0}^{N-1}\frac{1}{\alpha'!}\partial^{\alpha'}_{\eta'}D^{\alpha'}_{y'}
 p(\eta',y',\xi',\mu;x_n)\Big|_{\substack{y'=x'\\ \eta'=\xi'}}+\\
 &+N\sum_{|\gamma'|=N}\int_0^1\frac{(1-\theta)^{N-1}}{\gamma'!}r_{\gamma',\theta}(x',\xi',\mu;x_n)\,d\theta
\end{split}
\end{align}
where 
 $$r_{\gamma',\theta}(x',\xi',\mu;x_n)
    =\mathrm{Os}-\iint e^{-iy'\eta'}(\partial^{\gamma'}_{\eta'}D^{\gamma'}_{y'} p)
    (\xi'+\theta\eta',x'+y',\xi',\mu;x_n)\,dy'\dbar\eta'$$
is, for each $\mu$ and $x_n$, a symbol in  $S^{d-N}_{1,0}(\rz^{n-1};\cz^{L_1\times M_0})$. 
\end{proposition}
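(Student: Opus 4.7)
The plan is to recognize the iterated integral defining $[\op(\kappa^{-1})(\mu)\op(\kappa k_0')(\mu)u](x)$ as a pseudodifferential operator with a double symbol in the sense of \cite[Chapter 2]{Kuma}, and to invoke the reduction and expansion theorems of that chapter. For this I regard $\mu$ and $x_n$ as fixed parameters and view $p(\eta',y',\eta'',\mu;x_n)$ from \eqref{eq:comp02} as a $\cz^{L_1\times M_0}$-valued amplitude in $(\eta',y',\eta'')$.

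The first step is to verify the double-symbol estimate already displayed just before the proposition. The dependence on $(\eta'',y')$ falls directly onto $k_0'(y',\eta'',\mu;\,\cdot\,)$ and supplies the decay $\spk{\eta'',\mu}^{d-|\alpha''|}$ in $\eta''$ with boundedness in $y'$. The dependence on $\eta'$ lies entirely in the conjugating factor $\kappa_{\omega(\eta',\eta'',\mu)}$; expanding $D^{\alpha'}_{\eta'}\kappa_\omega k_0'$ via \eqref{eq:chainrule} produces a finite sum of terms of the form $\omega^{-m}(D^{\alpha'_1}_{\eta'}\omega)\cdots(D^{\alpha'_m}_{\eta'}\omega)\kappa_\omega\Theta_m k_0'$. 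Since $\omega=[\eta',\mu]^{-1}[\eta'',\mu]$, each derivative $D_{\eta'}\omega$ contributes exactly one factor $\spk{\eta',\mu}^{-1}$, while the prefactor $\omega^{1/2}$ in \eqref{eq:comp02} accounts for the asymmetric exponents $-1/2$ and $d+1/2$. Uniformity in $x_n$ is automatic, since $\kappa_\omega\Theta_m k_0'$ remains in $\scrS(\rz_+,\cz^{L_1\times M_0})$ with seminorms bounded in terms of those of $k_0'$ and a power of $\omega$. Altogether this places $p$ in Kumano-go's class $S^{-1/2,d+1/2}_{1,0}$, with his variables $(\xi,x',\xi')$ playing the role of our $(\eta',y',\eta'')$.

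The second step is then a direct citation. Theorem 2.5 of Chapter 2 of \cite{Kuma} yields the left-reduced symbol $p_L(x',\xi',\mu;x_n)\in S^d_{1,0}(\rz^{n-1};\cz^{L_1\times M_0})$ as the stated oscillatory integral, together with the equality of the two pseudodifferential operators on $\scrS(\rz^{n-1},\cz^{M_0})$; Theorem 3.1 of the same chapter then furnishes the Taylor expansion \eqref{eq:expansion} and the order estimate $r_{\gamma',\theta}\in S^{d-N}_{1,0}(\rz^{n-1};\cz^{L_1\times M_0})$ for the remainder.

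The main obstacle is that \cite{Kuma} is formulated for scalar symbols, whereas our amplitude is matrix-valued and depends on the additional parameters $\mu$ and $x_n$. For the proposition as stated this is essentially routine, because every argument is linear and componentwise; the one point requiring care is to track that the constants in Kumano-go's estimates can be written as finite combinations of $\scrS(\rz_+)$-seminorms of $k_0'$ with explicit polynomial weights in $\mu$ dictated by \eqref{eq:chainrule}. This parametric form of the reduction is precisely what is needed to feed the result into \eqref{eq:expansion01} and into the remaining parts of the proof of Theorem \ref{thm:conjugation03}.
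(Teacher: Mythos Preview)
Your proposal is correct and follows essentially the same route as the paper: verify that for fixed $\mu$ and $x_n$ the amplitude $p(\eta',y',\eta'',\mu;x_n)$ lies in Kumano-go's double-symbol class $S^{-1/2,d+1/2}_{1,0}$ (the paper does this in the paragraph immediately preceding the proposition), then invoke \cite[Chapter~2, Theorems~2.5 and~3.1]{Kuma}. The paper in fact does not give a separate proof of this proposition at all---it simply states it as the outcome of those citations---so your write-up is if anything more detailed than the original, and your remarks on matrix values and parametric dependence anticipate exactly what is needed for the subsequent amplitude arguments.
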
 

By virtue of \eqref{eq:chainrule} the terms in the first sum on the right-hand side of 
\eqref{eq:expansion} satisfiy 
 $$\partial^{\alpha'}_{\eta'}D^{\alpha'}_{y'}
    p(\eta',y',\xi',\mu;x_n)\Big|_{\substack{y'=x'\\ \eta'=\xi'}}\in 
     \frakS^{-|\alpha'|}(\rz^{n-1}\times\rpbar;\scrS(\rz_+,\cz^{L_1\times M_0}));$$
note here that $\omega(\xi',\xi',\mu)\equiv1$. 
In particular, the first term (with $\alpha'=0)$ is 
 $$p(\eta',y',\mu;x_n)\Big|_{\substack{y'=x'\\ \eta'=\xi'}}=k_0'(x',\xi',\mu;x_n).$$
Hence, to complete the proof of Theorem \ref{thm:conjugation02} it remains to show that 
\begin{equation}\label{eq:osint01}
 \int_0^1 (1-\theta)^{N-1}
    r_{\gamma',\theta}(x',\xi',\mu;x_n)\,d\theta\in 
    \frakS^{d-N}(\rz^{n-1}\times\rpbar;\scrS(\rz_+,\cz^{L_1\times M_0}))
\end{equation}
for every $\gamma'$ with $|\gamma'|=N$. 
To this end we make use of the following result:  

\begin{theorem}[and Definition]\label{thm:oscillatory-int}
Let $F$ be a Frèchet space whose topology is given by a system of semi-norms 
$\{\trinorm{\cdot}^{(\ell)}\mid \ell\in\nz\}$. A smooth function $a(y',\eta')$ on $\rz^{n-1}\times\rz^{n-1}$ 
with values in $F$ is called an \emph{amplitude function} if there exists a sequence 
$\tau=(\tau_\ell)$ of real numbers such that 
 $$\trinorm{D^{\alpha'}_{\eta'}D^{\beta'}_{y'}a(y',\eta')}^{(\ell)}\lesssim \spk{\eta'}^{\tau_\ell}$$
uniformly in $(y',\eta')$ for every choice of indices $\ell$, $\alpha'$, $\beta'$. Then 
 $$\mathrm{Os}[a]:=\mathrm{Os}-\iint e^{iy'\eta'}a(y',\eta')\,dy'\dbar\eta':=\lim_{\eps\to 0}\iint e^{iy'\eta'}
     \chi(\eps y',\eps\eta')a(y',\eta')\,dy'\dbar\eta'$$
exists in $F$, where $\chi\in\scrC^\infty_{\mathrm{comp}}(\rz^{n-1}\times\rz^{n-1})$ with $\chi(0,0)=1$. 
The space $\calA^\tau(\rz^{n-1};F)$ of all such amplitudes is a Fréchet space in the obvious way and the map 
$a\mapsto\mathrm{Os}[a]$ is continuous. 
\end{theorem}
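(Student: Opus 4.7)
The plan is to adapt the classical oscillatory-integral construction in the spirit of \cite{Kuma} to the Fr\'echet-space-valued setting, working seminorm by seminorm. The key tool is the pair of formally self-adjoint operators
\begin{align*}
 L_1=\spk{\eta'}^{-2}(1-\Delta_{y'}),\qquad L_2=\spk{y'}^{-2}(1-\Delta_{\eta'}),
\end{align*}
both of which preserve $e^{iy'\eta'}$. Integration by parts transforms the regularized integral
$I_\eps(a):=\iint e^{iy'\eta'}\chi(\eps y',\eps\eta')\,a(y',\eta')\,dy'\dbar\eta'$
into
$\iint e^{iy'\eta'}L_2^{M}L_1^{N}[\chi(\eps y',\eps\eta')a(y',\eta')]\,dy'\dbar\eta'$
for every $N,M\in\nz_0$.

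Fix a seminorm index $\ell$ and choose $N_\ell,M_\ell\in\nz$ with $2N_\ell-\tau_\ell>n-1$ and $2M_\ell>n-1$. Expanding $L_2^{M_\ell}L_1^{N_\ell}$ by Leibniz's rule and using the hypothesis on $a$, the estimate $|D^{\gamma'}_{\eta'}\spk{\eta'}^{-2N_\ell}|\lesssim\spk{\eta'}^{-2N_\ell-|\gamma'|}$, and the boundedness of all partial derivatives of $\chi$ $($so that each derivative of $\chi(\eps\cdot,\eps\cdot)$ is uniformly bounded for $\eps\in(0,1]$, the extra $\eps$-factors being at most one$)$, one obtains
\begin{align*}
 \trinorm{L_2^{M_\ell}L_1^{N_\ell}[\chi(\eps y',\eps\eta')\,a(y',\eta')]}^{(\ell)}
  \lesssim\spk{y'}^{-2M_\ell}\spk{\eta'}^{\tau_\ell-2N_\ell}
\end{align*}
uniformly in $\eps\in(0,1]$. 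The right-hand side belongs to $L^1(\rz^{n-1}\times\rz^{n-1})$, so $I_\eps(a)$ exists as an $F$-valued Bochner integral whose $\trinorm{\cdot}^{(\ell)}$-seminorm is uniformly bounded. Seminorm-wise dominated convergence, together with the fact that $\chi(\eps\cdot,\eps\cdot)\to 1$ pointwise and that all positive-order derivatives $D^\alpha[\chi(\eps\cdot,\eps\cdot)]$ tend to zero pointwise, then yields the existence of the limit $\mathrm{Os}[a]$ in $F$.

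Independence of the cutoff $\chi$ $($subject only to $\chi(0,0)=1$$)$ is verified by interpolating between two admissible cutoffs along the straight-line path $\chi_s=(1-s)\chi+s\wt\chi$, which reduces the claim to showing that the analogous limit vanishes when $\chi$ is replaced by a compactly supported $\psi$ with $\psi(0,0)=0$; this is immediate from the same dominated-convergence argument since $\psi(\eps y',\eps\eta')\to 0$ pointwise. Equipping $\calA^\tau(\rz^{n-1};F)$ with the seminorms
\begin{align*}
 q^{(\ell)}_{\alpha',\beta'}(a)=\sup_{(y',\eta')\in\rz^{n-1}\times\rz^{n-1}}\spk{\eta'}^{-\tau_\ell}\trinorm{D^{\alpha'}_{\eta'}D^{\beta'}_{y'}a(y',\eta')}^{(\ell)}
\end{align*}
makes it a Fr\'echet space, and the integration-by-parts representation immediately yields
$\trinorm{\mathrm{Os}[a]}^{(\ell)}\lesssim\sum q^{(\ell)}_{\alpha',\beta'}(a)$
with the sum taken over $|\alpha'|\le 2M_\ell$, $|\beta'|\le 2N_\ell$, proving continuity of $a\mapsto\mathrm{Os}[a]$.

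The principal difficulty is purely combinatorial bookkeeping: tracking the many Leibniz terms produced by $L_2^{M_\ell}L_1^{N_\ell}$ and verifying that the $\eps$-dependence generated by differentiating $\chi(\eps\cdot,\eps\cdot)$ never destroys the $L^1$-majorant. Since each such $\eps$-factor is dominated by one, this absorption is automatic, and no analytical idea beyond the scalar-valued construction of \cite{Kuma} enters the proof.
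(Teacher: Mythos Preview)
The paper does not give a proof of this statement; it is presented as a standard result (a Fr\'echet-space-valued version of the classical oscillatory-integral construction from \cite{Kuma}) and is simply invoked as a tool. Your proof is the standard argument---reduction to the scalar case seminorm by seminorm via iterated integration by parts with the operators $L_1=\spk{\eta'}^{-2}(1-\Delta_{y'})$ and $L_2=\spk{y'}^{-2}(1-\Delta_{\eta'})$, followed by dominated convergence---and it is correct. There is nothing to compare; you have supplied exactly the proof one would expect for a result the paper quotes without argument.
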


If $\tau^0,\tau^1$ are two sequences and $B:F_0\times F_1\to F$ is a bilinear continuous mapping between 
Fréchet spaces, then $B$ induces a continuous map 
 $$B:\calA^{\tau^0}(\rz^{n-1};F_0)\times \calA^{\tau^1}(\rz^{n-1};F_1)\lra 
     \calA^{\tau}(\rz^{n-1};F),$$
with a resulting sequence $\tau=\tau(\tau^0,\tau^1)$.  

\begin{proposition}\label{prop:osint02}
With the above notation and $|\gamma'|=N$, the function 
 $$a(y',\eta';x',\xi',\mu,\theta;x_n):= 
     (\partial^{\gamma'}_{\eta'}D^{\gamma'}_{y'} p)(\xi'+\theta\eta',x'+y',\xi',\mu;x_n)$$
is an amplitude function of $(y',\eta')$ with values in 
$$\scrC\big([0,1],\frakS^{d-N}(\rz^{n-1}\times\rpbar;\scrS(\rz_+,\cz^{L_1\times M_0}))\big).$$   
\end{proposition}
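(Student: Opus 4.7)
The plan is to compute $\partial^{\gamma'}_{\eta'}D^{\gamma'}_{y'}p$ via the chain rule, substitute $\eta'\mapsto\xi'+\theta\eta'$, $\eta''\mapsto\xi'$, $y'\mapsto x'+y'$, and then verify the amplitude estimate with values in $\scrC([0,1],\frakS^{d-N}(\rz^{n-1}\times\rpbar;\scrS(\rz_+,\cz^{L_1\times M_0})))$. Additional amplitude derivatives $D^{\alpha'}_{\eta'}$ and $D^{\beta'}_{y'}$ applied to $a$ simply increase the total $\eta'$- and $y'$-derivative orders of $p$ (up to bounded factors of $\theta$), so it suffices to estimate arbitrarily high derivatives $\partial^{\gamma}_{\eta'}D^{\beta}_{y'}p$ evaluated at the substitution.

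Since $\omega(\eta',\eta'',\mu)=[\eta',\mu]^{-1}[\eta'',\mu]$ is independent of $y'$, all $y'$-derivatives only hit $k_0'(y',\eta'',\mu;x_n)$. Combining the chain rule \eqref{eq:chainrule} applied to $\omega(\eta',\eta'',\mu)$ with the formula for $D^\ell_\lambda\kappa_\lambda$ from Section \ref{sec:2.1.2} yields, for any multi-index $\gamma$, a finite linear combination of terms of the shape
$$\Phi_m(\eta',\eta'',\mu)\,(\kappa_{\omega(\eta',\eta'',\mu)}\Theta_m D^{\beta}_{y'}k_0')(y',\eta'',\mu;x_n),\qquad 1\le m\le|\gamma|,$$
where $\Phi_m = \omega^{-m}\prod_j D^{\alpha'_j}_{\eta'}\omega$ with $\alpha'_1+\cdots+\alpha'_m=\gamma$. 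From $D^{\alpha'_j}_{\eta'}\omega\lesssim[\eta',\mu]^{-1-|\alpha'_j|}[\eta'',\mu]$ and $\omega^{-m}=[\eta',\mu]^m[\eta'',\mu]^{-m}$, the scalar $\Phi_m$ is $\lesssim [\eta',\mu]^{-|\gamma|}$ (no $[\eta'',\mu]$-dependence at leading order), and analogous estimates hold for its $\eta''$- and $\mu$-derivatives.

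Next I apply the substitution and invoke Peetre's inequality $[\xi'+\theta\eta',\mu]^{\pm 1}\lesssim\spk{\eta'}[\xi',\mu]^{\pm 1}$, uniformly in $\theta\in[0,1]$. This turns the estimate on $\Phi_m$ into $|D^{\alpha'}_{\xi'}D^j_\mu\Phi_m|\lesssim\spk{\eta'}^{|\gamma|+|\alpha'|+j}[\xi',\mu]^{-|\gamma|-|\alpha'|-j}$ after substitution, so the scalar factor is a symbol of order $-|\gamma|$ in $(\xi',\mu)$ with polynomial growth in $\spk{\eta'}$. For the operator-valued factor, $\Theta_m$ and $D^{\beta}_{y'}$ preserve the class $\frakS^d(\rz^{n-1}\times\rpbar;\scrS(\rz_+,\cz^{L_1\times M_0}))$, and $\kappa_{\omega_\theta}f(x_n)=\omega_\theta^{1/2}f(\omega_\theta x_n)$ with $\omega_\theta^{\pm 1}\lesssim\spk{\eta'}$ changes each Schwartz seminorm only by a power of $\omega_\theta$; $\xi'$- and $\mu$-differentiation of $\omega_\theta$ generates further $\spk{\eta'}^k[\xi',\mu]^{-k}$ factors by the same chain-rule mechanism. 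Taking $|\gamma|\ge N$ yields total order $d-N$ in $(\xi',\mu)$ with polynomial growth in $\spk{\eta'}$, which is the amplitude condition of Theorem \ref{thm:oscillatory-int}.

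The main obstacle is to track the \emph{regularity number} in the $\wt S^{d,\nu}$ and $\wtbfS^{d,\nu}$ cases, where order and regularity must both drop by $N$. This succeeds because the prefactor $[\xi',\mu]^{-N}$ contains no $\spk{\xi'}^{-1}$-factor, and $\xi'$-differentiation exchanges $[\xi',\mu]^{-k}$ only against itself; the bounds defining $\wt S^{d-N,\nu-N}$ are thus satisfied with the correct pattern $\spk{\xi'}^{\nu-N-|\alpha'|}\spk{\xi',\mu}^{d-\nu-j}$. For $\wtbfS^{d-N,\nu-N}$ I would additionally derive the expansion at infinity from the expansion of $k_0'$ via Remark \ref{rem:equivalent-expansion}: the scaling $\omega_\theta$ and the scalar $\Phi_m$ both admit asymptotic expansions in negative powers of $[\xi',\mu]$, and the coefficients assemble into the principal limit and lower-order components of the full term. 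Continuity in $\theta\in[0,1]$ is immediate from the smooth dependence of $\omega_\theta$ on $\theta$.
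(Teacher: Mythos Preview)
Your direct chain-rule computation gets the $S^{d-N}_{1,0}$-type estimates right, but there is a genuine gap: the target space $\frakS^{d-N}$ is a \emph{poly-homogeneous} class (namely $S^{d-N}$, $\wt S^{d-N,\nu}$, or $\wtbfS^{d-N,\nu}$), and your estimates do not establish poly-homogeneity. After the substitution you have terms of the form
\[
\Phi_m(\xi'+\theta\eta',\xi',\mu)\,\bigl(\kappa_{\omega_\theta}\Theta_m D^{\gamma'}_{y'}k_0'\bigr)(x'+y',\xi',\mu;x_n),
\]
and the bounds you write down place this in the corresponding $\frakS^{d-N}_{1,0}$-class, but you never exhibit homogeneous components in $(\xi',\mu)$ nor the expansion at infinity. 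Your last paragraph asserts that ``the coefficients assemble'' but gives no mechanism; the coupling of $(\xi',\mu)$ through $\omega_\theta$ \emph{and} through $k_0'$ simultaneously makes a direct expansion argument delicate. Also, a minor but telling error: the regularity number does \emph{not} drop by $N$---it remains $\nu$. The $N$ derivatives in $\eta'$ hit only $\omega$, which is strongly parameter-dependent of order $0$, so the prefactor $\Phi_m$ lies in $S^{-N}\subset\wt S^{-N,0}$; multiplied against $\wt S^{d,\nu}$ this gives $\wt S^{d-N,\nu}$.

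The paper's device that you are missing is the tensor-product reduction via nuclearity of $\scrS(\rz_+)$: one may assume $k_0'(x',\xi',\mu;x_n)=r(x',\xi',\mu)\phi(x_n)$. Then $p=r\cdot p_\phi$ factors, with $r$ depending only on $(y',\eta'',\mu)$ and $p_\phi$ only on $(\eta',\eta'',\mu;x_n)$, so that $\partial^{\gamma'}_{\eta'}D^{\gamma'}_{y'}p=(D^{\gamma'}_{y'}r)(\partial^{\gamma'}_{\eta'}p_\phi)$ splits cleanly. After the substitution, $D^{\gamma'}_{y'}r(x'+y',\xi',\mu)$ is trivially an amplitude in $y'$ with values in $\frakS^d$ (this is where the full $\frakS$-structure is carried), while $(\partial^{\gamma'}_{\eta'}p_\phi)(\xi'+\theta\eta',\xi',\mu;x_n)$ is an amplitude in $\eta'$ with values merely in $S^{-N}(\rz^{n-1}\times\rpbar;\scrS(\rz_+))$. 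The latter is proved in Lemma~\ref{lem:osint03} by Taylor-expanding in $\eta'$ around $\eta'=0$ (where $\omega=1$), which produces the homogeneous components explicitly. The product then lands in $\frakS^d\cdot S^{-N}\subset\frakS^{d-N}$ for each of the three choices of $\frakS$, with no need to track regularity or expansion at infinity through the group-action factor.
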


This result yields that the oscillatory integral defining $r_{\gamma',\theta}$ in Proposition \ref{prop:osint} 
converges in $\scrC\big([0,1],\frakS^{d-N}(\rz^{n-1}\times\rpbar;\scrS(\rz_+,\cz^{L_1\times M_0}))\big)$, 
hence integration with respect to $\theta$ yields 
\eqref{eq:osint01} and thus finishes the proof of Theorem \ref{thm:conjugation02}. 

The proof of Proposition $\ref{prop:osint02}$ is again split into different steps. To begin with, 
let us recall that $\scrS(\rz_+)$ is a nuclear Fréchet space and that 
 $$\scrS(\rz_+,F)=\scrS(\rz_+)\wh\otimes_\pi F$$
with the completed projective tensor-product. By a classical result thus every function $u\in \scrS(\rz_+,F)$ 
can be written as a series 
  $$u(t)=\sum_{\ell=0}^{+\infty} c_\ell u_\ell(t) f_\ell$$
where $(c_\ell)\subset\cz$ is an absolutely summable numerical sequence and $(u_\ell)$, $(f_\ell)$ are 
infinitesimal sequences in  $\scrS(\rz_+)$ and $F$, respectively. Applying this to the symbol-kernel $k_0'$ 
it is therefore no restriction of generality to assume that $k_0'$ has the form 
 $$k_0'(x',\xi',\mu;x_n)=r(x',\xi',\mu)\phi(x_n)$$
with $r(x',\xi',\mu)\in \frakS^d(\rz^{n-1}\times\rpbar;\cz^{L_1\times M_0})$ and $\phi(x_n)\in\scrS(\rz_+)$. 
Correspondingly, 
\begin{align}\label{eq:amplitude}
 p(\eta',y',\eta'',\mu;x_n)=r(y',\eta'',\mu) 
 \underbrace{(\kappa_{\omega(\eta',\eta'',\mu)}\phi)(x_n)}_{=:p_\phi(\eta',\eta'',\mu;x_n)}.
\end{align}
It is obvious that  
 $$a_1(y',x',\xi',\mu):=D^{\gamma'}_{y'}r(x'+y',\xi',\mu) $$
is an amplitude function in $y'$ with values in $\frakS^d(\rz^{n-1}\times\rpbar;\cz^{L_1\times M_0})$. 
In fact, whenever $f\in\scrC^\infty_b(\rz^{n-1},F)$ with some Fréchet space $F$, then $f(x'+y')$ 
is an amplitude function of $y'$ with values in $\scrC^\infty_b(\rz^{n-1},F)$. 

\begin{lemma}\label{lem:osint03}
With the above notation and $|\gamma'|=N$, the function 
 $$a_\phi(\eta',\xi',\mu,\theta;x_n):= 
     (\partial^{\gamma'}_{\eta'}p_\phi)(\xi'+\theta\eta',\xi',\mu;x_n)$$
is an amplitude in $\eta'$ with values in 
$\scrC\big([0,1],S^{-N}(\rz^{n-1}\times\rpbar;\scrS(\rz_+,\cz^{L_1\times M_0}))\big)$.  
\end{lemma}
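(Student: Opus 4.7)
The plan is to expand $\partial^{\gamma'}_{\eta'}p_\phi$ via the chain-rule formula \eqref{eq:chainrule} and then extract amplitude- and symbol-type estimates by means of Peetre-type inequalities. Since $\omega(\eta',\eta'',\mu)=[\eta'',\mu]/[\eta',\mu]$ depends on $\eta'$ only through $[\eta',\mu]^{-1}$, \eqref{eq:chainrule} yields
\begin{equation*}
 \partial^{\gamma'}_{\eta'}p_\phi(\eta',\eta'',\mu;x_n)
 =\sum_{\substack{1\le m\le N\\ \alpha_1+\cdots+\alpha_m=\gamma'\\|\alpha_j|\ge1}}
 c_{\alpha_1,\ldots,\alpha_m}\,q_{\alpha_1,\ldots,\alpha_m}(\eta',\mu)\,(\kappa_{\omega}\Theta_m\phi)(x_n),
\end{equation*}
with scalar prefactor $q_{\alpha_1,\ldots,\alpha_m}(\eta',\mu):=[\eta',\mu]^m\prod_{j=1}^m D^{\alpha_j}_{\eta'}[\eta',\mu]^{-1}\in S^{-N}_{1,0}(\rz^{n-1}\times\rpbar)$ which is \emph{independent} of $\eta''$, because the $[\eta'',\mu]^m$ produced by $\prod_j D^{\alpha_j}_{\eta'}\omega$ cancels against the $\omega^{-m}$ in \eqref{eq:chainrule}.

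I would then substitute $\eta''\mapsto\xi'$, $\eta'\mapsto\xi'+\theta\eta'$, and reduce all covariable weights to $\spk{\xi',\mu}$ via the two-sided Peetre-type inequality
\begin{equation*}
 \spk{\xi'+\theta\eta',\mu}^{s}\lesssim\spk{\eta'}^{|s|}\spk{\xi',\mu}^{s},\qquad s\in\rz,
\end{equation*}
uniform in $\theta\in[0,1]$. The dilation parameter now reads $\omega(\xi'+\theta\eta',\xi',\mu)=[\xi',\mu]/[\xi'+\theta\eta',\mu]$, which Peetre controls by $\spk{\eta'}$ on both sides; combined with the elementary bound $\trinorm{\kappa_\lambda u}^{(\ell)}_{\scrS(\rz_+)}\lesssim\max(\lambda,\lambda^{-1})^{M(\ell)}$ this shows that every Schwartz seminorm of $\kappa_{\omega}\Theta_m\phi$ is bounded uniformly in $(\xi',\mu,\theta)$ by a polynomial in $\spk{\eta'}$ alone.

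Finally, derivatives of $a_\phi$ in $\xi'$ or $\mu$ distribute over $q_{\alpha_1,\ldots,\alpha_m}\cdot\kappa_\omega\Theta_m\phi$ by Leibniz, with $\xi'$-derivatives of $\kappa_\omega$ treated by a further application of \eqref{eq:chainrule}; each such derivative gains a factor $\spk{\xi'+\theta\eta',\mu}^{-1}$ at the prefactor, which is absorbed into an extra $\spk{\xi',\mu}^{-1}$ at only the price of another polynomial factor in $\spk{\eta'}$. Derivatives in $\eta'$ act solely through the argument $\xi'+\theta\eta'$ and reproduce the same expressions with $\gamma'$ replaced by $\gamma'+\alpha'$ (times $\theta^{|\alpha'|}\le1$); the extra decay $\spk{\xi',\mu}^{-|\alpha'|}$ thereby gained absorbs the additional $\spk{\eta'}$-powers coming from the Peetre exchange, so that for every Schwartz seminorm $\trinorm{\cdot}^{(\ell)}$ and every choice of indices $\alpha',\beta',j$ one obtains, uniformly in $\theta\in[0,1]$,
\begin{equation*}
 \trinorm{D^{\alpha'}_{\eta'}D^{\beta'}_{\xi'}D^{j}_{\mu}\,a_\phi(\eta',\xi',\mu,\theta;\cdot)}^{(\ell)}
 \lesssim\spk{\eta'}^{\tau(\ell,\beta',j)}\spk{\xi',\mu}^{-N-|\beta'|-j}.
\end{equation*}
This is precisely the estimate needed to identify $a_\phi$ as an amplitude in $\eta'$ with values in $\scrC([0,1],S^{-N}_{1,0}(\rz^{n-1}\times\rpbar;\scrS(\rz_+,\cz^{L_1\times M_0})))$; continuity in $\theta$ is automatic from the smooth dependence of all ingredients. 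The main obstacle is the bookkeeping in this last step: verifying in every term produced by the combined Leibniz- and chain-rule expansions that the $\spk{\xi',\mu}^{-1}$-decay is always gained at the prefactor and that the $\spk{\eta'}$-growth introduced by the Peetre exchanges can be made independent of $|\alpha'|$.
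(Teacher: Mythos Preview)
Your argument yields only that $a_\phi$ is an amplitude with values in the H\"ormander class $S^{-N}_{1,0}(\rz^{n-1}\times\rpbar;\scrS(\rz_+))$, as you state explicitly in your final sentence. The lemma, however, asserts values in the \emph{poly-homogeneous} class $S^{-N}$. This is not a cosmetic distinction: the lemma feeds into Theorem~\ref{thm:conjugation03}, whose content is precisely that conjugation by the group-action operator preserves the poly-homogeneous classes $\frakS^d$ (standing for $S^d$, $\wt S^{d,\nu}$, or $\wtbfS^{d,\nu}$). If the remainders $r_{\gamma',\theta}$ in \eqref{eq:expansion} were only known to land in $S^{d-N}_{1,0}$, the conclusion \eqref{eq:expansion01} in the poly-homogeneous setting would not follow.

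Peetre-type estimates alone cannot recover poly-homogeneity here, because after the substitution $\eta'\mapsto\xi'+\theta\eta'$ your prefactors $q_{\alpha_1,\ldots,\alpha_m}(\xi'+\theta\eta',\mu)$ are \emph{translates} of homogeneous symbols, and a translate of a homogeneous function is not homogeneous. The paper circumvents this by Taylor-expanding $a_\phi$ in $\eta'$ around $\eta'=0$. At $\eta'=0$ one has $\omega(\xi',\xi',\mu)\equiv1$, so $\kappa_\omega$ is the identity and the Taylor coefficients collapse to expressions $q_{\alpha',m}(\xi',\mu)\,\Theta_m\phi(x_n)$ with $q_{\alpha',m}\in S^{-|\alpha'|}(\rz^{n-1}\times\rpbar)$ homogeneous of degree $-|\alpha'|$ for large $(\xi',\mu)$; these furnish the homogeneous components $a_\phi^{(-j)}$ explicitly, and being polynomials in $\eta'$ they are trivially amplitudes. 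The Taylor remainder of arbitrary order $L$ is then placed in $S^{-L}_{1,0}$ by estimates of exactly the kind you carry out. Your computation thus supplies one ingredient---the remainder bound---but omits the structural step that produces the poly-homogeneous expansion.
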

\begin{proof}
For simplicity of presentation we shall show the lemma only in case $N=0$; the general case 
is verified in exactly the same way and only comes along with more complex notation due to the involved 
chain rule. 

Next note that it is enough to show that $a_\phi(\eta',\xi',\mu;x_n):=a_\phi(\eta',\xi',\mu,1;x_n)$ is an amplitude 
function with values in $S^{0}(\rz^{n-1}\times\rpbar;\scrS(\rz_+,\cz^{L_1\times M_0}))$; 
in fact, if $a(y',\eta')$ is an amplitude function with values in a Fréchet space $F$ then 
$\wt{a}(y',\eta';\theta):=a(y',\theta\eta')$ is an amplitude with values in 
$\scrC([0,1],F)$. 

We need to show that the homogeneous components 
$a_\phi^{(-j)}(\eta',\xi',\mu;x_n)$ are amplitude functions with values in 
$S^{-j}_{hom}(\rz^{n-1}\times\rpbar;\scrS(\rz_+,\cz^{L_1\times M_0}))$ and that 
 $$a_\phi(\eta',\xi',\mu;x_n)
     -\sum_{j=0}^{L-1}\chi(\xi',\mu)a_\phi^{(-j)}(\eta',\xi',\mu;x_n)$$
is an amplitude with values in $S^{-L}_{1,0}(\rz^{n-1}\times\rpbar;\scrS(\rz_+))$ for arbitrary $L$. 

Due to \eqref{eq:chainrule}, $\partial^{\alpha'}_{\eta'}a_\phi(\eta',\xi',\mu;x_n)$ is a finite linear combination 
of terms 
\begin{align}\label{eq:hilf01}
 [\xi'+\eta',\mu]^{m}\Big(\prod_{\ell=1}^m \partial^{\alpha'_\ell}_{\eta'}[\xi'+\eta',\mu]^{-1}\Big)
 \kappa_{\omega(\xi'+\eta',\xi',\mu)}\Theta_m\phi(x_n)
\end{align}
with $0\le m\le|\alpha'|$ and $\alpha'_1+\ldots+\alpha_m'=\alpha'$. Therefore, 
since $\omega(\xi',\xi',\mu)\equiv1$,
 $$\partial^{\alpha'}_{\eta'}a_\phi(\eta',\xi',\mu;x_n)\big|_{\eta'=0}
    =\alpha'!\sum_{m=0}^{|\alpha'|} q_{\alpha',m}(\xi',\mu)\Theta_m\phi(x_n)$$
with certain $q_{\alpha',m}(\xi',\mu)\in S^{-|\alpha'|}(\rz^{n-1}\times\rpbar)$ which are positively homogeneous 
of degree $-|\alpha'|$ for $|\xi',\mu|\ge1$. Thus, by Taylor expansion, 
\begin{align*}
 a_\phi(\eta',\xi',\mu;x_n)
 =&\sum_{|\alpha'|=0}^{L-1}\sum_{m=0}^{|\alpha'|} 
     q_{\alpha',m}(\xi',\mu)\Theta_m\phi(x_n)\eta'^{\alpha'}+\\
 &+N\sum_{|\beta'|=L}\eta'^{\beta'}\int_0^1\frac{(1-t)^{L-1}}{\beta'!}
    (\partial^{\beta'}_{\eta'}a_\phi)(t\eta',\xi',\mu;x_n)\,dt
\end{align*}
It remains to show that $(\partial^{\beta'}_{\eta'}a_\phi)(\eta',\xi',\mu;x_n)$, $|\beta'|=N$, is an amplitude 
with values in 
$S^{-L}_{1,0}(\rz^{n-1}\times\rpbar;\scrS(\rz_+))$ because then we have verified our claim, 
since then $a_\phi$ has the homogeneous components 
$$a_\phi^{(-j)}(\eta',\xi',\mu;x_n)
    =\sum_{|\alpha'|=j}\sum_{m=0}^{j} 
     q_{\alpha',m}(\xi',\mu)\Theta_m\phi(x_n)\eta'^{\alpha'},$$
which are clearly amplitude functions of $\eta'$. As seen from \eqref{eq:hilf01},  
$(\partial^{\beta'}_{\eta'}a_\phi)(\eta',\xi',\mu;x_n)$ is a linear combination of terms 
  $$q(\xi'+\eta',\mu)\kappa_{\omega(\xi'+\eta',\xi',\mu)}\psi(x_n),\qquad q\in S^{-L}(\rz^{n-1}\times\rpbar),
      \quad     \psi\in\scrS(\rz_+).$$
From 
 $$|D^{\gamma'}_{\eta'}D^{\alpha'}_{\xi'}D^j_\mu q(\xi'+\eta',\mu)|\lesssim 
    \spk{\xi'+\eta',\mu}^{-L-|\alpha'|-|\gamma'|-j}\le
    \spk{\eta'}^{L+|\alpha'|+j}\spk{\xi',\mu}^{-L-|\alpha'|-j}$$
it follows that $q(\xi'+\eta',\mu)$ is an amplitude function with values in $S^{-L}(\rz^{n-1}\times\rpbar)$. 
Finally, let 
 $$b(\eta';\xi',\mu;x_n)=\kappa_{\omega(\xi'+\eta',\xi',\mu)}\psi(x_n).$$
Then $x_n^\ell D_{x_n}^{\ell'}D^{\gamma'}_{\eta'}D^{\alpha'}_{\xi'}D^j_\mu b$ is a finite linear combination 
of terms 
 $$\omega^{\ell'-\ell}\omega^{-m}\Big(\prod_{i=1}^m D^{\gamma'_i}_{\eta'}D^{\alpha_i'}_{\xi'}D^{j_i}_\mu
     \omega\Big) \kappa_\omega x_n^\ell D_{x_n}^{\ell'}\Theta_m\psi$$
with $m\le|\gamma'|+|\alpha'|+j$, $\gamma'_1+\ldots+\gamma'_m=\gamma'$, 
$\alpha'_1+\ldots+\alpha'_m=\alpha'$, and $j_1+\ldots+j_m=j$; here, $\omega$ is a short notation for 
$\omega(\xi'+\eta',\xi',\mu)$. We can estimate 
\begin{align*}
 |\omega(\xi'+\eta',\xi',\mu)^{\ell'-\ell}|=[\xi',\mu]^{\ell'-\ell}[\xi'+\eta',\mu]^{\ell-\ell'}
  \lesssim \spk{\eta'}^{\ell+\ell'}
\end{align*}
as well as 
\begin{align*}
\omega^{-1}|D^{\gamma'_i}_{\eta'}D^{\alpha_i'}_{\xi'}D^{j_i}_\mu\omega|
&\lesssim 
\frac{[\xi'+\eta',\mu]}{[\xi',\mu]}\sum_{\substack{\alpha''_i+\alpha'''_i=\alpha'_i\\ j'_i+j''_i=j_i}}
|D^{\alpha''_i}D_\mu^{j'_i}[\xi',\mu]||D^{\gamma'_i}_{\eta'}D^{\alpha'''_i}D_\mu^{j''_i}[\xi'+\eta',\mu]^{-1}|\\
&\lesssim \sum_{\substack{\alpha''_i+\alpha'''_i=\alpha'_i\\ j'_i+j''_i=j_i}}
[\xi',\mu]^{-|\alpha''_i|-j'_i}[\xi'+\eta',\mu]^{-|\alpha'''_i|-j''_i-|\gamma'_i|}\\
&\lesssim \spk{\xi',\mu}^{-|\alpha'_i|-j_i}\spk{\eta'}^{|\alpha'_i|+j_i}
\end{align*}
This yields
\begin{align*}
|x_n^\ell D_{x_n}^{\ell'}D^{\gamma'}_{\eta'}D^{\alpha'}_{\xi'}D^j_\mu b(\eta';\xi',\mu;x_n)|\lesssim 
\spk{\xi',\mu}^{-|\alpha'|-j}\spk{\eta'}^{\ell+\ell'+|\alpha'|+j+\frac12}. 
\end{align*}
Hence $b$ is an amplitude function with values in $S^0_{1,0}(\rz^{n-1}\times\rpbar;\scrS(\rz_+))$. 
Thus $(\partial^{\beta'}_{\eta'}a_\phi)(\eta',\xi',\mu;x_n)$ is as claimed and the proof is complete.
\end{proof}

This finishes the proof of of Theorem \ref{thm:conjugation03}.a$)$. The proof of b$)$ is very similar 
and therefore we shall only indicate the necessary adjustments in 
the above proof but leave the details to the reader. Using block-matrix representations as above,  
we need to find $\bfg\in\frakB^{d;r}_G$ such that 
 $$\op(\bfg)(\mu)=\begin{pmatrix}\op(\kappa)(\mu)\op(g')(\mu)\op(\kappa^{-1})(\mu)&\op(\kappa)(\mu)\op(k')(\mu)\\
                                          \op(t')(\mu)\op(\kappa^{-1})(\mu)&\op(q')(\mu)\end{pmatrix}.$$
Therefore $q=q'$ and $t=t'\kappa^{-1}$. Then determine symbols $\kappa\#g'$ and $\kappa\#k'$ such that 
 $$\op(\kappa)(\mu)\op(g')(\mu)=\op(\kappa\#g')(\mu),\qquad \op(\kappa)(\mu)\op(k')(\mu)=\op(\kappa\#k')(\mu)$$
and define $g:=(\kappa\#g')\kappa^{-1}$ and $k:=\kappa\#k'$. One then has to show that 
\begin{align*}
 \kappa^{-1}g\kappa=\kappa^{-1}(\kappa\#g')\in \frakS(\rz^{n-1}\times\rpbar;\Gamma^r_G(\rz_+;(L_0,0),(L_1,0)),\\
 \kappa^{-1}k=\kappa^{-1}(\kappa\#k')\in \frakS(\rz^{n-1}\times\rpbar;\Gamma^r_G(\rz_+;(0,M_0),(0,M_1))
\end{align*}
and, in order to verify \eqref{eq:expansion02}, 
\begin{align*}
 \kappa^{-1}g\kappa-g'\in \frakS^{d-1}(\rz^{n-1}\times\rpbar;\Gamma^r_G(\rz_+;(L_0,0),(L_1,0)),\\
 \kappa^{-1}k-k'\in \frakS^{d-1}(\rz^{n-1}\times\rpbar;\Gamma^r_G(\rz_+;(0,M_0),(0,M_1)). 
\end{align*}
Treating as above only on the Poisson part, to calculate $\kappa\#k'$ we have 
\begin{align*}
\begin{split}
 [\op(\kappa)(\mu)&\op(k')(\mu)u](x')\\
 &=\iint\hspace*{-1.1ex}\iint e^{i(x'-y')\eta'+i(y'-y'')\eta''}p'(\eta',y',\eta'',\mu)u(y'')
 \,dy''\dbar\eta''dy'\dbar\eta'
\end{split}
\end{align*}
with $p'(\eta',y',\eta'',\mu)=\kappa(\eta',\mu)k'(y',\eta'',\mu)$. Thus passing to symbol-kernels, 
in \eqref{eq:comp02}, $p$ needs to be replaced by 
 $$p(\eta',y',\eta'',\mu;x_n)=(\kappa(\eta',\mu)k')(y',\eta'',\mu;x_n).$$
Then the analogue of Proposition \ref{prop:osint} and \eqref{eq:expansion} for $\op(\kappa)(\mu)\op(k')(\mu)$
is valid; in particular, the leading term in \eqref{eq:expansion} is $(\kappa(\xi',\mu)k')(x',\xi',\mu;x_n)$. 
In \eqref{eq:osint01} we have to substitute $r_{\gamma',\theta}$ by $\kappa^{-1}(\xi',\mu)r_{\gamma',\theta}$
and, correspondingly, in Proposition \ref{prop:osint02}, $a$ by $\kappa^{-1}(\xi',\mu)a$. With 
$k'(x',\xi',\mu;x_n)=r(x',\xi',\mu)\phi(x_n)$ in \eqref{eq:amplitude} we then need to consider instead 
\begin{align*}
 p(\eta',y',\eta'',\mu;x_n)=r(y',\eta'',\mu) (\kappa^{-1}(\eta'',\mu)\kappa(\eta',\mu)\phi)(x_n)
 =\underbrace{(\kappa_{1/\omega(\eta',\eta'',\mu)}\phi(x_n)}_{=:p_\phi(\eta',\eta'',\mu;x_n)}, 
\end{align*}
with $\omega$ as defined in \eqref{eq:comp03}. Therefore, to prove b), it remains to repeat the proof of Lemma 
\ref{lem:osint03} with the function $\omega$ replaced by $1/\omega$.

\begin{corollary}\label{cor:comp03}
Let $\bfg\in\wtbfB^{d,\nu;r}_G(\rz^{n-1}\times\rpbar;\frakg)$. Then 
$\bfg^\infty_{[d,\nu]}=(\bfkappa^{-1}\#\bfg\#\bfkappa)^\infty_{[d,\nu]}$. 
\end{corollary}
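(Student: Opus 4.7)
By Definition \ref{def:b-tilde-limit}, $\bfg^\infty_{[d,\nu]}$ is \emph{defined} as the principal limit symbol of the pointwise conjugate $\bfkappa^{-1}\bfg\bfkappa$, while Theorem \ref{thm:conjugation03}.a) places the operator-level conjugate $\bfkappa^{-1}\#\bfg\#\bfkappa$ into the same class $\wtbfS^{d,\nu}(\rz^{n-1}\times\rpbar;\Gamma^r_G(\rz_+;\frakg))$, so its principal limit symbol at level $(d,\nu)$ is equally meaningful. The corollary thus amounts to the assertion that these two principal limit symbols coincide.

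The plan is to read this off directly from the Taylor-type expansion of the Leibniz product exhibited in the proof of Theorem \ref{thm:conjugation03}. Proposition \ref{prop:osint} expands $\bfkappa^{-1}\#\bfg\#\bfkappa$ into a finite sum whose $|\alpha'|=0$ summand is $p(\eta',y',\xi',\mu;\cdot)|_{y'=x',\,\eta'=\xi'}$, together with correction terms $\tfrac{1}{\alpha'!}\partial^{\alpha'}_{\eta'}D^{\alpha'}_{y'} p|_{y'=x',\,\eta'=\xi'}$ for $|\alpha'|\ge 1$ and a remainder $R_N$. Here $p(\eta',y',\eta'',\mu;\cdot)=\bfkappa_{\omega(\eta',\eta'',\mu)}(\bfkappa^{-1}\bfg\bfkappa)(y',\eta'',\mu;\cdot)$ with $\omega$ as in \eqref{eq:comp03}; since $\omega(\xi',\xi',\mu)=1$, the leading $|\alpha'|=0$ summand is precisely $\bfkappa^{-1}\bfg\bfkappa$.

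The key observation is that every derivative with $|\alpha'|\ge 1$ introduces, via the chain-rule formula \eqref{eq:chainrule} applied to $\bfkappa_\omega$ at $\eta'=\xi'$, at least one factor of the form $\partial^{\alpha'_\ell}_{\xi'}[\xi',\mu]/[\xi',\mu]$, whose pointwise size is $\lesssim \spk{\xi'}\spk{\xi',\mu}^{-2}$. This places the $|\alpha'|\ge 1$ summands, and (for $N$ large) also the remainder $R_N$, into a subclass of the $\wtbfS$-hierarchy whose intrinsic asymptotic expansion in powers of $[\xi',\mu]$ starts strictly below $[\xi',\mu]^{d-\nu}$. Since by Definition \ref{def:symbol-expansion} the principal limit symbol $(\cdot)^\infty_{[d,\nu]}$ is exactly the coefficient of $[\xi',\mu]^{d-\nu}$ -- equivalently, $\lim_{\mu\to+\infty}[\xi',\mu]^{\nu-d}(\cdot)$ -- the correction terms contribute nothing, yielding $(\bfkappa^{-1}\#\bfg\#\bfkappa)^\infty_{[d,\nu]}=(\bfkappa^{-1}\bfg\bfkappa)^\infty_{[d,\nu]}=\bfg^\infty_{[d,\nu]}$.

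The one point of real substance that must be checked is the stated regularity placement of the correction terms in the $\wtbfS$-hierarchy; but this is essentially a line-by-line rerun of Lemma \ref{lem:osint03} in which the $\spk{\xi',\mu}$-exponent is tracked separately from the $\spk{\xi'}$-exponent, and so requires no new ideas beyond those already employed in the proof of Theorem \ref{thm:conjugation03}.
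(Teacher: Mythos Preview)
Your argument is correct, and the underlying mechanism is the same one the paper uses, but you are working much harder than necessary. The paper states this as an immediate corollary of \eqref{eq:expansion01} in Theorem~\ref{thm:conjugation03}: that formula already tells you that $\bfkappa^{-1}\#\bfg\#\bfkappa-\bfkappa^{-1}\bfg\bfkappa\in\wtbfS^{d-1,\nu}(\rz^{n-1}\times\rpbar;\Gamma^r_G(\rz_+;\frakg))$, and any element of $\wtbfS^{d-1,\nu}\subset\wt S^{d-1,\nu}_{1,0}$ satisfies $[\xi',\mu]^{\nu-d}\cdot(\text{symbol})\to 0$ as $\mu\to\infty$, hence has vanishing principal limit symbol at level $(d,\nu)$. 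There is no need to go back inside the proof of Theorem~\ref{thm:conjugation03}, re-expand via Proposition~\ref{prop:osint}, or rerun Lemma~\ref{lem:osint03}; all of that work has already been packaged into \eqref{eq:expansion01}.
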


\subsection{The calculus of generalized singular Green symbols}\label{sec:4.5}

The composition of generalized singular Green operators on $\rz_+$ induces continuous mappings 
 $$\Gamma^{r_1}_G(\rz_+;\frakg_1)\times \Gamma^{r_0}_G(\rz_+;\frakg_0)
     \lra \Gamma^{r_0}_G(\rz_+;\frakg_1\frakg_0),$$ 
taking the formal adjoint with respect to the $L^2$-inner product yields continous mappings
 $$\Gamma^{0}_G(\rz_+;\frakg)\lra \Gamma^{0}_G(\rz_+;\frakg^{-1}).$$
Given $\bfg'_j\in\wtbfS^{d_j,\nu_j}(\rz^{n-1}\times\rpbar;\Gamma^{r_j}_G(\rz_+;\frakg_j))$ 
then, following the proof in Section 8 of \cite{Seil22-1}, one can show that 
 $$(\bfg'_1\#\bfg'_0)(x',\xi',\mu)
     :=\mathrm{Os}-\iint e^{-iy'\eta'}\bfg_1'(x'+y',\xi',\mu)\bfg_0'(x',\xi'+\eta',\mu)\,dy'\dbar\eta'$$
is an oscillatory integral converging in 
$\wtbfS^{d_0+d_1,\nu_0+\nu_1}(\rz^{n-1}\times\rpbar;\Gamma^{r_0}_G(\rz_+;\frakg_1\frakg_0))$
and  
\begin{align*}
 \op(\bfg'_1\#\bfg'_0)(\mu)&=\op(\bfg'_1)(\mu)\op(\bfg'_0)(\mu),\\
    (\bfg'_1\#\bfg'_0)^\infty_{[d_0+d_1,\nu_0+\nu_1]}
    &=(\bfg'_1)^\infty_{[d_1,\nu_1]}(\bfg'_0)^\infty_{[d_00,\nu_0]}.
\end{align*}
Likewise, if $\bfg'\in\wtbfS^{d,\nu}(\rz^{n-1}\times\rpbar;\Gamma^{0}_G(\rz_+;\frakg))$ with $d\le0$ then 
 $${\bfg'}^{(*)}(x',\xi',\mu)
     :=\mathrm{Os}-\int e^{-iy'\eta'}\bfg'(x'+y',\xi'+\eta',\mu)^*\,dy'\dbar\eta'$$
converges in  $\wtbfS^{d,\nu}(\rz^{n-1}\times\rpbar;\Gamma^{0}_G(\rz_+;\frakg^{-1}))$ and 
 $$\op({\bfg'}^{(*)})=\op(\bfg')^*,\qquad ({\bfg'}^{(*)})^\infty_{[d,\nu]}=({\bfg'}^\infty_{[d,\nu]})^{(*)}.$$ 
Combining this with Theorem \ref{thm:conjugation03} and Corollary \ref{cor:comp03} we immediately get the 
analogue of Theorem \ref{thm:comp-strong} for generalized singular Green symbols with limit symbol at infinity.
At the same time, this way of reasoning also yields an alternative proof for the algebra property of  
generalized Green symbols with strong parameter dependence. 

\begin{theorem}\label{thm:adjoint-limit}
Let $d\le0$. Then $\bfg\mapsto \bfg^{(*)}$ induces continuous maps
\begin{align*} 
 \wtbfB^{d,\nu;0}_G&(\rz^{n-1}\times\rpbar;\frakg)\lra 
 \wtbfB^{d,\nu;0}_G(\rz^{n-1}\times\rpbar;\frakg^{-1}).  
\end{align*}
Both homogeneous principal symbol and principal limit symbol are well-behaved, 
\begin{align*} 
  ({\bfg}^{(*)})^{(d,\nu)}=({\bfg}^{(d,\nu)})^{*},\qquad 
  ({\bfg}^{(*)})^\infty_{[d,\nu]}=({\bfg}^\infty_{[d,\nu]})^{(*)}. 
\end{align*}
\end{theorem}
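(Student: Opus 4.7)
The plan is to reduce to the corresponding statement for the un-twisted symbol class $\wtbfS^{d,\nu}(\rz^{n-1}\times\rpbar;\Gamma^0_G(\rz_+;\frakg))$ which was already recorded in Section \ref{sec:4.5} just before the theorem. Given $\bfg\in\wtbfB^{d,\nu;0}_G(\rz^{n-1}\times\rpbar;\frakg)$, set $\bfg':=\bfkappa^{-1}\#\bfg\#\bfkappa$, so that by Theorem \ref{thm:conjugation03}.a$)$ one has $\bfg'\in\wtbfS^{d,\nu}(\rz^{n-1}\times\rpbar;\Gamma^0_G(\rz_+;\frakg))$ and
$$\op(\bfg)(\mu)=\op(\bfkappa)(\mu)\,\op(\bfg')(\mu)\,\op(\bfkappa^{-1})(\mu).$$
The crucial identity is $\op(\bfkappa)(\mu)^*=\op(\bfkappa^{-1})(\mu)$, which follows from Plancherel in $x'$ combined with the $L^2(\rz_+)$-unitarity of the dilation group-action $\kappa_\lambda$; this is compatible with the Hilbert-triple formal-adjoint formalism of Theorem \ref{thm:adjoint01} since, in the relevant triples, the dual group-action coincides with $\{\kappa_\lambda\}$ itself. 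Consequently $\op(\bfg)(\mu)^*=\op(\bfkappa)(\mu)\op(\bfg')(\mu)^*\op(\bfkappa^{-1})(\mu)$.

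Taking $(\bfg')^{(*)}$ to be the un-twisted adjoint symbol introduced in Section \ref{sec:4.5}, which belongs to $\wtbfS^{d,\nu}(\rz^{n-1}\times\rpbar;\Gamma^0_G(\rz_+;\frakg^{-1}))$ and satisfies $\op((\bfg')^{(*)})(\mu)=\op(\bfg')(\mu)^*$, I define $\bfg^{(*)}:=\bfkappa\#(\bfg')^{(*)}\#\bfkappa^{-1}$. Theorem \ref{thm:conjugation03}.b$)$ then places $\bfg^{(*)}$ in $\wtbfB^{d,\nu;0}_G(\rz^{n-1}\times\rpbar;\frakg^{-1})$ and yields $\op(\bfg^{(*)})(\mu)=\op(\bfg)(\mu)^*$, establishing the mapping property. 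Continuity follows because each of the three maps $\bfg\mapsto\bfg'$, $\bfg'\mapsto(\bfg')^{(*)}$, and $\bfg'\mapsto\bfkappa\#\bfg'\#\bfkappa^{-1}$ is continuous between the relevant Fréchet spaces.

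For the principal symbols, the identity $\bfkappa^{-1}\#\bfg^{(*)}\#\bfkappa=(\bfg')^{(*)}$ combined with Corollary \ref{cor:comp03} and the known relation $((\bfg')^{(*)})^\infty_{[d,\nu]}=((\bfg')^\infty_{[d,\nu]})^{(*)}$ yields the principal limit symbol claim. For the homogeneous principal symbol, the twist formula $\bfg^{(d,\nu)}=\bfkappa_{|\xi',\mu|}(\bfg')^{(d,\nu)}\bfkappa^{-1}_{|\xi',\mu|}$ from after Definition \ref{def:conjugation01}, together with the pointwise identity $(\bfkappa_{|\xi',\mu|})^*=\bfkappa^{-1}_{|\xi',\mu|}$ in the $L^2$-pairing, immediately gives $(\bfg^{(*)})^{(d,\nu)}=(\bfg^{(d,\nu)})^{*}$. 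The main technical point throughout is the clean verification of $\op(\bfkappa)(\mu)^*=\op(\bfkappa^{-1})(\mu)$ inside the twisted calculus, ensuring that the $L^2$-pairings defining formal adjoints are compatible with the Hilbert-triple pairings of Definition \ref{def:Hilbert-triple}; once this point is settled, the argument is essentially the formal assembly of Theorem \ref{thm:conjugation03}, Corollary \ref{cor:comp03}, and the abstract adjoint result stated in Section \ref{sec:4.5}.
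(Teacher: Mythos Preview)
Your proof is correct and follows essentially the same approach as the paper: un-twist $\bfg$ via Theorem~\ref{thm:conjugation03}, use the $L^2$-unitarity of the dilation group-action to obtain $\op(\bfkappa)(\mu)^*=\op(\bfkappa^{-1})(\mu)$, take the adjoint in the un-twisted class $\wtbfS^{d,\nu}$ as described in Section~\ref{sec:4.5}, re-twist, and invoke Corollary~\ref{cor:comp03}. Your version is somewhat more detailed than the paper's terse argument (in particular you spell out the homogeneous principal symbol identity and the continuity), but the strategy is identical.
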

\begin{proof}
Due to Theorem \ref{thm:conjugation03} we can write $\bfg=\bfkappa\#\bfg'\#\bfkappa^{-1}$. 
Snce the dilation semi-group is unitary on $L^2$, the group-action operator satisfies 
$\op(\bfkappa)(\mu)^*=\op(\bfkappa^{-1})(\mu)$. Therefore 
  $$\bfg^{(*)}=(\bfkappa\#\bfg'\#\bfkappa^{-1})^{(*)}
     =\bfkappa\#(\bfg')^{(*)}\#\bfkappa^{-1}.$$
Then the claim follows immediately by Corollary \ref{cor:comp03}. 
\end{proof}

\begin{theorem}\label{thm:comp-limit}
Let $\frakg_1$ be composable with $\frakg_0$. Then 
$(\bfg_1,\bfg_0)\mapsto \bfg_1\#\bfg_0$ induces continuous maps
\begin{align*} 
 \wtbfB^{d_1,\nu_1;r_1}_G&(\rz^{n-1}\times\rpbar;\frakg_1)\times  
 \wtbfB^{d_0,\nu_0;r_0}_G(\rz^{n-1}\times\rpbar;\frakg_0)\\
    &\lra \wtbfB^{d_0+d_1,\nu_0+\nu_1;r_0}_G(\rz^{n-1}\times\rpbar;\frakg_1\frakg_0).  
\end{align*}
Both homogeneous principal symbol and principal limit symbol are multiplicative, 
\begin{align*} 
  (\bfg_1\#\bfg_0)^{(d_0+d_1,\nu_0+\nu_1)}&=\bfg_1^{(d_1,\nu_1)}\bfg_0^{(d_0,\nu_0)},\\
  (\bfg_1\#\bfg_0)^\infty_{[d_0+d_1,\nu_0+\nu_1]}&=(\bfg_1)^\infty_{[d_1,\nu_1]}(\bfg_0)^\infty_{[d_0,\nu_0]}.
\end{align*}
\end{theorem}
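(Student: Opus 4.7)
The plan is to reduce the composition in $\wtbfB^{\cdot,\cdot;\cdot}_G$ to the corresponding composition for un-twisted $F$-valued expansion-type symbols, exploiting the un-twisting isomorphism of Theorem~\ref{thm:conjugation03}. First I would set $\bfg'_j := \bfkappa^{-1}\#\bfg_j\#\bfkappa \in \wtbfS^{d_j,\nu_j}(\rz^{n-1}\times\rpbar;\Gamma^{r_j}_G(\rz_+;\frakg_j))$, so that $\bfg_j = \bfkappa\#\bfg'_j\#\bfkappa^{-1}$ and, on the operator level,
\begin{equation*}
\op(\bfg_1)(\mu)\op(\bfg_0)(\mu) = \op(\bfkappa)(\mu)\,\op(\bfg'_1)(\mu)\op(\bfg'_0)(\mu)\,\op(\bfkappa^{-1})(\mu).
\end{equation*}

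Next I would invoke the composition result for $F$-valued symbols with expansion at infinity stated in the paragraph preceding Theorem~\ref{thm:adjoint-limit}: the oscillatory-integral Leibniz product $\bfg'_1\#\bfg'_0$ exists, defines a continuous bilinear map into $\wtbfS^{d_0+d_1,\nu_0+\nu_1}(\rz^{n-1}\times\rpbar;\Gamma^{r_0}_G(\rz_+;\frakg_1\frakg_0))$, realizes the operator composition $\op(\bfg'_1)(\mu)\op(\bfg'_0)(\mu)$, and has principal limit symbol $(\bfg'_1)^\infty_{[d_1,\nu_1]}(\bfg'_0)^\infty_{[d_0,\nu_0]}$. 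Applying Theorem~\ref{thm:conjugation03}.b$)$ to $\bfg'_1\#\bfg'_0$ would then produce
\begin{equation*}
\bfg_1\#\bfg_0 := \bfkappa\#(\bfg'_1\#\bfg'_0)\#\bfkappa^{-1} \in \wtbfB^{d_0+d_1,\nu_0+\nu_1;r_0}_G(\rz^{n-1}\times\rpbar;\frakg_1\frakg_0),
\end{equation*}
realizing the operator composition; the uniqueness clause of Theorem~\ref{thm:conjugation03}.b$)$ identifies this symbol with the Leibniz product, and continuity of $(\bfg_1,\bfg_0)\mapsto\bfg_1\#\bfg_0$ is inherited from continuity at the un-twisted level combined with the twisting operations of Theorem~\ref{thm:conjugation03}.

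For the symbol identities, Corollary~\ref{cor:comp03} applied to $\bfg_0$, $\bfg_1$, and $\bfg_1\#\bfg_0$ would yield
\begin{equation*}
(\bfg_1\#\bfg_0)^\infty_{[d_0+d_1,\nu_0+\nu_1]} = (\bfg'_1\#\bfg'_0)^\infty_{[d_0+d_1,\nu_0+\nu_1]} = (\bfg_1)^\infty_{[d_1,\nu_1]}(\bfg_0)^\infty_{[d_0,\nu_0]},
\end{equation*}
the middle equality being the multiplicativity already established at the un-twisted level. Multiplicativity of the homogeneous principal symbol follows from the inclusion $\wtbfB^{d,\nu;r}_G\subset\wt{B}^{d,\nu;r}_G$ noted after Definition~\ref{def:conjugation01}: it places the $\bfg_j$ in the twisted poly-homogeneous framework of Section~\ref{sec:3.1}, whose analogue of Theorem~\ref{thm:Leibniz01} directly gives $(\bfg_1\#\bfg_0)^{(d_0+d_1,\nu_0+\nu_1)} = \bfg_1^{(d_1,\nu_1)}\bfg_0^{(d_0,\nu_0)}$.

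The main (and really the only substantive) obstacle in this strategy is the un-twisted composition statement invoked in the second paragraph; its proof, which the paper defers to the techniques of~\cite{Seil22-1}, combines oscillatory-integral estimates in the spirit of Theorem~\ref{thm:oscillatory-int} with the symbol-expansion machinery of Section~\ref{sec:3.2}. Once that is in hand, the present theorem reduces to bookkeeping through the un-twisting isomorphism.
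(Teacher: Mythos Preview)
Your proposal is correct and follows essentially the same route as the paper: un-twist via Theorem~\ref{thm:conjugation03} to pass to $\bfg'_j\in\wtbfS^{d_j,\nu_j}(\rz^{n-1}\times\rpbar;\Gamma^{r_j}_G)$, compose there using the Leibniz-product result stated just before Theorem~\ref{thm:adjoint-limit}, twist back, and read off the principal symbols via Corollary~\ref{cor:comp03}. The paper's proof is terser but structurally identical; your extra care in invoking uniqueness in Theorem~\ref{thm:conjugation03}.b$)$ to identify the resulting symbol with the Leibniz product, and in treating the homogeneous principal symbol separately via the $\wt B^{d,\nu;r}_G$-inclusion, simply makes explicit what the paper leaves implicit.
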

\begin{proof}
According to Theorem \ref{thm:conjugation03} we can write $\bfg_j=\bfkappa\#\bfg'_j\#\bfkappa^{-1}$. Then  
 $$\bfg_1\#\bfg_0=(\bfkappa\#\bfg'_1\#\bfkappa^{-1})\#(\bfkappa\#\bfg'_0\#\bfkappa^{-1})
    =\bfkappa\#(\bfg'_1\#\bfg'_0)\#\bfkappa^{-1}.$$
From this together with Corollary \ref{cor:comp03} the claim follows immediately. 
\end{proof}

The analogue of the previous theorem also holds true in the classes $\wt B^{d,\nu;r}_G$. We shall also need 
the following result on asymptotic summation. 

\begin{proposition}\label{prop:asymptotic-summation01}
Let $\bfg_j\in \wtbfB^{d-j,\nu-j;r}_G(\rz^{n-1}\times\rpbar;\frakg)$, $j\in\nz_0$. Then there exists a symbol 
$\bfg_j\in \wtbfB^{d,\nu;r}_G(\rz^{n-1}\times\rpbar;\frakg)$ such that, for every $N\in\nz_0$, 
 $$\bfg-\sum_{j=0}^{N-1}\bfg_j\in \wtbfB^{d-j,\nu-j;r}_G(\rz^{n-1}\times\rpbar;\frakg).$$
\end{proposition}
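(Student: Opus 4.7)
The plan is to reduce the asymptotic summation to the corresponding problem for operator-valued symbols with values in $\Gamma^r_G(\rz_+;\frakg)$, where a standard Borel-type construction applies, and then transfer the result back through the conjugation correspondence of Theorem \ref{thm:conjugation03}.

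First, I would pass to the \emph{untwisted} side. By Theorem \ref{thm:conjugation03}.a), for each $j$ the symbol
 $$\bfg'_j:=\bfkappa^{-1}\#\bfg_j\#\bfkappa\;\in\;\wtbfS^{d-j,\nu-j}(\rz^{n-1}\times\rpbar;\Gamma^r_G(\rz_+;\frakg))$$
is well-defined, and the correspondence $\bfg_j\leftrightarrow\bfg'_j$ is one-to-one. If I can asymptotically sum the sequence $(\bfg'_j)$ within the Fréchet space $\wtbfS^{d,\nu}(\rz^{n-1}\times\rpbar;\Gamma^r_G(\rz_+;\frakg))$ to obtain a symbol $\bfg'$ with
 $$\bfg'-\sum_{j=0}^{N-1}\bfg'_j\;\in\;\wtbfS^{d-N,\nu-N}(\rz^{n-1}\times\rpbar;\Gamma^r_G(\rz_+;\frakg))\qquad\forall N\in\nz_0,$$
then $\bfg:=\bfkappa\#\bfg'\#\bfkappa^{-1}\in\wtbfB^{d,\nu;r}_G(\rz^{n-1}\times\rpbar;\frakg)$ by Theorem \ref{thm:conjugation03}.b), and applying the inverse conjugation termwise shows that $\bfg-\sum_{j=0}^{N-1}\bfg_j$ lies in $\wtbfB^{d-N,\nu-N;r}_G(\rz^{n-1}\times\rpbar;\frakg)$.

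Next, I would perform the asymptotic summation on the untwisted side by a standard Borel-type excision argument. Fix an excision function $\chi\in\scrC^\infty(\rz^{n-1})$ with $\chi\equiv 0$ near $\xi'=0$ and $\chi\equiv1$ for $|\xi'|\geq 1$, and choose a sequence $c_j\to+\infty$ sufficiently fast, setting
 $$\bfg':=\sum_{j=0}^{+\infty}\chi(\xi'/c_j)\,\bfg'_j.$$
Since $\bfg'_j\in\wtbfS^{d-j,\nu-j}_{1,0}(\rz^{n-1}\times\rpbar;\Gamma^r_G(\rz_+;\frakg))$, the estimates defining this class (cf.\ Section \ref{sec:3.1} and \ref{sec:3.1.1}) give, for any continuous seminorm $\trinorm{\cdot}^{(\ell)}$ on $\Gamma^r_G(\rz_+;\frakg)$ and any multi-indices, a bound of the form $C\spk{\xi'}^{\nu-j-|\alpha'|}\spk{\xi',\mu}^{d-\nu-k}$. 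Choosing $c_j$ large enough (depending on $\ell,\alpha',\beta',k$ in the usual diagonal way) makes $\chi(\xi'/c_j)\bfg'_j$ satisfy the same estimates with an extra factor $2^{-j}$ once $j\geq\ell+|\alpha'|+|\beta'|+k$, hence the series converges in $\wtbfS^{d,\nu}_{1,0}$; the homogeneous expansion is summed componentwise, and the tails $\bfg'-\sum_{j<N}\bfg'_j=\sum_{j\geq N}\chi(\xi'/c_j)\bfg'_j$ belong to $\wtbfS^{d-N,\nu-N}_{1,0}(\rz^{n-1}\times\rpbar;\Gamma^r_G(\rz_+;\frakg))$ by the same seminorm estimates.

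The only genuinely non-routine point is verifying that this Borel summation respects the additional structure encoded by the superscript $\mathbf{S}$ (the existence of a limit-symbol expansion and of an angular symbol for each homogeneous component). But the expansion at infinity of Definition \ref{def:symbol-expansion} and the condition in Definition \ref{def:angular-symbol02} are preserved under multiplication by $\chi(\xi')$, convergent sums in the Fréchet topology of $\wtbfS^{d,\nu}_{1,0}$ (whose defining seminorms include the seminorms controlling the limit-symbol remainders, by the description given just after Definition \ref{def:symbol-expansion}), and termwise extraction of the homogeneous and limit components. Thus $\bfg'\in\wtbfS^{d,\nu}(\rz^{n-1}\times\rpbar;\Gamma^r_G(\rz_+;\frakg))$ with the desired remainder estimates, and pulling back by Theorem \ref{thm:conjugation03}.b) yields the required $\bfg$. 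The main technical obstacle is bookkeeping the seminorms on the Fréchet target $\Gamma^r_G(\rz_+;\frakg)$ simultaneously with the symbol seminorms when choosing the $c_j$, but this is the standard diagonal procedure for Borel summation in Fréchet-valued symbol classes.
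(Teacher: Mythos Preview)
Your proposal is correct and follows essentially the same route as the paper: reduce via conjugation with $\bfkappa$ to asymptotic summation in $\wtbfS^{d,\nu}(\rz^{n-1}\times\rpbar;\Gamma^r_G(\rz_+;\frakg))$, where the Fréchet-valued Borel argument applies, and then transfer back. One simplification: you invoke the operator-level conjugation of Theorem~\ref{thm:conjugation03}, but the pointwise conjugation $\bfg\mapsto\bfkappa^{-1}\bfg\,\bfkappa$ of Definition~\ref{def:conjugation01} already gives a linear bijection $\wtbfB^{d,\nu;r}_G\leftrightarrow\wtbfS^{d,\nu}(\rz^{n-1}\times\rpbar;\Gamma^r_G(\rz_+;\frakg))$ for each order, so the transfer of the asymptotic summation is immediate without the heavier $\#$-version.
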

\begin{proof}
By conjugation with $\bfkappa$, the claim follows from the corresponding claim of asymptotic summation 
for a sequence of symbols $\bfg'_j\in\wtbfS^{d-j,\nu-j}(\rz^{n-1}\times\rpbar;\Gamma^r_G(\rz_+;\frakg))$ in the class 
$\wtbfS^{d,\nu}(\rz^{n-1}\times\rpbar;\Gamma^r_G(\rz_+;\frakg))$. In turn this is  a special case of asymptotic 
summation of a sequence  of symbols $p_j\in\wtbfS^{d-j,\nu-j}(\rz^{n-1}\times\rpbar;F)$ in the class 
$\wtbfS^{d,\nu}(\rz^{n-1}\times\rpbar;F)$, where $F$ is some Fréchet space. In case $F=\cz$ this has been proved 
in \cite{Seil22-1}, the general case is verified in the same way. 
\end{proof}

\begin{proposition}\label{prop:asymptotic-summation02}
Let $\bfg_j\in \wtbfB^{d-j,\nu;r}_G(\rz^{n-1}\times\rpbar;\frakg)$, $j\in\nz_0$. Then there exists a symbol 
$\bfg_j\in \wtbfB^{d,\nu;r}_G(\rz^{n-1}\times\rpbar;\frakg)$ such that, for every $N\in\nz_0$, 
 $$\bfg-\sum_{j=0}^{N-1}\bfg_j\in \wtbfB^{d-j,\nu;r}_G(\rz^{n-1}\times\rpbar;\frakg).$$
\end{proposition}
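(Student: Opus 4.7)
The plan is to imitate the proof of Proposition~\ref{prop:asymptotic-summation01}, exploiting that the regularity number $\nu$ stays fixed across the sequence so that the remainder ends up with regularity $\nu$ rather than $\nu-N$. First I would use Theorem~\ref{thm:conjugation03} to write each $\bfg_j=\bfkappa\#\bfg_j'\#\bfkappa^{-1}$ with $\bfg_j'\in\wtbfS^{d-j,\nu}(\rz^{n-1}\times\rpbar;F)$, where $F:=\Gamma^r_G(\rz_+;\frakg)$, reducing the problem to constructing a symbol $\bfg'\in\wtbfS^{d,\nu}(\rz^{n-1}\times\rpbar;F)$ with $\bfg'-\sum_{j<N}\bfg_j'\in\wtbfS^{d-N,\nu}(\rz^{n-1}\times\rpbar;F)$ for every $N\in\nz_0$; setting $\bfg:=\bfkappa\#\bfg'\#\bfkappa^{-1}$ and invoking Corollary~\ref{cor:comp03} then yields the desired symbol in $\wtbfB^{d,\nu;r}_G$.

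For the construction of $\bfg'$ I would use a Borel-type argument adapted to the anisotropic class $\wtbfS^{d,\nu}$. Fix a zero-excision function $\chi\in\scrC^\infty(\rz^{n-1}\times\rpbar)$ identically $0$ on $\{[\xi',\mu]\le 1\}$ and identically $1$ on $\{[\xi',\mu]\ge 2\}$, and for a sequence $c_j\nearrow+\infty$ to be chosen diagonally, set $\chi_j(\xi',\mu):=\chi([\xi',\mu]/c_j)$. Because the excision is taken in the combined variable $[\xi',\mu]$, one checks that $\chi_j$ lies in $\wtbfS^{0,0}(\rz^{n-1}\times\rpbar)$ uniformly in $j$, with principal limit symbol~$1$ (the difference $\chi_j-1$ is compactly supported in $(\xi',\mu)$, hence smoothing in the sense of Proposition~\ref{prop:smoothing}); thus multiplication by $\chi_j$ preserves both order and regularity. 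Define
\begin{equation*}
\bfg'(x',\xi',\mu):=\sum_{j=0}^{\infty}\chi_j(\xi',\mu)\,\bfg_j'(x',\xi',\mu),
\end{equation*}
which is pointwise locally finite since $\chi_j(\xi',\mu)=0$ for $[\xi',\mu]\le c_j$. The crucial estimate, obtained by Leibniz expansion together with the lower bound $\spk{\xi',\mu}\gtrsim c_j$ on $\mathrm{supp}\,\chi_j$ and the inequality $\spk{\xi'}\le\spk{\xi',\mu}$, is that for every $N\le j$ and every continuous semi-norm $\trinorm{\cdot}^{(\ell)}$ on $\wtbfS^{d-N,\nu}(\rz^{n-1}\times\rpbar;F)$,
\begin{equation*}
\trinorm{\chi_j\,\bfg_j'}^{(\ell)}\lesssim c_j^{-(j-N)}.
\end{equation*}
Choosing $c_j$ diagonally so that this bound is $\le 2^{-j}$ for all $\ell,N\le j$ makes the series converge absolutely in every $\wtbfS^{d-M,\nu}$ and, in particular, produces $\bfg'\in\wtbfS^{d,\nu}$.

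For the remainder I would decompose
\begin{equation*}
\bfg'-\sum_{j<N}\bfg_j'=\sum_{j\ge N}\chi_j\,\bfg_j'+\sum_{j<N}(\chi_j-1)\,\bfg_j'.
\end{equation*}
The first sum lies in $\wtbfS^{d-N,\nu}$ by the crucial estimate; the second is a finite sum, and each of its terms is smooth and compactly supported in $(\xi',\mu)$ since $\chi_j-1$ vanishes outside $\{[\xi',\mu]\le 2c_j\}$, so it belongs to the smoothing class $\wtbfS^{-\infty,-\infty}\subseteq\wtbfS^{d-N,\nu}$ by Proposition~\ref{prop:smoothing}. This yields the required remainder estimate, and the proof is then completed by the un-twisting step of the first paragraph.

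The hard part is the crucial estimate. The real content is that the excision, taken in the combined variable $[\xi',\mu]$, leaves the anisotropic $\xi'$-weight $\spk{\xi'}^{\nu-|\alpha'|}$ untouched while producing a gain $\spk{\xi',\mu}^{N-j}\lesssim c_j^{N-j}$ on the support of $\chi_j$. To verify this one has to check that after applying Leibniz to an arbitrary derivative $D^{\alpha'}_{\xi'}D^k_\mu(\chi_j\bfg_j')$, the factors $\spk{\xi',\mu}^{-|\beta'|-l}$ produced by derivatives falling on $\chi_j$ combine with the factors $\spk{\xi'}^{\nu-|\alpha'|+|\beta'|}\spk{\xi',\mu}^{d-j-\nu-k+l}$ coming from $\bfg_j'$ to give, via $\spk{\xi'}\le\spk{\xi',\mu}$, the desired anisotropic bound in $\wtbfS^{d-N,\nu}$. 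This is precisely the step that would fail for excisions taken in $\xi'$ alone, and it is the only substantial difference compared to the proof of Proposition~\ref{prop:asymptotic-summation01}.
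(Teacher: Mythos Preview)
Your approach is essentially the same as the paper's: the paper simply states that the proof is analogous to that of Proposition~\ref{prop:asymptotic-summation01}, which in turn reduces via conjugation with $\bfkappa$ to an asymptotic summation in the $F$-valued class $\wtbfS^{d,\nu}(\rz^{n-1}\times\rpbar;F)$ and then cites \cite{Seil22-1}. You carry out the same reduction and then spell out the Borel-type construction that the paper leaves implicit.

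Two minor remarks. First, the reduction step is more direct if done via the pointwise conjugation $\bfg\mapsto\bfkappa^{-1}\bfg\bfkappa$ of Definition~\ref{def:conjugation01} rather than the operator-level version $\bfkappa^{-1}\#\bfg\#\bfkappa$ of Theorem~\ref{thm:conjugation03}; the latter works too, but the class $\wtbfB^{d,\nu;r}_G$ is \emph{defined} by the pointwise conjugation, so no appeal to Corollary~\ref{cor:comp03} is needed. Second, your ``crucial estimate'' controls only the semi-norms of $\wt S^{d-N,\nu}_{1,0}$, whereas the Fr\'echet topology on $\wtbfS^{d-N,\nu}$ also involves the maps to the limit symbols $p^\infty_{[d-N,\nu+j]}$ and to the homogeneous components $p^{(d-N-j,\nu-j)}$. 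To conclude that $\bfg'\in\wtbfS^{d,\nu}$ (and that the remainders lie in $\wtbfS^{d-N,\nu}$, not merely in $\wt S^{d-N,\nu}_{1,0}$) one should either verify these extra semi-norms directly, or argue a posteriori: since $(1-\chi_j)\bfg_j'$ is compactly supported in $(\xi',\mu)$ and hence smoothing, each candidate limit symbol and homogeneous component of $\bfg'$ is a \emph{finite} sum of the corresponding data of the $\bfg_j'$, and the required remainder estimates then follow from the inclusions $\wt S^{d-L,\nu}_{1,0}\subset\wt S^{d,\nu+L}_{1,0}$ and $\wt S^{d-j,\nu+L-j}_{1,0}\subset\wt S^{d,\nu+L}_{1,0}$. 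This is routine and the paper omits it as well, but your sketch would be cleaner with a sentence acknowledging it.
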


The proof of this result is analogous to that of Proposition \ref{prop:asymptotic-summation01}.
Let us conclude this section with a remark concerning regularizing symbols: 

\begin{remark}\label{rem:smoothing-operators}
For arbitrary weight-datum $\frakg$, 
\begin{align*} 
 B^{-\infty;r}_G(\rz^{n-1}\times\rpbar;\frakg)
 \subset \wtbfB^{d-\infty,\nu-\infty;r}_G(\rz^{n-1}\times\rpbar;\frakg)
 \subset\wt B^{d-\infty,\nu-\infty;r}_G(\rz^{n-1}\times\rpbar;\frakg),
\end{align*} 
and 
\begin{align*} 
 B^{-\infty;r}_G(\rz^{n-1}\times\rpbar;\frakg)
 &=\mathop{\mbox{\Large$\cap$}}_{N\in\nz_0} \wtbfB^{d-N,\nu;r}_G(\rz^{n-1}\times\rpbar;\frakg)\\
 &=\mathop{\mbox{\Large$\cap$}}_{N\in\nz_0}\wt B^{d-N,\nu;r}_G(\rz^{n-1}\times\rpbar;\frakg)\\
 &=S^{-\infty}(\rz^{n-1}\times\rpbar;\Gamma^r_G(\rz_+,\frakg)).
\end{align*} 
\end{remark}
\section{The full calculus with expansion at infinity}\label{sec:5}

\subsection{Definition and symbolic structure}\label{sec:5.1}

The full calculus is obtained by enlarging the strongly parameter-dependent calculus by the new class 
of generalized singular Green symbols.  

\begin{definition}\label{def:singular-green-new}
Let $d,\nu\in\gz$, $r\in\nz_0$, and $\frakg=((L_0,M_0),(L_1,M_1))$. Then 
 $$\bfB^{d,\nu;r}_G(\rz^{n-1}\times\rpbar;\frakg):=B^{d;r}_G(\rz^{n-1}\times\rpbar;\frakg)
    +\wtbfB^{d,\nu;r}_G(\rz^{n-1}\times\rpbar;\frakg).$$
\end{definition}

It is obvious by construction that 
\begin{align*} 
 \bfB^{d,\nu;r}_G(\rz^{n-1}\times\rpbar;\frakg)
  &=\wtbfB^{d,\nu;r}_G(\rz^{n-1}\times\rpbar;\frakg),\qquad\nu\le0,\\
 \bfB^{d,\nu;r}_G(\rz^{n-1}\times\rpbar;\frakg)
  &\subset\wtbfB^{d,0;r}_G(\rz^{n-1}\times\rpbar;\frakg),\qquad\nu\ge1.
\end{align*} 
Therefore we can associate with $\bfg\in\bfB^{d,\nu;r}_G(\rz^{n-1}\times\rpbar;\frakg)$ its 
homogeneous principal symbol $\bfg^{(d,\nu_-)}$ and the principal 
limit-symbol $\bfg^\infty_{[d,\nu_-]}$ where $\nu_-:=\min(0,\nu)$.

\begin{definition}\label{def:boundary-symbol-new}
Let $d\in\gz$, $\nu,r\in\nz_0$, and $\frakg=((L_0,M_0),(L_1,M_1))$. Then 
 $$\bfB^{d,\nu;r}(\rz^{n-1}\times\rpbar;\frakg):=B^{d;r}(\rz^{n-1}\times\rpbar;\frakg)
    +\wtbfB^{d,\nu;r}_G(\rz^{n-1}\times\rpbar;\frakg).$$
The elements of $\bfB^{d,\nu;r}(\rz^{n-1}\times\rpbar;\frakg)$ are called boundary symbols. 
The class of regularizing boundary symbols is 
 $$\bfB^{-\infty,\nu;r}(\rz^{n-1}\times\rpbar;\frakg)=B^{-\infty;r}(\rz^{n-1}\times\rpbar;\frakg).$$
\end{definition}

Equivalently, any $\bfp\in \bfB^{d,\nu;r}(\rz^{n-1}\times\rpbar;\frakg)$ has the form 
 $$\bfp(x',\xi',\mu)
     =\begin{pmatrix}\op^+(a)(x',\xi',\mu)&0\\0&0\end{pmatrix}+\bfg(x^\prime,\xi^\prime,\mu),$$
where $a\in S^d(\rz^n\times\rpbar;\scrL(\cz^{L_0},\cz^{L_1}))$ satisfies the transmission condition 
and $\bfg$ is a generalized singular Green symbol from $\bfB^{d,\nu;r}_G(\rz^{n-1}\times\rpbar;\frakg)$.  
Then the homogeneous principal symbol of  $\bfp$ is  
\begin{align}\label{eq:b-homogeneous-principal}
 \sigma_\psi^{(d)}(\bfp)(x,\xi,\mu)=a^{(d)}(x,\xi,\mu),\qquad  x_n\ge0,\quad (\xi,\mu)\not=0.
\end{align}

\begin{definition}\label{def:principal-symbols-new}
Representing $\bfp\in\bfB^{d,\nu;r}(\rz^{n-1}\times\rpbar;\frakg)$ as $\bfp=\bfp_0+\wt \bfg$ with 
$\bfp_0\in B^{d;r}(\rz^{n-1}\times\rpbar;\frakg)$ and 
$\wt\bfg\in\wtbfB^{d,\nu;r}_G(\rz^{n-1}\times\rpbar;\frakg)$, the principal boundary symbol of $\bfp$ is 
 $$\sigma_\partial^{(d,\nu)}(\bfp)(x',\xi',\mu)
    :=\sigma_\partial^{(d)}(\bfp_0)(x',\xi',\mu)+\wt\bfg^{(d,\nu)}(x',\xi',\mu),\qquad \xi'\not=0,$$
the principal limit symbol of $\bfp$ is 
\begin{align*}
 \sigma_\infty^{[d]}(\bfp)(x',\xi')
    :=\sigma_\partial^{(d)}(\bfp_0)(x',0,1)+\wt\bfg^\infty_{[d,0]}(x',\xi').
\end{align*}
\end{definition}

Note that, in the previous definition, $\wt\bfg^\infty_{[d,0]}=0$ in case $\nu\ge1$. The subordinate 
principal angular symbol is 
\begin{align*}
 \wh\sigma^{\spk{d}}(\bfp)(x',\xi')
 :&=\lim_{r\to0+}\sigma_\partial^{(d,\nu)}(\bfp)\Big(x',r\frac{\xi'}{|\xi'|},\sqrt{1-r^2}\Big)\\
  &=\sigma_\partial^{(d)}(\bfp_0)(x',0,1)+\wt\bfg_{\spk{d,0}}(x',\xi'),\qquad\xi'\not=0.
\end{align*}
Also here $\wt\bfg_{\spk{d,0}}=0$ in case $\nu\ge 1$. Moreover, $\wh\sigma^{\spk{d}}(\bfp)$ is the homogeneous 
principal symbol of the limit symbol $\sigma_\infty^{[d]}(\bfp)$, i.e., 
 $$\wh\sigma^{\spk{d}}(\bfp)=(\sigma_\infty^{[d]}(\bfp))^{(0)}.$$

\begin{remark}
For $\bfp\in\bfB^{d;r}(\rz^{n-1}\times\rpbar;\frakg)$ considered as an element of 
$\bfB^{d,0;r}(\rz^{n-1}\times\rpbar;\frakg)$ we have 
 $$\sigma_\infty^{[d]}(\bfp)(x',\xi')=\wh\sigma^{\spk{d}}(\bfp)(x',\xi')
    =\sigma_\partial^{(d)}(\bfp)(x',0,1),$$
i.e., both principal limit-symbol and principal angular symbol are determined by the principal boundary symbol. 
This is why ellipticity for symbols with regularity number $\nu\ge1$ $($thus, in particular, for strongly 
parameter-dependent symbols$)$ is covered by the homogeneous principal symbol and the boundary symbol alone. 
The principal limit-symbol will be relevant for symbols with regularity number $\nu=0$. 
\end{remark}

For a further description of the principal symbols we need to understand better the bahaviour of pseudodifferential 
operators on the half-axis under congugation with the dilation group-action. By direct calculation, if 
$a(x_n,\xi_n)\in S^d_\tr(\rz)$, then 
 $$\kappa_\lambda^{-1} \op^+(a)\kappa_\lambda=\op^+(a_\lambda),\qquad 
     a_\lambda(x_n,\xi_n)=a(x_n/\lambda,\lambda\xi_n)$$
for every $\lambda>0$. This already yields formula \eqref{eq:transmission} in the following lemma. 

\begin{lemma}\label{lem:transmission}
Let $a(x',\xi,\mu)\in S^{d}_\tr(\rz^n\times\rpbar;\scrL(\cz^{L_1},\cz^{L_2}))$ be independent of the 
$x_n$-variable. Define 
 $$a'(x',\xi',\mu;\xi_n)=a(x',\xi',[\xi',\mu]\xi_n,\mu).$$ 
Then $a'\in S^{d}(\rz^{n-1}\times\rpbar;S^d_\tr(\rz,\scrL(\cz^{L_1},\cz^{L_2})))$ and 
\begin{align}\label{eq:transmission}
 \kappa^{-1}(\xi',\mu)\op^+(a)(\xi',\mu)\kappa(\xi',\mu)=\op^+(a')(\xi',\mu).
\end{align}
The homogeneous principal symbol of $a'$ is 
 $$a'^{(d)}(x',\xi',\mu;\xi_n)=a^{(d)}(x',\xi',|\xi',\mu|\xi_n,\mu).$$
\end{lemma}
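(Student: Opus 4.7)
The operator identity \eqref{eq:transmission} is essentially already in place: applying the pre-stated scaling formula $\kappa_\lambda^{-1}\op^+(a)\kappa_\lambda=\op^+(a_\lambda)$ pointwise at $\lambda=[\xi',\mu]$ and using that $a$ is independent of $x_n$ collapses $a_\lambda(x_n,\xi_n)$ to $a(x',\xi',[\xi',\mu]\xi_n,\mu)=a'(x',\xi',\mu;\xi_n)$, so nothing more needs to be said on that front. The substantive content of the lemma is the symbol-class membership $a'\in S^d(\rz^{n-1}\times\rpbar;S^d_\tr(\rz,\scrL(\cz^{L_1},\cz^{L_2})))$ together with the principal-symbol identification.

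For the symbol estimates I would write $a'(x',\xi',\mu;\xi_n)=a(x',\xi',\eta,\mu)$ with $\eta:=[\xi',\mu]\xi_n$ and apply the Faà di Bruno chain rule. Each $(\xi',\mu)$-derivative of $a'$ either falls on the $\xi'$- or $\mu$-slot of $a$ directly, or produces a composite factor $(\partial^{k}_\eta a)(\cdot,\eta,\cdot)\cdot\prod_i(\partial^{\beta_i}_{\xi'}\partial^{\gamma_i}_\mu[\xi',\mu])\cdot\xi_n^k$ with indices summing to $\alpha'$ and $j$. Three ingredients then handle the bookkeeping: the Hörmander estimate for $a$, the bound $|D^\beta_{\xi'}D^\gamma_\mu[\xi',\mu]|\lesssim[\xi',\mu]^{1-|\beta|-|\gamma|}$ for $|\beta|+|\gamma|\geq 1$, and the equivalence $\spk{\xi',[\xi',\mu]\xi_n,\mu}\sim[\xi',\mu]\spk{\xi_n}$ valid on $[\xi',\mu]\geq 1$. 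Summing exponents of $[\xi',\mu]$ across each term yields exactly $d-|\alpha'|-j$, while the $\xi_n^k$ factors are absorbed into $\spk{\xi_n}^{d-|\alpha''|-j''}\leq\spk{\xi_n}^d$. For $\xi_n$-derivatives, $\partial_{\xi_n}a'=[\xi',\mu]\cdot(\partial_\eta a)(\cdot,\eta,\cdot)$, so each $\partial_{\xi_n}$ adds one factor $[\xi',\mu]$ in front but reduces the $\xi_n$-order by one; induction on $k$ gives
\begin{equation*}
|D^k_{\xi_n}D^{\alpha'}_{\xi'}D^{\beta'}_{x'}D^j_\mu a'(x',\xi',\mu;\xi_n)|\lesssim [\xi',\mu]^{d-|\alpha'|-j}\spk{\xi_n}^{d-k}.
\end{equation*}
The 1D transmission property of $a'(\cdot;\xi_n)$ at $\xi_n\to\pm\infty$ is inherited from that of $a$ in its $\xi_n$-slot, because the rescaling $\xi_n\mapsto[\xi',\mu]\xi_n$ is a positive dilation and therefore preserves the sign of $\xi_n$; a direct computation with the homogeneous component $a^{(d-\ell)}$ shows that the leading $\xi_n$-term of $a'(x',\xi',\mu;\cdot)$ at $\xi_n=\pm 1$ equals $[\xi',\mu]^{d-\ell}a^{(d-\ell)}(x',0,\pm 1,0)$, and the required parity thus reduces to the transmission condition of $a$ at $(\xi',\mu)=0$.

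For poly-homogeneity, substituting $a\sim\sum_\ell a^{(d-\ell)}$ into the definition of $a'$ gives $a'\sim\sum_\ell a^{(d-\ell)}(x',\xi',[\xi',\mu]\xi_n,\mu)$. On the conic region $|\xi',\mu|\geq 1$ one has $[\xi',\mu]=|\xi',\mu|$, so the $\ell$-th term equals $a^{(d-\ell)}(x',\xi',|\xi',\mu|\xi_n,\mu)$, which by the homogeneity of $a^{(d-\ell)}$ in $(\xi,\mu)$ is homogeneous of degree $d-\ell$ in $(\xi',\mu)$. This is the $(d-\ell)$-th homogeneous component of $a'$ and identifies the principal symbol as $a'^{(d)}(x',\xi',\mu;\xi_n)=a^{(d)}(x',\xi',|\xi',\mu|\xi_n,\mu)$. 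The hardest part will be the Faà di Bruno bookkeeping in the symbol estimates; the structural identity $\spk{\xi',[\xi',\mu]\xi_n,\mu}\sim[\xi',\mu]\spk{\xi_n}$ is what makes all exponents balance cleanly, and I would isolate this equivalence at the start as the key algebraic tool.
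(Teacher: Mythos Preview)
Your approach is correct and aligns with the paper's. The paper, however, allocates the effort differently: it declares the $S^d_{1,0}$-membership with values in $S^d_\tr(\rz)$ a ``well-known fact'' (citing \cite[p.~10]{Schr01}) and spends its energy on the poly-homogeneity, whereas you sketch the symbol estimates and the transmission inheritance explicitly via the chain rule and the equivalence $\spk{\xi',[\xi',\mu]\xi_n,\mu}\sim[\xi',\mu]\spk{\xi_n}$. For poly-homogeneity both arguments substitute the expansion $a\sim\sum a^{(d-\ell)}$ and exploit $[\xi',\mu]=|\xi',\mu|$ on $|\xi',\mu|\ge1$; the paper is just more meticulous about the excision functions, choosing $\chi(\xi',\mu)$ and $\wh\chi(\xi,\mu)$ with $\chi\wh\chi=\chi$ and $\chi=0$ on $|\xi',\mu|\le1$, so that $\wh\chi(\xi',[\xi',\mu]\xi_n,\mu)\,a^{(d-j)}(x',\xi',[\xi',\mu]\xi_n,\mu)$ splits cleanly into $\chi(\xi',\mu)\,a^{(d-j)}(x',\xi',|\xi',\mu|\xi_n,\mu)$ plus a term supported where $(1-\chi)$ is nonzero, hence smoothing in $(\xi',\mu)$. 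Your one-line ``substituting $a\sim\sum a^{(d-\ell)}$'' hides exactly this bookkeeping; it works, but if you write it out you will need the same excision mechanism to convert $[\xi',\mu]$ to $|\xi',\mu|$ and to control the remainder in $S^{d-N}_{1,0}(\rz^{n-1}\times\rpbar;S^d_\tr(\rz))$.
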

\begin{proof}
For convenience of notation let $L_1=L_2=1$. 
It is a well-known fact that $a'\in S^{d}_{1,0}(\rz^{n-1}\times\rpbar;S^d_\tr(\rz))$;
see, for example, \cite[p. 10]{Schr01} for a short explanation (where $\xi'$ has to be substituted by $(\xi',\mu))$. 
It remains to show the poly-homogeneity. 

Let $\chi(\xi',\mu)$ and $\wh\chi(\xi,\mu)$ be two zero-excision functions such that $\chi\wh\chi=\chi$ and 
$\chi(\xi',\mu)=0$ whenever $|\xi',\mu|\le1$. Then, for arbitrary $N\in\nz_0$, 
 $$a(x',\xi,\mu)=\sum_{j=0}^{N-1}\wh\chi(\xi,\mu)a^{(d-j)}(x',\xi,\mu)+r_N(x',\xi,\mu)$$
with $r_N\in S^{d-N}_\tr(\rz^n\times\rpbar)$. Therefore 
 $$a'(x',\xi',\mu;\xi_n)=\sum_{j=0}^{N-1}\wh\chi(\xi',[\xi',\mu]\xi_n,\mu)a^{(d-j)}(x',\xi',[\xi',\mu]\xi_n,\mu)+
    r_N'(x',\xi,\mu;\xi_n)$$
with $r_N'\in S^{d-N}_{1,0}(\rz^{n-1}\times\rpbar;S^{d-N}_\tr(\rz))\subset 
S^{d-N}_{1,0}(\rz^{n-1}\times\rpbar;S^{d}_\tr(\rz))$. Moreover, 
\begin{align*}
 \wh\chi(\xi',[\xi',\mu]\xi_n,\mu)a^{(d-j)}(x',\xi',[\xi',\mu]\xi_n,\mu)
    =&\,\chi(\xi',\mu)a^{(d-j)}(x',\xi',|\xi',\mu|\xi_n,\mu)+\\
     & +(1-\chi)(\xi',\mu)(\wh\chi a^{(d-j)})'(x',\xi',\mu;\xi_n).
\end{align*}
The second summand belongs to $S^{-\infty}(\rz^{n-1}\times\rpbar;S^{d-j}_\tr(\rz))$. Thus 
\begin{align*}
 a'(x',\xi',\mu;\xi_n)\equiv\sum_{j=0}^{N-1}\chi(\xi',\mu)a^{(d-j)}(x',\xi',|\xi',\mu|\xi_n,\mu)
\end{align*}
modulo $S^{d-N}_{1,0}(\rz^{n-1}\times\rpbar;S^{d}_\tr(\rz))$.
This completes the proof. 
\end{proof}

As a consequence of Lemma \ref{lem:transmission}, Theorem \ref{thm:conjugation02} and 
Definition \ref{def:conjugation01},
\begin{align*} 
 \bfkappa^{-1}_{|\xi',\mu|}\sigma_\partial^{(d,\nu)}(\bfp)(x',\xi',\mu)\bfkappa_{|\xi',\mu|}
 \in 
 \bfS^{d,\nu}_\Hom(\rz^{n-1}\times\rpbar;B^{d,r}(\rz_+;\frakg))
\end{align*}
where the latter space is that of \eqref{eq:sum-of-hom} with $F=B^{d,r}(\rz_+;\frakg)$. Moreover, 
\begin{align*}
 \sigma_\infty^{[d]}(\bfp)\in S^0(\rz^{n-1};B^{d;r}(\rz_+,\frakg)).
\end{align*}

\subsubsection{Composition with powers of $x_n$}\label{sec:5.1.1}

If $a=a(x_n)\in\scrC^\infty(\rpbar,\scrL(\cz^L))$ is a smooth function on $\rpbar$ 
we may consider the associated operator $\op^+(a)$, i.e. the operator of multiplication by $a$, as a symbol constant in  
the variables $(x',\xi',\mu)$. For simplicity of notation we shall use the same notation $a$ for the function $a$, the 
associated operator on $\rz_+$, and the associated pseudodifferential symbol. 

A case of particular importance is $a(x_n)=x_n^j\,\mathrm{id}_{\cz^L}$, $j\in\nz_0$. 
In this case we shall write simply $x_n^j$ for every choice of the value $L$. Note that 
  $$[(\kappa_\lambda^{-1} x_n^j \kappa_\lambda)u](x^n)
     =(t^j u(\lambda t)\big|_{t=\lambda^{-1}x_n}=\lambda^{-j} x_n^j u(x_n),$$
i.e., the symbol $x_n^j$ is twisted homogeneous of degree $-j$; in particular, 
$x_n^j\in S^{-j}(\rz^{n-1}\times\rpbar;E_0,E_1)$ 
for every choice of spaces $E_0$, $E_1$ carrying the dilation group-action such that $x_n^j\in\scrL(E_0,E_1)$.    
We will use the notation 
 $$\bfx_n^j:=\begin{pmatrix}x_n^j&0\\0&0\end{pmatrix}\;\in\; 
    S^{-j}(\rz^{n-1}\times\rpbar;E_0\oplus\cz^{M_0},E_1\oplus\cz^{M_1})$$
without explicitly indicating the involved spaces $E_\ell$ and numbers $M_j$. 

\begin{lemma}\label{lem:cut-off01}
Let $\frakB_G^{d;r}$ represent one choice of $B^{d;r}_G$, $\wtbfB^{d,\nu;r}_G$, or $\bfB^{d,\nu;r}_G$. 

If $\bfg\in \frakB^{d;r}_G(\rz^{n-1}\times\rpbar;\frakg)$, then  
 $\bfx_n^j\bfg,\;\bfg\bfx_n^j \in \frakB^{d-j;r}_G(\rz^{n-1}\times\rpbar;\frakg)$.  
\end{lemma}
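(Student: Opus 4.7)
The plan is to reduce to the type-zero case and then exploit the conjugation characterization of Theorems \ref{thm:conjugation02} and its analogue in Definition \ref{def:conjugation01} to transfer the statement to a statement about symbols with values in $\Gamma^0_G(\rz_+;\frakg)$. For the case $\frakB_G^{d;r}=\bfB_G^{d,\nu;r}$ the decomposition $\bfg=\bfg^{(1)}+\bfg^{(2)}$ with $\bfg^{(1)}\in B^{d;r}_G$ and $\bfg^{(2)}\in\wtbfB^{d,\nu;r}_G$ allows me to reduce to the other two cases separately. To reduce to $r=0$ I write $\bfg=\bfg_0+\sum_{k=1}^r\bfg_k\boldsymbol{\partial}_+^k$ with $\bfg_k\in\frakB^{d-k;0}_G$; then $\bfx_n^j\bfg=\bfx_n^j\bfg_0+\sum_k(\bfx_n^j\bfg_k)\boldsymbol{\partial}_+^k$ is immediately of the desired form provided $\bfx_n^j\bfg_k\in \frakB^{d-k-j;0}_G$. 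For $\bfg\bfx_n^j$ I first commute $\boldsymbol{\partial}_+^k$ past $\bfx_n^j$ by the Leibniz rule $\partial_+^k x_n^j=\sum_{\ell=0}^{\min(k,j)}\binom{k}{\ell}\frac{j!}{(j-\ell)!}x_n^{j-\ell}\partial_+^{k-\ell}$ and obtain a finite sum of terms $(\bfg_k\bfx_n^{j-\ell})\boldsymbol{\partial}_+^{k-\ell}$ which, assuming the type-zero case, lie in $\frakB^{d-k-(j-\ell);0}_G\cdot\boldsymbol{\partial}_+^{k-\ell}\subset \frakB^{d-j;r}_G$.

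It thus suffices to prove: if $\bfg\in\frakB^{d;0}_G$ then $\bfx_n^j\bfg$ and $\bfg\bfx_n^j$ belong to $\frakB^{d-j;0}_G$. By the pointwise characterizations in Theorem \ref{thm:conjugation02} and Definition \ref{def:conjugation01}, the symbol $\bfg':=\bfkappa^{-1}\bfg\,\bfkappa$ lies in $\frakS^d(\rz^{n-1}\times\rpbar;\Gamma^0_G(\rz_+;\frakg))$, where $\frakS^d$ is $S^d$, $\wt S^{d,\nu}$, or $\wtbfS^{d,\nu}$ respectively. Since $\bfx_n^j$ is twisted homogeneous of degree $-j$, as noted just before the lemma, we have $\bfkappa^{-1}(\xi',\mu)\bfx_n^j\bfkappa(\xi',\mu)=[\xi',\mu]^{-j}\bfx_n^j$ pointwise. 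Therefore
\begin{equation*}
 \bfkappa^{-1}(\bfx_n^j\bfg)\bfkappa=[\xi',\mu]^{-j}\bfx_n^j\,\bfg',\qquad
 \bfkappa^{-1}(\bfg\bfx_n^j)\bfkappa=[\xi',\mu]^{-j}\bfg'\,\bfx_n^j.
\end{equation*}
Applying the corresponding direction of Theorems \ref{thm:conjugation02}/\ref{thm:conjugation01} (and Definition \ref{def:conjugation01} in the $\wtbfB$ case) will then yield the claim, once we verify that the right-hand side is in $\frakS^{d-j}(\rz^{n-1}\times\rpbar;\Gamma^0_G(\rz_+;\frakg))$.

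For this final step I argue that the maps $G\mapsto x_n^j G$ and $G\mapsto G\,x_n^j$ are continuous from $\Gamma^0_G(\rz_+;\frakg)$ into itself: at the level of symbol kernels they simply amount to multiplication by $x_n^j$ (resp.\ $y_n^j$), which preserves rapid decay of the Schwartz kernels defining Poisson, trace and Green operators, and zeros out the pseudodifferential corner as appropriate. Hence $\bfx_n^j\bfg'$ and $\bfg'\bfx_n^j$ still lie in $\frakS^d(\rz^{n-1}\times\rpbar;\Gamma^0_G(\rz_+;\frakg))$. Multiplying by the scalar symbol $[\xi',\mu]^{-j}\in S^{-j}(\rz^{n-1}\times\rpbar)$ lowers the order by $j$ in each of $S^d$, $\wt S^{d,\nu}$, and $\wtbfS^{d,\nu}$ while leaving the regularity number $\nu$ unchanged, as is seen directly from the defining estimates and, in the expansion class, from the fact that the expansion $\sum p^\infty_{[d,\nu+k]}[\xi',\mu]^{d-\nu-k}$ is transformed into $\sum p^\infty_{[d,\nu+k]}[\xi',\mu]^{(d-j)-\nu-k}$, which is an expansion of order $d-j$ and regularity $\nu$. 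This gives the desired membership in $\frakS^{d-j}$, completing the proof in the type-zero case.

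The main obstacle, and the only place where some care is required, is the verification in the last step that multiplication by $[\xi',\mu]^{-j}$ behaves additively on the order while preserving the regularity number in the class $\wtbfS^{d,\nu}$; once this is confirmed from Definition \ref{def:symbol-expansion}, the rest of the argument is a mechanical assembly of the conjugation identity, the twisted homogeneity of $\bfx_n^j$, and the reduction-of-type step via the Leibniz rule.
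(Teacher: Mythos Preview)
Your proof is correct and follows essentially the same idea as the paper: the paper's ``proof'' is the single remark that the result is ``rather obvious (since $x_n^j$ is strongly parameter-dependent of order $-j$)'', i.e., it relies on the twisted homogeneity $\kappa_\lambda^{-1}x_n^j\kappa_\lambda=\lambda^{-j}x_n^j$ established just before the lemma. Your argument unpacks precisely this, with the additional care of reducing to type zero via the Leibniz commutation $\partial_+^kx_n^j=\sum_\ell\binom{k}{\ell}\frac{j!}{(j-\ell)!}x_n^{j-\ell}\partial_+^{k-\ell}$ and, for the $\wtbfB_G^{d,\nu;r}$-case, passing through the conjugation characterization of Definition~\ref{def:conjugation01}; the latter is indeed the natural route since that class is \emph{defined} via conjugation, whereas for $B_G^{d;r}$ one could alternatively argue directly from the intersection-of-$S^d$-spaces definition and the fact that pointwise products with $\bfx_n^j\in S^{-j}$ lower the order by $j$.
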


Though rather obvious $($since $x_n^j$ is strongly parameter-dependent of order $-j)$, 
this result is technically very important for the calculus and will be used various times in the sequel.  

\forget{
Note that $\op(\bfx_n^j\bfg)(\mu)=\bfx_n^j\op(\bfg)(\mu)$ and $\op(\bfg\bfx^j_n)(\mu)=\op(\bfg)(\mu)\bfx_n^j$. 
Then using that $\omega$ 
(considered as symbol in $(x',\xi',\mu)$) is a (non poly-homogeneous) symbol of order zero and writing 
$(1-\omega)=x_n^N(x_n^N(1-\omega))$ for arbitrary $N$, we obtain the following corollary. 

\begin{proposition}
If $\bfg\in B^{d;r}_G(\rz^{n-1}\times\rpbar;\frakg)$, then  
 $$\bfg-\mathbf{\omega}\bfg, \bfg-\bfg\mathbf{\omega} \in 
    B^{-\infty;r}_G(\rz^{n-1}\times\rpbar;\frakg).$$ 
\end{proposition}
}

\subsection{Composiition and formal adjoint}\label{sec:5.2}
It is immediate from Theorems \ref{thm:adjoint-strong} and \ref{thm:adjoint-limit} that the calculus is closed 
under taking the formal adjoint:

\begin{theorem}\label{thm:b-adjoint}
Let $d\le0$. Then the map $\bfp\mapsto\bfp^{(*)}$ $($formal adjoint with respect to the $L^2$-inner product$)$ 
induces a continuous map 
 $$\bfB^{d,\nu;0}(\rz^{n-1}\times\rpbar;\frakg)\lra 
    \bfB^{d,\nu;0}(\rz^{n-1}\times\rpbar;\frakg^{-1}).$$
All principal symbols behave natural under taking the formal adjoint.
\end{theorem}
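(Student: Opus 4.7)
The plan is to exploit the direct-sum definition
$$\bfB^{d,\nu;0}(\rz^{n-1}\times\rpbar;\frakg)=B^{d;0}(\rz^{n-1}\times\rpbar;\frakg)+\wtbfB^{d,\nu;0}_G(\rz^{n-1}\times\rpbar;\frakg)$$
together with the two adjoint results already established, Theorem \ref{thm:adjoint-strong} for the strongly parameter-dependent part and Theorem \ref{thm:adjoint-limit} for the expansion-at-infinity part. Specifically, given $\bfp\in \bfB^{d,\nu;0}(\rz^{n-1}\times\rpbar;\frakg)$ I would choose some decomposition $\bfp=\bfp_0+\wt\bfg$ with $\bfp_0\in B^{d;0}(\rz^{n-1}\times\rpbar;\frakg)$ and $\wt\bfg\in \wtbfB^{d,\nu;0}_G(\rz^{n-1}\times\rpbar;\frakg)$. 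Since the formal adjoint on the operator level is additive, $\op(\bfp)(\mu)^*=\op(\bfp_0)(\mu)^*+\op(\wt\bfg)(\mu)^*$, and the uniqueness statement in Theorem \ref{thm:adjoint-strong} (resp. Theorem \ref{thm:adjoint-limit}) gives unique adjoint symbols $\bfp_0^{(*)}\in B^{d;0}(\rz^{n-1}\times\rpbar;\frakg^{-1})$ and $\wt\bfg^{(*)}\in \wtbfB^{d,\nu;0}_G(\rz^{n-1}\times\rpbar;\frakg^{-1})$. Setting $\bfp^{(*)}:=\bfp_0^{(*)}+\wt\bfg^{(*)}$ puts the adjoint symbol in the required sum space; the independence of the choice of decomposition is automatic because $\op(\bfp^{(*)})(\mu)$ is determined by $\op(\bfp)(\mu)$ alone, via the $L^2$-pairing.

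Continuity of $\bfp\mapsto\bfp^{(*)}$ then follows from the continuity assertions of the two cited theorems, combined with the fact that the Fréchet topology on the sum space is the quotient of the product topology on $B^{d;0}\times \wtbfB^{d,\nu;0}_G$ modulo the kernel of the sum map. For the behaviour of the principal symbols, $\sigma_\psi^{(d)}$ is carried only by the pseudodifferential part $\bfp_0$ (see \eqref{eq:b-homogeneous-principal}), so $\sigma_\psi^{(d)}(\bfp^{(*)})=\sigma_\psi^{(d)}(\bfp_0^{(*)})=\sigma_\psi^{(d)}(\bfp_0)^*=\sigma_\psi^{(d)}(\bfp)^*$ directly from Theorem \ref{thm:adjoint-strong}. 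Similarly, the principal boundary symbol decomposes as $\sigma_\partial^{(d,\nu)}(\bfp)=\sigma_\partial^{(d)}(\bfp_0)+\wt\bfg^{(d,\nu)}$ by Definition \ref{def:principal-symbols-new}, and adjoint behaves naturally on each summand.

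For the principal limit symbol one invokes Definition \ref{def:principal-symbols-new}:
$$\sigma_\infty^{[d]}(\bfp)=\sigma_\partial^{(d)}(\bfp_0)(x',0,1)+\wt\bfg^\infty_{[d,0]}.$$
The first term is handled by the commutation of ``freezing at $(\xi',\mu)=(0,1)$'' with the pointwise adjoint, while the second term is handled by $(\wt\bfg^{(*)})^\infty_{[d,0]}=(\wt\bfg^\infty_{[d,0]})^{(*)}$ from Theorem \ref{thm:adjoint-limit}. The main (albeit minor) obstacle I anticipate is justifying that the decomposition ambiguity is harmless at the level of these secondary symbols: two decompositions $\bfp=\bfp_0+\wt\bfg=\bfp_0'+\wt\bfg'$ differ by a symbol in $B^{d;0}\cap \wtbfB^{d,\nu;0}_G$, which must be shown to contribute compatibly to both the $\sigma_\partial^{(d)}(\cdot)(x',0,1)$ piece and to the $(\cdot)^\infty_{[d,0]}$ piece under adjoining; this amounts to checking that on a strongly parameter-dependent generalized singular Green symbol, Proposition \ref{prop: classical} (applied inside $\wtbfB$ via Theorem \ref{thm:conjugation03} and Corollary \ref{cor:comp03}) identifies the two prescriptions for the limit symbol, and the adjoint then commutes with either, yielding consistency.
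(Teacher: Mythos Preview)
Your proposal is correct and matches the paper's approach exactly: the paper gives no proof beyond the single sentence ``It is immediate from Theorems \ref{thm:adjoint-strong} and \ref{thm:adjoint-limit} that the calculus is closed under taking the formal adjoint,'' which is precisely your decomposition-and-sum argument. Your additional discussion of the decomposition ambiguity for the principal limit symbol is more careful than the paper itself, which implicitly relies on the well-definedness of the principal symbols in Definition \ref{def:principal-symbols-new} (already independent of the splitting) rather than re-verifying it at the adjoint stage.
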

It will require more work to verify the closedness under composition. 

\begin{theorem}\label{thm:comp-new}
Let $\frakg_1$ be composable with $\frakg_0$. Then $(\bfp_1,\bfp_0)\mapsto \bfp_1\#\bfp_0$ induces continuous maps
\begin{align*} 
 \bfB^{d_1,\nu_1;r_1}&(\rz^{n-1}\times\rpbar;\frakg_1)\times  
     \bfB^{d_0,\nu_0;r_0}(\rz^{n-1}\times\rpbar;\frakg_0)\\
    &\lra \bfB^{d,\nu;r}(\rz^{n-1}\times\rpbar;\frakg_1\frakg_0),  
\end{align*}
where $($recall that $\nu_j\ge0)$
 $$d=d_0+d_1,\qquad \nu=\min(\nu_0,\nu_1),\qquad r=\max(r_1+d_0,r_0).$$
All principal symbols are multiplicative, i.e., 
\begin{align*} 
  \sigma_\psi^{(d_0+d_1)}(\bfp_1\#\bfp_0)
     &=\sigma_\psi^{(d_1)}(\bfp_1)\sigma_\psi^{(d_0)}(\bfp_0),\\
  \sigma_\partial^{(d_0+d_1,\nu)}(\bfp_1\#\bfp_0)
     &=\sigma_\partial^{(d_1,\nu_1)}(\bfp_1)\sigma_\partial^{(d_0,\nu_0)}(\bfp_0),\\
  \sigma_\infty^{[d_0+d_1]}(\bfp_1\#\bfp_0)
     &=\sigma_\infty^{[d_1]}(\bfp_1)\#\sigma_\infty^{[d_0]}(\bfp_0).
\end{align*}
Consequently, the principal angular symbol behaves multiplicatively. 
The class of generalized singular Green operators is a two-sided ideal with respect to composition. 
\end{theorem}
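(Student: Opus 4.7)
The plan is to split each factor along the defining sum $\bfB^{d_j,\nu_j;r_j}=B^{d_j;r_j}+\wtbfB^{d_j,\nu_j;r_j}_G$ and reduce to the two composition theorems already at our disposal. Writing $\bfp_j=\bfp_j^\sharp+\wt\bfg_j$ with $\bfp_j^\sharp\in B^{d_j;r_j}(\rz^{n-1}\times\rpbar;\frakg_j)$ and $\wt\bfg_j\in\wtbfB^{d_j,\nu_j;r_j}_G(\rz^{n-1}\times\rpbar;\frakg_j)$, distributivity of $\#$ yields
\begin{equation*}
\bfp_1\#\bfp_0=\bfp_1^\sharp\#\bfp_0^\sharp+\bfp_1^\sharp\#\wt\bfg_0+\wt\bfg_1\#\bfp_0^\sharp+\wt\bfg_1\#\wt\bfg_0.
\end{equation*}
The diagonal terms are immediate: Theorem \ref{thm:comp-strong} gives $\bfp_1^\sharp\#\bfp_0^\sharp\in B^{d;r}$, and Theorem \ref{thm:comp-limit} puts $\wt\bfg_1\#\wt\bfg_0$ in $\wtbfB^{d,\nu_0+\nu_1;r_0}_G\subset\wtbfB^{d,\nu;r_0}_G$ since $\nu_0+\nu_1\ge\nu=\min(\nu_0,\nu_1)$.

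The cross terms are the crux. For $\bfp_1^\sharp\#\wt\bfg_0$ we separate the pseudodifferential block $\op^+(a_1)$ from the Green block $\bfg_1^\sharp$ of $\bfp_1^\sharp$. The piece $\bfg_1^\sharp\#\wt\bfg_0$ lies in $\wtbfB^{d,\nu_0;r_0}_G$ by Theorem \ref{thm:comp-limit}, invoking the embedding $B^{d_1;r_1}_G\subset\wtbfB^{d_1,0;r_1}_G$ from the remark following Definition \ref{def:conjugation01}. The genuinely new piece $\op^+(a_1)\#\wt\bfg_0$ is tackled via Theorem \ref{thm:conjugation03}: writing $\wt\bfg_0=\bfkappa\#\wt\bfg_0'\#\bfkappa^{-1}$ with $\wt\bfg_0'\in\wtbfS^{d_0,\nu_0}(\rz^{n-1}\times\rpbar;\Gamma^{r_0}_G(\rz_+;\frakg_0))$, the product equals
\begin{equation*}
\bfkappa\#\bigl(\bfkappa^{-1}\#\op^+(a_1)\#\bfkappa\bigr)\#\wt\bfg_0'\#\bfkappa^{-1}.
\end{equation*}
After reducing to $x_n$-independent $a_1$ by Taylor expansion at $x_n=0$ (the Taylor remainder is absorbed into a Green symbol by Lemma \ref{lem:cut-off01}), Lemma \ref{lem:transmission} identifies $\bfkappa^{-1}\#\op^+(a_1)\#\bfkappa$ modulo lower order with $\op^+(a_1')\in S^{d_1}(\rz^{n-1}\times\rpbar;F)$, where $F$ is a space of order-$d_1$ pseudodifferential operators on $\rz_+$ enjoying the transmission property. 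Since $\Gamma^{r_0}_G(\rz_+;\cdot)$ is a two-sided ideal under composition with $F$, the operator-valued Leibniz product with $\wt\bfg_0'$ in the twisted-scalar calculus of Section \ref{sec:3.2} lands in $\wtbfS^{d,\nu_0}(\rz^{n-1}\times\rpbar;\Gamma^{r_0}_G)$; re-twisting via Theorem \ref{thm:conjugation03} returns $\op^+(a_1)\#\wt\bfg_0\in\wtbfB^{d,\nu_0;r_0}_G$. The mirror argument for $\wt\bfg_1\#\bfp_0^\sharp$ yields $\wtbfB^{d,\nu_1;\max(r_1+d_0,r_0)}_G$, the type shift $d_0$ emerging from the $\boldsymbol{\partial}_+^j$-factors in the definition of Green symbols of positive type, exactly as in the classical strong case.

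Summing the four contributions gives $\bfp_1\#\bfp_0\in\bfB^{d,\nu;r}$, and the ideal property for generalized Green symbols is an immediate consequence of the cross-term analysis. Multiplicativity of $\sigma_\psi^{(d)}$ is inherited from Theorem \ref{thm:comp-strong}. Multiplicativity of $\sigma_\partial^{(d,\nu)}$ follows by summing the four principal-symbol contributions and using multiplicativity in each sub-calculus. For $\sigma_\infty^{[d]}$ we combine multiplicativity of the principal limit symbol from Theorem \ref{thm:comp-limit} with the identity $\sigma_\infty^{[d]}(\bfp_j^\sharp)(x',\xi')=\sigma_\partial^{(d_j)}(\bfp_j^\sharp)(x',0,1)$ afforded by Proposition \ref{prop: classical} and the Remark following Definition \ref{def:principal-symbols-new}. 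The main obstacle is the cross piece $\op^+(a_1)\#\wt\bfg_0$: it is the only point where a strongly parameter-dependent pseudodifferential action must be squeezed into the $\wtbfS$-framework without losing regularity, and the quantitative bookkeeping via Lemma \ref{lem:transmission} together with the un-twisting formula \eqref{eq:expansion01} is what makes the regularity number $\nu_0$, and hence $\nu$, come out correctly.
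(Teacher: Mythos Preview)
Your overall reduction—splitting $\bfp_j$ along $B^{d_j;r_j}+\wtbfB^{d_j,\nu_j;r_j}_G$, disposing of the diagonal terms via Theorems~\ref{thm:comp-strong} and~\ref{thm:comp-limit}, and isolating the cross pieces $\op^+(a_1)\#\wt\bfg_0$ and $\wt\bfg_1\#\op^+(a_0)$—is exactly the paper's strategy. The Green-by-Green parts of the cross terms are also handled as you say.

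There are two places where the argument is incomplete. First, for $\op^+(a_1)\#\wt\bfg_0$: Lemma~\ref{lem:transmission} gives only the \emph{pointwise} identity $\kappa^{-1}(\xi',\mu)\op^+(a_1)(x',\xi',\mu)\kappa(\xi',\mu)=\op^+(a_1')(x',\xi',\mu)$, not the Leibniz-product conjugation $\bfkappa^{-1}\#\op^+(a_1)\#\bfkappa$. Since $\op^+(a_1)$ depends on $x'$ while $\bfkappa^{-1}$ depends on $\xi'$, the product $\bfkappa^{-1}\#\op^+(a_1)$ contains lower-order corrections involving $\partial^{\alpha'}_{\xi'}\kappa^{-1}$; placing these in a usable $F$-valued symbol class amounts to proving an analogue of Theorem~\ref{thm:conjugation03} for $\op^+(a)$, which you do not supply. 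The paper sidesteps this by expanding $\op^+(a)\#g_0$ directly in the ordinary Leibniz series, Taylor-expanding each coefficient in $x_n$, and invoking the pointwise result of Lemma~\ref{lem:transmission02} term by term together with asymptotic summation (Theorem~\ref{thm:comp-left}).

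Second, the right cross term $\wt\bfg_1\#\op^+(a_0)$ is \emph{not} a mirror of the left one. The paper's Theorem~\ref{thm:comp-right} needs three further ingredients absent from any symmetric reflection: order reduction via $\lambda^{d}_-$ to bring the Green factor to order $d_2\le 0$; passage to the formal adjoint $g_2\#\op^+(a)=(\op^+(a^*)\#g_2^{(*)})^{(*)}$ in the case where $a$ is a multiplication operator, which is what forces the restriction $d_2\le 0$ so that Theorem~\ref{thm:adjoint-limit} applies; and a nuclear tensor-product decomposition $a(x,\xi,\mu)=\sum\lambda_\ell a_\ell(x)p_\ell(\xi,\mu)$ to reduce the general case to the previous two. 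The phrase ``mirror argument'' hides genuine asymmetric work.
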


Before proving this result let us first make some informal discussion. Let us write 
$\bfp_j=\bfp_{j,0}+\wt\bfg_j$ with $\bfp_{j,0}\in B^{d_j;r_j}(\rz^{n-1}\times\rpbar;\frakg_j)$ and 
$\wt\bfg_j\in \wt\bfB^{d_j,\nu_j;r_j}_G(\rz^{n-1}\times\rpbar;\frakg_1)$. 
Then 
 $$\bfp_1\#\bfp_0=\bfp_{1,0}\#\bfp_{0,0}+\bfp_{1,0}\#\wt\bfg_0+\wt\bfg_1\#\bfp_{0,0}+\wt\bfg_1\#\wt\bfg_0.$$
The first and the last summand on the right-hand side are covered by Theorem \ref{thm:comp-strong}  and 
Theorem \ref{thm:comp-limit}, respectively. Hence it remains to analyze the mixed terms. Writing $\bfp_{j,0}$ as the 
sum of a pseudodifferential part and a generalized singular Green operator, the composition of the generalized 
singular Green operators is again covered by Theorem \ref{thm:comp-limit} together with the fact that 
$B^{d_j;r_j}_G(\rz^{n-1}\times\rpbar;\frakg_j)\subset \wt\bfB^{d_j,0;r_j}_G(\rz^{n-1}\times\rpbar;\frakg_j)$. 
Hence it remains to analyze compositions of the pseudodifferential part with the generalized singular Green operators. 
For the first mixed term this means to analyze compostions of the form $\op^+(a)\#g$ and $\op^+(a)\#k$ 
with a generalized singular Green symbol $g$ and a Poisson symbol $k$, for the second mixed term that of $g\#\op^+(a)$ 
and $t\#\op^+(a)$ with a trace symbol $t$, where $g,k,t$ belong to the corresponding $\wtbfB_G$-classes. 
We shall focus on the terms involving $g$, since the other terms are treated analogously. Using the explicit form of 
generalized singular Green symbols of type $r$ it is not difficult to see that the case of general type follows from that of 
type $r=0$.

\begin{lemma}\label{lem:transmission02}
Let $a(x',\xi,\mu)\in S^{d}_\tr(\rz^n\times\rpbar;\scrL(\cz^{L_1},\cz^{L_2}))$ be independent of the 
$x_n$-variable. Let $g_j\in\wtbfB^{d_j,\nu;r_j}_G(\rz^{n-1}\times\rpbar;\frakg_j)$, $j=0,2$, with 
$\frakg_j=((L_j,0),(L_{j+1},0))$ for $j=0,1,2$. Then  
\begin{align*}
 \op^+(a)g_0&\in \wtbfB^{d_0+d,\nu;r_0}_G(\rz^{n-1}\times\rpbar;\frakg_1\frakg_0),\\
 g_2\op^+(a)&\in \wtbfB^{d_2+d,\nu;r}_G(\rz^{n-1}\times\rpbar;\frakg_2\frakg_1),\qquad r=\max(r_2+d,0).
\end{align*}
Moreover, 
\begin{align*}
 (\op^+(a)g_0)^{(d_0+d,\nu)}(x',\xi',\mu)&=\op^+(a^{(d)})(x',\xi',\mu)g_0^{(d_0,\nu)}(x',\xi',\mu),\\
 (\op^+(a)g_0)^\infty_{[d_0+d,\nu]}(x',\xi')&=\op^+(a^{(d)})(x',0,1)g_{0,[d,\nu]}^\infty(x',\xi',\mu),
\end{align*}
and analogously for $g_2\op^+(a)$ with factors interchanged. 
\end{lemma}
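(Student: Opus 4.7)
\textbf{Proof plan for Lemma~\ref{lem:transmission02}.} The strategy is to un-twist via the group-action $\kappa$ and reduce to a statement about pointwise products of $\wt\bfS$-valued symbols in the framework of Sections~\ref{sec:3.1}--\ref{sec:3.1.1}. First I would reduce to the case $r_0=r_2=0$: by Definition~\ref{def:singular-green03}, symbols of positive type are finite sums of type-zero symbols composed with $\boldsymbol{\partial}_+^j$, which either factor out of the product on the right or, on the left, can be absorbed into the pseudodifferential symbol via $\boldsymbol{\partial}_+^j\op^+(a) = \op^+((i\xi_n)^j a)$, accounting for the type shift $\max(r_2+d,0)$. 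Then Lemma~\ref{lem:transmission} gives $\op^+(a) = \kappa\,\op^+(a')\,\kappa^{-1}$ pointwise, with $a'(x',\xi',\mu;\xi_n) = a(x',\xi',[\xi',\mu]\xi_n,\mu)$ in $S^d(\rz^{n-1}\times\rpbar;S^d_\tr(\rz;\scrL(\cz^{L_1},\cz^{L_2})))$, and by Definition~\ref{def:conjugation01} one may write $g_0 = \kappa\,g_0'\,\kappa^{-1}$ with $g_0'\in\wtbfS^{d_0,\nu}(\rz^{n-1}\times\rpbar;\Gamma^0_G(\rz_+;\frakg_0))$. Since $\kappa$ does not depend on $x'$, the pointwise identity
\begin{equation*}
 \kappa^{-1}\,(\op^+(a)\,g_0)\,\kappa = \op^+(a')\,g_0'
\end{equation*}
holds, and by Definition~\ref{def:conjugation01} it then suffices to show that this product belongs to $\wtbfS^{d+d_0,\nu}(\rz^{n-1}\times\rpbar;\Gamma^0_G(\rz_+;\frakg_1\frakg_0))$.

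For this membership I would combine two facts. First, pointwise operator composition defines a bilinear continuous map
\begin{equation*}
 \op^+\bigl(S^d_\tr(\rz;(L_1,L_2))\bigr)\times \Gamma^0_G(\rz_+;\frakg_0)\lra\Gamma^0_G(\rz_+;\frakg_1\frakg_0),
\end{equation*}
a statement in the parameter-free half-axis Boutet de Monvel calculus. Second, by the Leibniz rule together with the defining seminorm estimates, pointwise products of $F$-valued $\wt\bfS$-symbols add both orders and regularity numbers. Since Proposition~\ref{prop: classical} gives $S^d\subset\wtbfS^{d,0}$, so that $a'$ has regularity $0$, while $g_0'$ has order $d_0$ and regularity $\nu$, the product lies in $\wtbfS^{d+d_0,\nu}$ with values in $\Gamma^0_G(\rz_+;\frakg_1\frakg_0)$, as required.

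The principal symbols then follow by multiplicativity in the un-twisted picture. Homogeneity yields $(\op^+(a')\,g_0')^{(d+d_0,\nu)} = \op^+(a'^{(d)})\,(g_0')^{(d_0,\nu)}$, and applying the homogeneous version of Lemma~\ref{lem:transmission} and conjugating back by $\kappa_{|\xi',\mu|}$ recovers $\op^+(a^{(d)})\,g_0^{(d_0,\nu)}$, which is the claimed homogeneous principal symbol. For the principal limit symbol, Proposition~\ref{prop: classical} applied to $a'$ identifies $a'^\infty_{[d,0]}(x',\xi') = a'^{(d)}(x',0,1) = a^{(d)}(x',0,\,\cdot\,,1)$, hence $\op^+\!\bigl(a'^\infty_{[d,0]}\bigr) = \op^+(a^{(d)})(x',0,1)$; combined with $g_0^\infty_{[d_0,\nu]} = (g_0')^\infty_{[d_0,\nu]}$ from Definition~\ref{def:b-tilde-limit} and the multiplicativity of the principal limit symbol under pointwise products, this gives the stated formula. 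The case $g_2\,\op^+(a)$ is handled symmetrically.

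The hard part will be the bilinear composition claim for $F$-valued $\wt\bfS$-symbols --- namely that pointwise products add orders and regularity numbers. However, given the bilinearity and continuity of the underlying operator composition on the target Fréchet space, this reduces by the Leibniz rule to a direct estimation of the seminorms of Section~\ref{sec:3.1}, which is technical but presents no conceptual difficulty.
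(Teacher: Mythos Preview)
Your approach is essentially the same as the paper's: un-twist via $\kappa$ using Lemma~\ref{lem:transmission} for $\op^+(a)$ and Definition~\ref{def:conjugation01} for $g_j$, then invoke the continuity of the half-axis composition $\op^+(S^d_\tr)\times\Gamma^r_G\to\Gamma^{r'}_G$ together with the product behaviour of $\wtbfS$-symbols. The paper is slightly more economical in that it applies the continuity maps directly for general type $r_j$ rather than first reducing to type~$0$; your reduction is harmless but unnecessary, and your handling of the principal symbols is more explicit than the paper's terse ``the result follows immediately.''
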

\begin{proof}
First we recall the continuity of the maps  
\begin{align*}
 (a',g'_0)&\mapsto \op^+(a')g'_0:S^d_\tr(\rz;\scrL(\cz^{L_1},\cz^{L_2}))\times \Gamma^{r_0}_G(\rz_+;\frakg_0)
     \lra \Gamma^{r_0}_G(\rz_+;\frakg_1\frakg_0),\\
 (g'_2,a')&\mapsto g_2'\op^+(a'): \Gamma^{r_2}_G(\rz_+;\frakg_2)\times S^d_\tr(\rz;\scrL(\cz^{L_1},\cz^{L_2}))\times
     \lra \Gamma^{r}_G(\rz_+;\frakg_2\frakg_1).
\end{align*}
Then the result follows immediately, since 
\begin{align*}
 \kappa \op^+(a)g_0\kappa^{-1}&=\kappa\op^+(a)\kappa^{-1}\kappa g_0\kappa^{-1}=\op^+(a')g_0', \\
 \kappa g_2\op^+(a)\kappa^{-1}&=\kappa g_2\kappa^{-1}\kappa \op^+(a)\kappa^{-1}=g'_2\op^+(a'), 
\end{align*}
where $g_j'\in \wtbfS^{d_j,\nu}(\rz^{n-1}\times\rpbar;\Gamma^{r_j}_G(\rz_+;\frakg_j))$ and, due to 
Lemma \ref{lem:transmission}, $\op^+(a')\in S^d(\rz^{n-1}\times\rpbar;B^{d,0}(\rz_+;\frakg_1))$. 
\end{proof}

\begin{theorem}\label{thm:comp-left}
Let $\frakg_j=((L_j,0),(L_{j+1},0))$. Let $g_0\in\wtbfB^{d_0,\nu;0}_G(\rz^{n-1}\times\rpbar;\frakg_0)$ and 
$a\in S^{d}_\tr(\rz^{n}\times\rpbar;\scrL(\cz^{L_1},\cz^{L_2}))$. Then 
$\op^+(a)\#g_0\in\wtbfB^{d_0+d,\nu;0}_G(\rz^{n-1}\times\rpbar;\frakg_1\frakg_0)$  
and all principal symbols behave multiplicatively. 
\end{theorem}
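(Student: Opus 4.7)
The plan is to reduce $\op^+(a)\#g_0$ to pointwise (in $(x',\xi',\mu)$) products of operator-valued symbols by a two-step expansion: first the standard Leibniz expansion in $(x',\xi')$, and then a Taylor expansion of $a$ in $x_n$ at the boundary so that Lemma \ref{lem:transmission02}, stated only for $x_n$-independent symbols, becomes applicable. The Leibniz expansion for twisted operator-valued symbols gives
\[
 \op^+(a)\#g_0 \;\sim\; \sum_{\alpha}\frac{1}{\alpha!}\,\op^+(\partial_{\xi'}^\alpha a)\cdot D_{x'}^\alpha g_0,
\]
in which $\op^+(\partial_{\xi'}^\alpha a)$ retains the transmission property with order $d-|\alpha|$ and $D_{x'}^\alpha g_0\in\wtbfB^{d_0,\nu;0}_G$ preserves both order and regularity. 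Provided each summand lies in $\wtbfB^{d+d_0-|\alpha|,\nu;0}_G$ and the truncated Leibniz remainder can be placed in $\wtbfB^{d+d_0-N,\nu;0}_G$, Proposition \ref{prop:asymptotic-summation01} produces the asymptotic sum in $\wtbfB^{d+d_0,\nu;0}_G$; the remainder is controlled by an oscillatory-integral argument modeled on Proposition \ref{prop:osint}.

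The main technical task is therefore to show that for $b\in S^{d'}_\tr(\rz^n\times\rpbar)$ and $g\in\wtbfB^{d_0,\nu;0}_G$, the pointwise product $\op^+(b)\cdot g$ belongs to $\wtbfB^{d'+d_0,\nu;0}_G$. To this end I would Taylor-expand $b$ in $x_n$ about the boundary:
\[
 b(x,\xi,\mu)=\sum_{k=0}^{N-1}\frac{x_n^k}{k!}\,b_k(x',\xi,\mu)+x_n^N R_N(x,\xi,\mu),\qquad b_k:=\partial_{x_n}^k b(x',0,\xi,\mu),
\]
so that each $b_k$ is independent of $x_n$ and inherits the transmission property. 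Since multiplication by $x_n^k$ commutes with $r^+e^+$ on the half-line, $\op^+(x_n^k b_k)=\bfx_n^k\op^+(b_k)$. Lemma \ref{lem:transmission02} applied to each $x_n$-independent $b_k$ gives $\op^+(b_k)\cdot g\in\wtbfB^{d'+d_0,\nu;0}_G$, and Lemma \ref{lem:cut-off01} improves the order by $k$ to place $\bfx_n^k\op^+(b_k)\cdot g$ in $\wtbfB^{d'+d_0-k,\nu;0}_G$. The principal Taylor terms thus asymptotically sum in $\wtbfB^{d'+d_0,\nu;0}_G$ by Proposition \ref{prop:asymptotic-summation01}.

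The delicate step is the Taylor remainder $\bfx_n^N\op^+(R_N)\cdot g$: applying Lemma \ref{lem:cut-off01} would require $\op^+(R_N)\cdot g$ to already belong to a $\wtbfB$-class, which is circular. I would break this by first establishing the weaker statement $\op^+(R_N)\cdot g\in\wt B^{d'+d_0,\nu;0}_G$ directly from symbol estimates: by Theorem \ref{thm:op-plus}, $\op^+(R_N)$ is a strongly parameter-dependent operator-valued symbol of order $d'$, hence lies in $\wt S^{d',0}$, and since regularity numbers are additive under composition the pointwise product with $g\in\wt S^{d_0,\nu}$ lies in $\wt S^{d'+d_0,\nu}$. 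The analogue of Lemma \ref{lem:cut-off01} in the $\wt B$-class then gives $\bfx_n^N\op^+(R_N)\cdot g\in\wt B^{d'+d_0-N,\nu;0}_G$ for every $N$, exhibiting the expansion at infinity that characterizes $\op^+(b)\cdot g\in\wtbfB^{d'+d_0,\nu;0}_G$. The multiplicativity of the principal symbols is then read off the leading ($\alpha=0$, $k=0$) term $\op^+(a|_{x_n=0})\cdot g_0$ of the double expansion: Lemma \ref{lem:transmission02} yields $\sigma_\partial^{(d+d_0,\nu)}(\op^+(a)\#g_0)=\op^+(a^{(d)}|_{x_n=0})\cdot g_0^{(d_0,\nu)}$ and analogously for the principal limit symbol, all other expansion terms being of strictly lower order, while multiplicativity of the principal angular symbol follows from Remark \ref{rem:limit-angular}. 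The main obstacle is the rigorous control of both the Taylor remainder and the Leibniz remainder while preserving the regularity $\nu$ throughout.
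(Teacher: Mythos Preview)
Your approach is essentially the paper's: Leibniz expansion in $(x',\xi')$ combined with Taylor expansion of $a$ in $x_n$ at the boundary to reduce to Lemma~\ref{lem:transmission02}, then asymptotic summation of the good terms and identification of the total remainder as smoothing via Remark~\ref{rem:smoothing-operators}. Two small corrections: the asymptotic summation you need is Proposition~\ref{prop:asymptotic-summation02} (only the order drops while $\nu$ stays fixed), and the Leibniz remainder need only land in the weaker class $\wt B^{d+d_0-N,\nu;0}_G$ (indeed only in the $\wt S_{1,0}$-version), not in $\wtbfB$---this is all the oscillatory-integral estimate gives and all that is required, since $\bigcap_N\wt B^{d-N,\nu;0}_G=B^{-\infty;0}_G$.
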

\begin{proof}
For convenience of notation let us assume $L_0=L_1=L_2=1$, 
First we consider $\op^+(a)$ and $g_0$ as operator-valued symbols in the sense of Theorem \ref{thm:op-plus} and 
Definition \ref{def:singular-green03}, respectively. Then 
 $$\op^+(a)\#g_0=\sum_{|\alpha'|=0}^{N-1}\frac{1}{\alpha'!}
    \partial^{\alpha'}_{\xi'}\op^+(a)D^{\alpha'}_{x'}g_0+r_N,$$
for arbitrary $N$, where 
 $$r_N(x',\xi',\mu)=
    N\sum_{|\gamma'|=N}\int_0^1\frac{(1-\theta)^{N-1}}{\gamma'!}r_{\gamma',\theta}(x',\xi',\mu)\,d\theta$$
where 
 $$r_{\gamma',\theta}(x',\xi',\mu)
    =\mathrm{Os}-\iint e^{-iy'\eta'}\partial^{\gamma'}_{\eta'}\op^+(a)(x',\xi'+\theta\eta',\mu)
        D^{\gamma'}_{y'} g_0(x'+y',\xi',\mu)\,dy'\dbar\eta'.$$

Since $\partial^{\alpha'}_{\xi'}a\in S^{d-|\alpha'|}_\tr(\rz^{n}\times\rpbar)$, 
\begin{align*} 
 \partial^{\alpha'}_{\xi'}\op^+(a)
 &\in S^{d-|\alpha'|}_{1,0}(\rz^{n-1}\times\rpbar; H^{s,\delta}(\rz_+),H^{s-d,\delta}(\rz_+))\\
 &\subset \wt S^{d-|\alpha'|,0}_{1,0}(\rz^{n-1}\times\rpbar; H^{s,\delta}(\rz_+),H^{s-d,\delta}(\rz_+))
\end{align*} 
for every $s$ and $\delta$, the oscillatory integral defining $r_N$ converges in  
\begin{align}\label{eq:hilf01-1} 
 \mathop{\mbox{\Large$\cap$}}_{s,s',\delta,\delta'}\wt S^{d_0+d-N,\nu;0}_{1,0}(\rz^{n-1}\times\rpbar;
     H^{s,\delta}_0(\rpbar),H^{s',\delta'}(\rz_+)).
\end{align}
For $|\alpha'|\le N-1$ write 
 $$\partial^{\alpha'}_{\xi'}a(x,\xi,\mu)
     =\sum_{k=0}^{N-1-|\alpha'|}\frac{1}{k!}\partial_{x_n}^k\partial^{\alpha'}_{\xi'}a(x',0,\xi,\mu)x_n^k+
     x_n^{N-|\alpha'|}a_{\alpha',N}(x,\xi,\mu)$$
Since $x_n^\ell$ is twisted homogeneous of degree $-\ell$, we have 
 $$x_n^{N-|\alpha'|}\op^+(a_{\alpha',N})\in 
     \wt S^{d-N,0}_{1,0}(\rz^{n-1}\times\rpbar; H^{s,\delta}(\rz_+),H^{s-d,\delta-N+|\alpha'|}(\rz_+))$$
for every $s$ and $\delta$. Altogether, 
\begin{align*} 
 \op^+(a)\#g_0
 =\sum_{\ell=0}^{N-1}\sum_{|\alpha'|+k=\ell}\frac{1}{\alpha'!}   
     \op^+(\partial^{\alpha'}_{\xi'}a_k)x_n^kD^{\alpha'}_{x'}g_0+ \wt r_N,
\end{align*} 
where $a_k(x',\xi,\mu)=(\partial_{x_n}^ka)(x',0,\xi,\mu)/k!$ and $\wt r_N$ belongs to the space in \eqref{eq:hilf01-1}. 
By the previous Lemma \ref{lem:transmission02}, 
 $$\wt g_\ell:=\sum_{|\alpha'|+k=\ell}\frac{1}{\alpha'!}   
     \op^+(\partial^{\alpha'}_{\xi'}a_k)x_n^kD^{\alpha'}_{x'}g_0
     \in \wtbfB^{d_0+d-\ell,\nu;0}_G(\rz^{n-1}\times\rpbar;\frakg_1\frakg_0).$$
By asymptotic summation there exists a $\wt g\in\wtbfB^{d_0+d,\nu;0}_G(\rz^{n-1}\times\rpbar;\frakg_1\frakg_0)$ 
such that $\wt g-\sum_{\ell<N}\wt g_\ell\in\wtbfB^{d_0+d-N,\nu;0}_G(\rz^{n-1}\times\rpbar;\frakg)$ 
for every $N\in\nz_0$. In view of Remark \ref{rem:smoothing-operators} we can conclude that 
 $$\wt g-\op^+(a)\#g_0\in
     B^{-\infty;0}_G(\rz^{n-1}\times\rpbar;\frakg_1\frakg_0).$$
This shows the first claim of the theorem and that 
 $$\op^+(a)\#g_0=\op^+(a_0)g_0 \mod  \wtbfB^{d_0+d-1,\nu;0}_G(\rz^{n-1}\times\rpbar;\frakg_1\frakg_0).$$
The multiplicativity of the principal symbols then holds by Lemma \ref{lem:transmission02}. 
\forget{
It remains to show the multiplicativity of the principal symbols. 
To this end we first note that, by Taylor expansion similarly as above, 
 $$\op^+(a)\#g_0-\op^+(a_0)\#g_0\in\wtbfB^{d_0+d-1,\nu;0}_G(\rz^{n-1}\times\rpbar;\frakg_1\frakg_0),$$
where $a_0(x',\xi,\mu)=a(x',0,\xi,\mu)$. Hence it suffices to consider the symbol $\op^+(a_0)\#g_0$. 

Now let us assume additionally that $a_0$ does not depend on the $x'$-variable. Then 
\begin{align*}
 \kappa\#\op^+(a_0)\#g_0\#\kappa^{-1}=\big(\kappa\op^+(a_0)\kappa^{-1}\big)\#(\kappa\#g_0\#\kappa^{-1}).
\end{align*}
By Lemma \ref{lem:transmission} this is the Leibniz-product of a symbol from 
 $$S^d(\rz^{n-1}\times\rpbar;B^{d;0}(\rz_+.\frakg_1))\subset
    \wtbfS^{d,0}(\rz^{n-1}\times\rpbar;B^{d;0}(\rz_+.\frakg_1))$$ 
and one from $\wtbfS^{d_0,\nu}(\rz^{n-1}\times\rpbar;\Gamma^0_G(\rz_+.\frakg_1))$. The desired behaviour of 
the homogeneous principal symbols follows. The multiplicativity of the principal limit symbol follows analogously 
to \cite[Theorem 8.4]{Seil22-1} where the corresponding fact was proved for scalar-valued symbols. 

Finally we consider the general case of $x'$-dependence. For this proof let $S^d_{const}(\rz^{n}\times\rpbar;F)$ 
be the space of poly-homogeneous symbols of $(\xi,\mu)$ with values in a Fréchet space $F$ and 
$S^d_{\tr,const}(\rz^{n}\times\rpbar;F)$ the subspace of symbols with the transmission property 
$($i.e., whose homogeneous components satisfy the symmetry-relations of Definition \ref{def:transmission-property}). 
As a closed subspace of a nuclear Fréchet space $S^d_{\tr,const}(\rz^{n}\times\rpbar)$ is nuclear itself. It follows that 
 $$S^d_{\tr,const}(\rz^{n}\times\rpbar;F)=S^d_{\tr,const}(\rz^{n}\times\rpbar)\wh\otimes_\pi F$$
with the completed projective tensor product. Taking $F=\scrC^\infty_b(\rz^{n-1}_{x'})$ we thus can represent $a_0$ in 
the form 
 $$a_0(x',\xi,\mu)=\sum_{\ell=0}^{+\infty}\lambda_\ell u_\ell(x')p_\ell(\xi,\mu)$$
where $(\lambda_\ell)$ is a numerical, absolutely summable sequence and $(u_\ell)\subset \scrC^\infty_b(\rz^{n-1})$ 
and $(p_\ell)\subset S^d_{\tr,const}(\rz^{n}\times\rpbar)$ are zero-sequences. Then 
 $$\kappa(\op^+(a_0)\#g_0)\kappa^{-1}=\sum_{\ell=0}^{+\infty}\lambda_\ell u_\ell
    \kappa(\op^+(p_\ell)\#g_0)\kappa^{-1}.$$
Thus the principal limit symbol satisfies $($recall Definition \ref{def:b-tilde-limit})
\begin{align*} 
 (\op^+(a_0)\#g_0)^\infty_{[\nu]}
    &=\sum_{\ell=0}^{+\infty}\lambda_\ell u_\ell (\op^+(p_\ell)\#g_0)^\infty_{[\nu]}\\
    &=\sum_{\ell=0}^{+\infty}\lambda_\ell u_\ell (\op^+(p_{\ell}^{(d)})(0,1)g^\infty_{0,[\nu]})\\
    &=\op^+\Big(\sum_{\ell=0}^{+\infty}\lambda_\ell u_\ell p_{\ell}^{(d)}(0,1)\Big)g^\infty_{0,[\nu]}
     =\op^+(a_0)(\cdot,0,1)g^\infty_{0,[\nu]}
\end{align*}
The reasoning for the homogeneous principal symbol is similar. 
}
\end{proof}

\forget{
\begin{lemma}\label{lem:mult}
For $a\in\scrC^\infty_b(\rz^n)$ and $g'\in\wtbfS^{d,\nu}_{1,0}(\rz^{n-1}\times\rpbar;\scrS(\rz_+\times\rz_+))$ define 
 $$g''(x',\xi',\mu;x_n,y_n)=a(x',[\xi',\mu]^{-1}x_n)g'(x',\xi',\mu;x_n,y_n).$$
Then $g''\in \wtbfS^{d,\nu}_{1,0}(\rz^{n-1}\times\rpbar;\scrS(\rz_+\times\rz_+))$. 
\end{lemma}
\begin{proof}
If $\varphi\in\scrS(\rz_+\times\rz_+)$ define $k_{a,\varphi}$ by 
 $$k_{a,\varphi}(x',\xi',\mu;x_n,y_n)=a([\xi',\mu]^{-1}x_n)\varphi(x_n,y_n).$$
It is easy to check that $k_{a,\varphi}\in S^0_{1,0}(\rz^{n-1}\times\rpbar;\scrS(\rz_+\times\rz_+))$. 
Inserting  
 $$a(x',x_n)=\sum_{j=0}^{N-1} (\partial^j_{x_n}a)(x',0) x_n^j/j!+x_n^Na_N(x',x_n)$$
with $a_N\in\scrC^\infty_b(\rpbar)$, one sees that 
$k_{a,\varphi}\in S^0(\rz^{n-1}\times\rpbar;\scrS(\rz_+\times\rz_+))$ is poly-homogeneous 
with components 
 $$k_{a,\varphi}^{(-j)}(x',\xi',\mu;x_n,y_n)=
     (\partial^j_{x_n}a)(x',0) |\xi',\mu|^{-j}\varphi(x_n,y_n)x_n^j.$$
Moreover, the map $\varphi\mapsto k_{a,\varphi}$ is continuous from 
$\scrS(\rz_+\times\rz_+)$ to $S^0(\rz^{n-1}\times\rpbar;\scrS(\rz_+\times\rz_+))$
$($the continuity with values in $\scrC(\rz^{n-1}\times\rpbar\times\rz_+\times\rz_+)$ is obvious, 
then use the closed graph theorem$)$. 

Since $\scrS(\rz_+\times\rz_+,F)=\scrS(\rz_+\times\rz_+){\wh\otimes}_\pi F$ for every Fréhet space $F$, we can 
write 
 $$g'(x',\xi',\mu;x_n,y_n)=\sum_{\ell=0}^{+\infty}\lambda_\ell g'_\ell(x',\xi',\mu)\varphi_\ell(x_n,y_n)$$
with zero-sequences $(g'_\ell)\subset \wtbfS^{d,\nu}_{1,0}(\rz^{n-1}\times\rpbar)$, 
$(\varphi_\ell)\subset \scrS(\rz_+\times\rz_+)$, and $(\lambda_\ell)$ a numerical, absolutely summable sequence. 
But then 
$g''=\sum_{\ell=0}^{+\infty}\lambda_\ell g'_\ell k_{a,\varphi_\ell}$ converges in 
$\wtbfS^{d,\nu}_{1,0}(\rz^{n-1}\times\rpbar;\scrS(\rz_+\times\rz_+))$, 
since $S^0(\rz^{n-1}\times\rpbar;\scrS(\rz_+\times\rz_+))\hookrightarrow 
\wtbfS^{0,0}_{1,0}(\rz^{n-1}\times\rpbar;\scrS(\rz_+\times\rz_+))$. 
\end{proof}

\begin{lemma}\label{lem:mult}
For $a\in\scrC^\infty_b(\rpbar)$ and $g\in\scrS(\rpbar\times\rpbar)$ define 
 $$k_{a,\varphi}(\xi',\mu;x_n,y_n)=a([\xi',\mu]^{-1}x_n)\varphi(x_n,y_n).$$
Then $k_{a,\varphi}\in S^0(\rz^{n-1}\times\rpbar;\scrS(\rpbar\times\rpbar))$ with homogeneous components 
 $$k_{a,\varphi}^{(-j)}(\xi',\mu;x_n,y_n)=\frac{d^j a}{dx_n^j}(0)|\xi',\mu|^{-j}x_n^j\varphi(x_n,y_n).$$
Moreover,  the map 
$(a,\varphi)\mapsto k_{a,\varphi}$ is continuous. 
\end{lemma}
\begin{proof}
It is clear that $(a,\varphi)\mapsto k_{a,\varphi}$ is continuous as a mapping with values in 
$\scrC(\rz^{n-1}\times\rpbar\times\rpbar\times\rpbar)$. Hence, by the closed graph theorem, it suffices to show that 
$k:=k_{a,\varphi}\in S^0(\rz^{n-1}\times\rpbar;\scrS(\rpbar\times\rpbar))$ for every choice of $a$ and $\varphi$. 

By chain rule it is easy to check that $k\in S^0_{1,0}(\rz^{n-1}\times\rpbar;\scrS(\rpbar\times\rpbar))$. 

Now let $N\in\nz_0$ be given. By Taylor expansion, 
 $$a(x_n)=\sum_{j=0}^{N-1} \alpha_j x_n^j+x_n^Na_N(x_n)$$
with $\alpha_j=\frac{d^j a}{dx_n^j}(0)$ and a certain $a_N\in\scrC^\infty_b(\rpbar)$. Therefore 
 $$k(\xi',\mu;x_n,y_n)=\sum_{j=0}^{N-1} \alpha_j [\xi',\mu]^{-j}x_n^j\varphi(x_n,y_n)+k_N(\xi',\mu;x_n,y_n)$$
with 
 $$k_N(\xi',\mu;x_n,y_n)=[\xi',\mu]^{-N} x_n^Nk_{a_N,\varphi}(\xi',\mu;x_n,y_n).$$
Obviously $k_N\in S^{-N}_{1,0}(\rz^{n-1}\times\rpbar;\scrS(\rpbar\times\rpbar))$.  The claim follows. 
\end{proof}

\begin{proposition}\label{prop:mult}
If $a\in\scrC^\infty_b(\rz^n)$ and $g\in\wtbfB^{d,\nu;r}_G(\rz^{n-1}\times\rpbar;\frakg)$, $\frakg=((L_0,0),(L_1,0))$. 
Then both 
$\op^+(a)\#g$ and $g\#\op^+(a)$ belong to $\wtbfB^{d,\nu;r}_G(\rz^{n-1}\times\rpbar;\frakg)$. 
\end{proposition}
\begin{proof}
It is enough to consider the case $r=0$, the general case follows easily with the explicit representation of singular 
Green symbols of type $r$. We have $\op^+(a)\#g=\op^+(a)g$. Using that 
$g=\kappa g'\kappa^{-1}$ with some $g'\in\wtbfS^{d,\nu}(\rz^{n-1}\times\rpbar;\Gamma^0_G(\frakg))$, we find 
 $$\kappa^{-1}(\op^+(a)\#g)\kappa=\kappa^{-1}\op^+(a)\kappa g'.$$
Passing to symbol-kernels, the latter has symbol-kernel
 $$[\xi',\mu]a(x',[\xi',\mu]^{-1}x_n)g'(x',\xi',\mu;x_n,y_n)=:[\xi',\mu]g''(x',\xi',\mu;x_n,y_n)$$
$($note that $\op^+(a)(x')$ is just multiplication with $a(x',\cdot))$. We have to show that 
$g''\in\wtbfS^{d,\nu}(\rz^{n-1}\times\rpbar;\Gamma^0_G(\frakg))$. By Taylor expansion
 $$a(x',x_n)=\sum_{j=0}^{N-1} \alpha_j(x') x_n^j+x_n^Na_N(x',x_n)$$
where $\alpha_j(x'):=(\partial^j_{x_n}a)(x',0)/j!$ and $a_N\in\scrC^\infty_b(\rz^n)$. It follows that 
\begin{align*}
 g''(x',\xi',\mu;x_n,y_n)=&\sum_{j=0}^{N-1} \alpha_j(x') [\xi',\mu]^{-j}g'(x',\xi',\mu;x_n,y_n)x_n^j+\\
    &+g_N''(x',\xi',\mu;x_n,y_n)
\end{align*}
with $g_N''(x',\xi',\mu;x_n,y_n)=a_N(x',[\xi',\mu]^{-1}x_n)[\xi',\mu]^{-N}g'(x',\xi',\mu;x_n,y_n)x_n^N$. 
The terms in the first sum belong to $\wtbfS^{d-j,\nu}(\rz^{n-1}\times\rpbar;\Gamma^0_G(\frakg))$. 
Since 
 $$[\xi',\mu]^{-N}g'(x',\xi',\mu;x_n,y_n)x_n^N\in \wtbfS^{d-N,\nu}(\rz^{n-1}\times\rpbar;\Gamma^0_G(\frakg)),$$  
the previous Lemma \ref{lem:mult} yields 
$g_N''\in\wtbfS^{d-N,\nu}_{1,0}(\rz^{n-1}\times\rpbar;\Gamma^0_G(\frakg))$, hence $g_N''\in  
\wtbfS^{d-N,\nu-N}_{1,0}(\rz^{n-1}\times\rpbar;\Gamma^0_G(\frakg))$. 
In conclusion, 
$\op^+(a)\#g=\kappa g''\kappa^{-1}$ is as required with homogeneous principal symbol 
 $$(\op^+(a)\#g)^{(d,\nu)}(x',\xi',\mu)=a(x',0)g^{(d,\nu)}(x',\xi',\mu).$$ 
For $g\#\op^+(a)$ we pass to the adjoint and note that 
 $$(g\#\op^+(a))^{(*)}=\op^+(\overline{a})\#g^{(*)}\in
    \wtbfB^{d,\nu;r}_G(\rz^{n-1}\times\rpbar;\frakg^{(-1)})$$
by the first part of the proof. Taking the adjoint again yields the claim.  
\end{proof}
}

\begin{theorem}\label{thm:comp-right}
Let $\frakg_j=((L_j,0),(L_{j+1},0))$. Let $g_2\in\wtbfB^{d_2,\nu;0}_G(\rz^{n-1}\times\rpbar;\frakg_2)$ and 
$a\in S^{d}_\tr(\rz^{n}\times\rpbar;\scrL(\cz^{L_1},\cz^{L_2}))$. Then 
$g_2\#\op^+(a)\in\wtbfB^{d_2+d,\nu;d_{+}}_G(\rz^{n-1}\times\rpbar;\frakg_2\frakg_1)$  
and all principal symbols behave multiplicatively. 
\end{theorem}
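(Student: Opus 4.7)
The plan is to mirror the structure of the proof of Theorem~\ref{thm:comp-left}, combining the Leibniz expansion in $\xi'$ with a Taylor expansion of $a$ in the $x_n$-variable at the boundary, followed by conjugation with the group-action operator. The crucial new feature compared to Theorem~\ref{thm:comp-left} is the type shift $0 \mapsto d_+$: this arises because $\op^+(\tilde a)$ now sits on the \emph{right} of a type-$0$ Green symbol, so integration-by-parts against the Green kernel generates boundary terms that must be absorbed into a type-$d_+$ representative.

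First I would write, for arbitrary $N \in \nz_0$,
$$g_2 \# \op^+(a) = \sum_{|\alpha'| < N} \frac{1}{\alpha'!} (\partial^{\alpha'}_{\xi'} g_2) \cdot \bigl(D^{\alpha'}_{x'} \op^+(a)\bigr) + R_N,$$
with the oscillatory integral defining $R_N$ converging in $\wt S^{d_2+d-N,\nu;0}$-type classes by the same estimates and the use of Theorem~\ref{thm:oscillatory-int} as in the proof of Theorem~\ref{thm:comp-left}. For each term in the finite sum I would Taylor-expand $D^{\alpha'}_{x'}\partial^{\alpha'}_{\xi'} a$ in $x_n$ at $x_n=0$ up to order $N-|\alpha'|$. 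Since $x_n$ is the left variable of $a$, one has $\op^+(x_n^k \tilde a) = x_n^k \op^+(\tilde a)$, and Lemma~\ref{lem:cut-off01} places $(\partial^{\alpha'}_{\xi'} g_2) x_n^k$ in $\wtbfB^{d_2-|\alpha'|-k,\nu;0}_G$. The problem is thereby reduced to analyzing a single building-block expression $\tilde g \cdot \op^+(\tilde a)$ with $\tilde g \in \wtbfB^{d_g,\nu;0}_G$ and $\tilde a$ independent of $x_n$ with transmission property of order $d$.

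The key lemma, which is the right-sided analog of Lemma~\ref{lem:transmission02}, asserts that such a product lies in $\wtbfB^{d_g+d,\nu;d_+}_G$. Its proof uses Theorem~\ref{thm:conjugation02} and Lemma~\ref{lem:transmission}: writing $\tilde g = \bfkappa \tilde g' \bfkappa^{-1}$ with $\tilde g' \in \wtbfS^{d_g,\nu}(\rz^{n-1}\times\rpbar;\Gamma^0_G(\rz_+;\frakg))$ and $\bfkappa^{-1} \op^+(\tilde a) \bfkappa = \op^+(\tilde a')$ with $\tilde a' \in S^d(\rz^{n-1}\times\rpbar; S^d_\tr(\rz;\scrL(\cz^{L_1},\cz^{L_2})))$, one obtains
$$\bfkappa^{-1}\bigl(\tilde g \cdot \op^+(\tilde a)\bigr)\bfkappa = \tilde g' \cdot \op^+(\tilde a').$$
The classical Boutet de Monvel composition on the half-axis (cf.~Remark~\ref{rem:sigular-green-half-axis}) identifies the pointwise product of a type-$0$ Green operator with a $\mu$-independent pseudo of order $d$ with transmission as an element of $\Gamma^{d_+}_G(\rz_+;\cdot)$, and this composition is jointly continuous. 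Hence $\tilde g' \cdot \op^+(\tilde a') \in \wtbfS^{d_g+d,\nu}(\rz^{n-1}\times\rpbar;\Gamma^{d_+}_G(\rz_+;\cdot))$, which by Definition~\ref{def:conjugation01} gives the claim.

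Asymptotic summation via Proposition~\ref{prop:asymptotic-summation02} over the double expansion in $\alpha'$ and $k$, together with the remainder analysis, yields $g_2 \# \op^+(a)$ in $\wtbfB^{d_2+d,\nu;d_+}_G$ modulo smoothing terms, which by Remark~\ref{rem:smoothing-operators} belong to $B^{-\infty;d_+}_G$; this establishes the first assertion. Multiplicativity of the principal symbols follows by inspecting the leading term ($\alpha'=0$, $k=0$): the homogeneous principal symbol factors as $g_2^{(d_2,\nu)} \op^+(a^{(d)})$, the principal limit symbol as $(g_2)^\infty_{[d_2,\nu]}\, \op^+(a^{(d)})(\cdot,0,\cdot,1)$, and the angular symbol is then determined by the limit symbol as in Remark~\ref{rem:limit-angular}. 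The main obstacle I expect is the core lemma itself: verifying that the classical half-axis composition produces a \emph{parameter-dependent} symbol with values in $\Gamma^{d_+}_G$ with the correct estimates in $\nu$ requires a careful check that the boundary terms generated in raising the type remain controlled jointly in $(x',\xi',\mu)$ within the expansion-at-infinity class $\wtbfS^{d_g+d,\nu}$.
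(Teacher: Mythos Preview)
Your plan to mirror Theorem~\ref{thm:comp-left} has a real gap in the remainder analysis. In the Leibniz expansion of $g_2\#\op^+(a)$ the $\xi'$-derivatives fall on the \emph{left} factor $g_2$, not on $\op^+(a)$. For $g_2\in\wtbfB^{d_2,\nu;0}_G\subset\wt B^{d_2,\nu;0}_G$ one has $\partial^{\gamma'}_{\xi'}g_2\in\wt B^{d_2-|\gamma'|,\nu-|\gamma'|;0}_G$: differentiation in $\xi'$ lowers \emph{both} order and regularity number in the $\wt S^{d,\nu}$-classes. Hence the oscillatory-integral remainder $R_N$ lies only in (the operator-valued analogue of) $\wt S^{d_2+d-N,\nu-N}_{1,0}$, not in $\wt S^{d_2+d-N,\nu}_{1,0}$ as you claim. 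The intersection over $N$ is then $\wt B^{(d_2+d)-\infty,\nu-\infty;d_+}_G$, which by the identification in Section~\ref{sec:3.2.1} equals $S^{d_2+d-\nu}_{1,0}(\rpbar;S^{-\infty}(\rz^{n-1};\Gamma^{d_+}_G))$: rapidly decreasing in $\xi'$ but merely of fixed order in $\mu$, with no expansion at infinity. This is \emph{not} contained in $\wtbfB^{d_2+d,\nu;d_+}_G$, so the argument does not close. In Theorem~\ref{thm:comp-left} this problem is absent precisely because the $\xi'$-derivatives land on $\op^+(a)\in S^d_{1,0}$, whose regularity number stays $0$ under differentiation (strongly parameter-dependent symbols lose order but not regularity).

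The paper's proof avoids this asymmetry by a duality argument rather than a direct expansion. First it reduces to $d_2\le 0$ via the order reductions $\lambda^d_-$ and Theorem~\ref{thm:comp-left}. Then, for $a\in\scrC^\infty_b(\rz^n)$ (pure multiplication, $\xi$-independent), it writes $g_2\#\op^+(a)=(\op^+(a^*)\#g_2^{(*)})^{(*)}$ and invokes Theorem~\ref{thm:comp-left} together with Theorem~\ref{thm:adjoint-limit}; now the $\xi'$-derivatives fall on $\op^+(a^*)$ and no regularity is lost. Finally, a general $a$ is decomposed via nuclearity as $a=\sum_\ell\lambda_\ell\,a_\ell(x)\,p_\ell(\xi,\mu)$ with $a_\ell\in\scrC^\infty_b$ and $p_\ell$ $x$-independent; the factor $g_2\#\op^+(a_\ell)$ is handled by the previous step and the remaining right multiplication by $\op^+(p_\ell)$ by Lemma~\ref{lem:transmission02}. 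Your ``key lemma'' is exactly the right-sided half of Lemma~\ref{lem:transmission02} and is fine; what fails is the reduction \emph{to} that lemma via the direct Leibniz/Taylor scheme.
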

\begin{proof}
Suppose the claim is true in case $d_2\le 0$. For general $d_2$ we make use the reduction of orders 
$\lambda^{d}_-(\xi,\mu)$ from Section \ref{sec:2.3.1} by writing
 $$g_2\#\op^+(a)=\op^+(\lambda^{d_2}_-)\#(\wt g_2\#\op^+(a)),$$
where $\wt g_2:=\op^+(\lambda^{-d_2}_-)\#g_2\in \wtbfB^{0,\nu;0}_G\rz^{n-1}\times\rpbar;\frakg_2)$ due to 
Theorem \ref{thm:comp-left}. By the claim with $d_2=0$, $\wt g_2\#\op^+(a)$ belongs to 
$\wtbfB^{d,\nu;d_{+}}_G(\rz^{n-1}\times\rpbar;\frakg_2\frakg_1)$. Applying Theorem \ref{thm:comp-left}
yields  $g_2\#\op^+(a)\in\wtbfB^{d_2+d,\nu;d_{+}}_G(\rz^{n-1}\times\rpbar;\frakg_2\frakg_1)$. 
The multiplicativity of the principal symbols follows analogously, first applying the case $d_2=0$ to 
$\wt g_2\#\op^+(a)$ and then using twice Theorem \ref{thm:comp-left}. 

Next let us assume $d_2\le 0$. As a first case assume that $a$ has order $0$ and is independent of $\xi$, i.e., 
$a\in\scrC^\infty_b(\rz^n)$ and $\op^+(a)(x')$ is the operator of multiplication by $a(x',\cdot)$. Writing  
$g_2\#\op^+(a)=(\op^+(a^*)\#g_2^{(*)})^{(*)}$, 
the result follows from Theorems \ref{thm:comp-left} and \ref{thm:b-adjoint}.

Finally consider the general case $a\in S^d_\tr((\rz^{n}\times\rpbar;\scrL(\cz^{L_1},\cz^{L_2}))$. 
Let us introduce for this proof the space 
$S^d_{const}(\rz^{n}\times\rpbar;F)$ of poly-homogeneous symbols of $(\xi,\mu)$ with values in a Fréchet space $F$ 
and $S^d_{\tr,const}(\rz^{n}\times\rpbar;F)$ the subspace of symbols with the transmission property 
$($i.e., symbols whose homogeneous components satisfy the symmetry-relations of 
Definition \ref{def:transmission-property}). 
Being a closed subspace of a nuclear Fréchet space, $S^d_{\tr,const}(\rz^{n}\times\rpbar)$ is nuclear itself. 
It follows that 
 $$S^d_{\tr,const}(\rz^{n}\times\rpbar;F)=S^d_{\tr,const}(\rz^{n}\times\rpbar)\wh\otimes_\pi F$$
with the completed projective tensor product. Taking $F=\scrC^\infty_b(\rz^{n}_{x},\scrL(\cz^{L_1},\cz^{L_2}))$ 
we thus can represent $a$ in the form 
 $$a(x,\xi,\mu)=\sum_{\ell=0}^{+\infty}\lambda_\ell a_\ell(x)p_\ell(\xi,\mu)$$
where $(\lambda_\ell)\subset\cz$ is an absolutely summable sequence, $(a_\ell)\subset \scrC^\infty_b(\rz^{n})$ 
and $(p_\ell)\subset S^d_{\tr,const}(\rz^{n}\times\rpbar)$ are zero-sequences. Then 
 $$g_2\#\op^+(a)=\sum_{\ell=0}^{+\infty}\lambda_\ell \big(g_2\#\op^+(a_\ell)\big)\op^+(p_\ell)$$
and the result follows from the previous consideration and Lemma \ref{lem:transmission02}. 
\forget{
Using the reduction of orders $\lambda^{d}_-(\xi,\mu)$ 
from Section \ref{aaa}, we write 
\begin{align*}
 g_2\#\op^+(a)&=g_2\#\op^+(a)\#\op^+(\lambda^{-d}_-)\#\op^+(\lambda^{d}_-)\\
   & =(g_2\#\op^+(a\lambda^{-d}_-))\op^+(\lambda^{d}_-)+(g_2\#g_1)\op^+(\lambda^{d}_-)
\end{align*}
with $g_1\in B^{0;(-d)_+}_G(\rz^{n-1}\times\rpbar;\frakg_1)\subset 
\wtbfB^{0,0;(-d)_+}_G(\rz^{n-1}\times\rpbar;\frakg_1)$. Thus, due to Theorem \ref{thm:comp-limit} and 
Lemma \ref{lem:transmission02}, 
 $$(g_2\#g_1)\op^+(\lambda^{d}_-)\in \wtbfB^{d_2+d,\nu;d_{+}}_G(\rz^{n-1}\times\rpbar;\frakg_2\frakg_1).$$
By Theorem \ref{thm:comp-left}, 
$\wt g_2:=\op^+(\lambda^{-d_2}_-)\#g_2\in \wtbfB^{0,\nu;0}_G\rz^{n-1}\times\rpbar;\frakg_2)$. 
Hence it remains to analyze 
the term $g:=\op^+(\lambda^{d_2}_-)\#(\wt g_2\#\op^+(a_1))\op^+(\lambda^{d}_-)$ where 
$a_1:=a\lambda^{-d}_-$ belongs to $S^{0}_\tr(\rz^{n}\times\rpbar;\scrL(\cz^{L_1},\cz^{L_2}))$. 
Now the formal adjoint symbol of 
$\wt g_2\#\op^+(a_1)$ is the Leibniz-product of $\op^+(a_1)^{(*)}\in B^{0;0}(\rz^{n-1}\times\rpbar;\frakg_1^{-1})$ 
and $\wt g_2^{(*)}\in \wtbfB^{0,0;0}_G(\rz^{n-1}\times\rpbar;\frakg_2^{-1})$, hence itself belongs to 
$\wtbfB^{0,0;0}_G(\rz^{n-1}\times\rpbar;\frakg_1^{-1}\frakg_2^{-1})$ due to Theorem \ref{thm:comp-limit} and 
Theorem \ref{thm:comp-left}. By passing to the formal adjoint symbol again, using Theorem \ref{thm:adjoint-limit}, 
we derive $\wt g_2\#\op^+(a_1)\in\wtbfB^{0,0;0}_G(\rz^{n-1}\times\rpbar;\frakg_2\frakg_1)$. 
Lemma \ref{lem:transmission} and Theorem \ref{thm:comp-left} thus yield 
$g\in\wtbfB^{d_2+d,\nu;d_{+}}_G(\rz^{n-1}\times\rpbar;\frakg_2\frakg_1)$ as desired. 
This completes the proof. 
PRINCIPAL SYMBOLS
}
\end{proof}

\subsection{Ellipticity and parametrix construction}\label{sec:5.3}
We shall need the following result: 

\begin{lemma}
Let $\bfg\in \bfB^{d,\nu;r}_G(\rz^{n-1}\times\rpbar;\frakg)$, $\nu\in\nz_0$ with $\bfg^{(d,0)}=0$. 
Then $\bfg\in \bfB^{d-1,\nu-1;r}_G(\rz^{n-1}\times\rpbar;\frakg)$. 
\end{lemma}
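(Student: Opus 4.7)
\emph{Proof plan.} The approach splits by regularity. When $\nu=0$ the class $\bfB^{d,0;r}_G$ coincides with the poly-homogeneous space $\wtbfB^{d,0;r}_G$, so the vanishing of $\bfg^{(d,0)}$ will place $\bfg$ directly in $\wtbfB^{d-1,-1;r}_G=\bfB^{d-1,\nu-1;r}_G$ by the asymptotic structure of $\wtbfB$. I focus on the case $\nu\ge 1$.

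I would start from an arbitrary decomposition $\bfg=\bfp+\wt\bfg$ with $\bfp\in B^{d;r}_G$ and $\wt\bfg\in\wtbfB^{d,\nu;r}_G$. The inclusion $\bfB^{d,\nu;r}_G\subset\wtbfB^{d,0;r}_G$ translates the hypothesis into $\bfp^{(d)}+\wt\bfg^{(d,\nu)}=0$; since $\wt\bfg^{(d,\nu)}\in\wtbfS^{d,\nu}_\Hom$ vanishes to order $\nu$ at $\xi'=0$ on the unit semi-sphere, so does $\bfp^{(d)}$, which is moreover smooth on the entire closed semi-sphere by $\bfp\in B^{d;r}_G$. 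Next I would pass to the $F$-valued setting with $F=\Gamma^r_G(\rz_+;\frakg)$ via Theorem~\ref{thm:conjugation03}, setting $\bfp':=\bfkappa^{-1}\#\bfp\#\bfkappa\in S^d(\rz^{n-1}\times\rpbar;F)$ and $\wt\bfg':=\bfkappa^{-1}\#\wt\bfg\#\bfkappa\in\wtbfS^{d,\nu}(\rz^{n-1}\times\rpbar;F)$, so that $\bfp'^{(d)}\in S^d_\Hom\cap\wtbfS^{d,\nu}_\Hom$.

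The crux will be to construct a bridge symbol $\bfh'\in S^d\cap\wtbfS^{d,\nu}$ with principal symbol $\bfp'^{(d)}$ in both classes. I would take $\bfh':=\chi(\xi',\mu)\bfp'^{(d)}$ with a zero-excision $\chi$ in $(\xi',\mu)$; then $\bfh'\in S^d$ automatically. The verification $\bfh'\in\wtbfS^{d,\nu}$ poly-homogeneously would use the $\nu$-fold vanishing of $\bfp'^{(d)}$ at $\xi'=0$: this yields the $\wtbfS^{d,\nu}_{1,0}$-estimates for $\bfh'$, and for any zero-excision $\chi'(\xi')$ in $\xi'$ only, forces $(\chi(\xi',\mu)-\chi'(\xi'))\bfp'^{(d)}$ to be compactly supported in $\xi'$ with all derivatives controlled by $[\xi',\mu]^{d-\nu}$; by Proposition~\ref{prop:smoothing} this difference lies in $\bigcap_N\wtbfS^{d-N,\nu-N}_{1,0}=S^{d-\nu}(\rpbar;S^{-\infty}(\rz^{n-1};F))$, so the $\wtbfS^{d,\nu}$-expansion of $\bfh'$ has leading term $\chi'(\xi')\bfp'^{(d)}$ and all subsequent components zero. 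Setting $\bfh:=\bfkappa\#\bfh'\#\bfkappa^{-1}$ and applying the reverse direction of Theorem~\ref{thm:conjugation03} then yields $\bfh\in B^{d;r}_G\cap\wtbfB^{d,\nu;r}_G$ with principal symbol $\bfp^{(d)}$.

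Finally I would decompose $\bfg=(\bfp-\bfh)+(\wt\bfg+\bfh)$: the first summand has vanishing $B^{d;r}_G$-principal symbol, hence lies in $B^{d-1;r}_G$; the second has vanishing $\wtbfB^{d,\nu;r}_G$-principal symbol $\wt\bfg^{(d,\nu)}+\bfp^{(d)}=0$, hence lies in $\wtbfB^{d-1,\nu-1;r}_G$ by the asymptotic structure of $\wtbfB$. This gives $\bfg\in\bfB^{d-1,\nu-1;r}_G$ as required. The hard part will be verifying that the bridge symbol $\bfh'$ genuinely belongs to the poly-homogeneous subspace $\wtbfS^{d,\nu}$ (not merely $\wtbfS^{d,\nu}_{1,0}$), which hinges on a careful Taylor analysis of $\bfp'^{(d)}$ near $\xi'=0$ on the semi-sphere together with the smoothing characterisation of Proposition~\ref{prop:smoothing}.
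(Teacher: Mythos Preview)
Your proposal is correct and follows essentially the same approach as the paper. Both arguments reduce to the key step of showing that $\chi(\xi',\mu)\bfp'^{(d)}$ (your $\bfh'$, the paper's $\chi\bfg_0'^{(d)}$) belongs to $\wtbfS^{d,\nu}(\rz^{n-1}\times\rpbar;F)$, using that $\bfp'^{(d)}\in S^d_\Hom$ vanishes to order $\nu$ at the north pole $(0,1)$; the paper cites \cite[Proposition~5.3, Step~2]{Seil22-1} for this fact while you sketch it via Proposition~\ref{prop:smoothing}. The only cosmetic differences are that the paper uses pointwise conjugation $\bfkappa_{|\xi',\mu|}^{-1}(\cdot)\bfkappa_{|\xi',\mu|}$ rather than the operator-level $\bfkappa^{-1}\#(\cdot)\#\bfkappa$, and organizes the decomposition as three terms (two lower-order pieces plus a smoothing remainder $\bfr=(\chi-\wt\chi)\bfg_0'^{(d)}$) rather than your two-term redecomposition via the bridge symbol---but these amount to the same computation.
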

\begin{proof}
Only the case $\nu\ge1$ needs to be proved. Write $\bfg=\bfg_0+\wt\bfg$ with 
$\bfg_0\in B^{d;r}_G(\rz^{n-1}\times\rpbar;\frakg)$ and 
$\wt\bfg\in\wtbfB^{d,\nu;r}_G(\rz^{n-1}\times\rpbar;\frakg)$. 
Then $\bfg^{(d,0)}=\bfg_0^{(d)}+\wt\bfg^{(d,\nu)}$. 
Let $\chi(\xi,\mu)$ and $\wt\chi(\xi')$ be zero-excision functions such that $\chi\wt\chi=\wt\chi$.  Then 
 $$\bfg=(\bfg_0-\chi\bfg_0^{(d)})+(\wt\bfg-\wt\chi\wt\bfg^{(d,\nu)})+\bfr,\qquad 
    \bfr=\chi\bfg_0^{(d)}+\wt\chi\wt\bfg^{(d,\nu)}.$$
Now define 
\begin{align*}
 \bfg_0'^{(d)}&=\bfkappa_{|\xi',\mu|}^{-1}\bfg_0^{(d)}\bfkappa_{|\xi',\mu|}
 \in S^d_\Hom(\rz^{n-1}\times\rpbar;F),\\
 \wt\bfg'^{(d,\nu)}&= \bfkappa_{|\xi',\mu|}^{-1}\wt\bfg^{(d,\nu)}\bfkappa_{|\xi',\mu|}
 \in    \wtbfS^{d,\nu}_\Hom(\rz^{n-1}\times\rpbar;F), 
\end{align*}
where $F:=\Gamma^r_G(\rz_+;\frakg)$. Also let $\bfr'= \bfkappa_{|\xi',\mu|}^{-1}\bfr\bfkappa_{|\xi',\mu|}$. 

From $\bfg_0'^{(d)}=-\wt\bfg'^{(d,\nu)}$ it follows that 
$\bfg_0'^{(d)}\in\scrC^\infty(\sz^{n-1}_+,\scrC^\infty_b(\rz^{n-1},F))$ vanishes to order $\nu$ in $(\xi',\mu)=(0,1)$. 
As shown in Step 2 of the proof of \cite[Proposition 5.3]{Seil22-1} this implies 
$\chi\bfg_0'^{(d)}\in \wtbfS^{d,\nu}(\rz^{n-1}\times\rpbar;F)$. But then 
 $$\bfr'=\chi(\bfg_0'^{(d)}+\wt\chi\wt\bfg'^{(d,\nu)})=(1-\wt\chi)\chi\bfg_0'^{(d)}$$
together with $(1-\wt\chi)\in\wtbfS^{0-\infty,0-\infty}(\rz^{n-1}\times\rpbar)$ yields 
$\bfr'\in \wtbfS^{d-\infty,\nu-\infty}(\rz^{n-1}\times\rpbar;F)$, hence 
$\bfr\in \wtbfB^{d-\infty,\nu-\infty;r}_G(\rz^{n-1}\times\rpbar;\frakg)$. The claim follows. 
\end{proof}

\begin{definition}\label{def:ellipticity-full}
Let $d\in\gz$, $\nu\in\nz_0$, and $\frakg=((L,M_0),(L,M_1))$. 
A symbol $\bfp\in \bfB^{d,\nu;d_+}(\rz^{n-1}\times\rpbar;\frakg)$ 
is called elliptic provided 
\begin{itemize}
 \item[$($E$1)$] the homogeneous principal symbol is pointwise invertible and
    $$|\sigma_\psi^{(d)}(\bfp)^{-1}(x,\xi,\mu)|\lesssim 1\qquad \forall\;\substack{x\in\rz^n\\x_n\ge0}\quad
        \forall\;|\xi,\mu|=1,$$
 \item[$($E$2)$] for some $($and then for all$)$ $s>d_+-\frac{1}{2}$, the principal boundary symbol is pointwise 
  invertible and
    $$\|\sigma_\partial^{(d,\nu)}(\bfp)^{-1}(x',\xi',\mu)\|_{\scrL(H^{s-d}(\rz_+,\cz^{L})\oplus\cz^{M_1},
        H^s(\rz_+,\cz^{L})\oplus\cz^{M_0})}\lesssim 1$$
  uniformly for $x'\in\rz^{n-1}$ and $|\xi',\mu|=1$, $\xi'\not=0$, 
 \item[$($E$3)$] for some $($and then for all$)$ $s>d_+-\frac{1}{2}$ the following operator is an isomorphism$:$ 
 $$\op(\sigma^{[d]}_\infty(\bfp)):
     \begin{matrix} L^2(\rz^{n-1},H^s(\rz_+,\cz^{L}))\\ \oplus\\ L^2(\rz^{n-1},\cz^{M_0})\end{matrix}
     \lra
     \begin{matrix}L^2(\rz^{n-1},H^{s-d}(\rz_+,\cz^{L}) \\ \oplus \\ L^2(\rz^{n-1},\cz^{M_1})\end{matrix}.$$ 
\end{itemize}
\end{definition}
For the independence of condition $($E$2)$ of the specific value of $s$ see \cite[Theorem 3.30]{Schu-BVP} for instance.  
Due to the spectral-invariance of pseudodifferential operators (see the proof of 
Proposition \ref{prop:invertibility-one+g} for more 
details), an equivalent formulation of $($E$3)$ is then as follows: 
\begin{itemize}
 \item[$($E$3)$] For some $($and then for all$)$ $s>d_+-\frac{1}{2}$ the principal limit symbol is invertible 
  with respect to the Leibniz product in 
   $$S^0(\rz^{n-1};\scrL(H^{s}(\rz_+,\cz^{L})\oplus\cz^{M_0},
      H^{s-d}(\rz_+,\cz^{L})\oplus\cz^{M_1})),$$ 
  i.e., , there exists a symbol 
   $$\bfq_\infty\in S^0(\rz^{n-1};\scrL(H^{s-d}(\rz_+,\cz^{L})\oplus\cz^{M_1},
        H^s(\rz_+,\cz^{L})\oplus\cz^{M_0}))$$ 
  such that 
  $\sigma^{[d]}_\infty(\bfp)\#\bfq_\infty=1$ and $\bfq_\infty\#\sigma^{[d]}_\infty(\bfp)=1$.  
\end{itemize}

Let us now show the independence of $($E$3)$ of $s:$ 

\begin{lemma}\label{lem:independence}
If $($E$3)$ holds for some $s_0>d_+-\frac{1}{2}$ then it holds for all $s>d_+-\frac{1}{2}$. 
\end{lemma}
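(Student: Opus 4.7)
The plan is to upgrade condition (E3) at $s_0$ to the equivalent symbol-level statement: the existence of a two-sided Leibniz inverse $\bfq_\infty \in S^0(\rz^{n-1};B^{-d;(-d)_+}(\rz_+;\frakg^{-1}))$ of $\sigma_\infty^{[d]}(\bfp)$. The latter formulation is manifestly $s$-independent, and via the mapping property \eqref{eq:mapping-property01} applied in the parameter-free variant the associated operator $\op(\bfq_\infty)$ is bounded between the $s$-scales for every $s>d_+-\frac12$; since it is a two-sided inverse of $\op(\sigma_\infty^{[d]}(\bfp))$ by construction, (E3) then follows at every admissible $s$.

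To produce $\bfq_\infty$ I would first argue that invertibility of $\op(\sigma_\infty^{[d]}(\bfp))$ on the $s_0$-scale forces pointwise invertibility (with uniform bounds) of the principal angular symbol $\wh\sigma^{\spk{d}}(\bfp)=(\sigma_\infty^{[d]}(\bfp))^{(0)}$ as an element of $B^{d;d_+}(\rz_+;\frakg)$: a standard Weyl/Fredholm argument with approximate eigenfunctions concentrated in the region $|\xi'|\sim R\to+\infty$ shows that a point $(x',\xi'/|\xi'|)$ at which the principal angular symbol failed to be invertible (or for which no uniform bound on the inverse held) would obstruct injectivity or surjectivity of the operator. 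With pointwise invertibility of the principal angular symbol secured, Lemma \ref{lem:invertibility01} inverts it at the homogeneous-symbol level, and a standard parametrix construction combined with asymptotic summation (Proposition \ref{prop:asymptotic-summation01} in the parameter-independent variant) yields a symbolic parametrix $\bfq_0\in S^0(\rz^{n-1};B^{-d;(-d)_+}(\rz_+;\frakg^{-1}))$ satisfying
\begin{align*}
  \bfq_0\#\sigma_\infty^{[d]}(\bfp)&=1-\bfr_L,\\
  \sigma_\infty^{[d]}(\bfp)\#\bfq_0&=1-\bfr_R,
\end{align*}
with smoothing remainders $\bfr_L,\bfr_R$ in the corresponding calculus.

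Finally I would correct $\bfq_0$ to a true inverse using the hypothesis: since $\op(\sigma_\infty^{[d]}(\bfp))$ is invertible on the $s_0$-scales, both $1-\op(\bfr_L)$ and $1-\op(\bfr_R)$ are invertible there as well. Invoking the spectral invariance of the operator-valued calculus on $\rz^{n-1}$---this is precisely what Proposition \ref{prop:invertibility-one+g} establishes, as already referenced in the paragraph preceding the lemma---the inverses of $1-\op(\bfr_L)$ and $1-\op(\bfr_R)$ are again of the form $1-\op(\wt\bfr_L)$ and $1-\op(\wt\bfr_R)$ with smoothing $\wt\bfr_L,\wt\bfr_R$, and suitable composition with $\bfq_0$ then yields the desired two-sided symbolic inverse $\bfq_\infty$. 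The main obstacle is exactly this spectral-invariance step: passing from invertibility of $1-\op(\bfr)$ on a single Sobolev scale to a smoothing symbolic inverse requires exploiting the specific structural features of the parameter-independent operator-valued calculus with values in the smoothing Boutet de Monvel class, and it is the proof of Proposition \ref{prop:invertibility-one+g} that carries out this argument.
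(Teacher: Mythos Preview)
Your overall strategy---upgrade (E3) at $s_0$ to the statement that $\bfq_\infty$ lies in the parameter-independent Boutet de Monvel class $S^0(\rz^{n-1};B^{-d;(-d)_+}(\rz_+;\frakg^{-1}))$---is exactly the paper's goal, and your closing argument (mapping properties plus density) matches the paper's final paragraph. But the route to that upgrade is quite different, and yours has a real weak point.

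The paper does \emph{not} first establish invertibility of the principal angular symbol, and it does not build a full parametrix by asymptotic summation. Instead it uses only condition (E1) to write down a crude one-term parametrix $\bfq(x')=\begin{pmatrix}\op^+(a^{(d)}(x',0,0,\cdot,1)^{-1})&0\\0&0\end{pmatrix}$, obtains $\bfq\#\sigma_\infty^{[d]}(\bfp)=1-\bfg_L$ and $\sigma_\infty^{[d]}(\bfp)\#\bfq=1-\bfg_R$ with $\bfg_L,\bfg_R$ singular Green, and then solves for
\[
\bfq_\infty=\bfq+\bfg_L\#\bfq+\bfg_L\#\bfq_\infty\#\bfg_R.
\]
The sandwich $\bfg_L\#\bfq_\infty\#\bfg_R$ is the key: the Green symbols regularize in the normal variable on both sides, so this term lands in $S^0(\rz^{n-1};\Gamma_G^{(-d)_+}(\rz_+;\frakg^{-1}))$ even though $\bfq_\infty$ is a priori only a symbol with values in $\scrL(H^{s_0-d}\oplus\cz^{M_1},H^{s_0}\oplus\cz^{M_0})$. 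No spectral-invariance step beyond the initial passage to the ``second version'' of (E3), and no ellipticity of the angular symbol, is needed.

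Your route is more circuitous and the step you flag as ``standard''---deducing pointwise invertibility of the principal angular symbol from invertibility of $\op(\sigma_\infty^{[d]}(\bfp))$ via approximate eigenfunctions---is the genuine gap. On non-compact $\rz^{n-1}$ with merely $\scrC^\infty_b$-coefficients and operator-valued symbols this argument is delicate (localization near a bad $(x_0',\xi_0')$ must be uniform in $x'$), and nothing in the paper supplies it. Two citations are also off: Lemma~\ref{lem:invertibility01} concerns parameter-\emph{dependent} homogeneous classes $\wtbfS^{d,\nu}_\Hom(\rz^{n-1}\times\rpbar;\cdot)$, not the parameter-free $S^0_\Hom(\rz^{n-1};\cdot)$ where the angular symbol lives; and Proposition~\ref{prop:invertibility-one+g} is a large-$\mu$ statement for the class $\wtbfB^{0-\infty,0-\infty;0}_G$, not the parameter-independent spectral-invariance you need (the Beals argument used inside its proof is what you want, not the proposition itself). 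The paper's sandwich trick avoids all of this.
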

\begin{proof}
We consider the second version of $($E$3)$ and shall use the notation from 
Definition \ref{def:principal-symbols-new} and \eqref{eq:b-homogeneous-principal}. 
Assume $($E$3)$ holds for $s=s_0$. 

Define $b(x',\xi_n)=a^{(d)}(x',0,0,\xi_n,1)^{-1}$ and 
$\bfq(x'):=\begin{pmatrix}\op^+(b)(x')&0\\ 0&0\end{pmatrix}$. Then 
$\bfq\in S^0(\rz^{n-1}; B^{-d;0}(\rz_+;\bfg^{-1}))$ and 
\begin{align}\label{eq:ind01}
 \bfq\#\sigma_\infty^{[d]}(\bfp)=1-\bfg_L,\qquad \sigma_\infty^{[d]}(\bfp)\#\bfq=1-\bfg_R
\end{align}
with 
\begin{align*}
 \bfg_L\in & \, S^0(\rz^{n-1};\Gamma_G^{d_+}(\rz_+;\bfg^{-1}\frakg))\\
     &\subset S^0(\rz^{n-1};\scrL(H^{s_0}(\rz_+,\cz^{L})\oplus\cz^{M_0},
        H^{s_0}(\rz_+,\cz^{L})\oplus\cz^{M_0})),\\
 \bfg_R\in & \, S^0(\rz^{n-1};\Gamma_G^{(-d)_+}(\rz_+;\frakg\frakg^{-1}))\\
     &\subset S^0(\rz^{n-1};\scrL(H^{s_0-d}(\rz_+,\cz^{L})\oplus\cz^{M_1},
        H^{s_0-d}(\rz_+,\cz^{L})\oplus\cz^{M_1})). 
\end{align*}
Applying the Leibniz product with $\bfq_\infty$ from the right to the first identity in \eqref{eq:ind01} and from the left 
to the second identity gives $\bfq=\bfq_\infty-\bfg_L\#\bfq_\infty$ and 
$\bfq=\bfq_\infty-\bfq_\infty\#\bfg_R$. Resolving for $\bfq_\infty$ yields 
$\bfq_\infty=\bfq+\bfg_L\#\bfq+\bfg_L\#\bfq_\infty\#\bfq_R$. The second term on the right belongs to 
$S^0(\rz^{n-1}; \Gamma_G^{(-d)_+}(\rz_+;\bfg^{-1}))$. The same is true for the third term. In fact, write  
$\bfg_R=\sum_{j=0}^{(-d)_+} \bfg_R^j\,\boldsymbol{\partial}^j_+$ with 
$\bfg_R^j\in S^0(\rz^{n-1};\Gamma_G^0(\rz_+;\frakg\frakg^{-1}))$. 
Then, using the mapping property of generalized singular Green operators on $\rz_+$, 
 $$\bfq_\infty\#\bfg_R^j\in S^0(\rz^{n-1};\scrL(H^{s,\delta}_0(\rz_+,\cz^{L})\oplus\cz^{M_1},
        H^{s_0}(\rz_+,\cz^{L})\oplus\cz^{M_0}))$$
for all $s,\delta\in\rz$. Since 
 $$\bfg_L\in S^0(\rz^{n-1};\scrL(H^{s_0}(\rz_+,\cz^{L})\oplus\cz^{M_0},
        H^{s',\delta'}(\rz_+,\cz^{L})\oplus\cz^{M_0}))$$ 
for all $s',\delta'\in\rz$, 
it follows that $\bfg_L\#\bfq_\infty\#\bfg_R^j\in S^0(\rz^{n-1};\Gamma_G^0(\rz_+;\frakg^{-1}))$. 

Altogether we have verified that $\bfq_\infty\in S^0(\rz^{n-1};B^{-d;(-d)_+}(\rz_+;\frakg^{-1}))$. 
Using the density of $H^\infty(\rz_+,\cz^L)$ in $H^s(\rz_+,\cz^L)$ and the mapping properties of 
$\op(\bfq_\infty)$ and $\op(\sigma^{[d]}_\infty(\bfp))$, respectively, it follows that 
$\op(\bfq_\infty)$ is the inverse of $\op(\sigma^{[d]}_\infty(\bfp))$ in the first version of $($E$3)$ 
for any choice of $s$, hence $\bfq_\infty$ is the inverse of 
$\sigma^{[d]}_\infty(\bfp)$ with respect to the Leibniz product for any choice of $s$ in the second version 
of $($E$3)$. 
\end{proof}

\begin{definition}\label{def:parametrix01-1}
Let $d\in\gz$, $\nu\in\nz_0$. A \emph{parametrix} for $\bfp\in \bfB^{d,\nu;d_+}(\rz^{n-1}\times\rpbar;\frakg)$, 
$\frakg=((L,M_0),(L,M_1))$ is any symbol  
$\bfq\in \bfB^{-d,\nu;(-d)_+}(\rz^{n-1}\times\rpbar;\frakg^{-1})$ such that 
\begin{align*} 
 1-\bfq\#\bfp&\in B^{-\infty;d_+}(\rz^{n-1}\times\rpbar;\frakg^{-1}\frakg),\\
 1-\bfp\#\bfq&\in B^{-\infty;(-d)_+}(\rz^{n-1}\times\rpbar;\frakg\frakg^{-1}).
\end{align*}
\end{definition}

\begin{theorem}\label{thm:parametrix-new}
Let $\bfp\in \bfB^{d,\nu;d_+}(\rz^{n-1}\times\rpbar;\frakg)$. Then 
$\bfp$ is elliptic if and only if $\bfp$ has a parametrix. 
In this case there exists a $\mu_0$ such that $\op(\bfp)(\mu)$ is invertible for $\mu\ge\mu_0$ and 
the parametrix $\bfq$ can be chosen in such a way that
 $$\op(\bfq)(\mu)=\op(\bfp)(\mu)^{-1}\qquad\forall\;\mu\ge\mu_0.$$ 
\end{theorem}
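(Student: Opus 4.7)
The plan is to adapt the classical parametrix construction sketched in the commented-out portion of Theorem \ref{thm:parametrix01} to the enlarged calculus, with condition (E3) playing the role that was absent in the strongly parameter-dependent case. Necessity is the easy direction: if $\bfq$ is a parametrix, then $\bfq\#\bfp\equiv 1$ and $\bfp\#\bfq\equiv 1$ modulo $B^{-\infty;\cdot}$, and the multiplicativity of $\sigma_\psi^{(d)}$, $\sigma_\partial^{(d,\nu)}$ and $\sigma_\infty^{[d]}$ from Theorem \ref{thm:comp-new} yields (E1) and (E2) immediately. For (E3) I use that regularizing boundary symbols are rapidly decreasing in $\mu$, hence their principal limit symbols vanish by Proposition \ref{prop:smoothing}; therefore $\sigma_\infty^{[d]}(\bfp)\#\sigma_\infty^{[-d]}(\bfq)$ equals $1$ modulo a symbol in $S^{-\infty}(\rz^{n-1};\Gamma^r_G(\rz_+;\cdot))$, from which invertibility of $\op(\sigma^{[d]}_\infty(\bfp))$ follows by a Neumann-series/spectral-invariance argument of the type used in Lemma \ref{lem:independence}.

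For sufficiency, I would build $\bfq$ in three layers. First, from (E1) together with standard parameter-dependent $\psi$do calculus, produce $b\in S^{-d}_\tr(\rz^n\times\rpbar;\scrL(\cz^L))$ with $a\#b-1$ and $b\#a-1$ of order $-\infty$ in $\rz^n_+\times\rz^n\times\rpbar$, and set $\bfq_0=\begin{pmatrix}\op^+(b)&0\\0&0\end{pmatrix}$. Theorem \ref{thm:comp-new} then yields
\begin{align*}
 \bfp\#\bfq_0=1-\bfh_R,\qquad \bfq_0\#\bfp=1-\bfh_L
\end{align*}
with $\bfh_R\in\bfB^{0,\nu;(-d)_+}_G(\rz^{n-1}\times\rpbar;\frakg\frakg^{-1})$ and $\bfh_L\in\bfB^{0,\nu;d_+}_G(\rz^{n-1}\times\rpbar;\frakg^{-1}\frakg)$, modulo terms in $B^{-\infty;\cdot}$. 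Second, I use (E2) together with Lemma \ref{lem:invertibility02} to show that $\sigma_\partial^{(d,\nu)}(\bfp)^{-1}$ exists as an element of $\bfS^{-d,\nu}_\Hom$; the familiar algebraic identity
\begin{align*}
 \sigma_\partial^{(d,\nu)}(\bfp)^{-1}=\sigma_\partial^{(-d,\nu)}(\bfq_0)+\sigma_\partial^{(-d,\nu)}(\bfq_0)\sigma_\partial^{(0,0)}(\bfh_R)+\sigma_\partial^{(0,0)}(\bfh_L)\sigma_\partial^{(d,\nu)}(\bfp)^{-1}\sigma_\partial^{(0,0)}(\bfh_R)
\end{align*}
lets me correct $\bfq_0$ by a symbol $\bfg\in\bfB^{-d,\nu;(-d)_+}_G$ so that both new defects have vanishing homogeneous \emph{and} vanishing principal boundary symbols, i.e.\ drop by one order in the scale. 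Iterating and invoking the asymptotic summation of Propositions \ref{prop:asymptotic-summation01}--\ref{prop:asymptotic-summation02} produces a $\bfq_1$ such that the residual defects lie in $\bfB^{d-\infty,\nu-\infty;\cdot}_G$, which by Remark \ref{rem:smoothing-operators} coincides with $B^{-\infty;\cdot}_G$ modulo the part that is only controlled at the limit-symbol level.

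Third — and this is the genuinely new step — I use (E3) to kill the remaining defect at infinity. The defect $\bfh$ at this stage has vanishing homogeneous principal symbol but a possibly nontrivial principal limit symbol $\sigma_\infty^{[0]}(\bfh)\in S^0(\rz^{n-1};\Gamma^{\cdot}_G(\rz_+;\cdot))$. Invertibility in (E3) means that $\sigma_\infty^{[d]}(\bfp)$ has a Leibniz-inverse $\bfq_\infty\in S^0(\rz^{n-1};B^{-d;(-d)_+}(\rz_+;\frakg^{-1}))$ (by the argument of Lemma \ref{lem:independence}); I use it to define a correction whose principal limit symbol exactly compensates $\sigma_\infty^{[0]}(\bfh)$, thereby reducing $\bfh$ to a symbol in $\bfB^{-\infty,-\infty;\cdot}_G=B^{-\infty;\cdot}_G$. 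After asymptotic summation of all three types of corrections I obtain $\bfq\in\bfB^{-d,\nu;(-d)_+}$ with $1-\bfq\#\bfp$ and $1-\bfp\#\bfq$ in $B^{-\infty;\cdot}$. For the final claim of genuine invertibility, I exploit that $B^{-\infty;\cdot}$-symbols are rapidly decaying in $\mu$, so $1-\op(\bfr)(\mu)$ is invertible for $\mu\geq\mu_0$ by a Neumann series, and a proposition analogous to the one sketched after Theorem \ref{thm:parametrix01} (regularizing symbols form a spectrally invariant family) shows that the inverse is of the form $1-\op(\wt\bfr)(\mu)$ within the calculus; absorbing this into $\bfq$ yields $\op(\bfq)(\mu)=\op(\bfp)(\mu)^{-1}$ for $\mu\geq\mu_0$.

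The main obstacle I anticipate is the coordination of the second and third layers: after correcting the boundary symbol, one must verify that the residual defect's principal limit symbol still lives in the class where (E3) can be invoked, i.e.\ that the correction in $\bfS^{-d,\nu}_\Hom$ of Lemma \ref{lem:invertibility02} (which has regularity $\nu$ and hence \emph{no} limit-symbol contribution when $\nu\geq 1$) is compatible with the iterative reduction at the limit-symbol stratum, where the effective regularity is $0$. Keeping track of the two bookkeeping orders $(d,\nu)$ simultaneously during asymptotic summation — and ensuring the final $\bfq$ really sits in $\bfB^{-d,\nu;(-d)_+}$ rather than merely in the larger $\wt\bfB$ class — is the delicate point that will require careful use of Remark \ref{rem:limit-angular} and the splitting in Definition \ref{def:singular-green-new}.
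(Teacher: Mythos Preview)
Your outline follows the paper's strategy closely, but there are two concrete places where the logic is not yet right.

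First, in your second layer you write ``(E2) together with Lemma \ref{lem:invertibility02}'' to invert $\sigma_\partial^{(d,\nu)}(\bfp)$. Lemma \ref{lem:invertibility02} has two hypotheses: (i) the pointwise bound, which is (E2), \emph{and} (ii) invertibility of the principal angular symbol. The angular symbol equals the homogeneous principal symbol of $\sigma_\infty^{[d]}(\bfp)$ (Remark \ref{rem:limit-angular}), so (ii) is supplied by (E3), not (E2). Thus (E3) enters already at the boundary-symbol level, not only in your third layer. The paper makes this explicit in the lemma immediately preceding the case split.

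Second, the bookkeeping you flag as the ``main obstacle'' is resolved in the paper by a case distinction that you do not make. After one boundary-symbol correction the defect lies in $\bfB^{-1,\nu-1;0}_G$. If $\nu\ge 1$ this has nonnegative regularity, so by Theorem \ref{thm:comp-new} the Neumann powers stay in $\bfB^{-\ell,\nu-1;0}_G$ and their intersection over $\ell$ is $B^{-\infty;0}_G$ by Remark \ref{rem:smoothing-operators}; your third layer is then unnecessary. If $\nu=0$ the defect lies in $\wtbfB^{-1,-1;0}_G$, Neumann powers land in $\wtbfB^{-\ell,-\ell;0}_G$, and the intersection is only $\wtbfB^{0-\infty,0-\infty;0}_G$, which is strictly larger than $B^{-\infty;0}_G$. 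At this point the limit symbol of the defect lies in $S^{-\infty}(\rz^{n-1};\Gamma^0_G)$, not $S^0$ as you write; (E3) lets you kill it, but the resulting defect is still only in $\wtbfB^{0-\infty,0-\infty;0}_G$ with vanishing limit symbol, not yet in $B^{-\infty}_G$. A separate inversion argument (Proposition \ref{prop:invertibility-one+g}, using Proposition \ref{prop:smoothing} to identify such symbols with $S^{-1}(\rpbar;S^{-\infty})$ and then Neumann series plus spectral invariance) is required to finish. Your final paragraph collapses these two distinct inversion steps into one.
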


The remaining part of this section is dedicated to the proof of Theorem \ref{thm:parametrix-new}. 
Using the order reductions from Section \ref{sec:2.3.1} and an analogue of Remark \ref{rem:order-reduction} we may assume 
without loss of generality that $\bfp\in \bfB^{0,\nu;0}(\rz^{n-1}\times\rpbar;\frakg)$, i.e., $\bfp$ is of order and type zero. 
Correspondingly, the parametrix will also be of order and type zero. 

Let $\bfp$ have the pseudodifferential part $\op^+(a)$. 
By the invertibility of $\sigma^{(0)}_\psi(\bfp)$ we find a symbol 
$b\in S^{0}_\tr(\rz^{n}\times\rpbar;\scrL(\cz^L))$ such that 
$(1-a\#b)(x,\xi,\mu)$ and $(1-b\#a)(x,\xi,\mu)$ are of order $-\infty$ on 
$\rz^n_+\times\rz^n\times\rpbar$. 
Thus if $\bfq_0:=\begin{pmatrix}\op^+(b)&0\\0&0\end{pmatrix}$ then 
\begin{align}\label{eq:param-new01} 
 \bfp\#\bfq_0\equiv 1-\bfg_R,\qquad \bfq_0\#\bfp\equiv 1-\bfg_L, 
\end{align}
modulo regularizing symbols of type zero, where 
$\bfg_L\in \bfB^{0,\nu;0}_G(\rz^{n-1}\times\rpbar;\frakg^{-1}\frakg)$ and  
$\bfg_R\in \bfB^{0,\nu;0}_G(\rz^{n-1}\times\rpbar;\frakg\frakg^{-1})$. 
Applying $\sigma_\partial^{(0,\nu)}$ to these identities  
and resolving for $\sigma_\partial^{(0,\nu)}(\bfp)^{-1}$ we find 
\begin{align}\label{eq:param-new02} 
    \sigma_\partial^{(0,\nu)}(\bfp)^{-1}=\sigma_\partial^{(0,\nu)}(\bfq_0)
     +\sigma_\partial^{(0,\nu)}(\bfq_0)\bfg_R^{(0,\nu)}
     +\bfg_L^{(0,\nu)}\sigma_\partial^{(0,\nu)}(\bfp)^{-1}\bfg_R^{(0,\nu)}.
\end{align}

\begin{lemma}
The third term on the right-hand side of $\eqref{eq:param-new02}$ is the homogeneous principal symbol of a 
generalized singular Green symbol from the class $\bfB^{0,\nu;0}_G(\rz^{n-1}\times\rpbar;\frakg^{-1})$. 
\end{lemma}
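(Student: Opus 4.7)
The strategy is to pass via conjugation with $\bfkappa_{|\xi',\mu|}$ to the untwisted setting, where one can exploit an ``absorption'' property of the Green ideal $\Gamma^0_G(\rz_+;\cdot)$: whenever $G_1,G_2$ are Green operators on $\rz_+$ and $T$ is any bounded operator between a suitable pair of Sobolev spaces, the product $G_1TG_2$ is Green. This is immediate from the mapping-property characterization in Remark \ref{rem:sigular-green-half-axis}, since the outer Green factors can be routed through any intermediate Sobolev spaces that accommodate $T$.

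Set
$$A:=\bfkappa^{-1}\sigma_\partial^{(0,\nu)}(\bfp)\,\bfkappa,\qquad
L:=\bfkappa^{-1}\bfg_L^{(0,\nu)}\,\bfkappa,\qquad
R:=\bfkappa^{-1}\bfg_R^{(0,\nu)}\,\bfkappa,$$
the conjugating group-action being evaluated at $|\xi',\mu|$. By Lemma \ref{lem:transmission} and Definition \ref{def:conjugation01},
$A\in\bfS^{0,\nu}_\Hom(\rz^{n-1}\times\rpbar;B^{0;0}(\rz_+;\frakg))$, while $L$ and $R$ lie in $\bfS^{0,\nu}_\Hom$ with values in $\Gamma^0_G(\rz_+;\frakg^{-1}\frakg)$ and $\Gamma^0_G(\rz_+;\frakg\frakg^{-1})$, respectively. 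To gain symbolic control over the pointwise inverse $A^{-1}$, which exists for $\xi'\neq 0$ by condition (E2), I would apply Lemma \ref{lem:invertibility02} with $E_0=H^s(\rz_+,\cz^L)\oplus\cz^{M_0}$ and $E_1=H^s(\rz_+,\cz^L)\oplus\cz^{M_1}$ for some admissible $s$: hypothesis $(i)$ is exactly (E2); hypothesis $(ii)$ is supplied by (E3) via the identity $\wh\sigma^{\spk{0}}(\bfp)=(\sigma_\infty^{[0]}(\bfp))^{(0)}$, which converts the Leibniz-invertibility of $\sigma_\infty^{[0]}(\bfp)$ into pointwise invertibility of the principal angular symbol. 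The lemma then yields $A^{-1}\in\bfS^{0,\nu}_\Hom(\rz^{n-1}\times\rpbar;\scrL(E_1,E_0))$.

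Combining the preceding steps, the pointwise triple product $L(x',\xi',\mu)\,A^{-1}(x',\xi',\mu)\,R(x',\xi',\mu)$ takes values in $\Gamma^0_G(\rz_+;\frakg^{-1})$ by the absorption property. To upgrade this to the desired symbolic statement, I would decompose $L=L_S+L_W$ and $R=R_S+R_W$ into their $S^0_\Hom$-components (with Green values) and their $\wtbfS^{0,\nu}_\Hom$-components (also with Green values), expand $L\cdot A^{-1}\cdot R$ accordingly, and observe that any term containing at least one of $L_W$ or $R_W$ falls into $\wtbfS^{0,\nu}_\Hom$ with Green values, while $L_S\cdot A^{-1}\cdot R_S$ falls into $S^0_\Hom$ with Green values. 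Continuity of the trilinear composition $\Gamma^0_G\times\scrL\times\Gamma^0_G\to\Gamma^0_G$ in the respective Fréchet/Banach topologies, together with the product rule for the symbol estimates, ensures smooth dependence in $(x',\xi',\mu)$. Hence
$$L\,A^{-1}\,R\;\in\;\bfS^{0,\nu}_\Hom(\rz^{n-1}\times\rpbar;\Gamma^0_G(\rz_+;\frakg^{-1})).$$
Un-conjugating via $\bfkappa$ and invoking Definition \ref{def:conjugation01} then exhibits $\bfg_L^{(0,\nu)}\sigma_\partial^{(0,\nu)}(\bfp)^{-1}\bfg_R^{(0,\nu)}$ as the homogeneous principal symbol of an element of $\wtbfB^{0,\nu;0}_G(\rz^{n-1}\times\rpbar;\frakg^{-1})\subset\bfB^{0,\nu;0}_G(\rz^{n-1}\times\rpbar;\frakg^{-1})$.

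The main technical obstacle is ensuring symbol estimates in \emph{all} Fréchet seminorms defining $\Gamma^0_G(\rz_+;\frakg^{-1})$ simultaneously, given that $A^{-1}$ only takes values in the Banach space $\scrL(E_1,E_0)$ for one fixed pair of Sobolev spaces. The point is that each application of the product rule to a Green-valued symbol yields again a Green-valued symbol (with possibly degraded regularity, which stays within $\bfS^{0,\nu}_\Hom$), and the outer Green factors compensate for $A^{-1}$ being bounded on only one pair of Sobolev spaces by re-routing through those spaces.
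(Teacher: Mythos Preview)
Your approach is essentially identical to the paper's: conjugate by $\bfkappa_{|\xi',\mu|}$, invoke Lemma~\ref{lem:invertibility02} with (E2) furnishing hypothesis~(i) and (E3) furnishing hypothesis~(ii) via the identification of the principal angular symbol with the homogeneous principal symbol of the limit symbol, then use the Green absorption property $\Gamma^0_G\times\scrL\times\Gamma^0_G\to\Gamma^0_G$ to conclude that $L\,A^{-1}\,R$ has Green values and lies in the correct homogeneous symbol class, and finally un-conjugate.

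There is one small slip in your decomposition argument: you assert that $L_S\cdot A^{-1}\cdot R_S$ falls into $S^0_\Hom$ with Green values, but $A^{-1}$ itself lies only in $\bfS^{0,\nu}_\Hom=S^0_\Hom+\wtbfS^{0,\nu}_\Hom$, so this product is a priori only in $\bfS^{0,\nu}_\Hom$, not $S^0_\Hom$. This does not harm the conclusion, since the full product $L\,A^{-1}\,R$ then lands in $\bfS^{0,\nu}_\Hom(\rz^{n-1}\times\rpbar;\Gamma^0_G(\rz_+;\frakg^{-1}))$ either way, and un-conjugation places the third term in $\bfB^{0,\nu;0}_G$ (not necessarily $\wtbfB^{0,\nu;0}_G$ as you write in the last line, for the same reason). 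The paper's proof is terser on this point and simply records the product as lying in the appropriate homogeneous class without the explicit four-term expansion.
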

\begin{proof}
Let $p'(x',\xi',\mu)=\bfkappa_{|\xi',\mu|}^{-1}\sigma^{(0,\nu)}_\partial(\bfp)(x',\xi',\mu)\bfkappa_{|\xi',\mu|}$. Then 
 $$p'\in \bfS^{0,\nu}_\Hom(\rz^{n-1}\times\rpbar;
    \scrL(H^s(\rz_+,\cz^{L})\oplus\cz^{M_0}),H^{s}(\rz_+,\cz^{L})\oplus\cz^{M_1})).$$
Due to ellipticity condition $($E$2)$, $p'$ satisfies assumption i) of Lemma \ref{lem:invertibility02}. 
By definition, the principal angular symbol of $p'$ coincides with the principal angular symbol of $\bfp$. 
The latter coincides with the homogeneous principal symbol of the limit symbol of $\bfp$; by ellipticity condition $($E$3)$, 
it is invertible and the inverse is the homogeneous principal symbol of $q_\infty$. Hence $p'$ also satisfies assumption ii) 
of Lemma \ref{lem:invertibility02}. In conclusion 
 $$p'^{-1}\in \bfS^{0,\nu}_\Hom(\rz^{n-1}\times\rpbar;
    \scrL(H^{s}(\rz_+,\cz^{L})\oplus\cz^{M_1}),H^s(\rz_+,\cz^{L})\oplus\cz^{M_0}))$$
and $p'(x',\xi',\mu)^{-1}=\bfkappa_{|\xi',\mu|}^{-1}\sigma^{(0,\nu)}_\partial(\bfp)(x',\xi',\mu)^{-1}
\bfkappa_{|\xi',\mu|}$. 
Conjugating $\bfg_L^{(0,\nu)}$ and $\bfg_R^{(0,\nu)}$ with $\bfkappa_{|\xi',\mu|}$ yields symbols 
$\bfg_L'^{(0,\nu)}$ and $\bfg_R'^{(0,\nu)}$ from 
$\wtbfS^{0,\nu}_\Hom(\rz^{n-1}\times\rpbar;\Gamma^0_G(\rz_+;\frakg^{-1}\frakg))$ and 
$\wtbfS^{0,\nu}_\Hom(\rz^{n-1}\times\rpbar;\Gamma^0_G(\rz_+;\frakg\frakg^{-1}))$, respectively. 
The third term on the right-hand side of $\eqref{eq:param-new02}$ conjugated with $\bfkappa_{|\xi',\mu|}$ 
coincides with $\bfg_L'^{(0,\nu)}p'\bfg_R'^{(0,\nu)}$, hence belongs to 
$\wtbfS^{0,\nu}_\Hom(\rz^{n-1}\times\rpbar;\Gamma^0_G(\rz_+;\frakg^{-1}))$. This shows the claim. 
\end{proof}

By the previous lemma thus there exists a 
$\bfg\in \bfB^{0,\nu;0}_G(\rz^{n-1}\times\rpbar;\frakg^{-1})$ with 
 $$\bfg^{(0,\nu)}= \sigma_\partial^{(0,\nu)}(\bfq_0)\bfg_R^{(0,\nu)}
     +\bfg_L^{(0,\nu)}\sigma_\partial^{(0,\nu)}(\bfp)^{-1}\bfg_R^{(0,\nu)}.$$
Renaming $\bfq_0+\bfg$ as $\bfq_0$ again, we thus find the identities \eqref{eq:param-new01} where, in addition, 
the homogeneous principal symbols $\bfg_L^{(0,\nu)}$ and $\bfg_R^{(0,\nu)}$ vanish, i.e., 
$\bfg_L$ and $\bfg_R$ belong to $\bfB^{-1,\nu-1;0}_G(\rz^{n-1}\times\rpbar;\frakg^{-1}\frakg)$ and 
 $\bfB^{-1,\nu-1;0}_G(\rz^{n-1}\times\rpbar;\frakg\frakg^{-1})$, respectively 

Sofar the reasoning is the same for any value $\nu\in\nz_0$. 
However, from here on we must distinguish the cases $\nu\ge1$ and $\nu=0$. 

In case $\nu-1\ge0$ we have $\bfg_R^{\#\ell}\in \bfB^{-\ell,\nu-1;0}_G(\rz^{n-1}\times\rpbar;\frakg^{-1}\frakg)$ 
for every $\ell$. 
By asymptotic summation there exists $\bfg_0\in \bfB^{-1,\nu-1;0}_G(\rz^{n-1}\times\rpbar;\frakg^{-1}\frakg)$ such 
that, for every $N\in\nz$, 
 $$\bfg_0-\sum_{\ell=1}^{N-1}\bfg_R^{\#\ell}\in \bfB^{-N,\nu-1;0}_G(\rz^{n-1}\times\rpbar;\frakg^{-1}\frakg).$$
Then $\bfq:=\bfq_0+\bfq_0\#\bfg_0\in \bfB^{0,\nu;0}(\rz^{n-1}\times\rpbar;\frakg^{-1})$ satisfies 
$\bfp\#\bfq\equiv 1-\bfg_{R}$ and $\bfq\#\bfp\equiv 1-\bfg_{L}$ modulo regularizing symbols of type zero, where  
 $$\bfg_{L}\in \mathop{\mbox{\Large$\cap$}}_{N\in\nz}
     \bfB^{-1-N,\nu-1;0}_G(\rz^{n-1}\times\rpbar;\frakg\frakg^{-1})\subset
     B^{-\infty;0}(\rz^{n-1}\times\rpbar;\frakg\frakg^{-1}).$$
and analogously for $\bfg_R$. Thus we have found 
\begin{align}\label{eq:param-new03} 
\begin{split}
 \bfr_R&:=1-\bfp\#\bfq\in B^{-\infty;0}(\rz^{n-1}\times\rpbar;\frakg^{-1}\frakg),\\
 \bfr_L&:=1 -\bfq\#\bfp\in B^{-\infty;0}(\rz^{n-1}\times\rpbar;\frakg\frakg^{-1}). 
\end{split}
\end{align}
Also in case $\nu=0$, which we will consider now, we shall construct a parametrix $\bfq$ satisfying 
\eqref{eq:param-new03}; however, the reasoning is more complex and makes use of the principal limit symbol. 
Let us return to the above point where we have constructed 
$\bfq_0\in \bfB^{0,\nu;0}(\rz^{n-1}\times\rpbar;\frakg^{-1})$ such that 
 $$ \bfp\#\bfq_0\equiv 1-\bfg_R,\qquad \bfq_0\#\bfp\equiv 1-\bfg_L$$ 
modulo regularizing symbols of type zero, where 
$\bfg_L\in \bfB^{-1,-1;0}_G(\rz^{n-1}\times\rpbar;\frakg\frakg^{-1})=
\wtbfB^{-1,-1;0}_G(\rz^{n-1}\times\rpbar;\frakg\frakg^{-1})$ and analogously 
$\bfg_R\in \wtbfB^{-1,-1;0}_G(\rz^{n-1}\times\rpbar;\frakg^{-1}\frakg)$. Noting that 
$\bfg_R^{\#\ell}\in \wtbfB^{-\ell,-\ell;0}_G(\rz^{n-1}\times\rpbar;\frakg^{-1}\frakg)$, by asymptotic summation 
we can construct $\bfg_0\in \wtbfB^{-1,-1;0}_G(\rz^{n-1}\times\rpbar;\frakg^{-1}\frakg)$ such that 
$\bfq_1:=\bfq_0+\bfq_0\#\bfg_0$ satisfies
\begin{align*}
 \bfp\#\bfq_1\equiv 1-\bfg_R,\qquad \bfq_1\#\bfp\equiv 1-\bfg_L, 
\end{align*}
modulo regularizing symbols of type zero where 
 $$\bfg_L\in \wtbfB^{0-\infty,0-\infty;0}_G(\rz^{n-1}\times\rpbar;\frakg\frakg^{-1}),\qquad 
    \bfg_R\in \wtbfB^{0-\infty,0-\infty;0}_G(\rz^{n-1}\times\rpbar;\frakg^{-1}\frakg).$$
Now we apply $\sigma^{[0]}_\infty$ to these identities and resolve for $\bfq_\infty$ (see ellipticity condition $($E$3))$; 
this yields
\begin{align*}
  q_\infty=\sigma_\infty^{[0]}(\bfq_1)
     +\sigma_\infty^{[0]}(\bfq_1)\#\bfg^\infty_{R,[0,0]}
     +\bfg^\infty_{L,[0,0]}\# \bfq_\infty\# \bfg^\infty_{R,[0,0]}.
\end{align*}
Note that $\bfg^\infty_{L/R,[0,0]}=\bfg^\infty_{L/R,[-N,-N]}$ for every $N\in\nz$ and therefore 
 $$\bfg^\infty_{L,[0,0])}\in S^{-\infty}(\rz^{n-1};\Gamma^0_G(\rz_+;\frakg\frakg^{-1})),\qquad 
     \bfg^\infty_{R,[0,0])}\in S^{-\infty}(\rz^{n-1};\Gamma^0_G(\rz_+;\frakg^{-1}\frakg)).$$ 
It follows that 
   $$\bfg'_1:=\sigma_\infty^{[0]}(\bfq_1)\#\bfg^\infty_{R,[0,0]}
      +\bfg^\infty_{L,[0,0]}\# \bfq_\infty\# \bfg^\infty_{R,[0,0]}\in 
      S^{-\infty}(\rz^{n-1};\Gamma^0_G(\rz_+;\frakg^{-1})).$$
Since 
 $$S^{-\infty}(\rz^{n-1};\Gamma^0_G(\rz_+;\frakg^{-1}))\subset 
    \wtbfS^{0-\infty,0-\infty}(\rz^{n-1}\times\rpbar;\Gamma^0_G(\rz_+;\frakg^{-1})),$$
$\bfg_1:=\bfkappa \bfg'_1\bfkappa^{-1}$ lies in $\wtbfB^{0-\infty,0-\infty;0}_G(\rz^{n-1}\times\rpbar;\frakg^{-1})$ 
and has principal limit symbol $\bfg^\infty_{1,[0,0]}=\bfg'_1$. 
Hence $\bfq_2:=\bfq_1+\bfq_1\#\bfg_1$ satisfies
\begin{align*}
 \bfp\#\bfq_2\equiv 1-\bfg_R,\qquad \bfq_2\#\bfp\equiv 1-\bfg_L, 
\end{align*}
modulo regularizing symbols of type zero where 
 $$\bfg_L\in \wtbfB^{0-\infty,0-\infty;0}_G(\rz^{n-1}\times\rpbar;\frakg\frakg^{-1}),\qquad 
    \bfg_R\in \wtbfB^{0-\infty,0-\infty;0}_G(\rz^{n-1}\times\rpbar;\frakg^{-1}\frakg)$$
have both vanishing principal limit symbol, i.e., $\bfg^\infty_{L/R,[0,0]}=0$. By the following 
Proposition \ref{prop:invertibility-one+g} there exists 
$\bfg_2\in \wtbfB^{0-\infty,0-\infty;0}_G(\rz^{n-1}\times\rpbar;\frakg^{-1}\frakg)$ such that 
$\bfq:=\bfq_2+\bfq_2\#\bfg_2$ satisfies \eqref{eq:param-new03}. 

\forget{
\begin{lemma}\label{lem:spectral-inv}
Let $\frakg=((L,M),(L,M))$ and $E:=L^2(\rz_+,\cz^L)\oplus\cz^M$. Then 
$\calA:=\{1-\op(\bfp)\mid \bfp\in S^{-\infty}(\rz^{n-1};\Gamma^0_G(\rz_+;\frakg))\}$ is a spectral-invariant 
subalgebra of $\scrL(L^2(\rz^{n-1},E))$, i.e., 
 $$\calA\cap\scrL(L^2(\rz^{n-1},E))^{-1}=\calA^{-1},$$ 
where $X^{-1}$ denotes the group of invertible elements of $X$. 
\end{lemma}
\begin{proof}
It is known that 
$\wt\calA:=\{\op(\bfp)\mid \bfp(x',\xi')\in \scrC^\infty_b(\rz^{n-1}\times\rz^{n-1};\scrL(E))\}$ 
is a spectral-invariant subalgebra 
of $\scrL(L^2(\rz^{n-1},E))$ $($this is true by (a slight extension of) the classical result of Beals on the characterization 
of pseudodifferential opeartors by mapping properties of certain commutators; for a proof see, for instance, 
\cite[Corollary A.3]{Seil22-2}). Note that $\calA\subset\wt\calA$. Thus, if $1-\op(\bfp)\in\calA$ is invertible in 
$\scrL(L^2(\rz^{n-1},E))$ then $(1-\op(\bfp))^{-1}=\op(\bfq)$ with $\bfq\in \calA)$. 
But then 
 $$(1-\op(\bfp))^{-1}=1+\op(\bfp)+\op(\bfp)(1-\op(\bfp))^{-1}\op(\bfp)
    =1+\op(\bfp+\bfp\#\bfq\#\bfp)$$
shows that $(1-\op(\bfp))^{-1}=1-\op(\wt\bfp)$ with $\wt\bfp\in S^{0}_{1,0}(\rz^{n-1};\Gamma^0_G(\rz_+;\frakg))$. 
For the latter note that 
 $$(p_0,q,p_1)\mapsto p_0qp_1:
     \Gamma^0_G(\rz_+;\frakg)\times\scrL(E)\times\Gamma^0_G(\rz_+;\frakg)\lra 
    \Gamma^0_G(\rz_+;\frakg)$$
is continuous. 
\end{proof}
}

\begin{proposition}\label{prop:invertibility-one+g}
Let $\bfg\in \wtbfB^{0-\infty,0-\infty;0}_G(\rz^{n-1}\times\rpbar;\frakg)$, $\frakg=((L,M),(L,M))$, have vanishing 
principal limit symbol. Then $1+\op(\bfg)(\mu)$ is invertible for large $\mu\in\rpbar$ and there exists an 
$\bfh\in \wtbfB^{0-\infty,0-\infty;0}_G(\rz^{n-1}\times\rpbar;\frakg^{-1})$ such that, for these $\mu$,  
 $$(1-\op(\bfg))(\mu)^{-1}=(1-\op(\bfh))(\mu).$$
\end{proposition}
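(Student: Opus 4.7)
The plan is to reduce to the untwisted picture of Theorem~\ref{thm:conjugation03}, exploit the vanishing of the principal limit symbol to obtain operator-norm decay in $\mu$, invert by an asymptotic Neumann series, and finally absorb the residual smoothing defect via a spectral-invariance result for the strongly parameter-dependent Boutet calculus.

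First I would apply Theorem~\ref{thm:conjugation03}.a) to write $\bfg=\bfkappa\#\bfg'\#\bfkappa^{-1}$ with $\bfg':=\bfkappa^{-1}\#\bfg\#\bfkappa\in \wtbfS^{0-\infty,0-\infty}(\rz^{n-1}\times\rpbar;\Gamma^0_G(\rz_+;\frakg))$; by Corollary~\ref{cor:comp03} the vanishing of the limit symbol transfers to $\bfg'$. Proposition~\ref{prop:smoothing}, applied with $d=\nu=0$, then identifies this class with $S^0(\rpbar;S^{-\infty}(\rz^{n-1};\Gamma^0_G(\rz_+;\frakg)))$. Since the leading homogeneous component in $\mu$ equals the principal limit symbol ($p^{(0)}(\mu)=p^\infty_{[0,0]}$) and thus vanishes, one concludes $\bfg'\in S^{-1}(\rpbar;S^{-\infty}(\rz^{n-1};\Gamma^0_G(\rz_+;\frakg)))$. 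As $\op(\bfkappa)(\mu)$ is unitary on $L^2(\rz^{n-1},E)$ with $E:=L^2(\rz_+,\cz^L)\oplus\cz^M$, it suffices to invert $1-\op(\bfg')(\mu)$, and Calder\'on--Vaillancourt-type $L^2$-boundedness yields $\|\op(\bfg')(\mu)\|_{\scrL(L^2)}\lesssim\spk{\mu}^{-1}$.

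Next, Theorem~\ref{thm:comp-limit} gives $\bfg'^{\#k}\in S^{-k}(\rpbar;S^{-\infty}(\rz^{n-1};\Gamma^0_G(\rz_+;\frakg)))$, so asymptotic summation in the $\mu$-order scale produces $\bfh'_1\sim\sum_{k\ge1}\bfg'^{\#k}$ in $S^{-1}(\rpbar;S^{-\infty}(\rz^{n-1};\Gamma^0_G(\rz_+;\frakg)))$. A telescoping check shows that both $(1-\bfg')\#(1+\bfh'_1)-1$ and $(1+\bfh'_1)\#(1-\bfg')-1$ lie in $\cap_{N\ge0} S^{-N}(\rpbar;S^{-\infty}(\rz^{n-1};\Gamma^0_G(\rz_+;\frakg)))=S^{-\infty}(\rz^{n-1}\times\rpbar;\Gamma^0_G(\rz_+;\frakg))$. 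Re-twisting via Theorem~\ref{thm:conjugation03}.b) produces $\bfh_1:=\bfkappa\#\bfh'_1\#\bfkappa^{-1}\in\wtbfB^{0-\infty,0-\infty;0}_G(\rz^{n-1}\times\rpbar;\frakg)$, and through Theorem~\ref{thm:conjugation02} together with Remark~\ref{rem:smoothing-operators} the two remainders correspond to symbols $\bfr_R,\bfr_L\in B^{-\infty;0}_G$.

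To conclude, since $\bfr_R$ is rapidly decreasing in $\mu$, $1-\op(\bfr_R)(\mu)$ is invertible on $L^2(\rz^{n-1},E)$ for large $\mu$ by a straightforward Neumann argument, and the spectral-invariance statement for the strongly parameter-dependent Boutet smoothing class (precisely the claim alluded to in the commented paragraph of Section~\ref{sec:2.3.2}, to be established by a Beals-type commutator characterization in the spirit of \cite{Seil22-2}) identifies the inverse as $1-\op(\wt\bfr_R)(\mu)$ with $\wt\bfr_R\in B^{-\infty;0}_G$. Handling $\bfr_L$ symmetrically shows that $1-\op(\bfg)(\mu)$ is invertible for $\mu\ge\mu_0$, and setting $\bfh:=-\bfh_1+\wt\bfr_R+\bfh_1\#\wt\bfr_R$ gives $(1-\op(\bfg)(\mu))^{-1}=1-\op(\bfh)(\mu)$, with $\bfh\in\wtbfB^{0-\infty,0-\infty;0}_G$ by Theorem~\ref{thm:comp-limit} and the inclusion $B^{-\infty;0}_G\subset\wtbfB^{0-\infty,0-\infty;0}_G$ from Remark~\ref{rem:smoothing-operators}. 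The principal difficulty will be the spectral-invariance step: although the operator inverse is trivially produced by the Neumann series, extracting it \emph{as a symbol in the calculus} hinges on the Beals-type characterization for generalized singular Green operators with rapid $\mu$-decay, which requires separate verification and is the only ingredient not manifestly supplied by the preceding sections.
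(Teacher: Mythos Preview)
Your proposal is correct and follows essentially the same route as the paper: untwist via Theorem~\ref{thm:conjugation03}, use Proposition~\ref{prop:smoothing} and the vanishing limit symbol to place $\bfg'$ in $S^{-1}(\rpbar;S^{-\infty}(\rz^{n-1};\Gamma^0_G))$, build an asymptotic Neumann parametrix, and absorb the smoothing remainder by a Beals-type spectral-invariance argument. The only organizational difference is that the paper stays in the untwisted picture for the final step and resolves your ``principal difficulty'' explicitly: it inverts $1-\op(\bfr'_L)(\mu)$ by combining the Beals characterization of $\calA=\{\op(\bfp)\mid\bfp\in\scrC^\infty_b(\rz^{n-1}\times\rz^{n-1};\scrL(E))\}$ with the sandwich identity $(1-R)^{-1}=1+R+R(1-R)^{-1}R$ and the continuity of $\Gamma^0_G\times\scrL(E)\times\Gamma^0_G\to\Gamma^0_G$, twisting back only at the very end---so no separate lemma is needed.
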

\begin{proof}
By Theorem \ref{thm:conjugation03} and Corollary \ref{cor:comp03}, 
 $$\bfg':=\bfkappa^{-1}\#\bfg\#\bfkappa
    \in\wtbfS^{0-\infty,0-\infty}(\rz^{n-1}\times\rpbar;\Gamma^0_G(\rz_+,\frakg))$$ 
has vanishing principal limit symbol. By Proposition \ref{prop:smoothing} this is equivalent to 
 $$\bfg'\in S^{-1}(\rpbar;S^{-\infty}(\rz^{n-1};\Gamma^0_G(\rz_+,\frakg))).$$
Then $\bfg'^{\#\ell}\in S^{-\ell}(\rpbar;S^{-\infty}(\rz^{n-1}\times\rpbar;\Gamma^0_G(\rz_+,\frakg)))$ and, 
by asymptotic summation, there exists 
$\bfh'_0\in\wt S^{-1}(\rpbar;S^{-\infty}(\rz^{n-1};\Gamma^0_G(\rz_+,\frakg)))$ such that 
 $$\bfh'_0-\sum_{\ell=1}^{N-1}\bfg'^{\#\ell}\in 
     S^{-1-N}(\rpbar;S^{-\infty}(\rz^{n-1};\Gamma^0_G(\rz_+,\frakg)))$$
for every $N\in\nz_0$. Therefore, 
 $$(1-\bfh_0')\#(1-\bfg')=1-\bfr'_L,\qquad (1-\bfg')\#(1-\bfh_0')=1-\bfr'_R$$
with $\bfr'_L,\bfr'_R \in \wt S^{-\infty}(\rpbar;S^{-\infty}(\rz^{n-1};\Gamma^0_G(\rz_+,\frakg)))$. 

Next we note that by (a slight extension of) a classical result of Beals \cite{Beal77,Beal79} on the characterization 
of pseudodifferential opeartors via mapping properties of certain commutators (for a proof see, for instance, 
\cite[Corollary A.3]{Seil22-2}),  
 $$\calA:=\{\op(\bfp)\mid \bfp(x',\xi')\in \scrC^\infty_b(\rz^{n-1}\times\rz^{n-1};\scrL(E))\}, \quad 
    E:=L^2(\rz_+,\cz^L)\oplus\cz^M,$$ 
is spectral-invariant in $\scrL(L^2(\rz^{n-1},E))$, i.e., an element of $\calA$ has an inverse in $\calA$ if and only 
if it is invertible as an operator in $\scrL(L^2(\rz^{n-1},E))$. Moreover, inversion is a continuous operation in $\calA$. 
From this it follows that there exists a $\bfp'(\mu)\in\scrC^\infty_b(\rpbar;\calA)$ such that 
$\op(\bfp')(\mu)=\chi(\mu)(1-\op(\bfr'_L))(\mu)^{-1}$ for a suitable zero-excision function $\chi$. Then 
\begin{align*}
 (1-\op(\bfr'_L)(\mu)^{-1}&=1+\op(\bfr'_L)(\mu)+\op(\bfr'_L)(\mu)(1-\op(\bfr'_L)(\mu)^{-1}\op(\bfr'_L)(\mu)\\
 &=1+\op(\bfr'_L+\bfr'_L\#\bfp'\#\bfr'_L)(\mu)
\end{align*}
shows the existence of $\bfs'_L\in S^{-\infty}(\rpbar;S^{-\infty}(\rz^{n-1};\Gamma^0_G(\rz_+,\frakg)))$ 
such that $(1-\op(\bfr'_L))(\mu)^{-1}=1+\op(\bfs'_L)(\mu)$ for large $\mu$. 
For the latter note that 
 $$(r_0,p,r_1)\mapsto r_0pr_1:
     \Gamma^0_G(\rz_+;\frakg)\times\scrL(E)\times\Gamma^0_G(\rz_+;\frakg)\lra 
    \Gamma^0_G(\rz_+;\frakg)$$
is continuous. Analogously we find $\bfs'_R$ for $1-\bfr'_R$. 

Thus if $\bfh':=\bfh'_0-\bfs'_L+\bfs'_L\#\bfh'_0$, then $(1-\op(\bfg'))(\mu)^{-1}=1-\op(\bfh')(\mu)$ for large 
enough $\mu$. Then the claim follows with $\bfh:=\bfkappa\#\bfh'\#\bfkappa^{-1}$. 
\end{proof}

Summing up, we have constructed $\bfq$ with \eqref{eq:param-new03} in both cases $\nu\ge1$ and $\nu=0$. 
Then the following lemma completes the proof of Theorem \ref{thm:parametrix-new}. 

\begin{lemma}
Let $\bfr\in B^{-\infty;0}(\rz^{n-1}\times\rpbar;\frakg)$, $\frakg=((L,M),(L,M))$. 
Then $1-\op(\bfr)(\mu)$ is invertible for large $\mu\in\rpbar$ and there exists an 
$\bfs\in B^{-\infty;0}(\rz^{n-1}\times\rpbar;\frakg)$ such that, for these $\mu$,  
 $$(1-\op(\bfr))(\mu)^{-1}=(1-\op(\bfs))(\mu).$$
\end{lemma}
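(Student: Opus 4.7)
My plan is to mimic the proof of Proposition \ref{prop:invertibility-one+g}, but skipping its preliminary asymptotic-summation step, since here $\bfr$ is already fully regularizing in both order and type.

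First, I would exploit the identification $B^{-\infty;0}(\rz^{n-1}\times\rpbar;\frakg)=\scrS(\rz^{n-1}\times\rpbar,\scrC^\infty_b(\rz^{n-1}_{x'};\Gamma^0_G(\rz_+,\frakg)))$. With $E:=L^2(\rz_+,\cz^L)\oplus\cz^M$, this Schwartz decay implies that $\|\op(\bfr)(\mu)\|_{\scrL(L^2(\rz^{n-1},E))}$ tends to $0$ faster than any polynomial as $\mu\to+\infty$. Consequently, there exists $\mu_0$ with $\|\op(\bfr)(\mu)\|\le 1/2$ for $\mu\ge\mu_0$, so that $1-\op(\bfr)(\mu)$ is invertible in $\scrL(L^2(\rz^{n-1},E))$ on $[\mu_0,+\infty)$.

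Second, I would invoke the Beals-type spectral-invariance result cited in the proof of Proposition \ref{prop:invertibility-one+g} (namely \cite[Corollary A.3]{Seil22-2}): the algebra $\calA:=\{\op(\bfp)\mid \bfp(x',\xi')\in \scrC^\infty_b(\rz^{n-1}\times\rz^{n-1};\scrL(E))\}$ is spectrally invariant in $\scrL(L^2(\rz^{n-1},E))$, and inversion is continuous within $\calA$. The smooth $\mu$-dependence $\mu\mapsto 1-\op(\bfr)(\mu)$ with values in $\calA$ then produces $\bfp'\in\scrC^\infty_b(\rpbar;\scrC^\infty_b(\rz^{n-1}\times\rz^{n-1};\scrL(E)))$ with $\op(\bfp')(\mu)=\chi(\mu)(1-\op(\bfr)(\mu))^{-1}$ for a zero-excision function $\chi$ equal to $1$ on $[2\mu_0,+\infty)$. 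Inserting $\op(\bfp')(\mu)$ into the algebraic identity
\begin{equation*}
(1-\op(\bfr)(\mu))^{-1}=1+\op(\bfr)(\mu)+\op(\bfr)(\mu)\,(1-\op(\bfr)(\mu))^{-1}\,\op(\bfr)(\mu),
\end{equation*}
valid whenever the left-hand side exists, yields for $\mu\ge 2\mu_0$ the formula $(1-\op(\bfr)(\mu))^{-1}=1+\op(\bfr+\bfr\#\bfp'\#\bfr)(\mu)$. I would then set $\bfs:=-\bfr-\bfr\#\bfp'\#\bfr$.

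The only non-trivial point (and the main obstacle) is to verify that the sandwich $\bfr\#\bfp'\#\bfr$ lies in $B^{-\infty;0}(\rz^{n-1}\times\rpbar;\frakg)$. This reduces to the continuity of the trilinear composition map $B^{-\infty;0}\times\scrC^\infty_b(\rpbar;\calA)\times B^{-\infty;0}\to B^{-\infty;0}$, which is the direct analogue of the continuity statement $(r_0,p,r_1)\mapsto r_0pr_1\colon \Gamma^0_G(\rz_+;\frakg)\times\scrL(E)\times\Gamma^0_G(\rz_+;\frakg)\to \Gamma^0_G(\rz_+;\frakg)$ used near the end of the proof of Proposition \ref{prop:invertibility-one+g}. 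It rests on two facts: first, $\Gamma^0_G(\rz_+;\frakg)$ is a two-sided ideal under composition against $\scrL(E)$, so the operator-valued sandwich takes values in $\Gamma^0_G$; second, the Schwartz decay of the outer factors $\bfr$ in $(\xi',\mu)$ is preserved under Leibniz product with the middle factor of bounded growth, exactly by the usual oscillatory-integral estimates underlying Theorem \ref{thm:Leibniz01}. This furnishes $\bfs\in B^{-\infty;0}(\rz^{n-1}\times\rpbar;\frakg)$ with $(1-\op(\bfr)(\mu))^{-1}=1-\op(\bfs)(\mu)$ for $\mu\ge 2\mu_0$, concluding the proof.
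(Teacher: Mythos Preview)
Your argument has a genuine gap: the identification
\[
B^{-\infty;0}(\rz^{n-1}\times\rpbar;\frakg)=\scrS\big(\rz^{n-1}\times\rpbar,\scrC^\infty_b(\rz^{n-1}_{x'};\Gamma^0_G(\rz_+,\frakg))\big)
\]
is \emph{false}. This identity holds only for the Green part $B^{-\infty;0}_G$ (see Remark \ref{rem:smoothing-operators}), not for the full class $B^{-\infty;0}$. By Definition \ref{def:bdry-symbol01}, a regularizing boundary symbol has the form $\bfr=\begin{pmatrix}\op^+(a)&0\\0&0\end{pmatrix}+\bfg$ with $a\in S^{-\infty}(\rz^n\times\rpbar;\scrL(\cz^L))$ and $\bfg\in B^{-\infty;0}_G$. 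The operator $\op^+(a)(x',\xi',\mu)\in\scrL(L^2(\rz_+,\cz^L))$ has integral kernel $\int e^{i(x_n-y_n)\xi_n}a(x',x_n,\xi',\xi_n,\mu)\,\dbar\xi_n$, which decays rapidly in $|x_n-y_n|$ but \emph{not} in $x_n$ or $y_n$ separately; hence it is not in $\Gamma^0_G(\rz_+;\frakg)$.

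This breaks the sandwich step: the continuity $\Gamma^0_G\times\scrL(E)\times\Gamma^0_G\to\Gamma^0_G$ you invoke does not apply, because the outer factors $\bfr(x',\xi',\mu)$ do not take values in $\Gamma^0_G$. With a generic middle factor $\bfp'$ (merely in $\scrC^\infty_b(\rz^{n-1}\times\rz^{n-1};\scrL(E))$, not in the Boutet de Monvel class), there is no reason why $\op^+(a)\#\bfp'\#\op^+(a)$ should again be of the form $\op^+(b)+g$. The paper's proof is designed precisely to avoid this: it first peels off the pseudodifferential part by finding $b\in S^{-\infty}$ with $(1-\op(a))^{-1}=1-\op(b)$ on $\rz^n$, observes that $\op^+(a\#b)-\op^+(a)\#\op^+(b)\in B^{-\infty;0}_G$, and thereby reduces to inverting $1+\bfg$ with $\bfg\in B^{-\infty;0}_G$. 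Only at that stage does your Proposition \ref{prop:invertibility-one+g}-style argument apply. Your approach is correct for $\bfr\in B^{-\infty;0}_G$, but the lemma as stated requires this additional reduction step.
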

\begin{proof}
If $\bfr=\bfg\in B^{-\infty;0}_G(\rz^{n-1}\times\rpbar;\frakg)$ the proof is a simpler version of that of 
Proposition \ref{prop:invertibility-one+g}; we leave the details to the reader. 

Let $\op^+(a)(\mu)$ be the pseudodifferential part of $\bfr$, where $a\in S^{-\infty}(\rz^n\times\rpbar;\scrL(\cz^L))$. 
Then there exists a $b\in S^{-\infty}(\rz^n\times\rpbar;\scrL(\cz^L))$ such that 
 $$(1-\op(a)(\mu))^{-1}=(1-\op(b)(\mu)$$
for large enough $\mu$ as pseudodifferential operators on $\rz^n$. But then
\begin{align*}
 1-\op^+(1-a)\#\op^+(1-b)
 &=\op^+((1-a)\#(1-b))-\op^+(1-a)\#\op^+(1-b)\\
 &=\op^+(a\#b)-\op^+(a)\#\op^+(b)=:g
\end{align*}
with $g \in B^{-\infty;0}_G(\rz^{n-1}\times\rpbar;(L,0),(L,0))$. This yields 
 $$(1-\bfr)\#\begin{pmatrix}\op^+(1-b)&0\\0&1\end{pmatrix}=1+\bfg$$
with $\bfg\in B^{-\infty;0}_G(\rz^{n-1}\times\rpbar;\frakg)$. Now we are in the situation from the beginning 
of the proof. 
\end{proof}

\section{Operators of Toeplitz type}\label{sec:6}

\subsection{An abstract framework for parameter-dependent operators}\label{sec:6.1}

In this section we present some results of \cite{Seil12} that we shall apply below to boundary value problems. 
For convenience of the reader we adapt the formalism to better fit with the current paper and include a 
self-contained proof of Theorem \ref{thm:param-toeplitz}.  

We begin with a number of definitions. Let 
 $$\frakG=\{\frakg=(g_0,g_1)\mid g_0,g_1\in G\}$$
be a set of ``weight-data''. The inverse weight-datum of $\frakg=(g_0,g_1)$ is $\frakg^{-1}=(g_1,g_0)$, 
for two weight-data $\frakg_j=(g_j,g_{j+1})$, $j=0,1$, let $\frakg_1\frakg_0=(g_0,g_2)$. 
Assume that with every $g\in G$ is associated a Hilbert space $H(g)$ and with  
every $\frakg=(g_0,g_1)\in\frakG$  there are associated two vector-spaces of parameter-dependent operators 
$L^0(\rpbar;\frakg)$ and $L^{-\infty}(\rpbar;\frakg)$, the first containing the second, such that:
\begin{itemize}
 \item[$(1)$] If $A\in L^0(\rpbar;\frakg)$ then 
  $A(\mu)\in\scrL(H(g_0),H(g_1))$ for every $\mu\in\rpbar$, 
 \item[$(2)$] Taking ($\mu$-wise) the adjoint of operators induces maps 
  $$L^{d}(\rpbar;\frakg)\lra L^{d}(\rpbar;\frakg^{-1}),\qquad d\in\{0,-\infty\},$$
 \item[$(3)$] ($\mu$-wise) composition of operators induces maps 
  $$L^{d_1}(\rpbar;\frakg_1)\times L^{d_0}(\rpbar;\frakg_0)\lra L^{d_0+d_1}(\rpbar;\frakg_1\frakg_0)$$
  whenever $\frakg_1\frakg_0$ is defined and $d_0,d_1\in\{0,-\infty\}$. 
\end{itemize}

Writing simply $1$ for the identity map in each $H(g)$, we require that $1\in L^{0}(\rpbar;(g,g))$ for every 
$g\in G$. 

\begin{definition}
A \emph{parametrix} of $A\in L^0(\rpbar;\frakg)$ is any $B\in L^0(\rpbar;\frakg^{-1})$ satisfying 
 $$1-AB \in L^{-\infty}(\rpbar;\frakg\frakg^{-1}),\qquad 
     1-BA \in L^{-\infty}(\rpbar;\frakg^{-1}\frakg).$$
\end{definition}

We assume the existence of a principal symbol map, i.e., with every 
$A\in L^0(\rpbar;\frakg)$ with arbitrary weight $\frakg=(g_0,g_1)$ is associated a tuple 
 $$\sigma(A)=(\sigma_1(A),\ldots,\sigma_N(A))\qquad \text{($N$ some fixed integer)}$$
consisting of bundle homomorphisms 
$\sigma_k(A):E_k(g_0)\lra E_k(g_1)$, 
where $E_k(g)$ represents a Hilbert space bundle (finite or infinite-dimensional) associated with $g\in G$.   
We require that the principal symbols vanish for operators of order $-\infty$, that $\sigma_k(1)=1$ for every k, and    
\begin{itemize}
 \item[$(4)$] each principal symbol $\sigma_k$ is compatible with the addition, composition, and adjoint, i.e. 
  $\sigma_k(A+B)=\sigma_k(A)+\sigma_k(B)$ whenever $A+B$ is defined, 
  $\sigma_k(AB)=\sigma_k(A)\sigma_k(B)$ whenever $AB$ is defined, and 
  $\sigma_k(A^*)=\sigma_k(A)^*$ for every $A$.
 \item[$(5)$] $A\in L^0(\rpbar;\frakg)$ possesses a parametrix if and only if $\sigma_k(A)$ is a 
  bundle isomorphism for every $k=1,\ldots,N$.
\end{itemize}

\subsubsection{Ellipticity and parametrices} \label{sec:6.1.1}

We now consider \emph{Toeplitz type operators} associated with two projections. 
Given $\frakg=(g_0,g_1)$ and two projections $\Pi_j\in L^0(\rpbar;\frakg_j)$, $\frakg=(g_j,g_j)$, $j=0,1$, 
let us set, for $d\in\{0,-\infty\}$, 
\begin{align*}
 L^d(\rpbar;\frakg,\Pi_0,\Pi_1)
 =\big\{A\in L^d(\rpbar;\frakg)\mid     A(1-\Pi_0)=(1-\Pi_1)A=0\big\}
\end{align*}

\begin{definition}\label{def:param-toeplitz}
Given $A\in L^0(\rpbar;\frakg,\Pi_0,\Pi_1)$, any $B\in L^0(\rpbar;\frakg^{-1},\Pi_1,\Pi_0)$ 
satisfying 
\begin{align*}
 \Pi_1-AB &\in L^{-\infty}(\rpbar;\frakg\frakg^{-1},\Pi_1,\Pi_1),\\
 \Pi_0-BA &\in L^{-\infty}(\rpbar;\frakg^{-1}\frakg,\Pi_0,\Pi_0)
\end{align*}
is called a \emph{parametrix} of $A$. 
\end{definition}
    
The following theorem shows how ellipticity ``descents'' to the class of Toeplitz type operators: 

\begin{theorem}\label{thm:param-toeplitz}
For $A\in L^0(\rpbar;\frakg,\Pi_0,\Pi_1)$, $\frakg=(g_0,g_1)$, the following are equivalent: 
\begin{itemize}
 \item[a$)$] $A$ possesses a parametrix in the sense of Definition $\ref{def:param-toeplitz}$. 
 \item[b$)$] $A$ is \emph{elliptic}, i.e., for every $k=1,\ldots,N$ the following map is bijective$:$ 
   $$\sigma_k(A):\sigma_k(\Pi_0)\big(E_k(g_0)\big)\lra \sigma_k(\Pi_1)\big(E_k(g_1)\big).$$
\end{itemize}
\end{theorem}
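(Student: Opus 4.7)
The direction (a)$\Rightarrow$(b) is immediate. Applying each $\sigma_k$ to the two parametrix identities from Definition \ref{def:param-toeplitz} and invoking axiom (4) together with the vanishing of $\sigma_k$ on $L^{-\infty}$-operators gives $\sigma_k(A)\sigma_k(B)=\sigma_k(\Pi_1)$ and $\sigma_k(B)\sigma_k(A)=\sigma_k(\Pi_0)$. Combined with the Toeplitz identities $\sigma_k(A)=\sigma_k(\Pi_1)\sigma_k(A)\sigma_k(\Pi_0)$ and the analogous one for $B$, this expresses exactly that $\sigma_k(B)$ restricts to a two-sided inverse of $\sigma_k(A)$ between the projected subspaces.

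For the non-trivial direction (b)$\Rightarrow$(a), my strategy is the classical ``Moore--Penrose'' trick of turning a partial isomorphism into a full one by squaring and adding the complementary projection. Concretely, I consider
$$P \;:=\; A^*A + (1-\Pi_0)^*(1-\Pi_0) \;\in\; L^0(\rpbar;(g_0,g_0)).$$
Using $A=\Pi_1 A\Pi_0$, the principal symbol $\sigma_k(P)$ decomposes with respect to $E_k(g_0)=\mathrm{ran}(\sigma_k(\Pi_0))\oplus\ker(\sigma_k(\Pi_0))$: it acts as $\sigma_k(A)^*\sigma_k(A)$ on the first summand, invertible by hypothesis (b), and as the non-degenerate positive form $(1-\sigma_k(\Pi_0))^*(1-\sigma_k(\Pi_0))$ on the second. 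Hence $\sigma_k(P)$ is bijective on all of $E_k(g_0)$ for each $k$, and axiom (5) yields an ambient parametrix $Q\in L^0(\rpbar;(g_0,g_0))$. I then set
$$B \;:=\; \Pi_0\,Q\,A^*\,\Pi_1 \;\in\; L^0(\rpbar;\frakg^{-1},\Pi_1,\Pi_0),$$
for which the Toeplitz constraints hold by construction.

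The verification that $B$ is a parametrix splits into two steps. First, using only $A=\Pi_1 A\Pi_0$ and $B=\Pi_0 B\Pi_1$ one checks directly that $BA-\Pi_0$ and $AB-\Pi_1$ automatically satisfy the Toeplitz conditions of $L^0(\frakg^{-1}\frakg,\Pi_0,\Pi_0)$ and $L^0(\frakg\frakg^{-1},\Pi_1,\Pi_1)$, respectively. Second, the principal-symbol identities $\sigma_k(BA)=\sigma_k(\Pi_0)$ and $\sigma_k(AB)=\sigma_k(\Pi_1)$ follow by substituting $A^*A=P-(1-\Pi_0)^*(1-\Pi_0)$ and exploiting the block-diagonal form of $\sigma_k(Q)=\sigma_k(P)^{-1}$; for the $AB$ side the key is the algebraic identity $T(T^*T)^{-1}T^*=\mathrm{id}$ applied to the restricted bijection $T:=\sigma_k(A)|_{\mathrm{ran}(\sigma_k(\Pi_0))}$.

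The main obstacle is the final promotion from ``vanishing principal symbols'' to ``membership in $L^{-\infty}$'' within the Toeplitz classes, since axiom (5) in its stated form supplies only leading-order invertibility. The standard resolution is to iterate the above construction order by order on the symbolic level and asymptotically sum the corrections, absorbing them into $B$; this uses the asymptotic completeness of the calculus, which is a feature of each concrete calculus to which the abstract framework is applied. A secondary technical point, to be addressed in the write-up, is the case of non-self-adjoint projections, which can be handled either by assuming the $\Pi_j$ are orthogonal (as they are in the applications of the present paper) or by working with a suitably symmetrised replacement for $(1-\Pi_0)^*(1-\Pi_0)$.
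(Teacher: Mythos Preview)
You misread axiom~(5): it supplies a \emph{full} parametrix, not merely leading-order invertibility. Once each $\sigma_k(P)$ is bijective, (5) gives $Q$ with $QP-1\in L^{-\infty}$, and then the operator identity $A^*A\Pi_0=P\Pi_0$ (from $(1-\Pi_0)\Pi_0=0$) yields $BA=\Pi_0QA^*A\Pi_0=\Pi_0QP\Pi_0\equiv\Pi_0$ directly modulo $L^{-\infty}$, with no symbolic detour and no asymptotic summation---indeed the two-level framework $L^0\supset L^{-\infty}$ has no intermediate filtration to sum over, and the axioms do \emph{not} include ``$\sigma_k(C)=0$ for all $k$ implies $C\in L^{-\infty}$''. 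This last missing implication is where your actual gap lies: your $B$ is only a \emph{left} parametrix, and your plan for the right side (verify $\sigma_k(AB)=\sigma_k(\Pi_1)$ via $T(T^*T)^{-1}T^*=\mathrm{id}$, then ``promote'') cannot be completed within the abstract framework. The paper's remedy is to pass to the adjoint: one shows (Step~3) that bijectivity of $a:\mathrm{im}\,\pi_0\to\mathrm{im}\,\pi_1$ implies bijectivity of $a^*:\mathrm{im}\,\pi_1^*\to\mathrm{im}\,\pi_0^*$, so $A^*\in L^0(\rpbar;\frakg^{-1},\Pi_1^*,\Pi_0^*)$ is elliptic; the same construction applied to $A^*$ and then adjointed gives a right parametrix $B_R$, and $B\equiv B(AB_R)=(BA)B_R\equiv B_R$ makes both two-sided.

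On your ``secondary technical point'': the paper handles non-self-adjoint projections without modifying the construction and without assuming orthogonality (and the projections in the applications of Section~7.2 are \emph{not} assumed orthogonal). Your block-diagonal claim for $\sigma_k(P)$ indeed fails in general, but invertibility still holds by a short Fredholm argument: writing $tx=(ax,(1-\pi_0)x)$ one checks that $t:H_0\to H_1\oplus H_0$ is injective with closed range $\mathrm{im}\,\pi_1\oplus\mathrm{im}(1-\pi_0)$, hence $t^*t=a^*a+(1-\pi_0)^*(1-\pi_0)$ is a self-adjoint injective Fredholm operator and therefore an isomorphism.
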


Since, by assumption, $A=\Pi_1A\Pi_0$ and every principal symbol is multiplicative under
composition, $\sigma_k(A)=\sigma_k(\Pi_1)\sigma_k(A)\sigma_k(\Pi_0)$, i.e., 
the requirement in b$)$ is (only) the bijectivity. 
    
\begin{proof}   
It is clear that a$)$ implies b$)$. So let us start out from b$)$. 

\textbf{Step 1:} Let $a\in\scrL(H_0,H_1)$ and let $\pi_j\in\scrL(H_j)$ be projections such that 
$a(1-\pi_0)=(1-\pi_1)a=0$ and $a:\mathrm{im}\,\pi_0\to\mathrm{im}\,\pi_1$ bijectively. 
Define $a_0=a^*a+(1-\pi_0)^*)(1-\pi_0)$. Then $a_0\in\scrL(H_0)$ is invertible. 
In fact, define $t\in\scrL(H_0,H_1\oplus H_0)$ by $tx=(ax,(1-\pi_0)x)$. Then $t$ is injective and, 
since $t(\pi_0x+(1-\pi_0)y)=(a\pi_0x,(1-\pi_0)y)$ for all $x,y\in H_0$, 
$\mathrm{im}\,t=\mathrm{im}\,\pi_1\oplus\mathrm{im}\,(1-\pi_0)$ is closed in $H_1\oplus H_0$. 
Hence $t$ is upper semi-fredholm and therefore $a_0=t^*t$ is a self-adjoint, injective Fredholm operator, i.e., 
an isomorphism. 

\textbf{Step 2:} Let $\frakg_0=\frakg^{-1}\frakg$; then
 $$A_0(\mu):=A(\mu)^*A(\mu)+(1-\Pi_0)(\mu)^*(1-\Pi_0)(\mu)\in L^0(\rpbar;\frakg_0)$$
and 
 $$\sigma_k(A_0)=\sigma_k(A)^*\sigma_k(A)+\sigma_k(1-\Pi_0)^*\sigma_k(1-\Pi_0).$$ 
Applying the first step in each fibre of $E_k(g_0)$ shows that $\sigma_k(A_0):E_k(g_0)\to E_k(g_0)$ 
is an isomorphism. Hence $A_0$ admits a parametrix 
$B_0\in L^0(\rpbar;\frakg_0)$. Then $B_L:=\Pi_0B_0A^*\Pi_1$ belongs to 
$L^0(\rpbar;\frakg^{-1},\Pi_1,\Pi_0)$ and 
\begin{align*}
 B_L(\mu)A(\mu)=\Pi_0(\mu)B_0(\mu)A(\mu)^*A(\mu)\Pi_0(\mu)
 =\Pi_0(\mu)B_0(\mu)A_0(\mu)\Pi_0(\mu)\equiv \Pi_0(\mu)
\end{align*}
modulo $L^{-\infty}(\rpbar;\frakg_0,\Pi_0,\Pi_0)$. Hence $B_L$ is a parametrix from the left. 
    
\textbf{Step 3:} Let the notation be as in Step $1$. Then $a^*:\mathrm{im}\,\pi_1^*\to \mathrm{im}\,\pi_0^*$ is 
bijective. In fact, consider $\mathrm{im}\,\pi_j$ as Hilbert spaces (with the inner product induced from $H_j)$ and 
denote by $\wh a$ the map $a:\mathrm{im}\,\pi_0\to\mathrm{im}\,\pi_1$, by $\wh\pi_1$ the map 
$\pi_1:H_1\to\mathrm{im}\,\pi_1$, and by $\iota_0$ the inclusion map $\mathrm{im}\,\pi_0\to H_0$. 
Let $b=\iota_0\wh{a}^{-1}\wh\pi_1:H_1\to H_0$. Then 
$ba$ vanishes on $\mathrm{ker}\,\pi_0$ and on $\mathrm{im}\,\pi_0$ it coincides with 
$\iota_0\wh{a}^{-1}\wh a=\iota_0$. Thus $ba=\pi_0$. Similarly, $ab=\pi_1$. 
Hence $a^*b^*=\pi_0^*$ and $b^*a^*=\pi_1^*$. 
It follows that $b^*:\mathrm{im}\,\pi_0^*\to \mathrm{im}\,\pi_1^*$ 
is the inverse of $a^*:\mathrm{im}\,\pi_1^*\to \mathrm{im}\,\pi_0^*$. 

\textbf{Step 4:} Applying Step 3 in each fibre to $\sigma_k(A)^*=\sigma_k(A^*)$ we see that $A^*$ 
is an elliptic element of $L^0(\rpbar;\frakg^{-1},\Pi_1^*,\Pi_0^*)$. By Step 2, $A^*$ has a paramterix from the 
left. Taking the adjoint yields $B_R\in L^0(\rpbar;\frakg,\Pi_0,\Pi_1)$ which is a parametrix for $A$ 
from the right, i.e. $AB_R\equiv \Pi_1$ modulo a remainder of order $-\infty$. But then 
$B_L\equiv B_L(AB_R)=(B_LA)B_R\equiv B_R$ shows that both $B_L$ and $B_R$ are (two-sided) parametrices for $A$. 
\end{proof}   

\subsubsection{Invertibility for large values of the parameter}\label{sec:6.1.2}

In order to obtain parametrices that are inverses (in the sense explained below) for large values of $\mu$ 
we need to require:  
\begin{itemize}
 \item[$(6)$] Whenever $R\in L^{-\infty}(\rpbar;\frakg)$ with $\frakg=(g,g)$, there exists a 
  $\mu_0\ge0$ and an $S\in L^{-\infty}(\rpbar;\frakg)$ such that, for all $\mu\ge\mu_0$, 
   $$(1-R(\mu))(1-S(\mu))=(1-S(\mu))(1-R(\mu))=1.$$
\end{itemize}

\begin{lemma}
Let $R\in L^{-\infty}(\rpbar;\frakg,\Pi,\Pi)$, $\frakg=(g,g)$. Then there exists an 
$S\in L^{-\infty}(\rpbar;\frakg,\Pi,\Pi)$ such that 
   $$(\Pi(\mu)-R(\mu))(\Pi(\mu)-S(\mu))=(\Pi(\mu)-S(\mu))(\Pi(\mu)-R(\mu))=\Pi(\mu)$$
for all sufficiently large $\mu$. 
\end{lemma}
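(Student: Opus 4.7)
The plan is to apply axiom (6) from the abstract framework directly to $R$ (viewed without the Toeplitz constraints), and then argue that the resulting inverse automatically lies in $L^{-\infty}(\rpbar;\frakg,\Pi,\Pi)$. The algebra that makes this work is driven entirely by the fact that the Toeplitz hypothesis $R(1-\Pi)=(1-\Pi)R=0$ is equivalent to $\Pi R=R=R\Pi$, i.e.\ $\Pi$ commutes with $1-R$.

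First I would invoke axiom (6) on $R\in L^{-\infty}(\rpbar;\frakg)$. This produces $\mu_0\ge 0$ and a symbol $S\in L^{-\infty}(\rpbar;\frakg)$ such that $(1-R(\mu))(1-S(\mu))=(1-S(\mu))(1-R(\mu))=1$ for every $\mu\ge\mu_0$. The only substantive work is to show that this $S$ already satisfies $S=\Pi S=S\Pi$. Using $R\Pi=R=\Pi R$ we compute $(1-R)(1-\Pi)=1-\Pi-R+R\Pi=1-\Pi$, and symmetrically $(1-\Pi)(1-R)=1-\Pi$. Multiplying $(1-R)(1-S)=1$ on the right by $1-\Pi$ gives $1-\Pi=(1-S)(1-\Pi)=1-\Pi-S+S\Pi$, hence $S=S\Pi$; multiplying $(1-S)(1-R)=1$ on the left by $1-\Pi$ gives $S=\Pi S$ in the same way. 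Therefore $S=\Pi S\Pi\in L^{-\infty}(\rpbar;\frakg,\Pi,\Pi)$.

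Finally I would verify the Toeplitz inversion identities by a direct expansion. Using $\Pi^2=\Pi$, $R\Pi=R$, and $\Pi S=S$,
\[
(\Pi-R)(\Pi-S)=\Pi^2-\Pi S-R\Pi+RS=\Pi-S-R+RS,
\]
and the equation $(1-R)(1-S)=1$ is equivalent to $RS=R+S$, so the right-hand side collapses to $\Pi$. The same computation with factors reversed, combined with $(1-S)(1-R)=1$, yields $(\Pi-S)(\Pi-R)=\Pi$.

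There is no genuine obstacle here: the lemma is a purely formal consequence of axiom (6) together with the commutation $\Pi(1-R)=(1-R)\Pi$. The only point that requires a moment's thought is that the Toeplitz property is inherited automatically by the inverse; everything else is a one-line algebraic manipulation.
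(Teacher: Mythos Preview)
Your argument is correct and follows essentially the same route as the paper: apply axiom $(6)$ to $R$, then relate the resulting $S$ to the projection $\Pi$. The paper simply sets $S:=\Pi\wt S\Pi$ and checks the identities by sandwiching $(1-R)(1-\wt S)=1$ between $\Pi$'s; you go one step further and observe that the $\wt S$ produced by $(6)$ already satisfies $\Pi\wt S=\wt S=\wt S\Pi$, so the sandwiching is redundant. One small wording issue: when you write ``multiplying $(1-R)(1-S)=1$ on the right by $1-\Pi$ gives $1-\Pi=(1-S)(1-\Pi)$'', the conclusion actually uses the invertibility of $1-R$ (or equivalently the companion identity $(1-S)(1-R)=1$ together with $(1-R)(1-\Pi)=1-\Pi$); this is implicit but worth making explicit.
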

\begin{proof}
First, by $(6)$, there exists an $\wt S\in L^{-\infty}(\rpbar;\frakg)$ such that 
$(1-R(\mu))(1-\wt S(\mu))=(1-\wt S(\mu))(1-R(\mu))=1$ for large $\mu$. Multiplying with $\Pi(\mu)$ 
from the left and the right, we see that $S:=\Pi\wt S\Pi$ is as desired. 
\end{proof}

As an immediate consequence of this lemma and Theorem \ref{thm:param-toeplitz} we get: 

\begin{theorem}\label{thm:inverse-toeplitz}
Assume $(6)$. Then for every elliptic $A\in L^0(\rpbar;\frakg,\Pi_0,\Pi_1)$
exists a parametrix $B\in L^0(\rpbar;\frakg^{-1},\Pi_1,\Pi_0)$ such that,  
for all sufficiently large $\mu$, 
\begin{align*}
 A(\mu)B(\mu)=\Pi_1(\mu),\qquad  B(\mu)A(\mu)=\Pi_0(\mu)
\end{align*}
\end{theorem}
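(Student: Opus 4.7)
The plan is to combine Theorem \ref{thm:param-toeplitz} with the lemma immediately preceding the statement, using a standard ``left-parametrix meets right-parametrix'' argument to upgrade from a parametrix modulo smoothing remainders to a genuine two-sided inverse for large $\mu$.

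First, by Theorem \ref{thm:param-toeplitz} the elliptic operator $A$ admits a parametrix $B_0\in L^0(\rpbar;\frakg^{-1},\Pi_1,\Pi_0)$; set
\[
  R_0:=\Pi_0-B_0A\in L^{-\infty}(\rpbar;\frakg^{-1}\frakg,\Pi_0,\Pi_0),\qquad
  R_1:=\Pi_1-AB_0\in L^{-\infty}(\rpbar;\frakg\frakg^{-1},\Pi_1,\Pi_1).
\]
Applying the lemma once to each of $R_0,R_1$ yields $S_0\in L^{-\infty}(\rpbar;\frakg^{-1}\frakg,\Pi_0,\Pi_0)$ and $S_1\in L^{-\infty}(\rpbar;\frakg\frakg^{-1},\Pi_1,\Pi_1)$ such that, for all sufficiently large $\mu$,
\[
  (\Pi_0-S_0)(\Pi_0-R_0)=\Pi_0,\qquad (\Pi_1-R_1)(\Pi_1-S_1)=\Pi_1.
\]

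Define the candidate left- and right-inverses
\[
  B_L:=(\Pi_0-S_0)B_0,\qquad B_R:=B_0(\Pi_1-S_1).
\]
Both lie in $L^0(\rpbar;\frakg^{-1},\Pi_1,\Pi_0)$; indeed the Toeplitz conditions $(1-\Pi_0)B_L=0$ and $B_L(1-\Pi_1)=0$ follow from $\Pi_0 S_0=S_0=S_0\Pi_0$ and $B_0(1-\Pi_1)=0$, and analogously for $B_R$. Using $B_0\Pi_0=B_0$ and $\Pi_1 B_0=B_0$ one computes, for large $\mu$,
\[
  B_L A=(\Pi_0-S_0)B_0 A=(\Pi_0-S_0)(\Pi_0-R_0)=\Pi_0,\qquad
  A B_R=(\Pi_1-R_1)(\Pi_1-S_1)=\Pi_1.
\]

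Finally, since $B_L=B_L\Pi_1$ and $\Pi_0 B_R=B_R$ (the Toeplitz relations just verified), for large $\mu$
\[
  B_L=B_L\Pi_1=B_L(AB_R)=(B_L A)B_R=\Pi_0 B_R=B_R.
\]
Hence $B:=B_L\in L^0(\rpbar;\frakg^{-1},\Pi_1,\Pi_0)$ satisfies $BA=\Pi_0$ and $AB=\Pi_1$ for all sufficiently large $\mu$, and is in particular still a parametrix in the sense of Definition \ref{def:param-toeplitz}. The only subtle point, and the one I would double-check carefully, is that $S_0$ and $S_1$ genuinely land in the Toeplitz subclasses (not merely in $L^{-\infty}(\rpbar;\frakg^{-1}\frakg)$ resp.\ $L^{-\infty}(\rpbar;\frakg\frakg^{-1})$), which is exactly the content of the preceding lemma; everything else is a formal manipulation.
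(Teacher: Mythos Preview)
Your proof is correct and is precisely the argument the paper has in mind: the paper does not give a detailed proof but simply states the theorem is ``an immediate consequence of this lemma and Theorem~\ref{thm:param-toeplitz},'' and your write-up spells out exactly that combination via the standard left/right parametrix trick. Your flagged concern about $S_0,S_1$ landing in the Toeplitz subclasses is indeed the sole nontrivial point, and it is exactly what the preceding lemma guarantees.
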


Obviously, using the notation of Theorem \ref{thm:inverse-toeplitz}, the map 
 $$A(\mu):\Pi_0(\mu)\big(H(g_0)\big)\lra \Pi_1(\mu)\big(H(g_1)\big)$$
is an isomorphism whenever $\mu$ is sufficiently large. The inverse is induced by $B(\mu)$. 

\section{Applications to boundary value problems}\label{sec:7}

We will apply the above results on Toeplitz type operators to boundary value problems

\subsection{Reformulation of the ellipticity conditions}\label{sec:7.1}

In the abstract setting of Section \ref{sec:6}, ellipticity was required to be characterized by the 
$($sole$)$ invertibility of certain principal symbols which were assumed to be bundle homomorphisms in certain 
Hilbert space bundles. 
Note that condition $($E$3)$ is of such a kind while, at a first glance, conditions $($E$1)$ and $($E$2)$ are not, since 
they contain estimates on the inverse (which are necessary in view of the non-compactness of $\rz^{n}_+)$. 
However, there is a simple way to reformulate these conditions appropriately, based on the following observation. 
For $a\in\scrC^\infty_b(\Omega,\scrL(E,\wt E))$ the following are equivalent: 
\begin{itemize}
 \item[$(1)$] $a(x)$ is invertible for every $x\in\Omega$ and $\|a(x)^{-1}\|_{\scrL(\wt E,E)}\lesssim 1$ uniformly 
   in $x\in\Omega$. 
 \item[$(2)$] $a(x)$ is invertible for every $x\in\Omega$ and 
  $x\mapsto a(x)^{-1} \in \scrC^\infty_b(\Omega,\scrL(\wt E,E))$. 
 \item[$(3)$] The operator of multiplication $f\mapsto af$ is invertible in 
  $\scrL(L^2(\Omega,E),L^2(\Omega,\wt E))$.
\end{itemize}
Let us take this point of view to reformulate ellipticity condition $($E$1)$. We consider $\sigma^{(d)}_\psi(\bfp)$ 
as a bundle homomorphism 
\begin{align}\label{eq:E1-new}
 \sigma^{(d)}_\psi(\bfp):\sz^{n}_+\times L^2(\rz^n_+,\cz^L)\lra \sz^{n}_+\times L^2(\rz^n_+,\cz^L)
\end{align}
which acts in the fibre over $(\xi,\mu)$ as operator of multiplication by $\sigma^{(d)}_\psi(\bfp)(\cdot,\xi,\mu)$. 
Then condition $($E$1)$ is equivalent to 
\begin{itemize}
 \item[$($E$1')$] The bundle homomorphism \eqref{eq:E1-new} is an isomorphism. 
\end{itemize}
Since the involved bundle is trivial, this is of course the same as requiring 
\begin{itemize}
 \item[$($E$1')$] Whenever $|\xi',\mu|=1$, the following map is an isomorphism: 
   \begin{align*}
     \sigma^{(d)}_\psi(\bfp)(\cdot,\xi',\mu):L^2(\rz^n_+,\cz^L)\lra L^2(\rz^n_+,\cz^L). 
    \end{align*}
\end{itemize}
Analogously, the boundary symbol $\sigma^{(d,\nu)}_\partial(\bfp)$ will be identified with a bundle homomorphism 
\begin{align}\label{eq:E2-new}
\begin{split}
 \sigma^{(d,\nu)}_\partial(\bfp):\wh\sz^{n-1}_+\times &L^2(\rz^{n-1},H^s(\rz_+,\cz^L)\oplus\cz^{M_0})\\
  &\lra \wh\sz^{n-1}_+\times L^2(\rz^{n-1},H^{s-d}(\rz_+,\cz^L)\oplus\cz^{M_1})
\end{split}
\end{align}
which acts in the fibre over $(\xi',\mu)$ as operator of multiplication by 
$\sigma^{(d,\nu)}_\partial(\bfp)(\cdot,\xi',\mu)$. 
Then conditions $($E$2)$ and $($E$3)$ together are equivalent to 
\begin{itemize}
 \item[$($E$2')$] The bundle hommorphism \eqref{eq:E2-new} is an isomorphism for some $($and then for all$)$  
 $s>d_+-\frac{1}{2}$. 
 \item[$($E$3')$] For some $($and then for all$)$ $s>d_+-\frac{1}{2}$, the following operator is invertible: 
   \begin{center}
    $\displaystyle 
    \op(\sigma^{[d]}_\infty(\bfp)):
     \begin{matrix}L^2(\rz^{n-1},H^s(\rz_+,\cz^L))\\ \oplus\\ L^2(\rz^{n-1},\cz^{M_0})\end{matrix}
     \lra 
     \begin{matrix}L^2(\rz^{n-1},H^{s-d}(\rz_+,\cz^L))\\ \oplus\\ L^2(\rz^{n-1},\cz^{M_1})\end{matrix}.$ 
    \end{center}
\end{itemize}

$($For $($E$3')$ recall the comment given after Definition \ref{def:ellipticity-full}).  
Again due to the triviality of the involved bundles we can write equivalently 
\begin{itemize}
 \item[$($E$2')$] Whenever $|\xi',\mu|=1$, the following map is an isomorphism  for some $($and then for all$)$  
 $s>d_+-\frac{1}{2}$: 
   \begin{center}
    $\displaystyle\sigma^{(d,\nu)}_\partial(\bfp)(\cdot,\xi',\mu):
     \begin{matrix}L^2(\rz^{n-1},H^s(\rz_+,\cz^L))\\ \oplus\\ L^2(\rz^{n-1},\cz^{M_0})\end{matrix}
     \lra 
     \begin{matrix}L^2(\rz^{n-1},H^{s-d}(\rz_+,\cz^L))\\ \oplus\\ L^2(\rz^{n-1},\cz^{M_1})\end{matrix}$.
    \end{center}
\end{itemize}


Let be given weight-data $\frakg=((L,M_0),(L,M_1))$ and $\frakg_j=((L,M_j),(L,M_j))$ for $j=0,1$. 
Suppose 
 $$\pi_j(x',\xi',\mu)\in \bfB^{0,\nu;0}(\rz^{n-1}\times\rpbar;\frakg_j)$$
are two projections, i.e. $\pi_j\#\pi_j=\pi_j$, where $\nu\in\nz_0$. Since $\pi_j$ is a projection,  
$\sigma_\psi^{(0)}(\pi_j)$,  $\sigma_\partial^{(0,\nu)}(\pi_j)$, and $\sigma_\infty^{[0]}(\pi_j)$ are projections, too. 
Recall that we consider $\sigma_\psi^{(0)}(\pi_j)$ as a bundle morphism as in \eqref{eq:E1-new}, 
$\sigma_\partial^{(0,\nu)}(\pi_j)$ as a bundle morphism as in \eqref{eq:E2-new}, and 
$\sigma_\infty^{[0]}(\pi_j)\in S^0(\rz^{n-1};\scrL(H^s(\rz_+,\cz^{L_j})\oplus\cz^{M_j}))$ for $s>-1/2$. 
In particular, 
\begin{align*}
 \mathrm{im}\,\sigma_\psi^{(0)}(\pi_j) &\subset \sz^{n}_+\times L^2(\rz^n_+,\cz^{L}),\\
 \mathrm{im}^s\,\sigma_\partial^{(0,\nu)}(\pi_j)&\subset
   \whsz^{n-1}_+\times L^2(\rz^{n-1},H^{s}(\rz_+,\cz^{L})\oplus\cz^{M_j}),\\
 \mathrm{im}^s\,\op(\sigma_\infty^{[0]}(\pi_j))&\subset
    L^2(\rz^{n-1},\scrL(H^{s}(\rz_+,\cz^{L})\oplus\cz^{M_1}); 
\end{align*}
here we added the superscript $s$ in $\mathrm{im}^s$ to indicate the involved Sobolev-regularity.   

\begin{definition}
With the above notation and $d\in\gz$, $\nu\in\nz_0$, let us define 
\begin{align*}
 \bfB^{d,\nu;d_+}&(\rz^{n-1}\times\rpbar;\frakg,\pi_0,\pi_1)\\
 &=\{\bfp\in \bfB^{d,\nu;d_+}(\rz^{n-1}\times\rpbar;\frakg)\mid (1-\pi_1)\#\bfp=\bfp\#(1-\pi_0)=0\}.  
\end{align*}
\end{definition}

\forget{
Since $\pi_1\#\bfp\#\pi_0=\bfp$ for $\bfp\in \bfB^{d,\nu;d_+}(\rz^{n-1}\times\rpbar;\frakg,\pi_0,\pi_1)$, 
\begin{align*}
 \sigma_\psi^{(d)}(\bfp)&=\sigma_\psi^{(0)}(\pi_1)\sigma_\psi^{(d)}(\bfp)\sigma_\psi^{(0)}(\pi_0),\\
 \sigma_\partial^{(d,\nu)}(\bfp)&=\sigma_\partial^{(0,\nu)}(\pi_1)\sigma_\partial^{(d,\nu)}(\bfp)
    \sigma_\partial^{(0,\nu)}(\pi_0),\\
 \sigma_\infty^{[d]}(\bfp)&=\sigma_\infty^{[0]}(\pi_1)\#\sigma_\infty^{[d]}(\bfp)\#\sigma_\infty^{[0]}(\pi_0). 
\end{align*}
}

\begin{theorem}\label{thm:parametrix-projection}
Let $\bfp\in\bfB^{d,\nu;d_+}(\rz^{n-1}\times\rpbar;\frakg,\pi_0,\pi_1)$ with $d\in\gz$, $\nu\in\nz_0$. 
Then there exists a symbol  
$\bfq\in \bfB^{-d,\nu;(-d)_+}(\rz^{n-1}\times\rpbar;\frakg^{-1},\pi_1,\pi_0)$ such that 
\begin{align*} 
 \op(\bfq)(\mu)\op(\bfp)(\mu)=\op(\pi_0)(\mu),\qquad 
 \op(\bfp)(\mu)\op(\bfq)(\mu)=\op(\pi_1)(\mu), 
\end{align*}
for sufficiently large $\mu$ if and only if the following conditions hold true$:$
\begin{itemize}
 \item[$(\Pi1)$] $\sigma_\psi^{(d)}(\bfp)$ of \eqref{eq:E1-new} restricts to an isomorphism 
  $\mathrm{im}\,\sigma_\psi^{(0)}(\pi_0)\to\mathrm{im}\,\sigma_\psi^{(0)}(\pi_1)$,
 \item[$(\Pi2)$] $\sigma_\partial^{(d,\nu)}(\bfp)$ of \eqref{eq:E2-new} restricts to an isomorphism 
  $\mathrm{im}^s\,\sigma_\partial^{(0,\nu)}(\pi_0)\to\mathrm{im}^{s-d}\,\sigma_\partial^{(0,\nu)}(\pi_1)$,
 \item[$(\Pi3)$] $\op(\sigma_\infty^{[d]}(\bfp))
 $ restricts to an isomorphism 
  $\mathrm{im}^s\,\sigma_\infty^{[0]}(\pi_0)\to\mathrm{im}^{s-d}\,\sigma_\infty^{([0]}(\pi_1)$
\end{itemize}
for some $s>d_+-\frac{1}{2}$. 
\end{theorem}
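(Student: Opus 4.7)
The proof is a direct application of the abstract Toeplitz-type framework of Section \ref{sec:6}, specifically Theorem \ref{thm:inverse-toeplitz}. The plan has three steps: reduce to order and type zero, verify the abstract axioms, and invoke Theorem \ref{thm:inverse-toeplitz}.

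First, the case of arbitrary order $d\in\gz$ reduces to the case $d=0$ by composition with the order reductions $\bflambda^{\pm d}_-$ of Section \ref{sec:2.3.1}, analogously to Remark \ref{rem:order-reduction}. If $d\ge 0$ one sets $\wt\bfp := \bfp\#\bflambda^{-d}_-\in\bfB^{0,\nu;0}(\rz^{n-1}\times\rpbar;\frakg)$ and $\wt\pi_0 := \bflambda^{d}_-\#\pi_0\#\bflambda^{-d}_-$, which is a projection in $\bfB^{0,\nu;0}$ (since $\bflambda^{d}_-\#\bflambda^{-d}_-=1$ and the order reductions are strongly parameter-dependent of infinite regularity, the regularity and type behave as required); one then checks $\pi_1\#\wt\bfp = \wt\bfp = \wt\bfp\#\wt\pi_0$. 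For $d\le 0$ one composes from the opposite side, with $\wt\bfp := \bflambda^{-d}_-\#\bfp$ and $\wt\pi_1 := \bflambda^{-d}_-\#\pi_1\#\bflambda^{d}_-$. Conjugation by the (invertible) principal symbols of $\bflambda^{\pm d}_-$ at each of the three symbol levels shows that the conditions $(\Pi 1)$-$(\Pi 3)$ for $\bfp$ relative to $(\pi_0,\pi_1)$ are equivalent to the corresponding three conditions for $\wt\bfp$ relative to $(\wt\pi_0,\pi_1)$ (respectively $(\pi_0,\wt\pi_1)$ in the second case).

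Next I would verify that the order-zero, type-zero calculus fits the abstract axioms (1)-(6) of Section \ref{sec:6.1}. Take $G=\nz_0\times\nz_0$, $H(g):=L^2(\rz^n_+,\cz^L)\oplus L^2(\rz^{n-1},\cz^M)$ for $g=(L,M)$, $L^0(\rpbar;\frakg):=\bfB^{0,\nu;0}(\rz^{n-1}\times\rpbar;\frakg)$, $L^{-\infty}(\rpbar;\frakg):=B^{-\infty;0}(\rz^{n-1}\times\rpbar;\frakg)$, and take as the three principal symbols the maps $\sigma_\psi^{(0)}$, $\sigma_\partial^{(0,\nu)}$ and $\op(\sigma_\infty^{[0]}(\cdot))$, interpreted as bundle homomorphisms over $\sz^n_+$, $\whsz^{n-1}_+$ and a single point, respectively, in the sense of Section \ref{sec:7.1}. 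Axioms (1)-(3) come from the mapping property \eqref{eq:mapping-property01}, Theorem \ref{thm:b-adjoint} and Theorem \ref{thm:comp-new}; axiom (4) (multiplicativity of and compatibility with adjoints of all three symbols) is contained in these same theorems; axiom (5) (ellipticity characterizes parametrix) is Theorem \ref{thm:parametrix-new} together with the bundle reformulation (E$1'$)-(E$3'$) of the ellipticity conditions; and axiom (6) is the final lemma of Section \ref{sec:5}.

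Having verified the axioms, Theorem \ref{thm:inverse-toeplitz} applied to $\wt\bfp$ and the appropriate pair of projections yields a symbol $\wt\bfq \in\bfB^{0,\nu;0}(\rz^{n-1}\times\rpbar;\frakg^{-1})$ such that $\op(\wt\bfq)(\mu)\op(\wt\bfp)(\mu)$ and $\op(\wt\bfp)(\mu)\op(\wt\bfq)(\mu)$ equal the respective projections for sufficiently large $\mu$; the sought $\bfq\in\bfB^{-d,\nu;(-d)_+}(\rz^{n-1}\times\rpbar;\frakg^{-1},\pi_1,\pi_0)$ is then obtained by composing $\wt\bfq$ on the opposite side with the order reduction used in Step 1, and a direct algebraic computation using $\bflambda^{d}_-\#\bflambda^{-d}_-=1$ produces the desired identities $\op(\bfq)(\mu)\op(\bfp)(\mu)=\op(\pi_0)(\mu)$ and $\op(\bfp)(\mu)\op(\bfq)(\mu)=\op(\pi_1)(\mu)$ for large $\mu$. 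The most delicate point will be Step 1: showing that the two-sided conjugation by $\bflambda^{\pm d}_-$ genuinely transports the three ellipticity conditions into equivalent ones by bundle isomorphisms at each level ($\sigma_\psi$, $\sigma_\partial$ and $\op(\sigma_\infty^{[\,\cdot\,]})$), and that the resulting twisted projections $\wt\pi_0$ or $\wt\pi_1$ remain inside the calculus with the correct regularity number $\nu$. This is routine given the multiplicativity of the principal symbols, but requires careful bookkeeping over the three symbol levels.
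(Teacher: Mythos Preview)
Your proposal is correct and follows essentially the same approach as the paper: reduce to order and type zero via conjugation with the order reductions $\bflambda^{\pm d}_-$ (introducing the twisted projection $\wt\pi_0=\bflambda^{d}_-\#\pi_0\#\bflambda^{-d}_-$ when $d\ge 1$, and symmetrically for $d\le -1$), then invoke the abstract Toeplitz framework of Section~\ref{sec:6}. The paper's proof is terser---it simply declares the $d=0$ case an immediate consequence of Section~\ref{sec:6.1.1} and handles the reduction in a few lines---whereas you spell out the verification of axioms~(1)--(6) and correctly pinpoint Theorem~\ref{thm:inverse-toeplitz} (rather than just Theorem~\ref{thm:param-toeplitz}) as the relevant abstract result for obtaining genuine inverses for large~$\mu$; this extra care is appropriate but does not constitute a different argument.
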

\begin{proof}
The case $d=0$ is an immediate consequence of the results from Section \ref{sec:6.1.1}. 
The general case can be reduced to it by the use of the order reductions from \eqref{eq:order-reduction}. 
Let us show this in case $d\ge1$ (the case $d\le-1$ is verified analogously$)$. 

Consider 
$\wt\bfp=\bfp\#\bflambda^{-d}_-$. Then 
 $$\wt\bfp=\pi_1\#\bfp\#\bflambda^{-d}_-\#\bflambda^{d}_-\#\pi_0\#\bflambda^{-d}_-
    =\pi_1\#\wt\bfp\#\wt\pi_0$$
with the projection 
 $$\wt\pi_0=\bflambda^{d}_-\#\pi_0\#\bflambda^{-d}_-\in
    \bfB^{0,\nu;0}(\rz^{n-1}\times\rpbar;\frakg_0).$$
This shows that $\wt\bfp\in \bfB^{0,\nu;0}(\rz^{n-1}\times\rpbar;\frakg,\wt\pi_0,\pi_1)$. 
Since $\op(\bflambda^{d}_-)$ is invertible with inverse $\op(\bflambda^{-d}_-)$ and by the multiplicativity 
under composition of all principal symbols, all principal symbols associated with $\bflambda^{\pm d}_-$ are 
isomorphisms and it is immediate by construction that $\bfp$ satisfies $(\Pi1)-(\Pi3)$ if and only if $\wt\bfp$ does 
(with $d=0$ and with respect to the projetcions $\wt\pi_0$ and $\pi_1$). Therefore we can apply the theorem 
with $d=0$ to $\wt\bfp$, yielding a corresponding symbol $\wt\bfq$. Then the claim follows with 
$\bfq:=\bflambda^{-d}_-\#\wt\bfq$.  
\end{proof}

\subsection{BVP with global projection conditions}\label{sec:7.2}

Let $A$ be a differential operator of order $d$ and let 
 $$A(\mu)=e^{i\theta}\mu^d-A=e^{i\theta}\mu^d-\sum_{|\alpha|\le d}a_{\alpha}(x)D^\alpha_x,\qquad \mu\ge0,$$
for some fixed $0\le\theta<2\pi$ and where $a_\alpha\in\scrC^\infty_b(\rz^n,\cz^{L\times L})$. 
We will apply Theorem \ref{thm:parametrix-projection} to analyze the invertibility of the problem 
\begin{equation}\label{eq:BVP-pi}
 \begin{pmatrix}A(\mu)\\ T\end{pmatrix}: 
    H^{s}(\rz^n_+,\cz^L)\lra
    \begin{matrix}H^{s-d}(\rz^n_+,\cz^L)\\ \oplus\\
    \Pi\Big( \mathop{\oplus}\limits_{j=0}^{d-1} H^{s-j-\frac{1}{2}}(\rz^{n-1},\cz^{m_j})\Big)
    \end{matrix},\qquad s>d-\frac12,
\end{equation}
where $T=\Pi S$ is a so-called \emph{global projection boundary condition} as described in the following 
Section \ref{sec:7.2.1}. 

\forget{
\begin{remark}
Our theory permits to consider in $\eqref{eq:BVP-pi}$ any general element $A(\mu)=\op(\bfp)(\mu)$ with 
$\bfp\in \bfB^{d,0;d}(\rz^{n-1}\times\rpbar;(L,0),(L,0))$. We focus here on the above case because it is important 
in applications and the expressions for the principal symbols of $A(\mu)$ are rather explicit. 
\end{remark}
}
\subsubsection{Global projection boundary conditions}\label{sec:7.2.1}

For $j\in\nz_0$ let us define 
 $$\gamma_j:\scrS(\rz_+,\cz^L)\lra \cz^L,\qquad 
     u\mapsto \frac{\partial^ju}{\partial x_n^j}(0)$$
$($we shall use the same notation for different values of $L)$. 
Then $\gamma_j$ is a strongly parameter-dependent trace symbol $($constant in $(x',\xi',\mu))$ of order 
$j+\frac{1}{2}$ and of type $j+1$. In fact, the symbol-kernels 
 $$g_0(\xi',\mu;x_n)=[\xi',\mu]\exp(-[\xi',\mu]y_n),\qquad g_1(\xi',\mu;y_n)=\exp(-[\xi',\mu]y_n)$$
define singular Green symbols $g_k\in B^{1/2-k;0}_G(\rz^{n-1}\times\rpbar;(L,0),(0,L))$ and, by integration by parts,  
 $$\gamma_0=g_0(\xi',\mu)-g_1(\xi',\mu)\partial_+.$$
For $j\ge1$ note that $\gamma_j u=\gamma_0(\partial^j_+u)$. Note that the principal symbol and the 
principal limit symbol are 
 $$\sigma^{(j+1/2)}(\gamma_j)=\sigma^{[j+1/2]}_\infty(\gamma_j)=\gamma_j.$$
Obviously, 
 $$[\op(\gamma_j)u](x')=\frac{\partial^ju}{\partial x_n^j}(x',0),\qquad u\in\scrS(\rz^n_+,\cz^L);$$
By abuse of notation (and to obtain standard notation from the literature) we shall write $\gamma_j$ 
instead of $\op(\gamma_j)$. 
If $s_{jk}(x',\xi')\in S^{j-k}(\rz^{n-1};\scrL(\cz^L,\cz^{m_j}))$, $j,k\in\nz_0$, with $m_j\in\nz_0$  
are pseudodifferential symbols then 
\begin{equation}\label{eq:bc}
  s_j(x',\xi')=\sum_{k=0}^{j} s_{jk}(x',\xi')\gamma_k:\scrS(\rz_+,\cz^L)\lra\cz^{m_j} 
\end{equation}
defines a trace symbol of order $j+1/2$, type $j+1$, and regularity number $0$, i.e., 
 $$s_j\in \bfB^{j+1/2,0;j+1}_G(\rz^{n-1}\times\rpbar;(L,0),(0,m_j)),\qquad j\in\nz_0$$
$($note that if in \eqref{eq:bc} the summation index $k$ would exceed $j$ then $s_j$ would have negative 
regularity number$)$. Writing $S_{jk}:=\op(s_{jk})$ and $S_j=\op(s_j)$ we thus have 
$$\begin{pmatrix}S_0\\\vdots\\ \vdots\\ S_{d-1}\end{pmatrix}=
    \begin{pmatrix}
     S_{00}&0&\cdots&0\\
     \vdots&\ddots&\ddots &\vdots\\
     \vdots& & \ddots &0\\
     S_{d-1,0}&\cdots&\cdots&S_{d-1,d-1}
    \end{pmatrix}
    \begin{pmatrix}\gamma_0\\\vdots\\ \vdots\\ \gamma_{d-1}\end{pmatrix}.
$$

\begin{definition}\label{def:global-projection-condition}
Using the previous notation and writing $S=(S_0,\ldots,S_{d-1})^t$, a 
\emph{global projection boundary} condition has the form $T=\Pi S$ with an idempotent  
$\Pi=(\Pi_{jk})_{0\le j,k\le d-1}$, where $\Pi_{jk}=0$ whenever $k>j$ and, otherwise, 
$\Pi_{jk}=\op(\pi_{jk})$ with $\pi_{jk}\in S^{j-k}(\rz^{n-1};\scrL(\cz^{\ell_k},\cz^{m_j}))$. 
\end{definition}

The assumption that $\Pi$ is a left lower triangular matrix ensures that the components of 
$\Pi$ have non-negative regularity number. More explicitly, $T$ has the form 
$T=(T_0,\ldots,T_{d-1})^t$ with  
 $$T_j=\sum_{k=0}^{j} \Pi_{jk}S_k=\sum_{k=0}^{j}\sum_{\ell=0}^{k} \op(\pi_{jk})\op(s_{k\ell})\gamma_\ell;$$
note that $T_j \in \bfB^{j+1/2,0;j+1}_G(\rz^{n-1}\times\rpbar;(L,0),(0,m_j))$. $T$ has the mapping property 
from  \eqref{eq:BVP-pi}.  
Let us remark that we admit also the case that some of the $m_j$ are equal to zero; in this case the condition 
$T_j$ is considered ``void'' respectively ``not present''. 

\begin{remark}
Let us compare the structure of the operator in $\eqref{eq:BVP-pi}$ with that considered by Grubb in \cite{Grub}. 
On the one hand the condition $T$ in $\eqref{eq:BVP-pi}$ is more general, since  
the projection $\Pi$ and all terms $S_{jk}$ can be pseudodifferential operators, while in \cite{Grub} one has $\Pi=1$ 
and all $S_{jj}$ are operators of multiplication with a function $s_{jj}(x')$, cf. \cite[(1.5.28)]{Grub}. On the other hand,  
each condition $T_k$ in \cite[(1.5.28)]{Grub} contains a trace operator $T_k'$ which we have to require to be either equal to zero 
or to be parameter-dependent. Similarly, $A(\mu)$ in \cite[(1.5.19)]{Grub} is permitted to contain a parameter-independent 
singular Green operator $G$ which we have to require either to be equal to zero or to be parameter-dependent. 
\end{remark}

\subsubsection{Invertibility of realizations}\label{sec:7.2.2}

In order to apply Theorem \ref{thm:parametrix-projection} we need to transform the problem in an equivalent form. 
To this end let 
 $$\Lambda(\mu)=\mathrm{diag}(\Lambda_0(\mu),\ldots,\Lambda_{d-1}(\mu)),\qquad 
     \Lambda_j(\mu)=[D',\mu]^{d-j-\frac{1}{2}}$$
and then consider 
\begin{align*}
 \begin{pmatrix}A(\mu)\\ \Lambda(\mu)T\end{pmatrix}
 =\begin{pmatrix}A(\mu)\\ \Lambda(\mu)\Pi S\end{pmatrix}
     =\begin{pmatrix}1&0\\ 0&\Pi_\Lambda(\mu)\end{pmatrix}
       \begin{pmatrix}A(\mu)\\ \Lambda(\mu)\Pi S\end{pmatrix},
\end{align*}
with
 $$\Pi_\Lambda(\mu)=\Lambda(\mu)\Pi\Lambda(\mu)^{-1}=\op(\pi_\Lambda)(\mu),
    \qquad \pi_\Lambda\in S^{0,0}(\rz^{n-1}\times\rpbar;\scrL(\cz^{M})),$$ 
where we use the identification 
 $$\cz^M=\cz^{m_0}\oplus\ldots\oplus\cz^{m_{d-1}},\qquad M=m_0+\ldots+m_{d-1}.$$
Therefore
\begin{equation}\label{eq:APS-symbol}
 \begin{pmatrix}A(\mu)\\ \Lambda(\mu)T\end{pmatrix}=\op(\bfp)(\mu),\qquad 
    \bfp\in \bfB^{d,0;d}(\rz^{n-1}\times\rpbar;(L,0),(L,M),1,\pi_\Lambda).
\end{equation}
By construction of $\bfp$, its homogeneous principal symbol is 
\begin{align*}
 \sigma_\psi^{(d)}(\bfp)(x,\xi,\mu)=e^{i\theta}\mu^d-\sum_{|\alpha|=d}a_\alpha(x)\xi^\alpha,
\end{align*}
its principal boundary symbol is 
\begin{align*}
 \sigma_\partial^{(d,0)}(\bfp)(x',\xi',\mu)=
 \begin{pmatrix}
   \sigma_\partial^{(d,0)}(A)(x',\xi',\mu)\\
   \sigma_\partial^{(d,0)}(\Lambda T)(x',\xi',\mu)
 \end{pmatrix}
 \end{align*}
where 
\begin{align*}
 \sigma_\partial^{(d,0)}(A)(x',\xi',\mu)=
  e^{i\theta}\mu^d-\sum\limits_{|\alpha|=d}a_\alpha(x',0)\xi'^{\alpha'}D_{x_n}^{\alpha_n}
\end{align*}
and
\begin{align*}
 \sigma_\partial^{(d,0)}(\Lambda T)(x',\xi',\mu)
 =\begin{pmatrix}
  |\xi',\mu|^{d-\frac12}\pi_{00}^{(0)}(x',\xi')s_{0}^{(0)}(x',\xi')\gamma_0\\
  \vdots\\
  |\xi',\mu|^{d-j-\frac12}\sum\limits_{k=0}^{j}
    \sum\limits_{\ell=0}^{k}\pi_{jk}^{(j-k)}(x',\xi')s_{k\ell}^{(k-\ell)}(x',\xi')\gamma_\ell\\
  \vdots\\
  |\xi',\mu|^{1-\frac12}\sum\limits_{k=0}^{d-1}
    \sum\limits_{\ell=0}^{k}\pi_{jk}^{(j-k)}(x',\xi')s_{k\ell}^{(k-\ell)}(x',\xi')\gamma_\ell
  \end{pmatrix}.
\end{align*}
If we introduce the ``mixed'' or ``Douglis-Nirenberg-type'' homogeneous principal symbols 
\begin{align*}
 \sigma_\Hom(\Pi)(x',\xi')&=
  \begin{pmatrix}
  \pi_{00}^{(0)} & & \\
  \vdots & \ddots & \\
  \pi_{d-1,0}^{(d-1)} & \cdots &\pi_{d-1,d-1}^{(0)}
 \end{pmatrix}(x',\xi'),\\
 \sigma_\Hom(S)(x',\xi')&=
 \begin{pmatrix}
  s_{00}^{(0)} & & \\
  \vdots & \ddots & \\
  s_{d-1,0}^{(d-1)} & \cdots & s_{d-1,d-1}^{(0)}
 \end{pmatrix}(x',\xi')
\end{align*}
and note that the factors $|\xi',\mu|^{d-j-\frac12}$ are equal to $1$ whenever $|\xi',\mu|=1$, we have 
\begin{align*}
 \sigma_\partial^{(d,0)}(\Lambda T)(x',\xi',\mu)
 =\sigma_\Hom(\Pi)(x',\xi')\sigma_\Hom(S)(x',\xi') 
    \begin{pmatrix}\gamma_0\\\vdots\\ \gamma_{d-1}\end{pmatrix}, \qquad |\xi',\mu|=1.
\end{align*}
Finally, the principal limit symbol of $\bfp$ is
\begin{align*}
 \sigma_\infty^{[d]}(\bfp)(x',\xi')
 =\begin{pmatrix}
  e^{i\theta}-a_{0,d}(x',0)D_{x_n}^d\\
  (\pi_{00}\#s_{00})(x',\xi')\gamma_0\\
  \vdots\\
  (\pi_{d-1,d-1}\#s_{d-1,d-1})(x',\xi')\gamma_{d-1}
  \end{pmatrix}.
\end{align*}
For the latter identity note that $\Lambda_j(\mu)\op(\pi_{jk})\op(s_{k\ell})\gamma_\ell$ is a trace operator with 
symbol belonging to $\wtbfB^{d,j-\ell;\ell+1}_G(\rz^{n-1}\times\rpbar;(L,0),(0,m_j))$, 
hence its principal limit symbol $\sigma_\infty^{[d]}$ is zero unless $\ell=j$; 
hence the principal limit-symbol of $\Lambda_j(\mu)T_j$ coincides with that of 
$\Lambda_j(\mu)\op(\pi_{jj})\op(s_{jj})\gamma_j=\Lambda_j(\mu)\op(\pi_{jj}\#s_{jj})\gamma_j$ which is the one 
appearing above, due the multiplicativity of the principal limit symbol under composition. Moreover recall that 
the principal limit symbol of $A(\mu)$ coincides with its boundary symbol evaluated in $(\xi',\mu)=(0,1)$. 

\begin{theorem}\label{thm:main-projection}
Let $\bfp$ from \eqref{eq:APS-symbol} satisfy $(\Pi1)-(\Pi3)$ of Theorem $\ref{thm:parametrix-projection}$ 
$($with $\pi_0=1$ and $\pi_1=(\pi_{jk}))$. Then there exist a pseudodifferential operator 
 $$B_+(\mu)=\op^+(b)(\mu),\qquad b\in S^{-d}_\tr(\rz^n\times\rpbar;\scrL(\cz^L)),$$
a singular Green operator 
 $$G(\mu)=\op(g)(\mu),\qquad g\in \wt\bfB^{-d,0;0}_G(\rz^{n-1}\times\rpbar;(L,0),(L,0)),$$
and trace operators 
 $$K_j(\mu)=\op(k_j)(\mu),\qquad k_j\in \wt\bfB^{-j-\frac{1}{2},0;0}_G(\rz^{n-1}\times\rpbar;(0,m_j),(L,0)),$$
such that, with $K(\mu)=(K_0(\mu),\ldots,K_{d-1}(\mu))$, 
\begin{align*}
  \begin{pmatrix}e^{i\theta}\mu^d-A\\ T\end{pmatrix} \begin{pmatrix}B_+(\mu)+G(\mu)&K(\mu)\end{pmatrix}
     &= \begin{pmatrix}1&0\\ 0&\Pi\end{pmatrix},\\
     \begin{pmatrix}B_+(\mu)+G(\mu)&K(\mu)\end{pmatrix}
     \begin{pmatrix}e^{i\theta}\mu^d-A\\ T\end{pmatrix}&=1
\end{align*}
for sufficiently large $\mu$. In particular, the map $\eqref{eq:BVP-pi}$ is bijective for large $\mu$. 
\end{theorem}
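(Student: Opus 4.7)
The plan is to derive Theorem \ref{thm:main-projection} directly from the parametrix result for Toeplitz symbols, Theorem \ref{thm:parametrix-projection}, applied to $\bfp$ from \eqref{eq:APS-symbol}. Before invoking that theorem I would first verify that the hypotheses $(\Pi1)$–$(\Pi3)$ stated here (phrased in terms of the original projection $\Pi=(\pi_{jk})$) are equivalent to the corresponding conditions with $\pi_1=\pi_\Lambda$ for $\bfp$ itself. Since $\pi_\Lambda=\Lambda\#\Pi\#\Lambda^{-1}$ with $\Lambda$ parameter-elliptic of non-zero order in each block, and since all three principal symbols are multiplicative under the Leibniz product (Theorem \ref{thm:comp-new}), conjugation by $\Lambda$ intertwines the image bundles of $\sigma_\psi^{(0)}$, $\sigma_\partial^{(0,0)}$, $\sigma_\infty^{[0]}$ of $\Pi$ with those of $\pi_\Lambda$; thus $(\Pi1)$–$(\Pi3)$ for $\bfp$ with projections $(1,\pi_\Lambda)$ follow from the analogous conditions for the original problem.

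Applying Theorem \ref{thm:parametrix-projection} then yields a symbol
$$\bfq\in \bfB^{-d,0;0}(\rz^{n-1}\times\rpbar;((L,M),(L,0)),\pi_\Lambda,1)$$
such that $\op(\bfq)(\mu)\op(\bfp)(\mu)=1$ and $\op(\bfp)(\mu)\op(\bfq)(\mu)=\op(\pi_\Lambda)(\mu)$ for all sufficiently large $\mu$. Because the codomain weight is $(L,0)$, the symbol $\bfq$ is a row $\bfq=(\bfq_1,\bfq_2)$ with no pseudodifferential or trace components on the boundary. By the defining decomposition (Definitions \ref{def:boundary-symbol-new} and \ref{def:singular-green-new}) we can write $\bfq_1=\op^+(b)+\op(g)$ with
$$b\in S^{-d}_\tr(\rz^n\times\rpbar;\scrL(\cz^L)),\qquad g\in\wt\bfB^{-d,0;0}_G(\rz^{n-1}\times\rpbar;(L,0),(L,0));$$
and splitting $\cz^M=\bigoplus_{j=0}^{d-1}\cz^{m_j}$ gives $\bfq_2=(\tilde k_0,\ldots,\tilde k_{d-1})$ with $\tilde k_j\in\wt\bfB^{-d,0;0}_G(\rz^{n-1}\times\rpbar;(0,m_j),(L,0))$.

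I then set $B_+(\mu):=\op^+(b)(\mu)$, $G(\mu):=\op(g)(\mu)$, and define $K_j(\mu):=\op(\tilde k_j)(\mu)\,\Lambda_j(\mu)$. The associated symbol is $k_j:=\tilde k_j\#[\xi',\mu]^{d-j-\frac12}$, and since $[\xi',\mu]^{d-j-\frac12}$ is strongly parameter-dependent of order $d-j-\frac12$, the composition rule of Theorem \ref{thm:comp-new} shows
$$k_j\in \wt\bfB^{-j-\frac12,0;0}_G(\rz^{n-1}\times\rpbar;(0,m_j),(L,0)),$$
as claimed. The two matrix identities of the theorem now follow by sandwiching the parametrix identities of $\bfp$ between the isomorphism $\mathrm{diag}(1,\Lambda(\mu)^{\pm1})$ on the corresponding sides; using $\Lambda(\mu)^{-1}\op(\pi_\Lambda)(\mu)\Lambda(\mu)=\Pi$ turns the right-hand side $\mathrm{diag}(1,\op(\pi_\Lambda)(\mu))$ into $\mathrm{diag}(1,\Pi)$, which is precisely the first identity, while the second identity is unchanged. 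Bijectivity of the map \eqref{eq:BVP-pi} for large $\mu$ is then immediate.

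The only genuinely non-routine step is the first one, i.e.\ confirming that the conditions $(\Pi1)$–$(\Pi3)$ as formulated with $\Pi$ indeed transport to the analogous conditions for $\bfp$ with $\pi_\Lambda$; once this compatibility is established, the rest is bookkeeping of orders, types, and regularity numbers using the composition theorem and the structure of $\bfB^{-d,0;0}$.
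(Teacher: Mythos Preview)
Your proposal is correct and follows essentially the same route as the paper: apply Theorem~\ref{thm:parametrix-projection} to $\bfp$ (with projections $(1,\pi_\Lambda)$) to obtain $\bfq$, then right-multiply by $\mathrm{diag}(1,\Lambda(\mu))$ and read off the components, using $\Lambda^{-1}\Pi_\Lambda\Lambda=\Pi$ to convert the projection. The paper's proof is terser---it simply says ``It remains to observe that $\op(\bfq)(\mu)\,\mathrm{diag}(1,\Lambda(\mu))=(B_+(\mu)+G(\mu),K(\mu))$ as stated''---whereas you spell out the composition $k_j=\tilde k_j\#[\xi',\mu]^{d-j-\frac12}$ and the resulting class via Theorem~\ref{thm:comp-new}; your extra remark that the ellipticity conditions phrased for $\Pi$ transfer to those for $\pi_\Lambda$ by conjugation with the invertible $\Lambda$ makes explicit a step the paper leaves implicit.
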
 
\begin{proof}
By Theorem \ref{thm:parametrix-projection} there exists a 
$\bfq\in \bfB^{-d,0;0}(\rz^{n-1}\times\rpbar;(L,M),(L,0),\pi_\Lambda,1)$ such that, for large enough $\mu$, 
 $$1=\op(\bfq)(\mu)\op(\bfp)(\mu)= \op(\bfq)(\mu)\begin{pmatrix}1&0\\ 0&\Lambda(\mu)\end{pmatrix}
       \begin{pmatrix}A(\mu)\\ T\end{pmatrix}$$
and 
\begin{align*}
 \begin{pmatrix}1&0\\ 0&\Pi_\Lambda(\mu)\end{pmatrix}
   &=\begin{pmatrix}1&0\\ 0&\Lambda(\mu)\end{pmatrix}\begin{pmatrix}1&0\\ 0&\Pi\end{pmatrix}
        \begin{pmatrix}1&0\\ 0&\Lambda(\mu)^{-1}\end{pmatrix}
     =\op(\bfp)(\mu)\op(\bfq)(\mu)\\
   &=\begin{pmatrix}1&0\\ 0&\Lambda(\mu)\end{pmatrix}
       \begin{pmatrix}A(\mu)\\ T\end{pmatrix}\op(\bfq)(\mu).
\end{align*}
The latter is equivalent to 
\begin{align*}
 \begin{pmatrix}1&0\\ 0&\Pi\end{pmatrix}
   =\begin{pmatrix}A(\mu)\\ T\end{pmatrix}\op(\bfq)(\mu)
      \begin{pmatrix}1&0\\ 0&\Lambda(\mu)\end{pmatrix}. 
\end{align*}
It remains to observe that 
$\op(\bfq)(\mu)\begin{pmatrix}1&0\\ 0&\Lambda(\mu)\end{pmatrix}
=\begin{pmatrix}B_+(\mu)+G(\mu)&K(\mu)\end{pmatrix}$ as stated. 
\end{proof}

Let us reformulate the assumption of the previous theroem, i.e., that $\bfp$ satisfies $(\Pi1)-(\Pi3)$, 
in a more explicit way: 
\begin{itemize}
 \item[$(\Pi1)$] The homogeneous principal symbol of $A(\mu)$ is invertible whenever $|\xi',\mu|=1$ and 
   $x\in\overline{\rz^n}_+$ with uniform estimate 
     $$\Big\|\Big(e^{i\theta}\mu^d-\sum_{|\alpha|=d}a_\alpha(x)\xi^\alpha\Big)^{-1}\Big\|_{\scrL(\cz^L)}
         \lesssim 1.$$
 \item[$(\Pi2)$] For some $s>d-\frac12$ the maps  
 \begin{align*}
 \sigma^{(d,0)}_\partial(\bfp)(\cdot,\xi',\mu):L^2(\rz^{n-1},H^s(\rz_+,\cz^L))\lra 
  \begin{matrix}
   L^2(\rz^{n-1},H^{s-d}(\rz_+,\cz^L)\\ 
   \oplus\\ 
   \sigma_\Hom(\Pi)(\cdot,\xi')\big(L^2(\rz^{n-1},\cz^M)\big)
   \end{matrix}.
\end{align*}
   are isomorphisms for all $(\xi',\mu)$ with $|\xi',\mu|=1$, $\xi'\not=0$  
  $(a(\cdot)$ means here the operator of multiplication with the function $a)$.
 \item[$(\Pi3)$] For some $s>d-\frac12$ the following map is an isomorphism: 
 \begin{align*}
 \begin{pmatrix}
  e^{i\theta}-a_{0,d}(x',0)D_{x_n}^d\\
  \Pi_{00}S_{00}\gamma_0\\
  \vdots\\
  \Pi_{d-1,d-1}S_{d-1,d-1}\gamma_{d-1}
  \end{pmatrix}:
  L^2(\rz^{n-1},H^s(\rz_+,\cz^L))\lra 
 \begin{matrix}
 L^2(\rz^{n-1},H^{s-d}(\rz_+,\cz^L))\\\oplus \\
 \mathop{\mbox{$\oplus$}}\limits_{j=0}^{d-1}\Pi_{jj}(L^2(\rz^{n-1},\cz^{m_j}))
 \end{matrix}
\end{align*}
\end{itemize} 
In case we require that all involved principal symbols are constant in $x$ or $x'$ for large $|x|$ and $|x'|$, respectively, 
the conditions become: 
\begin{itemize}
 \item[$(\Pi1)$] $e^{i\theta}\mu^d-\sum\limits_{|\alpha|=d}a_\alpha(x)\xi^\alpha$ is invertible for all $(x',\xi',\mu)$ 
  with $|\xi',\mu|=1$. 
 \item[$(\Pi2)$] For some $s>d-\frac12$, 
  \begin{align*}
   \sigma^{(d,0)}_\partial(\bfp)(x',\xi',\mu)
  :H^s(\rz_+,\cz^L))\lra 
  \begin{matrix}
   H^{s-d}(\rz_+,\cz^L)\\ 
   \oplus\\ 
   \sigma_\Hom(\Pi)(x',\xi')(\cz^M)
   \end{matrix}.
  \end{align*}
 is an isomorphism for all $(x',\xi',\mu)$ with $|\xi',\mu|=1$, $\xi'\not=0$.
 \item[$(\Pi3)$] As above.  
\end{itemize}

\subsubsection{Resolvent and trace expansions}\label{sec:7.2.3}

With the above notation let $A_T$ denote the unbounded operator in $L^2(\rz^n_+,\cz^L)$ acting as 
$A$ on the domain 
 $$\mathcal{D}(A_T)=\{u\in H^d(\rz^n_+,\cz^L)\mid Tu=0\}.$$
By Theorem \ref{thm:main-projection}, $e^{i\theta}\mu^d-A_T$ is invertible for sufficiently large $\mu$ with inverse  
 $$(e^{i\theta}\mu^d-A_T)^{-1}=B_+(\mu)+G(\mu),\qquad \mu >> 1.$$ 

\begin{corollary}
With the above notation and assumptions, 
$\Gamma_\theta:=\{e^{i\theta}\mu\mid\mu\ge 0\}$ is a ray of minimal growth for $A_T$, i.e., $\lambda-A_T$ 
is invertible for sufficiently large $\lambda\in\Gamma_\theta$ with uniform estimate 
 $$\|(\lambda-A_T)^{-1}\|_{\scrL(L^2(\rz^n_+,\cz^L))}\lesssim \frac{1}{|\lambda|}.$$
\end{corollary}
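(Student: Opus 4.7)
The plan rests entirely on the explicit resolvent formula $(e^{i\theta}\mu^d - A_T)^{-1} = B_+(\mu) + G(\mu)$ just derived via Theorem \ref{thm:main-projection}. First I would reparametrize the ray: write $\lambda = e^{i\theta}\mu^d$ with $\mu = |\lambda|^{1/d}$, identifying $\Gamma_\theta \cap \{|\lambda|\text{ large}\}$ with the image of large $\mu \ge 0$ and giving $|\lambda| = \mu^d$. Invertibility of $\lambda - A_T$ for large $|\lambda|$ is then immediate from the formula, so the corollary reduces to proving $\|B_+(\mu) + G(\mu)\|_{\scrL(L^2(\rz^n_+, \cz^L))} \lesssim \mu^{-d}$ uniformly as $\mu \to +\infty$.

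I would handle the two summands by the same rescaling principle: multiply by $\spk{\mu}^d$ to obtain a uniformly order-zero symbol, then apply $L^2$-continuity. For $B_+(\mu) = \op^+(b)(\mu)$ with $b \in S^{-d}_\tr$, the elementary inequality $\spk{\xi,\mu}^{-d-|\alpha|} \le \spk{\mu}^{-d}\spk{\xi}^{-|\alpha|}$ (which holds because $\spk{\xi,\mu} \ge \max(\spk{\xi}, \spk{\mu})$ and $d \ge 0$) shows that $\spk{\mu}^d b(\cdot,\cdot,\mu)$ is, uniformly in $\mu$, a symbol in $S^0_\tr(\rz^n; \scrL(\cz^L))$; the classical $L^2$-boundedness of transmission operators of order zero then gives $\|B_+(\mu)\|_{\scrL(L^2)} \lesssim \spk{\mu}^{-d}$. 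For $G(\mu) = \op(g)(\mu)$ with $g \in \wtbfB^{-d,0;0}_G$, the analogous rescaling is performed at the operator-valued symbol level. Using that $\kappa_\lambda$ is unitary on $L^2(\rz_+, \cz^L)$, the defining twisted estimates (with regularity number $\nu = 0$) yield $\|D^{\alpha'}_{\xi'} D^{\beta'}_{x'} g\|_{\scrL(L^2(\rz_+, \cz^L))} \lesssim \spk{\xi'}^{-|\alpha'|}\spk{\xi',\mu}^{-d} \le \spk{\mu}^{-d}\spk{\xi'}^{-|\alpha'|}$, so that $\spk{\mu}^d g$ is a uniform family in $S^0_{1,0}(\rz^{n-1}; \scrL(L^2(\rz_+, \cz^L)))$ (with trivial group-action on the target); operator-valued $L^2$-boundedness for such symbols then yields $\|G(\mu)\|_{\scrL(L^2(\rz^{n-1}, L^2(\rz_+, \cz^L)))} \lesssim \spk{\mu}^{-d}$.

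Combining the two estimates and using the identification $L^2(\rz^n_+, \cz^L) \cong L^2(\rz^{n-1}, L^2(\rz_+, \cz^L))$ produces $\|(\lambda - A_T)^{-1}\|_{\scrL(L^2)} \lesssim \spk{\mu}^{-d} \sim |\lambda|^{-1}$ for large $|\lambda|$, which is exactly the desired resolvent bound. The only mildly subtle step is the rescaling argument for $G(\mu)$: one must translate the twisted operator-valued symbol estimates into genuine $L^2$-operator norm bounds, and this is precisely where unitarity of the dilation group-action on $L^2(\rz_+)$ is essential. Once this point is granted, the remainder is routine bookkeeping around the calculus already developed in Sections \ref{sec:3.1} and \ref{sec:4}.
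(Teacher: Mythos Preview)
Your proof is correct and follows essentially the same strategy as the paper: rescale by $\mu^d$ to obtain uniform zero-order symbols and then invoke $L^2$-boundedness, with the key input being that the dilation group-action is unitary on $L^2(\rz_+,\cz^L)$. The only organizational difference is that the paper treats $B_+(\mu)$ and $G(\mu)$ simultaneously by viewing $\op^+(b)(x',\xi',\mu)+g(x',\xi',\mu)$ as a single operator-valued symbol in $\wt S^{-d,0}_{1,0}(\rz^{n-1}\times\rpbar;\scrL(L^2(\rz_+,\cz^L)))$ (using Theorem~\ref{thm:op-plus} for the $\op^+(b)$ part and the unitarity of $\kappa_\lambda$ to drop the twisting), whereas you handle $B_+(\mu)$ separately as a full $n$-dimensional transmission operator; both routes lead to the same estimate with comparable effort.
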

\begin{proof}
Using notation from Theorem \ref{thm:main-projection} observe that 
 $$\op^+(b)(x',\xi',\mu)+g(x',\xi',\mu)\in \wt S^{-d,0}_{1,0}(\rz^{n-1}\times\rpbar;\scrL(L^2(\rz_+,\cz^L)),$$
since the dilation group-action is a group of unitary operators on $L^2(\rz_+,\cz^L)$. 
Moreover, $p(x',\xi,\mu)\in \wt S^{-d,0}_{1,0}(\rz^{n-1}\times\rpbar;\scrL(E))$ implies 
$p_\mu(x',\xi'):=\mu^d p(x',\xi,\mu)\in S^{0}_{1,0}(\rz^{n-1};\scrL(E))$ uniformly $\mu$, 
hence $\mu^d \op(p)(\mu)=\op(p_\mu)\in \scrL(L^2(\rz^{n-1},E))$ is uniformly bounded in $\mu$. 
\end{proof}

Given a trace class operator $K:L^2(\rz^n_+,\cz^L)\to L^2(\rz^n_+,\cz^L)$ which is also an integral operator with 
continuous kernel $k\in \scrC(\rz^n_+\times\rz^n_+,\cz^{L\times L})$, its trace coincides with 
 $$\mathrm{Tr}\,K= \int_{\rz^n_+}\mathrm{tr}\,k(x,x)\,dx
    =\int_{\rz^{n-1}}\int_0^\infty \mathrm{tr}\,k(x',x_n,x',x_n)\,dx_n dx'.$$
If $p(\mu)=\op^+(a)(\mu)+g(\mu) \in \bfB^{d,\nu;0}(\rz^{n-1}\times\rpbar;(L,0),(L,0))$ has order $d<-n$, then 
$\op(p)(\mu)\in\scrL(L^2(\rz^n_+,\cz^L))$ for every $\mu$ with continuous integral kernel 
 $$k(x,y,\mu)=\int e^{i(x-y)\xi}a(x,\xi,\mu)\,d\xi+ \int e^{i(x'-y')\xi}g(x',\xi',\mu;x_n,y_n)\,\dbar\xi'.$$
Thus, supposing in addition the trace class property, we get 
\begin{align*}
 \mathrm{Tr}\,\op(p)(\mu)
  =&\,\int_{\rz^n} \int_{\rz^n_+}\mathrm{tr}\,a(x,\xi,\mu)\,dx \dbar\xi +\\
     &+\int_{\rz^{n-1}}\int_{\rz^{n-1}} \int_0^{+\infty} \mathrm{tr}\,g(x',\xi',\mu;x_n,x_n)\,dx_n dx'\dbar\xi'.
\end{align*}
To have the trace class property it is not sufficient to assume negative order $d<-n$ but one needs 
to require decay of the coefficients in $x$, essentially like $\spk{x}^{-n-\eps}$ for some $\eps>0$.  
Note however, that the symbol-kernel of the singular Green symbol $g$ is rapidly decreasing in $x_n$ and $y_n$. 
Thus it makes sense to introduce the symbol $\mathrm{Tr}_+g$ as 
\begin{align*}
 (\mathrm{Tr}_+g)(x',\xi',\mu):=
 \int_0^{+\infty} \mathrm{tr}\,g(x',\xi',\mu;x_n,x_n)\,dx_n; 
\end{align*}
in fact, for each fixed $(x',\xi',\mu)$ this is the trace of the trace class operator in $L^2(\rz_+,\cz^L)$ with integral 
kernel $(x_n,y_n)\mapsto g(x',\xi',\mu;x_n,y_n)$. 

\begin{lemma}
If $g\in\wtbfB^{d,\nu;0}(\rz^{n-1}\times\rpbar;(L,0),(L,0))$ with arbitrary $d,\nu\in\rz$, then 
$\mathrm{Tr}_+g\in\wtbfS^{d,\nu}(\rz^{n-1}\times\rpbar)$.  
\end{lemma}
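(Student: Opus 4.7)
The plan is to reduce the statement to a straightforward application of the (un-)twisting correspondence of Section~\ref{sec:4.2} combined with the fact that $\mathrm{Tr}_+$ is a continuous linear functional on $\Gamma^0_G(\rz_+;(L,0),(L,0))$. First I would recall that, by Theorem~\ref{thm:conjugation01} and Definition~\ref{def:conjugation01}, the symbol $g$ has an associated un-twisted symbol
$$g'=\kappa^{-1}g\,\kappa\;\in\; \wtbfS^{d,\nu}(\rz^{n-1}\times\rpbar;\Gamma^0_G(\rz_+;(L,0),(L,0)))$$
whose symbol-kernel is related to that of $g$ by
$$g(x',\xi',\mu;x_n,y_n)=[\xi',\mu]\,g'(x',\xi',\mu;[\xi',\mu]x_n,[\xi',\mu]y_n),$$
cf.\ \eqref{eq:symbol-kernel-relation}.

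The key computation is then a change of variables $t=[\xi',\mu]x_n$ on the diagonal, which gives
$$(\mathrm{Tr}_+g)(x',\xi',\mu)=\int_0^{+\infty}\mathrm{tr}\,g(x',\xi',\mu;x_n,x_n)\,dx_n
=\int_0^{+\infty}\mathrm{tr}\,g'(x',\xi',\mu;t,t)\,dt;$$
that is, $\mathrm{Tr}_+g$ is obtained by applying pointwise in $(x',\xi',\mu)$ the operator-trace functional
$$\tau:\Gamma^0_G(\rz_+;(L,0),(L,0))\lra\cz,\qquad \tau(G):=\int_0^{+\infty}\mathrm{tr}\,k_G(t,t)\,dt,$$
where $k_G\in\scrS(\rz_+\times\rz_+,\cz^{L\times L})$ denotes the Schwartz kernel of $G$.

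Next I would observe that $\tau$ is continuous and linear: by the characterization of type-zero Green operators in Section~\ref{sec:4.2}, the topology on $\Gamma^0_G(\rz_+;(L,0),(L,0))$ coincides with that of $\scrS(\rz_+\times\rz_+,\cz^{L\times L})$, restriction to the diagonal is continuous into $\scrS(\rz_+,\cz^{L\times L})$, the fibrewise trace is continuous, and finally integration over $\rz_+$ is continuous from $\scrS(\rz_+)$ into $\cz$. Then I would invoke the general principle that if $\tau:F_1\to F_0$ is a continuous linear map between Fréchet spaces, then pointwise composition with $\tau$ maps $\wtbfS^{d,\nu}(\rz^{n-1}\times\rpbar;F_1)$ continuously into $\wtbfS^{d,\nu}(\rz^{n-1}\times\rpbar;F_0)$; this is immediate from the definitions in Section~\ref{sec:3.1.1} and Section~\ref{sec:3.2}, since every defining seminorm of the target Fréchet space is bounded by a seminorm of the source. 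Applying this to $\tau$ and $g'$ yields $\mathrm{Tr}_+g=\tau\circ g'\in\wtbfS^{d,\nu}(\rz^{n-1}\times\rpbar)$.

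There is essentially no serious obstacle here; the only point requiring a bit of care is the verification that the substitution identity $(\mathrm{Tr}_+g)(x',\xi',\mu)=\tau(g'(x',\xi',\mu))$ really produces the same expression — this uses the prefactor $[\xi',\mu]$ in \eqref{eq:symbol-kernel-relation} in exactly the right way to cancel the Jacobian of $t=[\xi',\mu]x_n$. Once that identity is in place, the preservation of the homogeneous and limit expansions under $\tau$ is automatic: the homogeneous component $(\mathrm{Tr}_+g)^{(d-j,\nu-j)}$ equals $\tau\circ (g')^{(d-j,\nu-j)}$ and the principal limit-symbol satisfies $(\mathrm{Tr}_+g)^\infty_{[d,\nu]}=\tau\circ (g')^\infty_{[d,\nu]}$.
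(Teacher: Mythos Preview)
Your proposal is correct and takes essentially the same approach as the paper: the paper's proof also uses the symbol-kernel representation $g(x',\xi',\mu;x_n,y_n)=[\xi',\mu]\,g'(x',\xi',\mu;[\xi',\mu]x_n,[\xi',\mu]y_n)$ and the substitution $t=[\xi',\mu]x_n$ to reduce $\mathrm{Tr}_+g$ to the diagonal integral of $g'$. Your version is somewhat more explicit in framing the final step as applying a continuous linear functional $\tau$ to the $\Gamma^0_G$-valued symbol $g'$, which is a clean way to justify that the poly-homogeneous and limit-expansion structures are preserved, but the underlying mechanism is the same.
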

\begin{proof}
Recall that, by definition, $g$ has a symbol-kernel 
 $$g(x',\xi',\mu;x_n,x_n)=[\xi',\mu]g'(x',\xi',\mu;[\xi',\mu]x_n,[\xi',\mu]x_n)$$
where $g'\in \wtbfS^{d,\nu}(\rz^{n-1}\times\rpbar;\scrS(\rz_+\times\rz_+,\cz^{L\times L}))$. 
Inserting this in the defining expression of $\mathrm{Tr}_+g$ and making the change of variables $t:=[\xi',\mu]x_n$ 
immediately yields the claim. 
\end{proof}

As an immediate corollary of \cite[Theorem 6.3]{Seil22-1} (which is based on techniques for trace expansions 
developed by Grubb and Seeley, in particular in \cite[Theorem 2.1]{GrSe}) we get: 

\begin{theorem}
Let $g\in\wtbfB_G^{d,\nu;0}(\rz^{n-1}\times\rpbar;(L,0),(L,0))$ with $d<-n+1$ and $d-\nu\le0$. Then 
  $$\int \mathrm{Tr}_+g(x',\xi',\mu)\,\dbar\xi'\sim_{\mu\to+\infty}
     \sum_{j=0}^{+\infty}c_j(x')\mu^{d-j+n-1}+\sum_{j=0}^{+\infty}
     \big(c_j^\prime(x')\log\mu+c_j^{\prime\prime}(x')\big)\mu^{d-\nu-j}$$ 
with certain continuous and bounded functions $c_j,c_j^\prime,c_j^{\prime\prime}$. 
\end{theorem}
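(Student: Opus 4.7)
The plan is to reduce the statement to a purely scalar assertion about symbols with expansion at infinity and then invoke \cite[Theorem~6.3]{Seil22-1}. By the lemma immediately preceding the theorem, the partial-trace operation $\mathrm{Tr}_+$ sends generalized singular Green symbols to scalar symbols in the corresponding ``bold'' class with the same order and regularity number. Thus $p(x',\xi',\mu) := \mathrm{Tr}_+g(x',\xi',\mu)$ belongs to $\wtbfS^{d,\nu}(\rz^{n-1}\times\rpbar)$ with $\scrC^\infty_b$-dependence on $x'$. The order restriction $d<-n+1$ gives the pointwise estimate $|p(x',\xi',\mu)| \lesssim \spk{\xi',\mu}^{d}\le \spk{\xi'}^d$, which is integrable over $\rz^{n-1}_{\xi'}$, so the integral $\int p(x',\xi',\mu)\,\dbar\xi'$ converges absolutely for every $(x',\mu)$.

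Next I would apply \cite[Theorem~6.3]{Seil22-1} directly to $p$. That result handles exactly this setting, namely scalar symbols in $\wtbfS^{d,\nu}(\rz^{n-1}\times\rpbar)$ with $\scrC^\infty_b$-dependence on $x'$, under the integrability condition $d<-n+1$ and the range condition $d-\nu\le0$, and gives an asymptotic expansion of precisely the claimed form. Heuristically, the first series $\sum c_j(x')\mu^{d-j+n-1}$ arises from the poly-homogeneous components $p^{(d-j,\nu-j)}$: by the change of variables $\xi'=\mu\eta'$ and twisted homogeneity, each homogeneous piece contributes a factor $\mu^{d-j+(n-1)}$ times an angular integral over $\whsz^{n-1}_+$. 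The second series $\sum\bigl(c_j'(x')\log\mu+c_j''(x')\bigr)\mu^{d-\nu-j}$ comes from the expansion at infinity $p\sim\sum_{j}p^\infty_{[d,\nu+j]}(x',\xi')[\xi',\mu]^{d-\nu-j}$; evaluating $\int[\xi',\mu]^{d-\nu-j}q(x',\xi')\,\dbar\xi'$ for $q\in S^{\nu+j}(\rz^{n-1})$ via the same radial rescaling produces a logarithmic contribution exactly when the radial exponent matches $n-1$, and a purely polynomial contribution otherwise. The condition $d-\nu\le0$ guarantees that the two series occupy disjoint regions of the asymptotic scale and so combine into a single expansion without collision of orders.

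The coefficients $c_j,c_j',c_j''$ inherit continuity and boundedness in $x'$ from the uniform $\scrC^\infty_b$-bounds on $p$ and on its homogeneous and limit components, as they are explicit angular integrals over $\whsz^{n-1}_+$ and Taylor coefficients at $\xi'=0$ of the symbols $p^{(d-j,\nu-j)}$ and $p^\infty_{[d,\nu+j]}$, respectively.

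The main technical work, i.e., the construction of the expansion and the appearance of logarithmic terms, is entirely contained in \cite[Theorem~6.3]{Seil22-1}, which in turn draws on the heat/resolvent trace techniques of Grubb--Seeley. The only new step required here is therefore the identification $\mathrm{Tr}_+g\in\wtbfS^{d,\nu}(\rz^{n-1}\times\rpbar)$ of the preceding lemma, which is immediate from the symbol-kernel representation of $g$ upon making the substitution $t=[\xi',\mu]x_n$ in the integral defining $\mathrm{Tr}_+g$; no genuinely new obstacle arises.
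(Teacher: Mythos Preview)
Your proposal is correct and matches the paper's own treatment exactly: the paper states the theorem as ``an immediate corollary of \cite[Theorem~6.3]{Seil22-1}'' after the preceding lemma reduces $\mathrm{Tr}_+g$ to a scalar symbol in $\wtbfS^{d,\nu}(\rz^{n-1}\times\rpbar)$. Your additional heuristic explanation of how the two series arise is more than the paper provides, though your remark that $d-\nu\le0$ makes the two series occupy ``disjoint regions of the asymptotic scale'' is not quite accurate (the powers can overlap); this is harmless since the actual argument is the citation.
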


Concerning the pseudodifferential part, using the asymptotic expansion of $a$ in homogeneous components 
$a^{(d-j)}(x,\xi,\mu)\in S^{d-j}_\Hom$, it is easy to see that 
\begin{align*}
 \int_{\rz^n}\mathrm{tr}\,a(x,\xi,\mu)\,\dbar\xi \sim_{\mu\to+\infty}\sum_{j=0}^{+\infty} a_j(x)\mu^{d-j+n}
\end{align*}
provided $d<-n$ with coefficients $\displaystyle a_j(x)=\int a^{(d-j)}(x,\xi,1)\,\dbar\xi$. 

Let us now consider $P(\mu)=D(\mu^de^{i\theta}-A_T)^{-\ell}$, where $D$ is a differential operator of order $m$ 
and $\ell$ is so large that $m-d\ell<-n$. 
By Theorem \ref{thm:main-projection}, for large enough $\mu$, $P(\mu)=\op(p)(\mu)$ with a symbol 
 $$p\in \bfB^{m-d\ell,0;0}(\rz^{n-1}\times\rpbar;(L,0),(L,0)).$$
The above results imply at once the following:  

\begin{theorem}\label{thm:trace-expansion}
With the above notation and ellipticity assumptions, and supposing that the coefficients of $D$ are 
decaying in $x$ sufficiently fast, 
\begin{align*}
 \mathrm{Tr}\,D(\lambda e^{i\theta}-A_T)^{-\ell}\sim_{\lambda\to+\infty} 
 \sum_{j=-n}^{-1} \alpha_j \lambda^{\frac{m-j}{d}-\ell}
 +\sum_{j=0}^{+\infty}\big(\alpha_j^\prime\log\lambda+\alpha_j^{\prime\prime}\big)\lambda^{\frac{m-j}{d}-\ell}
\end{align*}
for certain coefficients $\alpha_j,\alpha_j^\prime$, and $\alpha_j^{\prime\prime}$ $($depending on $A$, $T$, and $D)$. 
\end{theorem}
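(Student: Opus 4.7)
The plan is to combine the resolvent structure from Theorem \ref{thm:main-projection} with the two expansion results stated just before Theorem \ref{thm:trace-expansion}, and then substitute $\mu=\lambda^{1/d}$.

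First, using Theorem \ref{thm:main-projection} and the composition Theorem \ref{thm:comp-new}, I would write, for $\mu$ sufficiently large,
$$D(e^{i\theta}\mu^d-A_T)^{-\ell}=\op^+(a)(\mu)+\op(g)(\mu),$$
with $a\in S^{m-d\ell}_\tr(\rz^n\times\rpbar;\scrL(\cz^L))$ and $g\in\wtbfB_G^{m-d\ell,0;0}(\rz^{n-1}\times\rpbar;(L,0),(L,0))$; the type remains zero because $D$ appears on the left of a type zero operator, and the composition rule $r=\max(r_1+d_0,r_0)$ then gives $r=\max(0+(-d\ell),0)=0$ since $-d\ell<0$. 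The sufficient decay assumption on the coefficients of $D$ propagates through composition with the parametrix symbols (whose $x$-dependence is only $\scrC^\infty_b$) to produce the required decay in $x$, respectively $x'$, of $a$ and $g$, which together with $m-d\ell<-n$ ensures that both $\op^+(a)(\mu)$ and $\op(g)(\mu)$ are trace-class.

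Second, I would split the trace into two parts:
\begin{align*}
\mathrm{Tr}\,D(e^{i\theta}\mu^d-A_T)^{-\ell}
&=\int_{\rz^n_+}\!\!\int\mathrm{tr}\,a(x,\xi,\mu)\,\dbar\xi\,dx+\int_{\rz^{n-1}}\!\!\int\mathrm{Tr}_+ g(x',\xi',\mu)\,\dbar\xi'\,dx'.
\end{align*}
For the first integral, the homogeneous expansion $a\sim\sum_{j\ge0}a^{(m-d\ell-j)}$ combined with homogeneity in $(\xi,\mu)$ yields the asymptotic series $\sum_{j\ge0}A_j\mu^{m-d\ell-j+n}$ where $A_j=\int\int\mathrm{tr}\,a^{(m-d\ell-j)}(x,\xi,1)\,\dbar\xi\,dx$. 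For the second integral, I would invoke the trace expansion theorem stated immediately before Theorem \ref{thm:trace-expansion} with order $m-d\ell$ and regularity $\nu=0$; its hypotheses $m-d\ell<-n+1$ and $m-d\ell-\nu\le 0$ both follow from $m-d\ell<-n$. After integrating in $x'$ this yields
$$\sum_{j\ge0}C_j\mu^{m-d\ell-j+n-1}+\sum_{j\ge0}\bigl(C'_j\log\mu+C''_j\bigr)\mu^{m-d\ell-j}.$$

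Third, I would substitute $\mu=\lambda^{1/d}$, using $\log\mu=\tfrac{1}{d}\log\lambda$, so that the three sums become series in powers $\lambda^{(m-k)/d-\ell}$ after the re-indexings $k=j-n$, $k=j-n+1$, and $k=j$ respectively. Collecting the contributions, the positive powers of $\lambda^{1/d}$ (those with $k\in\{-n,\ldots,-1\}$) receive no logarithmic contribution, because the $\log$ terms come only from the third sum, which is indexed by $k\ge0$. This produces exactly the claimed expansion, with $\alpha_j,\alpha'_j,\alpha''_j$ built as finite linear combinations of the $A_*$, $C_*$, $C'_*$, $C''_*$. The main obstacle is not conceptual but organizational: carefully tracking which of the three series contributes to each power $\lambda^{(m-k)/d-\ell}$, and checking that the logarithms appear only for $k\ge0$; a minor technical point is to verify convergence of the $x$- or $x'$-integrals of the individual homogeneous components and of the coefficients $c_j,c'_j,c''_j$, which is exactly what the decay assumption on $D$ and its propagation through the Leibniz product ensure.
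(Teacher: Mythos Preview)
Your proposal is correct and follows essentially the same approach as the paper: the paper sets up precisely this machinery in the paragraphs preceding the theorem (the symbol class $\bfB^{m-d\ell,0;0}$ for $P(\mu)$, the trace splitting, the two expansion results) and then simply states that ``the above results imply at once'' the claimed expansion. Your write-up makes explicit the re-indexing and the substitution $\mu=\lambda^{1/d}$ that the paper leaves to the reader.
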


If in the previous theorem the differential operator $D$ is tangential near the boundary, then $\alpha_0'=\ldots=\alpha_{m-1}'=0$ 
because in this case $p\in \bfB^{m-d\ell,m;0}(\rz^{n-1}\times\rpbar;(L,0),(L,0))$ has regularity number $m$. 

\subsection{Example: Laplace type operators}\label{sec:7.3}

The paper \cite{Grub03} investigates trace expansions for $D(\lambda-A_T)^{-\ell}$, $\ell>\frac{n+m}{2}$, 
where $A$ is a Laplace type operator on a compact manifold acting on sections of an $L$-dimensional 
vector-bundle, $D$ is a differential operator of order $m$, and $T$ is a boundary condition related to spectral boundary 
conditions. We shall consider the analogous problem in the half-space; for simplicity we shall assume that $A$ and $T$ have 
constant coefficients outside some compact set. 
We will explain that the assumptions made in \cite{Grub03} imply that  
$\begin{pmatrix}\mu^2e^{i\theta}-A\\ T\end{pmatrix}$ is an elliptic problem in the sense of Theorem \ref{thm:main-projection} 
for every $\theta\in(0,2\pi)$ and therefore the trace expansion follows from Theorem \ref{thm:trace-expansion} with  $d=2$. 

So let $A$ be a second order differential operator on $\rz^n_+$ with $\cz^{L\times L}$-valued coefficients such that, 
near the boundary, 
 $$ A=D_{x_n}^2+A'+x_nA_2+A_1,$$
where $A'$ is a second order self-adjoint, positive differential operator on the boundary and $A_j$ is a differential operator of 
order $j$, cf. \cite[Assumption 1.1]{Grub03}.  
Assume that $e^{-\theta}\mu^2-A$ for some $\theta\in(0,2\pi)$ satisfies ellipticity assumption $(\Pi1)$ $($for example, 
this is the case when $A$ is, as assumed in \cite{Grub03}, principally self-adjoint, i.e., $A^*-A$ has order one).
The boundary condition has the form 
 $$Tu=\begin{pmatrix}\Pi \gamma_0 u\\ (1-\Pi)(\gamma_1u+B\gamma_0u)\end{pmatrix},$$
where $B$ is a first order pseudo-differential operator on the boundary and $\Pi$ is zero order projection, 
cf. \cite[(1.4)]{Grub03}. Thus, in our notation, $d=2$, $S_{00}=S_{11}=1$, $S_{10}=B$, $\Pi_{00}=\Pi$, $\Pi_{11}=1-\Pi$, and 
$S_{01}=\Pi_{01}=\Pi_{10}=0$.
Let us first consider the principal limit symbol, i.e.,  
 $$ \begin{pmatrix}e^{i\theta}-D_{x_n}^2\\ \Pi \gamma_0 \\ (1-\Pi)\gamma_1\end{pmatrix}
     :L^2(\rz^{n-1},H^2(\rz_+,\cz^L))\lra 
     \begin{matrix}
      L^2(\rz^{n-1}, L^2(\rz_+,\cz^L))\\ \oplus\\ \Pi (L^2(\rz^{n-1},\cz^L))\\ \oplus\\ (1-\Pi) (L^2(\rz^{n-1},\cz^L))  
   \end{matrix}.$$
This operator is bijective whenever $\theta\not=0$. In fact, in this case,   
$e^{i\theta}-D_{x_n}^2$ is surjective with kernel 
 $$N=N(\theta)=\left\{u(x)=v(x')e^{-\rho x_n}\mid v\in L^2(\rz^{n-1},\cz^L)\right\},\qquad \rho=e^{i(\theta-\pi)/2},$$
Hence it suffices to verify that 
 $$ \begin{pmatrix}\Pi \gamma_0 \\ (1-\Pi)\gamma_1\end{pmatrix}
     :N\lra 
     \begin{matrix}
      \Pi (L^2(\rz^{n-1},\cz^L))\\ \oplus\\ (1-\Pi) (L^2(\rz^{n-1},\cz^L))  
   \end{matrix}$$
is bijective. $u\in N$ belongs to the kernel of this map if and only if 
$\Pi v=0$ and $-\rho (1-\Pi)v=0$, i.e., $v=0$. This shows the injectivity. Given $f,g\in L^2(\rz^{n-1},\cz^L)$ take $u\in N$ 
with $v:=\Pi f-\frac{1}{\rho}(1-\Pi)g$. Then $\Pi\gamma_0u=\Pi f$ and $(1-\Pi)\gamma_1u=(1-\Pi)g$. This shows the surjectivity. 
In other words, ellipticity condition $(\Pi3)$ is satisfied. 

Let us now look at the bijectivity of the principal boundary symbol 
 $$ \begin{pmatrix}
     e^{i\theta}\mu^2-{a}'^{(2)}(x',\xi')-D_{x_n}^2\\ \pi^{(0)}(x',\xi') \gamma_0 \\ 
     (1-\pi^{(0)}(x',\xi'))(\gamma_1+b^{(1)}(x',\xi')\gamma_0)
     \end{pmatrix}
     :H^2(\rz_+,\cz^L)\lra 
     \begin{matrix}
      L^2(\rz_+,\cz^L)\\ \oplus\\ \pi^{(0)}(x',\xi') (\cz^L)\\ \oplus\\ (1-\pi^{(0)}(x',\xi'))(\cz^L)  
   \end{matrix},$$
where $\xi'\not=0$ and  ${a}'^{(2)}$, $\pi^{(0)}$, and $b^{(1)}$ are the homogeneous principal symbols of $A'$, $\Pi$,  
and $B$, respectively. Let $\sigma=\sigma(x',\xi',\mu)$ denote the square root of ${a}'^{(2)}(x',\xi')-e^{i\theta}\mu^2$, 
i.e., $\sigma=\displaystyle\int_\Gamma \lambda^{1/2}
(\lambda-{a}'^{(2)}+e^{i\theta}\mu^2)^{-1}\,d\lambda$, where $\lambda^{1/2}$ is the principal branch of the square 
root on $\cz\setminus(-\infty,0]$ and $\Gamma$ is a Jordan curve around the poles of the integrand. 
The operator in the first row is surjective with kernel 
 $$N=N(x',\xi',\mu)=\left\{\phi(x_n)=e^{-\sigma x_n}z\mid z\in \cz^L\right\}.$$
Thus we have to verify the bijectivity of 
 $$ \begin{pmatrix}
      \pi^{(0)}(x',\xi') \gamma_0 \\(1-\pi^{(0)}(x',\xi'))(\gamma_1+b^{(1)}(x',\xi')\gamma_0)
     \end{pmatrix}
     :N\lra 
     \begin{matrix}
      \pi^{(0)}(x',\xi') (\cz^L)\\ \oplus\\ (1-\pi^{(0)}(x',\xi'))(\cz^L)  
   \end{matrix},$$
i.e., the bijectivity of 
\begin{align}\label{eq:map}
 \begin{pmatrix}
      \pi^{(0)}(x',\xi') \\(1-\pi^{(0)}(x',\xi'))(-\sigma+b^{(1)}(x',\xi'))
     \end{pmatrix}
     :\cz^L\lra 
     \begin{matrix}
      \pi^{(0)}(x',\xi') (\cz^L)\\ \oplus\\ (1-\pi^{(0)}(x',\xi'))(\cz^L)  
   \end{matrix}.
\end{align}
Since both domain and co-domain have dimension $L$ it suffices to verify the injectivity.   
To this end we assume that $\pi^{(0)}$ commutes with ${a'}^{(2)}$ (hence with $\sigma)$, cf. \cite[Assumption 2.4]{Grub03}, and that 
 $$-\sigma+(1-\pi^{(0)})b^{(1)}(1-\pi^{(0)}):\cz^L\lra\cz^L$$
is an isomorphism whenever $\xi'\not=0$, cf. \cite[(2.3)]{Grub03} and \cite[Assumption 2.7]{Grub03}. 
If $z$ is in the kernel of \eqref{eq:map}, then $z=(1-\pi^{(0)}(x',\xi'))z$, hence 
 $$0=(1-\pi^{(0)}(x',\xi'))(-\sigma+b^{(1)}(x',\xi'))z=[-\sigma+(1-\pi^{(0)})b^{(1)}(1-\pi^{(0)})](x',\xi')z.$$
It follows $z=0$. We thus have verified ellipticity assumption $(\Pi2)$.

\section{The calculus for manifolds}\label{sec:8}

In this section we lay out how the calculus extends to compact manifolds. We discuss coordinate-invariance and shall prove 
that the principal limit symbol has a global analogue. Moreover, we state how the other principal symbols look like globally. 
The technique to define operators on a manifold is to use local coordinates and partitions of unity. We do not enter 
in details concerning global composition and parametrix construction. 

\subsection{Twisting with group-action revisited}\label{sec:8.1}

Both characterization and definition of generalized singular Green symbols, cf. Theorem \ref{thm:conjugation02} 
and Definition \ref{def:conjugation01}, 
made use of the symbol $\kappa(\xi',\mu)=\kappa_{[\xi',\mu]}$ associated with the dilation group-action. 
We shall show now that we may substitute the smoothed norm-function $[\xi',\mu]$ by more general symbols of 
first order. In fact, let $a(x',\xi',\mu)\in S^1(\rz^{n-1}\times\rpbar)$ be homogeneous of degree $1$ in the large. 
Let us write $($cf. \eqref{eq:kappa-matrix}$)$ 
 $$\wt\kappa(x',\xi',\mu):=\kappa_{a(x',\xi',\mu)},\qquad 
     \wt\bfkappa=\begin{pmatrix}\wt\kappa&0\\0&1\end{pmatrix}.$$

\begin{proposition}\label{prop:alternative}
Let $\frakB^{d;r}_G$ represent  one choice of $B^{d;r}_G$ or $\wtbfB^{d,\nu;r}_G$ 
and let $\frakS^d$ represent the corresponding choice $S^d$ or $\wtbfS^{d,\nu}$.
Assume that 
 $$[\xi',\mu]\lesssim{a(x',\xi',\mu)}\lesssim [\xi',\mu]$$
uniformly in $(x',\xi',\mu)$. Then 
 $$\bfg\in \frakB^{d;r}_G(\rz^{n-1}\times\rpbar;\frakg) \iff \wt\bfkappa^{-1}\bfg\wt\bfkappa\in 
    \frakS^d(\rz^{n-1}\times\rpbar;\frakg).$$
\end{proposition}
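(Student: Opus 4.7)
The strategy is to factor $\wt\bfkappa$ through $\bfkappa$ by an intermediate conjugation that preserves $\frakS^d$, and then invoke Theorem \ref{thm:conjugation02} (together with Definition \ref{def:conjugation01} for the class $\wtbfB^{d,\nu;r}_G$). Set $h(x',\xi',\mu) := a(x',\xi',\mu)/[\xi',\mu]$. The hypotheses on $a$ force $h\in S^0(\rz^{n-1}\times\rpbar)$ with $h$ and $h^{-1}$ bounded, so $h^{-1}\in S^0$ as well, and $h$ is homogeneous of degree zero in the large. Because the dilation group is abelian, $\wt\kappa = \kappa_{h[\xi',\mu]} = \kappa_h\kappa = \kappa\kappa_h$ pointwise; with $\bfkappa_h := \mathrm{diag}(\kappa_h,1)$ this gives $\wt\bfkappa = \bfkappa\bfkappa_h$ and
$$\wt\bfkappa^{-1}\bfg\wt\bfkappa = \bfkappa_h^{-1}(\bfkappa^{-1}\bfg\bfkappa)\bfkappa_h.$$
By Theorem \ref{thm:conjugation02} (resp. Definition \ref{def:conjugation01}) the claim thus reduces to showing that the conjugation $T(\bfg') := \bfkappa_h^{-1}\bfg'\bfkappa_h$ is a continuous bijection of $\frakS^d(\rz^{n-1}\times\rpbar;\Gamma^r_G(\rz_+;\frakg))$ onto itself; the inverse of $T$ is obtained by replacing $h$ with $h^{-1}$, so only continuity of $T$ needs verification.

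Pointwise, $T$ acts on a Green symbol-kernel $g'(x_n,y_n)$ by $g'\mapsto h^{-1}g'(h^{-1}x_n,h^{-1}y_n)$ (with analogous formulas on Poisson and trace kernels), manifestly preserving $\Gamma^r_G(\rz_+;\frakg)$ and defining a smoothly $(x',\xi',\mu)$-dependent linear automorphism thereof. For the derivative estimates I would apply Leibniz's rule and the chain-rule identity \eqref{eq:chainrule} to write $D^{\alpha'}_{\xi'}D^{\beta'}_{x'}D^j_\mu T(\bfg')$ as a finite sum of terms
$$\Bigl(h^{-m}\prod_{i=1}^m D^{\gamma_i}h\Bigr)\cdot\bfkappa_h^{-1}(D^{\alpha''}_{\xi'}D^{\beta''}_{x'}D^{j''}_\mu\bfg')\bfkappa_h\cdot\boldsymbol\Theta_m,$$
with $\boldsymbol\Theta_m := \mathrm{diag}(\Theta_m,0)$ continuous on $\Gamma^r_G$. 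Since $h,h^{-1}\in S^0$ yield $|D^{\alpha'}_{\xi'}D^j_\mu h^{\pm 1}|\lesssim[\xi',\mu]^{-|\alpha'|-j}$ uniformly in $x'$, the scalar prefactor in parentheses absorbs precisely the order gain associated with the derivatives spent on $\bfkappa_h^{\pm 1}$ rather than on $\bfg'$, and the defining $\frakS^d$-estimates transfer from $\bfg'$ to $T(\bfg')$ in both cases $\frakS^d = S^d$ and $\frakS^d = \wtbfS^{d,\nu}$.

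In the case $\frakS^d = \wtbfS^{d,\nu}$ one must additionally produce the asymptotic expansion at $\mu\to\infty$ for $T(\bfg')$. Setting $h^\infty(x') := \hat a(x',0,1)$ (the value at $(\xi',\mu)=(0,1)$ of the principal symbol of $a$), poly-homogeneity of $a$ yields an asymptotic expansion of $h$ about $h^\infty$ in powers of $[\xi',\mu]^{-1}$; Taylor-expanding $\bfkappa_h = \bfkappa_{h^\infty}\bfkappa_{h/h^\infty}$ around $h/h^\infty = 1$ and combining with the known expansion of $\bfg'$ produces the desired expansion of $T(\bfg')$, with principal limit symbol $\bfkappa_{h^\infty}^{-1}(\bfg')^\infty_{[d,\nu]}\bfkappa_{h^\infty}\in S^\nu(\rz^{n-1};\Gamma^r_G(\rz_+;\frakg))$. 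The main obstacle will be precisely this final bookkeeping: one has to verify that each Taylor remainder lies in the correct anisotropic class $\wtbfS^{d,\nu+N}$ for arbitrary $N$, which ultimately hinges on the fact that $h\in S^0$ provides an $[\xi',\mu]^{-1}$-gain per $\xi'$-derivative — the strongest available bound — strictly stronger than the $\spk{\xi'}^{-1}$-gain allowed by the $\wtbfS^{d,\nu}$-estimates.
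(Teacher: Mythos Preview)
Your strategy is correct and shares its key insight with the paper: write $\wt\bfkappa=\bfkappa\bfkappa_h$ with $h=a/[\xi',\mu]\in S^0(\rz^{n-1}\times\rpbar)$ bounded above and below, so that by Theorem~\ref{thm:conjugation02} (resp.\ Definition~\ref{def:conjugation01}) the claim reduces to showing that conjugation by $\bfkappa_h$ preserves $\frakS^d(\rz^{n-1}\times\rpbar;\Gamma^r_G(\rz_+;\frakg))$. The execution, however, differs. The paper passes to symbol-kernels, uses the nuclear tensor-product representation $k'=\wt k'(\xi',\mu)\phi(x_n)$ to decouple the variables, and then observes in one stroke that $\phi_p(x',\xi',\mu;x_n):=p^{1/2}\phi(p\,x_n)$ (with $p=1/h$) lies in $S^0(\rz^{n-1}\times\rpbar;\scrS(\rz_+))\subset\wtbfS^{0,0}(\rz^{n-1}\times\rpbar;\scrS(\rz_+))$ because $p$ is homogeneous of degree zero in the large; the product $\wt k'\cdot\phi_p$ then inherits membership in $\wtbfS^{d,\nu}$ directly, covering both the $(1,0)$-estimates and the poly-homogeneous expansion at infinity simultaneously. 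Your route instead attacks the conjugation map $T$ directly via Leibniz and~\eqref{eq:chainrule}, which handles the $\wt S^{d,\nu}_{1,0}$-estimates cleanly; your final paragraph then sets up a separate Taylor-expansion argument for the expansion at $\mu\to\infty$. This is valid, but note two points: your displayed derivative formula with a single $\boldsymbol\Theta_m$ on the right is schematic (differentiating a conjugation produces $\Theta$-factors on both sides, equivalently commutators $[\,\cdot\,,\Theta_1]$), and you should also record that poly-homogeneity of $T(\bfg')$ (the $\wtbfS^{d-j,\nu-j}_\Hom$-components, not just the $\wtbfS^{d,\nu}_{1,0}$-expansion) follows because $h$ is itself homogeneous of degree zero in the large, so $\bfkappa_h$ commutes with the passage to homogeneous components. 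The paper's tensor-product reduction buys brevity by collapsing both verifications into the single observation $\phi_p\in S^0$; your approach avoids the tensor-product machinery at the cost of more explicit bookkeeping.
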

\begin{proof}
It is sufficient to consider the case $r=0$. For simplicity of presentation we shall show the result for 
Poisson symbols. The case of trace symbols then follows by duality, the case of singular Green symbols works in the 
same way and only involves more lengthy notation due to the $(x_n,y_n)$-dependence of the symbol-kernels. 

Let $k\in \frakB^{d;r}_G(\rz^{n-1}\times\rpbar;(0,M_0),(L,0))$ be a Poisson symbol with symbol-kernel 
$k(x',\xi',\mu;x_n)=[\xi',\mu]^{1/2}k'(x',\xi',\mu;[\xi',\mu]x_n)$ where 
 $$k'(x',\xi',\mu;x_n)\in \frakS^d(\rz^{n-1}\times\rpbar;\scrS(\rz_+,\cz^{L\times M_0}).$$ 
Since $x'$ enters here simply as a $\scrC_b^\infty$-parameter it is no loss of generality to assume that 
$k'$ is constant in $x'$. Moreover, we may assume $L=M_0=1$. Obviously, 
  $$k(x',\xi',\mu;x_n)=a(x',\xi',\mu)^{1/2}k''(x',\xi',\mu;a(x',\xi',\mu)x_n),$$
where 
\begin{align*}
 k''(x',\xi',\mu;x_n)&=p(x',\xi',\mu)^{1/2}k'(\xi',\mu;p(x',\xi',\mu)x_n),\\ 
     p(x',\xi',\mu)&:=\frac{[\xi',\mu]}{a(x',\xi',\mu)}.
\end{align*}
Now the claim is equivalent to the statement 
\begin{align*}
 k'\in \frakS^d(\rz^{n-1}\times\rpbar;\scrS(\rz_+))\iff k''\in \frakS^d(\rz^{n-1}\times\rpbar;\scrS(\rz_+)).
\end{align*}
Since both $p$ and $1/p$ belong to $S^0(\rz^{n-1}\times\rpbar)$ 
We only show direction ``$\Rightarrow$'', since the other direction works in the same way replacing $p$ with $1/p$. 
As explained below Proposition \ref{prop:osint02}, by a tensor-product argument we may 
assume that $k'$ has the form
 $$k'(\xi',\mu;x_n)=\wt k'(\xi',\mu)\phi(x_n)$$
with $\wt k'\in \frakS^d(\rz^{n-1}\times\rpbar)$ and $\phi\in\scrS(\rz_+)$. Then 
 $$k''(x',\xi',\mu;x_n)=\wt k'(\xi',\mu)\phi_p(x',\xi',\mu),\quad \phi_p(x',\xi',\mu)=p(x',\xi',\mu)\phi(p(x',\xi',\mu)x_n).$$
Since $p$ is homogeneous of degree $0$ in the large and $1\lesssim p\lesssim 1$,  
 $$\phi_p\in S^0(\rz^{n-1}\times\rpbar;\scrS(\rz_+))\subset \wtbfS^{0,0}(\rz^{n-1}\times\rpbar;\scrS(\rz_+)).$$
This implies at once the desired property for $k''$.
\forget{
$1/p$ belong to $S^0(\rz^{n-1}\times\rpbar)$ and using \eqref{aaa}, it is straight-forward to see that 
the maps
\begin{align}\label{eq:alternative01}
\begin{split}
    k'\mapsto k''
    &:S^d_{1,0}(\rz^{n-1}\times\rpbar;\scrS(\rz_+))\lra S^d_{1,0}(\rz^{n-1}\times\rpbar;\scrS(\rz_+)), \\ 
    k'\mapsto k''
     &:\wt{S}^{d,\nu}_{1,0}(\rz^{n-1}\times\rpbar;\scrS(\rz_+))\lra 
     \wt{S}^{d,\nu}_{1,0}(\rz^{n-1}\times\rpbar;\scrS(\rz_+))
\end{split}
\end{align}
are isomorphisms. The analog of \eqref{eq:alternative01} is also true for the subspaces of poly-homogeneous 
symbols, using the relations
 $${k''}^{(*-j)}(x',\xi,\mu;x_n)
    =p^{(0)}(x',\xi',\mu)^{1/2}{k'}^{(*-j)}(x',\xi',\mu;p^{(0)}(x',\xi',\mu)x_n),$$
where $(*-j)$ represents $(d-j)$ or $(d-j,\nu-j)$. This completes the proof for $\frakB^{d;r}_G=B^{d;r}_G$. 
Thus, from here on, we consider $\frakB^{d;r}_G=\wtbfB^{d,\nu;r}_G$.

Let $k'\in \wtbfS^{d,\nu}_{1,0}(\rz^{n-1}\times\rpbar;\scrS(\rz_+))$ be given. It has an expansion, 
for arbitrary $N$,   
 $$k'(\xi',\mu;x_n)=\sum_{j=0}^N {k'}_{[d,\nu+j]}^\infty(\xi',x_n)[\xi',\mu]^{d-\nu-j}
    +k_N'(\xi',\mu;x_n)$$
with $k'_N\in \wt{S}^{d,\nu+N}_{1,0}(\rz^{n-1}\times\rpbar;\scrS(\rz_+))$. By the previous observation, 
the associated symbol $k_N''$ remains in the same class. Now consider the auxiliary function
 $$q_j(y',\eta',\xi';x_n):=,p(y',\eta',\mu)^{1/2}{k'}_{[d,\nu+j]}^\infty(x',\xi';p(y',\eta',\mu)x_n).$$
Since this function is is homogeneous of degree $0$ for large $(\eta',\mu)$ it is easily seen to belong to 
 $$S^0(\rz^{n-1}_{\eta'}\times\rpbar;S^{\nu+j}_{1,0}(\rz^{n-1}_{\xi'};\scrS(\rz_+)))
    \subset \wtbfS^{0,0}_{1,0}(\rz^{n-1}_{\eta'}\times\rpbar;S^{\nu+j}_{1,0}(\rz^{n-1}_{\xi'};\scrS(\rz_+))).$$
In particular it equals, for every $N$, 
 $$\sum_{i=0}^{N-1} q_{j,[0,i]}^\infty(y',\eta',\xi';x_n)[\eta',\mu]^{-i}+q_{j,N}(y',\eta',\mu,x',\xi';x_n))$$ 
with certain coefficient  functions 
 $$q_{j,[0,i]}^\infty\in S^i_{1,0}(\rz^{n-1}_{\eta'};S^{\nu+j}_{1,0}(\rz^{n-1}_{\xi'};\scrS(\rz_+)))$$
and remainders 
 $$q_{j,N}\in \wt{S}^{0,N}_{1,0}(\rz^{n-1}_{\eta'}\times\rpbar;S^{\nu+j}_{1,0}(\rz^{n-1}_{\xi'};\scrS(\rz_+))).$$ 
Restricting to the diagonal $(x',\xi')=(y',\eta')$ we thus find an expansion 
 $$p(x',\xi',\mu)^{1/2}{k'}^\infty_{[d,\nu+j]}(\xi';p(x',\xi',\mu)x_n)\equiv
    \sum_{i=0}^{N-1} k'_{ij}(x',\xi';x_n)[\xi',\mu]^{-i}$$
modulo $\wt{S}^{\nu+j,\nu+j+N}_{1,0}(\rz^{n-1}_{\xi'};\scrS(\rz_+))$ with resulting symbols 
 $$k'_{ij}(x',\xi';x_n)\in S^{\nu+j+i}_{1,0}(\rz^{n-1}_{\xi'};\scrS(\rz_+)).$$ 
Combining this expansion with the above expansion of $k'$ one finds 
 $$k''(x',\xi',\mu;x_n))\equiv \sum_{k=0}^{N-1} \Big(\sum_{i+j=k}k'_{ij}(x',\xi';x_n)\Big)
    [\xi',\mu]^{d-\nu-k}$$
modulo $\wt{S}^{d,\nu+N}_{1,0}(\rz^{n-1}_{\xi'};\scrS(\rz_+))$ for every $N$. Thus 
\begin{align*}
    k'\mapsto k''
     :\wtbfS^{d,\nu}_{1,0}(\rz^{n-1}\times\rpbar;\scrS(\rz_+))\lra 
     \wtbfS^{d,\nu}_{1,0}(\rz^{n-1}\times\rpbar;\scrS(\rz_+)).
\end{align*}
For the validity of the corresponding statement in the poly-homogeneous classes it remains to observe 
that the homogeneous components $k''^{(d-j,\nu-j)}$ described above restrict (in $(\xi',\mu))$ to functions in 
$\scrC^\infty_T(\whsz^{n}_+)$ since $p^{(0)}$ restricts to a function in $\scrC^\infty(\mathbb{S}^n_+)$. 

The map $k''\mapsto k'$ is treated in the same way with $1/p$ in place of $p$. 
}
\end{proof}

\subsection{Invariance under coordinate changes}\label{sec:8.2}

We will study the invariance of the calculus on the half-space. In the present work we shall limit 
outselves to diffeomorphisms of the half-space that leave the ``normal-direction untouched'', 
i.e., have the form  
 $$\wt\chi(x',x_n)=(\chi(x'),x_n),$$
where $\chi$ is a diffeomorphism of $\rz^{n-1}$. Since our calculus concerns operators with globally 
bounded coefficients we need to impose some control at infinity of the diffeomorphism $\chi$, namely 
that $\partial_j\chi_k\in\scrC^\infty_b(\rz^{n-1})$, $1\le j,k\le n-1$, and $|\mathrm{det}\,\chi'|$ 
are uniformly bounded from above and below by some positive constants. 

Given a symbol $\bfp\in\bfB^{d,\nu;r}(\rz^{n-1}\times\rpbar;\frakg)$, $\frakg=((L_0,M_0),(L_1,M_1))$, 
the pull-back of $\op(\bfp)(\mu)$ is defined by 
 $$\chi^*\op(\bfp)(\mu) u=[\op(\bfp)(\mu)(u\circ\chi^{-1})]\circ\chi.,\qquad 
     u\in \scrS(\rz^{n-1},H^{s}(\rz_+,\cz^{L_0})\oplus\cz^{M_0}).$$ 
Note that we consider $\op(\bfp)(\mu)$ as a pseudodifferential operator on $\rz^{n-1}$ acting on vector-valued 
Sobolev spaces (having values in Sobolev spaces of the half-axis). For this reason we shall use the notation $\chi^*$ 
rather than $\wt\chi^*$.   

\begin{lemma}\label{lem:coordinate-change}
Let $a\in \wtbfS^{d,\nu}(\rz^{n-1}\times\rpbar)$. Then there exists a unique symbol $\chi^*a$ in the same class 
such that $\chi^*\op(a)(\mu)=\op(\chi^*a)(\mu)$ for all $\mu$. Moreover, 
 $$(\chi^*a)^{(d,\nu)}(x',\xi',\mu)=a^{(d,\nu)}(\chi(x'),\chi'(x')^{-t}\xi',\mu),\qquad 
     (\chi^*a)^\infty_{[d,\nu]}=\chi^*(a^\infty_{[d,\nu]}).$$
\end{lemma}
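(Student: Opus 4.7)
The plan is to reduce the statement to the classical Kuranishi trick, adapted to the bi-graded class $\wtbfS^{d,\nu}$. First I would write
$$
(\chi^*\op(a)(\mu)u)(x') = \mathrm{Os}-\iint e^{i(\chi(x')-\chi(y'))\xi'}\, a(\chi(x'),\xi',\mu)\, u(y')\,|\!\det\chi'(y')|\,dy'\,\dbar\xi'
$$
and use the smooth matrix-valued $\Phi(x',y')$ with $\Phi(x',x')=\chi'(x')$ and $\chi(x')-\chi(y')=\Phi(x',y')(x'-y')$; by the assumption on $\chi$, both $\Phi$ and $\Phi^{-t}$ lie in $\scrC^\infty_b$. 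After the change of variable $\eta'=\Phi(x',y')^{t}\xi'$, the operator takes the form $\op(\wt a)(\mu)$ with the right amplitude
$$
\wt a(x',y',\eta',\mu)=a\!\left(\chi(x'),\Phi(x',y')^{-t}\eta',\mu\right) J(x',y'),\qquad J(x',y')=\frac{|\!\det\chi'(y')|}{|\!\det\Phi(x',y')|}.
$$
Note that $J(x',x')\equiv1$.

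Next, I would invoke Proposition \ref{prop:osint} (with trivial group action, which is also available in the $\wt{\bfS}$-version cf. Lemma \ref{lem:osint03}) to obtain the left-symbol
$$
(\chi^*a)(x',\xi',\mu)=\mathrm{Os}-\iint e^{-iy'\eta'}\wt a(x',x'+y',\xi'+\eta',\mu)\,dy'\,\dbar\eta'
$$
together with its Taylor expansion
$$
(\chi^*a)(x',\xi',\mu)\sim\sum_{\alpha\in\nz_0^{n-1}}\frac{1}{\alpha!}\bigl(\partial_{\eta'}^{\alpha}D_{y'}^{\alpha}\wt a\bigr)(x',y',\xi',\mu)\Big|_{y'=x'}.
$$
The key technical verification is that $a\mapsto\wt a$ is continuous from $\wtbfS^{d,\nu}(\rz^{n-1}\times\rpbar)$ into the corresponding space of $(x',y')$-dependent amplitudes: since $\Phi(x',y')^{-t}$ is uniformly bounded with bounded inverse in $\scrC^\infty_b$, the chain rule gives $|\Phi(x',y')^{-t}\xi'|\simeq|\xi'|$ and $[\Phi(x',y')^{-t}\xi',\mu]\simeq[\xi',\mu]$ uniformly in $(x',y')$, so every derivative of $\wt a$ enjoys the $\wtbfS^{d,\nu}$-estimates (with the extra $|\xi'|^{\nu-|\alpha'|}$ gain upon differentiation in $\xi'$, as in Definition \ref{def:symbol-expansion}). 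By the same oscillatory-integral techniques used in the proof of Lemma \ref{lem:osint03}, the $\alpha$-th term then lies in $\wtbfS^{d-|\alpha|,\nu-|\alpha|}_{1,0}$ and the $N$-th remainder in $\wtbfS^{d-N,\nu-N}_{1,0}$; asymptotic summation (cf.\ Proposition \ref{prop:asymptotic-summation01} in the $F$-valued setting) produces the desired $\chi^*a\in\wtbfS^{d,\nu}(\rz^{n-1}\times\rpbar)$. Uniqueness is immediate because $\op(\cdot)(\mu)$ is injective on symbols.

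Finally, the leading term $\wt a(x',x',\xi',\mu)=a(\chi(x'),\chi'(x')^{-t}\xi',\mu)$ is itself poly-homogeneous of bi-order $(d,\nu)$ in the sense of Definition \ref{def:angular-symbol02}, because the linear isomorphism $\xi'\mapsto \chi'(x')^{-t}\xi'$ is independent of $\mu$ and preserves both the twisted homogeneity in $(\xi',\mu)$ and the Taylor behavior at $(\xi',\mu)=(0,1)$ used in Definition \ref{def:hom}. Since all higher-$\alpha$ terms drop by one unit in both indices, the principal bi-symbol of $\chi^*a$ equals that of the leading term, giving $(\chi^*a)^{(d,\nu)}(x',\xi',\mu)=a^{(d,\nu)}(\chi(x'),\chi'(x')^{-t}\xi',\mu)$. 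For the limit symbol I would apply the formula for $p^\infty_{[d,\nu]}$ displayed after Definition \ref{def:symbol-expansion} to the leading term: $\chi'(x')^{-t}\xi'$ is independent of $\mu$, so passing $\mu\to+\infty$ yields $(\chi^*a)^\infty_{[d,\nu]}(x',\xi')=a^\infty_{[d,\nu]}(\chi(x'),\chi'(x')^{-t}\xi')=\chi^*(a^\infty_{[d,\nu]})(x',\xi')$. The main obstacle is the third paragraph: one must confirm that the coordinate change is compatible not only with the $\wtbfS^{d,\nu}_{1,0}$ symbol estimates but with the finer expansion at infinity and the angular regularity built into $\wtbfS^{d,\nu}$ (i.e.\ with the $\scrC^\infty_T$-structure of Definitions \ref{def:singularity}--\ref{def:hom}); this is where the independence of $\chi'(x')^{-t}$ from $\mu$, combined with the uniform equivalence $[\Phi^{-t}\xi',\mu]\simeq[\xi',\mu]$, is essential.
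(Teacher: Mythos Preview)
Your Kuranishi-trick approach to the membership $\chi^*a\in\wtbfS^{d,\nu}$ and to the formula for the homogeneous principal symbol is correct, and it is essentially what the paper outsources to \cite[Theorem~7.1]{Seil22-1}. The paper then proves the limit-symbol formula by a different, more structural argument: it writes $a\equiv\sum_{j<N}a_j\,p_j$ with $a_j=a^\infty_{[d,\nu+j]}\in S^{\nu+j}_{1,0}(\rz^{n-1})$ and $p_j=[\xi',\mu]^{d-\nu-j}$, pulls back term by term to get $\chi^*a\equiv\sum_j(\chi^*a_j)\#(\chi^*p_j)$, expands the strongly parameter-dependent factors $\chi^*p_j\in S^{d-\nu-j}(\rz^{n-1}\times\rpbar)\subset\wtbfS^{d-\nu-j,0}$ at infinity, and reads off the leading coefficient as $(\chi^*a_0)\#1=\chi^*(a^\infty_{[d,\nu]})$. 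This exploits the product form of the expansion at infinity and never needs to track limit symbols through the Kuranishi expansion.

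Your argument for the limit symbol, by contrast, has a gap. You compute the limit symbol of the leading ($\alpha=0$) Kuranishi term only, obtaining $a^\infty_{[d,\nu]}(\chi(x'),\chi'(x')^{-t}\xi')$, and then identify this with $\chi^*(a^\infty_{[d,\nu]})(x',\xi')$. Both steps are wrong, though the errors cancel. First, the higher-$\alpha$ terms, which lie in $\wtbfS^{d-|\alpha|,\nu-|\alpha|}_{1,0}$, do \emph{not} have vanishing principal limit symbol when viewed in $\wtbfS^{d,\nu}_{1,0}$: since $d-\nu=(d-|\alpha|)-(\nu-|\alpha|)$, the formula $p^\infty_{[d,\nu]}=\lim_{\mu\to\infty}[\xi',\mu]^{\nu-d}p$ shows that the limit symbol is unchanged, and in general nonzero. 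Second, $\chi^*(a^\infty_{[d,\nu]})$ is the full pull-back symbol of the parameter-independent operator $\op(a^\infty_{[d,\nu]})$, with all its lower-order Kuranishi corrections; it is \emph{not} merely the substituted expression $a^\infty_{[d,\nu]}(\chi(x'),\chi'(x')^{-t}\xi')$. To repair your argument you must observe that the limit symbol of the $\alpha$-th Kuranishi term of $\chi^*a$ is precisely the $\alpha$-th Kuranishi term of $\chi^*(a^\infty_{[d,\nu]})$ (this follows since $(\partial_{\xi'}a)^\infty_{[d-1,\nu-1]}=\partial_{\xi'}(a^\infty_{[d,\nu]})$ and the $\Phi,J$ factors are $\mu$-independent), and then sum over $\alpha$. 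The paper's route avoids this bookkeeping entirely.
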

\begin{proof}
Except the formula for the limit symbol this is \cite[Theorem 7.1]{Seil22-1}. There it is also shown that the class 
$\wt S^{d,\nu}_{1,0}(\rz^{n-1}\times\rpbar)$ is invariant. We have 
 $$a(x',\xi',\mu)\equiv \sum_{j=0}^{N-1}a_j(x',\xi')p_j(\xi',\mu) \mod \wt S^{d,\nu+N}_{1,0}(\rz^{n-1}\times\rpbar)$$
where for brevity we write here $a_j:=a^\infty_{[d,\nu+j]}$ and $p_j(\xi',\mu):=[\xi',\mu]^{d-\nu-j}$. 
Therefore 
 $$\chi^*a \equiv \sum_{j=0}^{N-1}(\chi^*a_j)\#(\chi^*p_j) \mod \wt S^{d,\nu+N}_{1,0}(\rz^{n-1}\times\rpbar).$$
Now $\chi^*p_j\in S^{d-\nu-j}(\rz^{n-1}\times\rpbar)\subset\wtbfS^{d-\nu-j,0}(\rz^{n-1}\times\rpbar)$. 
Inserting the corresponding expansions of $\chi^*p_j$ yields
 $$\chi^*a \equiv \sum_{j=0}^{N-1}\Big(\sum_{k+\ell=j}(\chi^*a_k)\#(\chi^* p_k)_{[d-\nu-k,\ell]}\Big) p_j \mod 
     \wt S^{d,\nu+N}_{1,0}(\rz^{n-1}\times\rpbar).$$
Since $(\chi^*p_0)_{[d-\nu,0]}(x',\xi')=(\chi^*p_0)^{(d-\nu)}(x',0,1)=1$, the leading coefficient is 
$\chi^* a_0= \chi^*(a^\infty_{[d,\nu+j]})$. 
\end{proof}

\begin{theorem}\label{thm:coordinate-change}
Let $\bfp\in\bfB^{d,\nu;r}(\rz^{n-1}\times\rpbar;\frakg)$. Then there exists a unique symbol 
$\chi^*\bfp\in \bfB^{d,\nu;r}(\rz^{n-1}\times\rpbar;\frakg)$ such that 
$\chi^*\op(\bfp)(\mu)=\op(\chi^*\bfp)(\mu)$ for all $\mu$. 
For the associated principal symbols, the following relations are valid$:$ 
\begin{align*}
 \sigma^{(d)}_\psi(\chi^*\bfp)(x,\xi,\mu)&=\sigma^{(d)}_\psi(\bfp)(\chi(x'),x_n,\chi'(x')^{-t}\xi',\xi_n,\mu),\\
 \sigma^{(d,\nu)}_\partial(\chi^*\bfp)(x',\xi',\mu)&=\sigma^{(d,\nu)}_\psi(\bfp)(\chi(x'),\chi'(x')^{-t}\xi',\mu),\\
 \sigma^{[d]}_\infty(\chi^*\bfp)&=\chi^*\sigma^{[d]}_\infty(\bfp),
\end{align*}
where the latter means the symbol of the pull-back of 
$\op(\sigma^{[d]}_\infty(\bfp))$ under $\chi$. 
\end{theorem}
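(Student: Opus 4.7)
The strategy is to decompose $\bfp=\bfp_\psi+\bfg_0+\wt\bfg$ with $\bfp_\psi=\begin{pmatrix}\op^+(a)&0\\0&0\end{pmatrix}$, $\bfg_0\in B^{d;r}_G(\rz^{n-1}\times\rpbar;\frakg)$, and $\wt\bfg\in \wtbfB^{d,\nu;r}_G(\rz^{n-1}\times\rpbar;\frakg)$, and to verify the claim for each piece separately. Uniqueness of the symbol $\chi^*\bfp$ is immediate from the standard uniqueness of symbols for operator-valued pseudodifferential operators.

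For $\bfp_\psi$, the diffeomorphism $\wt\chi(x',x_n)=(\chi(x'),x_n)$ preserves both the boundary and the normal direction, so the classical Boutet de Monvel coordinate invariance (see Schrohe \cite{Schr01}, Schulze \cite{Schu-BVP}) yields $\chi^*\op^+(a)(\mu)=\op^+(\chi^*a)(\mu)+\op(\bfg_{\chi,a})(\mu)$, where $\chi^*a\in S^d_\tr(\rz^n\times\rpbar;\scrL(\cz^{L_0},\cz^{L_1}))$ has the transmission property and homogeneous principal symbol $a^{(d)}(\chi(x'),x_n,\chi'(x')^{-t}\xi',\xi_n,\mu)$, and $\bfg_{\chi,a}\in B^{d;r}_G$ is a strongly parameter-dependent Green symbol produced by the stationary-phase expansion of the coordinate change.

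For the Green parts, apply Theorem \ref{thm:conjugation03} to write $\bfg_0=\bfkappa\#\bfg_0'\#\bfkappa^{-1}$ and $\wt\bfg=\bfkappa\#\wt\bfg'\#\bfkappa^{-1}$ with $\bfg_0'\in S^d(\rz^{n-1}\times\rpbar;\Gamma^r_G(\rz_+;\frakg))$ and $\wt\bfg'\in\wtbfS^{d,\nu}(\rz^{n-1}\times\rpbar;\Gamma^r_G(\rz_+;\frakg))$. The $F$-valued version of Lemma \ref{lem:coordinate-change}, with $F=\Gamma^r_G(\rz_+;\frakg)$, is a routine extension of the proof of \cite[Theorem 7.1]{Seil22-1} since the argument there depends only on tensor-product factorizations and the expansion structure that are available in any Fréchet-space-valued setting used here; it produces $\chi^*\bfg_0'$ and $\chi^*\wt\bfg'$ in the respective classes, together with the expected pullback formulas for homogeneous principal and limit symbols. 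A direct computation of the integral kernel of $\chi^*\op(\bfkappa)(\mu)$, analogous to the oscillatory-integral manipulation of Section \ref{sec:4.4}, shows that $\chi^*\op(\bfkappa)(\mu)=\op(\wt\bfkappa)(\mu)$ modulo a symbol of lower order, where $\wt\bfkappa(x',\xi',\mu)=\kappa_{[\chi'(x')^{-t}\xi',\mu]}$; since $[\chi'(x')^{-t}\xi',\mu]\sim[\xi',\mu]$ uniformly in $x'$ by the bounds on $\chi'$, Proposition \ref{prop:alternative} ensures that conjugation by $\wt\bfkappa$ carries $\chi^*\bfg_0'$ and $\chi^*\wt\bfg'$ back into $B^{d;r}_G$ and $\wtbfB^{d,\nu;r}_G$, respectively. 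An asymptotic-summation argument, using Proposition \ref{prop:asymptotic-summation01} and Remark \ref{rem:smoothing-operators}, absorbs the iterated lower-order corrections from both $\chi^*\op(\bfkappa)$ and $\chi^*\op(\bfkappa^{-1})$.

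The formulas for the three principal symbols then follow. The expression for $\sigma^{(d)}_\psi(\chi^*\bfp)$ is dictated by $\chi^*a$; the boundary symbol formula combines the classical pullback rule for $\op^+(a^{(d)})$ with the twisted pullback of $\wt\bfg^{(d,\nu)}$ produced by Lemma \ref{lem:coordinate-change} in the $F$-valued form; and the limit symbol identity is obtained via Corollary \ref{cor:comp03}, using that $(\chi^*\wt\bfg)^\infty_{[d,\nu]}=\chi^*(\wt\bfg^\infty_{[d,\nu]})$ and that the contribution from the pseudodifferential part at $(\xi',\mu)=(0,1)$ only sees $\chi(x')$ in its $x'$-dependence. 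The main obstacle will be the careful bookkeeping of the error terms arising from replacing $\chi^*\op(\bfkappa^{\pm 1})$ by $\op(\wt\bfkappa^{\pm 1})$: one must interleave the asymptotic expansion in the regularity number $\nu$ with the Taylor expansion produced by the coordinate change so that the total error can be shown to lie in $B^{-\infty;r}_G$, a bookkeeping that parallels but is slightly more intricate than the one carried out in the proof of Theorem \ref{thm:conjugation03}.
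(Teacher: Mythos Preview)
Your decomposition and the handling of the strongly parameter-dependent part are fine, and the idea of invoking an $F$-valued version of Lemma~\ref{lem:coordinate-change} is sound. But your treatment of the piece $\wt\bfg\in\wtbfB^{d,\nu;r}_G$ is substantially more involved than the paper's, and the obstacle you yourself flag at the end is precisely the gap. Writing $\wt\bfg=\bfkappa\#\wt\bfg'\#\bfkappa^{-1}$ and then trying to pull back each factor forces you to make sense of $\chi^*\op(\bfkappa)(\mu)$. The symbol $\bfkappa(\xi',\mu)$ is not an element of any of the symbol classes of Section~\ref{sec:2} or~\ref{sec:3} unless you carefully view it as a twisted symbol with \emph{different} group-actions on domain and range (trivial on one side, dilation on the other); once you do that, the lower-order corrections from the coordinate change live in that hybrid twisted class, and then you must redo Theorem~\ref{thm:conjugation03} for an $x'$-dependent $\wt\bfkappa$ (Proposition~\ref{prop:alternative} only addresses pointwise conjugation, not the Leibniz product $\wt\bfkappa\#\cdot\#\wt\bfkappa^{-1}$). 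None of this is impossible, but you have not supplied it.

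The paper bypasses the whole issue by a simple tensor-product factorization. After reducing to type $r=0$ and, say, a Poisson symbol $k$, one writes the untwisted symbol-kernel as $k'(x',\xi',\mu;x_n)=a(x',\xi',\mu)\phi(x_n)$ with scalar $a\in\wtbfS^{d,\nu}$ and $\phi\in\scrS(\rz_+)$. Then on the operator level $k=k_\phi\# a$, where $k_\phi(\xi',\mu;x_n)=[\xi',\mu]^{1/2}\phi([\xi',\mu]x_n)$ is a \emph{strongly parameter-dependent} Poisson symbol in $B^{0;0}_G$. Now $\chi^*k=(\chi^*k_\phi)\#(\chi^*a)$: the first factor stays in $B^{0;0}_G$ by the known classical invariance, the second stays in $\wtbfS^{d,\nu}$ by Lemma~\ref{lem:coordinate-change}, and the composition theorem puts the product in $\wtbfB^{d,\nu;0}_G$. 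The principal symbol formulas drop out from the multiplicativity. This avoids any need to pull back the group-action operator itself.
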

\begin{proof}
The result is known to be true if $\bfp$ is strongly parameter-dependent, i.e., with $\bfp$ also 
$\chi^*\bfp$ belongs to $B^{d;r}(\rz^{n-1}\times\rpbar;\frakg)$ and, in this case, the homogeneous principal symbol 
and principal boundary symbol behave as indicated. Since the principal limit symbol, in this case, coincides with the principal 
boundary symbol evaluated in $(\xi',\mu)=(0,1)$, the claim for the limit symbol is trivially true. Hence it remains 
to show the claim for $\bfp\in \wtbfB^{d,\nu;r}_G(\rz^{n-1}\times\rpbar;\frakg)$.

Since the diffeomorphism does not effect $x_n$ it leaves the operator $\partial_+$ invariant. 
For this reason it suffices to consider generalized singular Green symbols of type $r=0$. 
For simplicity of presentation we shall prove the theorem for Poisson symbols only; trace and 
singular Green symbols are treated in an analogous way. 
So let $k\in \wtbfB^{d,\nu;0}_G(\rz^{n-1}\times\rpbar;(0,M_0),(L,0))$ 
and $k'=\kappa^{-1}\#k\in\wtbfS^{d,\nu;0}_G(\rz^{n-1}\times\rpbar;(0,M_0),(L,0))$. 
As already observed in the proof of Proposition \ref{prop:alternative}, it is no restriction to assume  
$\frakg=((0,1),(1,0))$ and that $k'$ is of product form, i.e., 
  $$k'(x',\xi',\mu;x_n)=a(x',\xi',\mu)\phi(x_n)$$
with $a\in \wtbfS^{d,\nu}(\rz^{n-1}\times\rpbar)$ and $\phi\in\scrS(\rz_+)$ 
$($we identify symbols with symbol-kernels$)$. Then 
 $${k'}_{[d,\nu]}^\infty(x',\xi';x_n)=a_{[d,\nu]}^\infty(x',\xi')\phi(x_n).$$
Moreover, 
$\op(k')(\mu)=\op(\varphi)\op(a)(\mu)$, where $\op(a)(\mu)$ is a pseudodifferential operator on 
$\rz^{n-1}$ while $[\op(\varphi)u](x)=u(x')\phi(x_n)$ is a Poisson operator. Therefore 
 $$k=\kappa\#k'=k_\phi\#a,\qquad k_\phi(x',\xi',\mu;x_n)=[\xi',\mu]^{1/2}\phi([\xi',\mu]x_n),$$
hence $\chi^*k=(\chi^*k_\phi)\#(\chi^*a)$. 
Since $k_\phi\in B^{0;0}_G(\rz^{n-1}\times\rpbar;\frakg)$, also $\chi^*k_\phi$ belongs to the same class; 
$\chi^*a\in \wtbfS^{d,\nu}(\rz^{n-1}\times\rpbar)$ by Lemma \ref{lem:coordinate-change}. This yields 
$\chi^*k\in \wtbfB^{d,\nu;0}_G(\rz^{n-1}\times\rpbar;\frakg)$. Moreover, 
\begin{align*}
  (\chi^*k_\phi)^\infty_{[0,0]}(x',\xi';x_n)
  &=(\chi^*k_\phi)^{(0)}(x',\xi',\mu;x_n)\big|_{(\xi',\mu)=(0,1)}\\
  &=k_\phi^{(0)}(\chi(x'),0,1;x_n)=|\xi',\mu|^{1/2}\phi(|\xi',\mu|x_n)\big|_{(\xi',\mu)=(0,1)}\\
  &=\phi(x_n)
\end{align*}
and therefore, again using Lemma \ref{lem:coordinate-change}, 
\begin{align*}
 (\chi^*k)_{[d,\nu]}^\infty(x',\xi';x_n)
 &=((\chi^*k_\phi)^\infty_{[0,0]}\#(\chi^*a)^\infty_{[d,\nu]})(x',\xi';x_n)\\
 &=(\chi^*a^\infty_{[d,\nu]})(x',\xi')\phi(x_n)=(\chi^*{k'}^\infty_{[d,\nu]})(x',\xi';x_n)\\
 &=(\chi^*{k}^\infty_{[d,\nu]})(x',\xi';x_n). 
\end{align*}
This gives the transformation rule for the principal limit symbol. The principal boundary symbol is treated similarly, 
again starting out from $\chi^*k=(\chi^*k_\phi)\#(\chi^*a)$. 
\end{proof}

\subsection{Opeartors on compact manifolds}\label{sec:8.3}

Let $M$ be a smooth compact manifold with boundary $\partial M$. We fix a collar neighborhood 
$\calU\cong \partial M\times[0,1)$ with a splitting of coordinates $(x',x_n)$. Moreover, let $E_j$ and $F_j$, $j=0,1$, 
be hermitian vector bundles on $M$ and $\partial M$, respectively, such that 
$E_j|_\calU= \pi^*E'_j$, where $E'=E|_{\partial M}$ and $\pi:\calU\to\partial M$ is the canonical projection. 
We want to define the classes 
 $$\bfB^{d,\nu;r}(M\times\rpbar;\frakg), \qquad \frakg=((E_0,F_0),(E_1,F_1)),$$
with $d\in\gz$ and $r,\nu\in\nz_0$. Elements will be operator-families 
 $$P(\mu)=
     \begin{pmatrix}
      A_+(\mu)+G(\mu)& K(\mu)\\ T(\mu)&Q(\mu)
     \end{pmatrix}:
     \begin{matrix}
      H^{s}(M,E_0)\\ \oplus\\ H^s(\partial M,F_0)
     \end{matrix}
     \lra
     \begin{matrix}
      H^{s-d}(M,E_1)\\ \oplus\\ H^{s-d}(\partial M,F_1)
     \end{matrix}
 $$
whenever $s>r-\frac{1}{2}$. The pseudodifferential part is of the form 
 $$A_+(\mu)=r^+A(\mu)e^+,\qquad A(\mu)\in L^d(\wt M;\wt E_0,\wt E_1),$$
where $A(\mu)$ is a strongly parameter-dependent pseudodifferential operator defined on an ambient manifold 
$\wt M$ in which $M$ is embedded and $\wt E_j$ are extensions of the vector-bundle $E_j$ to $\wt M$. 

It remains to introduce the classes of generalized singular Green operators. 

\begin{definition}
We define
 $$B^{-\infty;0}(M\times\rpbar;\frakg)=\mathop{\mbox{\Large$\cap$}}_{s,s'\in\rz}
    \scrS\left(\rpbar;\scrL\left(
     \begin{matrix}
      H^s_0(M,E_0)\\ \oplus\\ H^s(\partial M,F_0)
     \end{matrix},
     \begin{matrix}
      H^{s'}(M,E_1)\\ \oplus\\ H^{s'}(\partial M,F_1)
     \end{matrix}\right)\right)$$
and $B^{-\infty;r}(M;\frakg)$ the space of all operators of the form 
$G=\sum\limits_{j=0}^{r-1}G_j\boldsymbol{\partial}_+^j$, 
with $G_j\in B^{-\infty;0}(M;\frakg)$ and $\boldsymbol{\partial}_+$ as in \eqref{eq:partial-plus}, 
where $\partial_+$ represents a fixed choice of a first order differential operator which 
coincides with $\partial_{x_n}$ near the boundary. 
\end{definition}

Modulo this class of globally defined regularizing operators, all other contributions to the class of generalized singular 
Green operators are defined locally near the boundary. 

\begin{lemma}\label{lem:cut-off02}
Let $\bfg\in \wtbfB^{d,\nu;r}_G(\rz^{n-1}\times\rpbar;\frakg)$ and $\omega\in\scrC^\infty(\rpbar)$ be a 
compactly supported function with $\omega=1$ near the origin . Then 
 $$\bfg-\begin{pmatrix}\omega&0\\0&0\end{pmatrix}\bfg,\;\bfg-\bfg\begin{pmatrix}\omega&0\\0&0\end{pmatrix} 
    \in B^{-\infty;r}_G(\rz^{n-1}\times\rpbar;\frakg).$$
\end{lemma}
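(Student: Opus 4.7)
The plan is to deduce the lemma from Lemma \ref{lem:cut-off01} together with the elementary observation that, since $\omega \equiv 1$ near $0$ and has compact support in $\rpbar$, for every $N \in \nz_0$ we can write
\begin{equation*}
 1-\omega(x_n)=x_n^N\,\psi_N(x_n),
\end{equation*}
where $\psi_N\in\scrC^\infty_b(\rpbar)$: near $0$ it vanishes identically, while for $x_n$ large it equals $x_n^{-N}$, so $\psi_N$ is bounded together with all its derivatives.

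First I would reduce to type $r=0$. Writing $\bfg = \bfg_0 + \sum_{j=1}^r \bfg_j \boldsymbol{\partial}_+^j$ with $\bfg_j \in \wtbfB^{d-j,\nu;0}_G(\rz^{n-1}\times\rpbar;\frakg)$, the matrix $\begin{pmatrix}\omega & 0 \\ 0 & 0\end{pmatrix}$ factors out of the finite sum, so the claim for $\bfg$ reduces to the corresponding claim for each $\bfg_j$. From this point on we may work with a symbol of type $0$, and the block-matrix entries of the difference that actually require analysis are $(1-\omega)g$ (acting in the $x_n$-variable of the Green symbol-kernel) and $(1-\omega)k$ (acting in the $x_n$-variable of the Poisson symbol-kernel).

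Next I would apply the factorization above: $(1-\omega)g = \psi_N(x_n)\cdot(x_n^N g)$, and analogously for $k$. By Lemma \ref{lem:cut-off01}, $\bfx_n^N\bfg$ lies in $\wtbfB^{d-N,\nu;0}_G(\rz^{n-1}\times\rpbar;\frakg)$, which on the level of symbol-kernels means that $x_n^N g$ and $x_n^N k$ are kernels of symbols of order $d-N$ with the same regularity and type. Multiplication by $\psi_N(x_n)\in\scrC^\infty_b(\rpbar)$ preserves this class: using the symbol-kernel characterization following \eqref{eq:symbol-kernel-relation}, the Leibniz rule applied to $x_n^\ell D_{x_n}^{\ell'}(\psi_N g)$ together with the uniform boundedness of $\psi_N$ and all its derivatives yields estimates of exactly the required form. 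Thus $(1-\omega)g,\,(1-\omega)k \in \wtbfB^{d-N,\nu;0}_G$ for every $N$, and Remark \ref{rem:smoothing-operators} identifies them as elements of $B^{-\infty;0}_G$.

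The right-multiplication case $\bfg - \bfg\begin{pmatrix}\omega & 0 \\ 0 & 0\end{pmatrix}$ is entirely symmetric: the factor $1-\omega(y_n)$ now acts in the $y_n$-variable of the Green and trace symbol-kernels, and one invokes the $\bfg\bfx_n^N$ half of Lemma \ref{lem:cut-off01} together with right-multiplication by $\psi_N(y_n) \in \scrC^\infty_b(\rpbar)$. The main technical obstacle is the verification that multiplication by a $\scrC^\infty_b$-function of $x_n$ (respectively $y_n$) preserves the class $\wtbfB^{d-N,\nu;0}_G$; this is routine from the symbol-kernel characterization but has to be checked carefully component by component before the intersection over $N$ can be taken.
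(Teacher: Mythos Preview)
Your proposal is correct and follows essentially the same route as the paper's proof: the paper writes $(1-\omega)(x_n)=a_j(x_n)x_n^j$ with $a_j(x_n)=(1-\omega)(x_n)x_n^{-j}\in\scrC^\infty_b(\rpbar)$ for arbitrary $j$, then invokes Lemma~\ref{lem:cut-off01}, which is exactly your factorization and your appeal to the same lemma. Your version is simply more explicit about the reduction to type~$0$, about why multiplication by the $\scrC^\infty_b$ factor preserves the class, and about invoking Remark~\ref{rem:smoothing-operators} to pass to $B^{-\infty;0}_G$ via the intersection over $N$.
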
    
\begin{proof}
The claim follows from Lemma \ref{lem:cut-off01} by writing 
$(1-\omega)(x_n)=a_j(x_n)x_n^j$ for arbitrary $j\in\nz$ where 
$a_j(x_n)=(1-\omega)(x_n)x_n^{-j}\in\scrC^\infty_b(\rpbar)$. 
\end{proof}


In the following definition $\omega$ is a cut-off function as in Lemma \ref{lem:cut-off02}, considered as a function 
defined on $M$ with support in the collar-neighborhood of $\partial M$. 
The definition does not depend on the choice of $\omega$. 

\begin{definition}\label{def:Green-manifold}
Let $d,\nu\in\rz$ and let $\frakB^{d;r}_G$ be one choice of $B^{d;r}_G$ or  $\wtbfB^{d,\nu;r}_G$. 
The space $\frakB^{d;r}_G(M\times\rpbar;\frakg)$, $\frakg=((E_0,F_0),(E_1,F_1))$, consist of all  
operator-families 
   $$P(\mu):
     \begin{matrix}
      \scrC^\infty(M,E_0)\\ \oplus\\ \scrC^\infty(\partial M,F_0)
     \end{matrix}\to
     \begin{matrix}
      \scrC^\infty(M,E_1)\\ \oplus\\ \scrC^\infty(\partial M,F_1)
     \end{matrix}$$
satisfying$:$
\begin{itemize}
 \item[$(1)$] $\begin{pmatrix}1-\omega&0\\0&0\end{pmatrix}P(\mu)$ and 
  $P(\mu)\begin{pmatrix}1-\omega&0\\0&0\end{pmatrix}$ belong to $B^{-\infty;r}(M\times\rpbar;\frakg)$. 
 \item[$(2)$] For every chart $U$ of $\partial M$ such that $E_j'|_U=U\times\cz^{L_j}$ and 
  $F_j'|_U=U\times\cz^{M_j}$ and for every $\phi,\psi\in\scrC^\infty_0(U)$ the operator 
  $\begin{pmatrix}\phi\omega&0\\0&\phi\end{pmatrix}P(\mu)\begin{pmatrix}\psi\omega&0\\0&\psi\end{pmatrix}$
  transported to $\rz^n_+$ via the local trivialization is a generalized singular Green operator $\op(\bfp)(\mu)$ with 
  symbol   $\bfp\in\frakB^{d;r}_G(\rz^{n-1}\times\rpbar;((L_0,M_0),(L_1,M_1)))$. 
\end{itemize}
\end{definition}

\begin{definition}\label{def:Green-manifold02}
Let $d\in\gz$ and $\nu\in\nz_0$. Then we set 
 $$B^{d;r}(M\times\rpbar;\frakg)
    =\left\{\begin{pmatrix}A_+(\mu)&0\\0&0\end{pmatrix} \mid A(\mu)\in L^d(\wt M;\wt E_0,\wt E_1)
    \right\}
    +B^{d;r}_G(M\times\rpbar;\frakg)$$
and 
\begin{align*}
  \bfB^{d,\nu;r}(M\times\rpbar;\frakg)
  =B^{d;r}(M\times\rpbar;\frakg)+ \wtbfB^{d,\nu;r}_G(M\times\rpbar;\frakg).
\end{align*}
\end{definition}

\subsubsection{Principal symbols for the global calculus}\label{sec:8.3.1}

The relevant principal symbols for the calculus on the half-space $\rz^n_+$ lead to globally defined analogues 
in the manifold case. Given $P(\mu)\in \bfB^{d,\nu;r}(M\times\rpbar;\frakg)$, $\frakg=((E_0,F_0),(E_1,F_1))$, 
the global homogeneous principal symbol is a bundle-morphism 
\begin{align*}
 \sigma^d_\psi(P):\pi^*E_0\lra\pi^* E_1, \qquad \pi:(T^*M\times\rpbar)\setminus 0\lra M,
\end{align*}
where $\pi$ is the canonical projection, i.e., $\pi(v_x,\mu)=x$ for $v_x\in T^*_xM$; equivalently, we may 
consider $\pi:\{(v_x,\mu)\mid \|v_x\|^2+\mu^2=1\}\to M$, with norm induced by some  
fixed Riemannian metric.   
The global principal boundary symbol is a bundle-morphism 
\begin{align*}
 \sigma^{(d,\nu)}_\partial(P):
 \pi_\partial^*\begin{pmatrix}H^s(\rz_+)\otimes E_0'\\ \oplus\\ F_0\end{pmatrix}
 \lra
 \pi_\partial^*\begin{pmatrix}H^{s-d}(\rz_+)\otimes E_1'\\ \oplus\\ F_1\end{pmatrix},\qquad s>r-\frac12,
\end{align*}
where $\pi_\partial:(T^*\partial M\setminus 0)\times\rpbar\to \partial M$ is the canonical projection or, 
equivalently, $\pi_\partial$ considered with domain $\{(v'_{x'},\mu)\mid \|v'_{x'}\|^2+\mu^2=1,\;v'_{x'}\not=0\}$. 
The principal angular symbol is a bundle-morphism
\begin{align*}
 \wh\sigma^{\spk{d}}(P):
 \wh\pi_\partial^*\begin{pmatrix}H^s(\rz_+)\otimes E_0'\\ \oplus\\ F_0\end{pmatrix}
 \lra
 \wh\pi_\partial^*\begin{pmatrix}H^{s-d}(\rz_+)\otimes E_1'\\ \oplus\\ F_1\end{pmatrix},\qquad s>r-\frac12,
\end{align*}
where $\wh\pi_\partial:T^*\partial M\setminus 0\to \partial M$ is the canonical projection which, equivalently, can 
also be restricted to the unit co-sphere bundle of the boundary. 

In order to define the global analogue of the principal limit symbol, let $\phi_j\in\scrC^\infty_0(U_j)$, $j=1,\ldots,N$, 
be a partition of unity such that every union $U_j\cup U_k$ is contained in a chart domain $U_{jk}$ over which all 
involved bundles are trivial. Let $\bfp_{jk}\in\bfB^{d,\nu;r}(\rz^{n-1}\times\rpbar;((L_0,M_0),(L_1,M_1)))$ be 
the local symbol associated with  
$\begin{pmatrix}\phi_j\omega&0\\0&\phi_j\end{pmatrix}P(\mu)
\begin{pmatrix}\phi_k\omega&0\\0&\phi_k\end{pmatrix}$. 
The \emph{principal limit operator} of $P(\mu)$ is defined as 
\begin{align*}
 \sigma_\infty^{[d]}(P)=\sum_{j,k=1}^N \sigma_\infty^{[d]}(P_{jk})
\end{align*}
where $\sigma_\infty^{[d]}(P_{jk})$ transported in $\rz^n_+$ coincides with $\op(\bfp^\infty_{jk,[d,0]})$. 
By construction, 
 $$\sigma_\infty^{[d]}(P):
     \begin{matrix}
      H^s(\partial M, H^{s'}(\rz_+)\otimes E_0')\\ \oplus\\ H^s(\partial M,F_0)
     \end{matrix}\lra
     \begin{matrix}
      H^{s-\nu}(\partial M,H^{s'-d}(\rz_+)\otimes E_1')\\ \oplus\\ H^{s-\nu}(\partial M,F_1)
     \end{matrix}$$
for every $s'>r-\frac{1}{2}$. 
\bibliographystyle{amsalpha}

\end{document}